\documentclass{amsart}

\usepackage{stmaryrd}
\usepackage{enumerate}
\usepackage[pdftex]{graphicx}
\usepackage{amsmath}
\usepackage{amsthm}
\usepackage{multirow}

\newtheorem*{stirling'sformula}{Stirling's Formula}
\newtheorem{thm}{Theorem}[section]
\newtheorem{prop}[thm]{Proposition}
\newtheorem{lem}[thm]{Lemma}
\newtheorem{cor}[thm]{Corollary}

\theoremstyle{definition}
\newtheorem{definition}[thm]{Definition}

\theoremstyle{remark}
\newtheorem{notation}[thm]{Notation}

\numberwithin{equation}{section}

\newcommand{\Apc}{\pazocal{A}} 
\newcommand{\Bpc}{\pazocal{B}} 
\newcommand{\Cpc}{\pazocal{C}} 
\newcommand{\Dpc}{\pazocal{D}} 
\newcommand{\Epc}{\pazocal{E}} 
\newcommand{\Fpc}{\pazocal{F}}

\newcommand{\Ipc}{\pazocal{I}}

\newcommand{\Mpc}{\pazocal{M}} 
\newcommand{\Npc}{\pazocal{N}} 
 
\newcommand{\Ppc}{\pazocal{P}} 
\newcommand{\Qpc}{\pazocal{Q}} 
 
\newcommand{\Spc}{\pazocal{S}} 
\newcommand{\Tpc}{\pazocal{T}} 
 
\newcommand{\Vpc}{\pazocal{V}} 
\newcommand{\Wpc}{\pazocal{W}} 
\newcommand{\Xpc}{\pazocal{X}} 
 
\newcommand{\Zpc}{\pazocal{Z}}

\newcommand{\Fmc}{\mathcal{F}}

\newcommand{\Dbbb}{\mathbb{D}}

\newcommand{\Pbbb}{\mathbb{P}} 
 
\newcommand{\Rbbb}{\mathbb{R}}

\newcommand{\Zbbb}{\mathbb{Z}} 

\newcommand{\amf}{\mathfrak{a}}

\newcommand{\lmf}{\mathfrak{l}}

\newcommand{\smf}{\mathfrak{s}}

\newcommand{\Std}{\widetilde{S}}

\newcommand{\btd}{\widetilde{b}}

\newcommand{\etd}{\widetilde{e}}

\newcommand{\ptd}{\widetilde{p}}

\newcommand{\std}{\widetilde{s}}

\newcommand{\ztd}{\widetilde{z}}

\DeclareMathAlphabet{\pazocal}{OMS}{zplm}{m}{n}

\DeclareMathOperator{\suc}{suc}

\DeclareMathOperator{\Sec}{Sec}

\DeclareMathOperator{\Span}{Span}

\DeclareMathOperator{\Vol}{Vol}

\DeclareMathOperator{\id}{id}

\DeclareMathOperator{\Hom}{Hom}
\DeclareMathOperator{\htop}{h_{top}}

\newcommand{\sub}[2]{L^{(#2)}_{#1}} 

\begin{document}

\title{Degeneration of Hitchin representations along internal sequences}

\author{Tengren Zhang}

\thanks{The author was partially supported by U.S. National Science Foundation grants DMS - 1006298, DMS - 1306992, DMS - 1307164 and DMS - 1107452, 1107263, 1107367 "RNMS: Geometric structures And Representation varieties" (the GEAR Network).}

\begin{abstract}
We first give a coordinate system on the $PSL(n,\Rbbb)$ Hitchin component that is a direct analogue of the Fenchel-Nielsen coordinates on Teichm\"uller space. Using these coordinates, we consider a class of sequences in the Hitchin component, called internal sequences, along which most length functions grow to infinity and the topological entropy of the associated geodesic flow converges to $0$.
\end{abstract}

\maketitle

\section{Introduction} 
Let $S$ be a closed, oriented topological surface of genus at least $2$, and denote its fundamental group by $\Gamma$. The Teichm\"uller space of $S$, denoted by $\Tpc(S)$, is the space of marked hyperbolic metrics on $S$. From a representation theoretic point of view, one can think of $\Tpc(S)$ as a connected component of the space of conjugacy classes of discrete, faithful representations from $\Gamma$ to $PSL(2,\Rbbb)$. An advantage of taking this point of view is that it allows one to define a higher rank generalization of $\Tpc(S)$, which was introduced by Hitchin \cite{Hit1}. Presently, this ``higher Teichm\"uller space" is known as the {\em Hitchin component}, and can be defined as follows. Let $\iota:PSL(2,\Rbbb)\to PSL(n,\Rbbb)$ be the unique (up to conjugation) irreducible representation. This induces, via post-composition, an embedding $i$ from $\Tpc(S)$ into the character variety
\[\Xpc_n(S):=\Hom(\Gamma,PSL(n,\Rbbb))/ PSL(n,\Rbbb).\]
The {\em $n$-th Hitchin component of $S$}, denoted $Hit_n(S)$, can then be defined to be the connected component of $\Xpc_n(S)$ that contains the image of $i$, which is also known as the \emph{Fuchsian locus}. 

It is well-known that $\Tpc(S)=Hit_2(S)$. Also, by the work of Choi-Goldman \cite{ChoGol1} and Guichard-Wienhard \cite{GuiWie1}, we know respectively that $Hit_3(S)$ is the space of marked convex $\Rbbb\Pbbb^2$ structures on $S$ and that $Hit_4(S)$ is the space of marked convex foliated $\Rbbb\Pbbb^3$ structures on $T^1S$. These realizations of the lower rank Hitchin components as deformations spaces of geometric structures associated to $S$ provide a strong motivation for studying Hitchin representations from a geometric point of view. Guichard-Wienhard \cite{GuiWie2} also constructed domains of discontinuities for the image of Hitchin representations in $Hit_n(S)$ for all $n$. 

Interestingly, the Hitchin components have many of the desirable properties that $\Tpc(S)$ possess.  Hitchin \cite{Hit1} proved using Higgs bundle techniques that $Hit_n(S)$ is a cell of real dimension $(n^2-1)(2g-2)$, where $g$ is the genus of $S$. By understanding the dynamics of the $\Gamma$-action  induced by Hitchin representations on the space of complete flags in $\Rbbb^n$, Labourie \cite{Lab1} proved that they are discrete, faithful, and their images consist only of diagonalizable elements with eigenvalues that have pairwise distinct norms. Using the work of Fock-Goncharov \cite{FocGon1}, Bonahon-Dreyer \cite{BonDre1} gave a real-analytic parameterization of $Hit_n(S)$ that is a generalization of Thurston's shear coordinates in $\Tpc(S)$. There is also a parameterization of $Hit_3(S)$ by Goldman \cite{Gol1} which generalizes the Fenchel-Nielsen coordinates on $\Tpc(S)$.

By taking a special case of the parameterization by Bonahon-Dreyer \cite{BonDre1} and performing a linear reparameterization, one can obtain another parameterization that is explicitly analogous to the Fenchel-Nielsen coordinates on $\Tpc(S)$. More specifically, if we choose an oriented pants decomposition $\Ppc$ of $S$, then $Hit_n(S)$ can be parameterized by the following:
\begin{itemize}
\item A \emph{boundary invariant} taking values in $\amf^+$ for each curve in $\Ppc$.
\item $n-1$ \emph{gluing parameters} taking values in $\Rbbb$ for each curve in $\Ppc$.
\item $(n-1)(n-2)$ \emph{internal parameters} taking values in $\Rbbb$ for each pair of pants given by $\Ppc$.
\end{itemize}
Here, $\amf^+$ is the positive Weyl chamber of the Lie algebra $\smf\lmf(n,\Rbbb)$, which one can think of as the set of traceless $n\times n$ diagonal matrices with diagonal entries that are strictly decreasing down the diagonal. In fact, the boundary invariant for a curve in $\Ppc$ is the image of the corresponding group element under the Jordan projection. Here, one should think of the boundary invariant and gluing parameters as analogs of the Fenchel-Nielsen length and twist coordinates respectively. We call this parameterization of $Hit_n(S)$ the \emph{modified shear-triangle parameterization}.

In view of this parameterization, one can ask if there is any geometric meaning behind deforming the internal parameters. One way to approach this question is to study sequences $\{\rho_i\}_{i=1}^\infty$ in $Hit_n(S)$, along which the boundary invariants are held bounded away from the walls of $\amf^+$, while the $(n-1)(n-2)$ internal parameters for each pair of pants escape every compact set in the cell where they take values. Such sequences are called \emph{internal sequences} (see Definition \ref{internal sequence}). More informally, the internal sequences are those where we do not change the boundary invariants by much while deforming the internal parameters as much as possible. 

One can also study Hitchin representations by considering the induced flows on $T^1S$. For any Hitchin representation $\rho$, define the length function $l_\rho:\Gamma\to\Rbbb$ by 
\[l_\rho(X)=\log\bigg|\frac{\omega_n(\rho(X))}{\omega_1(\rho(X))}\bigg|,\]
where $\omega_n(\rho(X))$ and $\omega_1(\rho(X))$ are the eigenvalues of $\rho(X)$ with largest and smallest norm respectively. These are related to the length functions studied by Dreyer \cite{Dre1}. When $n=2$, $l_\rho(X)$ is the length of the closed geodesic in $S$ corresponding to $X$ in $\Gamma$, measured in the hyperbolic metric on $S$ corresponding to the representation $\rho$. 

Labourie \cite{Lab2} constructed, for each $\rho$, a unique (up to H\"older time preserving equivalence) H\"older reparameterization $(\phi_\rho)_t$ of the geodesic flow on $T^1S$, so that the closed orbit of $(\phi_\rho)_t$ corresponding to the conjugacy class $[X]$ in $[\Gamma]$ has period $l_\rho(X)$. In the case when $\rho$ lies in $\Tpc(S)$, $(\phi_\rho)_t$ is conjugate to the geodesic flow of the hyperbolic metric corresponding to $\rho$ via a H\"older time-preserving homeomorphism. This allows us to define the \emph{topological entropy of $\rho$}, $\htop(\rho)$, to be the topological entropy of the flow $(\phi_\rho)_t$. It then follows from the work of Bowen \cite{Bow1} and Pollicot \cite{Pol1} that 
\[\htop(\rho)=\limsup_{T\to\infty}\frac{1}{T}\log|\{[X]\in[\Gamma]:l_\rho(X)<T\}|.\]

In this paper, we study how the dynamics of these induced flows degenerate along internal sequences. The main goal is to prove the following theorem.

\begin{thm}\label{main theorem}
There exists a continuous function $\Theta:Hit_n(S)\to\Rbbb^+$ with the following properties:
\begin{enumerate}
\item If $X$ in $\Gamma$ does not correspond to a curve homotopic to a multiple of a curve in $\Ppc$, then $\Theta(\rho)\leq l_\rho(X)$. 
\item If $\{\rho_i\}_{i=1}^\infty$ is an internal sequence, then 
\[\lim_{i\to\infty}\Theta(\rho_i)=\infty.\]
\end{enumerate}
Furthermore,
\[\lim_{i\to\infty}\htop(\rho_i)=0.\]\end{thm}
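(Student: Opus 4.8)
The plan is to construct $\Theta$ explicitly from the modified shear-triangle parameterization and then establish the three assertions separately, with the entropy bound as the ultimate payoff. First I would define $\Theta(\rho)$ to be (a suitable normalization of) the minimum, over the pairs of pants $P$ cut out by $\Ppc$, of a quantity measuring how far the internal parameters of $\rho$ on $P$ have been pushed toward the boundary of the cell $\Rbbb^{(n-1)(n-2)}$; continuity is then immediate from continuity of the parameterization, and property (2) is essentially the definition of an internal sequence (Definition \ref{internal sequence}). The real content is property (1): one must show that stretching the internal parameters on a pair of pants forces the length $l_\rho(X)$ of every $\Gamma$-element $X$ whose geodesic representative genuinely crosses that pair of pants (i.e.\ is not peripheral or a multiple of a pants curve) to be at least $\Theta(\rho)$.

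For property (1) the natural tool is a positivity/estimate argument on the flag curve. Using Labourie's and Fock--Goncharov's description of Hitchin representations via positive configurations of flags, together with Bonahon--Dreyer's shear-triangle coordinates, one writes $\rho(X)$ as a product of elementary matrices (unipotents along edges, and the "triangle/internal" matrices inside each pair of pants $X$ traverses). The key point is that these are all \emph{totally positive} matrices with respect to a fixed flag, so there is no cancellation: the logarithm of the ratio of the top to bottom eigenvalue of such a product is bounded below by the logarithm of the ratio of the top to bottom diagonal entry of any one factor, in particular of the internal matrix. One then checks that as the internal parameters of $P$ leave every compact set, the relevant diagonal ratio of the internal matrix tends to infinity, uniformly, and that this lower bound is exactly what we called $\Theta(\rho)$. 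Here the hypothesis that $X$ is not homotopic to a multiple of a pants curve is used to guarantee that the geodesic actually picks up an internal factor for some $P$; and the hypothesis that the boundary invariants stay away from the walls of $\amf^+$ is used to keep the contribution of the unipotent edge-factors from interfering (e.g.\ to bound the entries of the conjugating matrices that put things in a common frame). I expect this positivity bookkeeping — making the "no cancellation" heuristic into a clean, uniform inequality while tracking the conjugations needed to compare factors written with respect to different flags — to be the main obstacle.

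Granting properties (1) and (2), the entropy statement follows from a counting/critical-exponent argument. By the Bowen--Pollicott formula quoted above, $\htop(\rho)$ is the exponential growth rate of $|\{[X]\in[\Gamma]: l_{\rho}(X)<T\}|$. Split conjugacy classes into those represented by multiples of pants curves and the rest. The pants-curve multiples contribute only polynomially (there are finitely many pants curves, and the $k$-th multiple has length $\asymp k$ times a fixed positive constant, since the boundary invariants are bounded away from the walls), so they do not affect the exponential rate. For every other class, property (1) gives $l_{\rho_i}(X)\ge \Theta(\rho_i)$, so for $T<\Theta(\rho_i)$ the counting function is bounded by a fixed polynomial in $T$; comparing growth rates and letting $T,\,i\to\infty$ with property (2) forces $\htop(\rho_i)\to 0$. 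To make this rigorous one wants a uniform bound on the number of non-peripheral classes of length $<T$ that does not depend on $i$ — for instance, because each such $X$ corresponds to a closed geodesic of bounded combinatorial length in the pants decomposition and the relevant transition data lie in a compact set once the boundary invariants are controlled — so a secondary technical step is to extract such a uniform count, e.g.\ by a compactness argument on the subsurface pieces or by an a priori lower bound on $l_{\rho}(X)$ in terms of the word length of $X$ relative to the pants decomposition.

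Finally I would remark on the logical structure: property (1) plus the boundedness of boundary invariants already gives a uniform positive lower bound on the systole away from pants curves, which combined with (2) shows the systole (hence all but finitely many length functions) blows up; the entropy decay is then a formal consequence of the pressure-theoretic characterization of $\htop$. The only place genuine new work enters beyond assembling known parameterizations is the positivity estimate in property (1), so that is where I would concentrate the effort.
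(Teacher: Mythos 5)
Your outline has the right general shape---define $\Theta$ from the modified shear-triangle parameterization, prove a length lower bound for (1), count for the entropy claim---and it correctly identifies where the work must go, but two of the steps are weaker than what is actually needed, so the gaps are mathematical, not just expository.

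The entropy claim does not follow from (1) and (2) alone. Property (1) gives a lower bound on the systole over non-peripheral classes, so the counting function $|\{[X]:l_{\rho_i}(X)<T\}|$ is trivial for $T<\Theta(\rho_i)$; but $\htop(\rho_i)$ is a $\limsup_{T\to\infty}$, which only sees large $T$, and a large systole by itself never forces small entropy (hyperbolic surfaces have entropy $1$ regardless of injectivity radius). What is needed is that $l_\rho(X)$ grows \emph{linearly} in the combinatorial complexity of $X$ with a slope that diverges along the sequence. You flag this as ``a secondary technical step,'' but it is the central content: the paper proves in Theorem~\ref{length lower bound} the bound $l_\rho(X)\geq r(\psi(X))\,K(\rho)/11 + s(\psi(X))\,L(\rho)/11$ (with $r,s$ combinatorial counts of crossings and windings), shows $K(\rho_i)\to\infty$ in Theorem~\ref{K to infinity}, and only then deduces the entropy decay by the counting of Lemma~\ref{Gamma_1} and Proposition~\ref{final estimate}. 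One cannot ``grant (1) and (2)'' and close the entropy argument; you must prove the combinatorial length lower bound, which is essentially the whole proof.

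On the positivity estimate itself: the claim that for a product $M_1\cdots M_k$ of totally positive matrices the ratio $\lambda_1/\lambda_n$ is bounded below by the top-to-bottom diagonal ratio of a single factor is not obviously true as stated and would need a genuine argument. A harder problem is that Definition~\ref{internal sequence} imposes no constraint on the gluing parameters, so the matrices one must control along a pants curve are themselves allowed to become unbounded; any estimate that tracks matrix entries must be built to survive that, which is what you would call the ``bookkeeping of conjugations.'' The paper avoids this difficulty entirely by working projectively: the length lower bound is organized in terms of cross ratios along the Frenet curve and their monotonicity (Proposition~\ref{useful cross ratio inequalities}) on the projective line $\Pbbb(\xi(x^-)^{(1)}+\xi(x^+)^{(1)})$, so the gluing parameters never appear. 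Finally, ``internal parameters escape every compact'' does not mean each coordinate diverges---only that the vector of internal parameters does---and the coordinates are linked by the closed leaf equalities~(\ref{alpha equation})--(\ref{gamma equation}); extracting a divergent cross ratio from this requires the case analysis of Lemma~\ref{triangle linear relations} and the flag-degeneration analysis of Proposition~\ref{triple ratio escaping 1}. The word ``uniformly'' in your sketch hides exactly this.
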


This theorem is a generalization of the results in \cite{Zha1}, where the author proved the same statement for $Hit_3(S)$ using the Goldman parameterization. Nie (\cite{Nie1}, \cite{Nie2}) also has some related results. In addition to the consequences mentioned in \cite{Zha1}, the main theorem has several other interesting geometric corollaries. Let $M$ be the symmetric space of $SL(n,\Rbbb)$. Define the \emph{critical exponent of $\rho$} by
\[h_M(\rho):=\limsup_{T\to\infty}\frac{1}{T}\log|\{X\in\Gamma:d_M(o,\rho(X)\cdot o)<T\}|,\]
where $d_M$ is the distance function on $M$ induced by the Riemannian metric, and $o$ is any point in $M$. This quantity is the exponential growth rate of the number of points in the $\Gamma$-orbit of $o$ that are contained in a ball of growing radius centered about $o$, and is in fact independent of the choice of $o$. Theorem \ref{main theorem} then allows us to deduce how the critical exponent degenerates along internal sequences.

\begin{cor}\label{first corollary}
Let $\{\rho_i\}_{i=1}^\infty$ be an internal sequence. Then
\[\lim_{i\to\infty}h_M(\rho_i)=0.\]
\end{cor}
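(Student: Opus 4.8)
The corollary should follow from Theorem \ref{main theorem} by comparing the symmetric-space distance $d_M(o,\rho(X)\cdot o)$ with the length function $l_\rho(X)$, and then transferring the entropy statement $\lim_i\htop(\rho_i)=0$ into a statement about $h_M(\rho_i)$. First I would recall the standard fact that for $g\in SL(n,\Rbbb)$ with singular values $\sigma_1(g)\ge\cdots\ge\sigma_n(g)$, one has $d_M(o,g\cdot o)^2=\sum_{k}(\log\sigma_k(g))^2$ for a suitably normalized metric, so in particular $d_M(o,g\cdot o)\ge\tfrac{1}{\sqrt 2}\,\log\!\big(\sigma_1(g)/\sigma_n(g)\big)$, while also $d_M(o,g\cdot o)\le C_n\,\log\!\big(\sigma_1(g)/\sigma_n(g)\big)$ whenever $\sigma_1(g)/\sigma_n(g)$ is bounded below (using $\sum\log\sigma_k=0$ to control the middle singular values by the extreme ones). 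Since $\rho(X)$ is diagonalizable with real positive eigenvalues of pairwise distinct modulus (Labourie \cite{Lab1}), and since conjugating $\rho$ within its conjugacy class does not change $h_M$, one can further relate the singular values of $\rho(X^m)$ as $m\to\infty$ to the eigenvalue moduli, giving $l_\rho(X)=\lim_{m\to\infty}\tfrac1m\log\!\big(\sigma_1(\rho(X^m))/\sigma_n(\rho(X^m))\big)$; a translation-length argument then yields two constants $a_n,b_n>0$ with
\[
a_n\, l_\rho(X)\ \le\ \lim_{m\to\infty}\frac1m\, d_M(o,\rho(X^m)\cdot o)\ \le\ b_n\, l_\rho(X).
\]

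Next I would use this two-sided comparison to sandwich $h_M(\rho)$ between multiples of $\htop(\rho)$, following the standard orbit-counting-versus-conjugacy-class-counting dictionary. Counting $X\in\Gamma$ with $d_M(o,\rho(X)\cdot o)<T$ is, up to the word-metric ambiguity within a conjugacy class (which contributes only polynomially, or is absorbed by the $\limsup$), comparable to counting conjugacy classes $[X]$ with (translation) length $<T$ in the $d_M$-pseudometric; by the displayed inequalities this is in turn comparable to counting $[X]$ with $l_\rho(X)<T/a_n$ from one side and $l_\rho(X)<T/b_n$ from the other. Invoking the Bowen--Pollicott formula
\[
\htop(\rho)=\limsup_{T\to\infty}\frac1T\log\big|\{[X]\in[\Gamma]:l_\rho(X)<T\}\big|
\]
quoted in the introduction, this gives $a_n^{-1}$ and $b_n^{-1}$ times $\htop(\rho)$ as lower and upper bounds for $h_M(\rho)$; in particular $h_M(\rho)\le b_n^{-1}\htop(\rho)$ for all $\rho\in Hit_n(S)$. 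Applying this to an internal sequence and using the final assertion of Theorem \ref{main theorem} that $\lim_i\htop(\rho_i)=0$ then forces $\lim_i h_M(\rho_i)=0$, as desired.

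The main obstacle is the careful passage between orbit counting (summing over all $X\in\Gamma$, with the symmetric-space displacement of a group element rather than its stable translation length) and the conjugacy-class counting that the Bowen--Pollicott formula governs. One must check that replacing the displacement $d_M(o,\rho(X)\cdot o)$ by the translation length $\ell_\rho^M(X):=\lim_m\frac1m d_M(o,\rho(X^m)\cdot o)$ changes the counting function only by an amount whose exponential growth rate vanishes—this uses that $\rho$ is a quasi-isometric embedding of $\Gamma$ (a consequence of Labourie's Anosov property) so that displacement and translation length differ by at most an additive constant depending on the generating set—and that summing a conjugacy class's contribution (which is governed by the centralizer, cyclic in this setting) inflates the count only sub-exponentially. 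Once this bookkeeping is in place, everything else is the elementary linear algebra of singular values together with a direct appeal to Theorem \ref{main theorem}. It is worth noting that the comparison is genuinely two-sided, so this argument in fact shows $h_M(\rho)$ and $\htop(\rho)$ have the same vanishing behavior along internal sequences, of which only the upper bound is needed for Corollary \ref{first corollary}.
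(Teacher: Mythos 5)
Your overall goal is right---show that $h_M(\rho)$ is bounded above by a constant times $\htop(\rho)$ and then invoke the last assertion of Theorem \ref{main theorem}---and your linear algebra relating the norm of the Cartan/Jordan projection to the spread of singular values/eigenvalues is fine. But the step you flag as ``the main obstacle'' is not merely bookkeeping, and the resolution you sketch does not work.

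The claim that ``displacement and translation length differ by at most an additive constant depending on the generating set'' is false as stated. Writing $\ell^M_\rho(X):=\lim_m\frac1m d_M(o,\rho(X^m)\cdot o)$, one always has $d_M(o,\rho(X)\cdot o)\geq\ell^M_\rho(X)$, but there is no uniform upper bound $d_M(o,\rho(X)\cdot o)\leq\ell^M_\rho(X)+C$: replacing $X$ by a conjugate $YXY^{-1}$ with $Y$ long keeps $\ell^M_\rho$ fixed while making the displacement arbitrarily large. What is true is that \emph{each conjugacy class has some} representative whose displacement is within $C$ of its translation length, but the set $\{X\in\Gamma:d_M(o,\rho(X)\cdot o)<T\}$ is not one-per-conjugacy-class---it is the whole ball, and the number of conjugates of a fixed $[X]$ lying in that ball is not ``sub-exponential''; it grows exponentially in $T$ (roughly like $e^{\delta(T-\ell^M_\rho(X))}$ in rank one). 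The matching of the exponential growth rates of the orbit count and the conjugacy-class count is precisely the content of the prime geodesic theorem in rank one and of Sambarino's theorems in the present higher-rank Anosov setting; it is a theorem, not an observation, and you have not reproduced its proof.

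The paper sidesteps this entirely by quoting Corollary 4.4 of Sambarino \cite{Sam2}, which already expresses $\htop(\rho)$ as an \emph{orbit} count (over all $X\in\Gamma$, not conjugacy classes) using the \emph{Cartan} projection:
\[
\htop(\rho)=\lim_{T\to\infty}\frac{1}{T}\log\big|\{X\in\Gamma:\mu_n(\rho(X))-\mu_1(\rho(X))<T\}\big|.
\]
Once you have this, the Jordan-versus-Cartan and conjugacy-versus-orbit comparisons are already packaged in, and the only thing left to check is the elementary inequality $\|\mu(g)\|\geq\frac{c_n}{n}\big(\mu_n(g)-\mu_1(g)\big)$ (valid because $\sum_i\mu_i(g)=0$), which gives $\htop(\rho)\geq\frac{c_n}{n}h_M(\rho)$ and hence the corollary. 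To repair your argument, you should either cite Sambarino's orbit-counting result (or an equivalent Anosov orbit-counting theorem) at the point where you pass from conjugacy classes to group elements, or give an actual proof of that passage; the quasi-isometric embedding property alone does not suffice.
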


\begin{proof}
Let $o\in M$ be the point stabilized by $PSO(n)$ in $PSL(n,\Rbbb)$ and $\mu:G\to\overline{\amf^+}$ the Cartan projection corresponding to $PSO(n)$ (See Section \ref{Cartan and Jordan projections}). One can verify that for any $g$ in $PSL(n,\Rbbb)$,
\begin{equation*}
d_M(o,g\cdot o)=||\mu(g)||:=c_n\sqrt{\sum_{i=1}^n\mu_i(g)^2}
\end{equation*}
where $\mu_1(g)\leq\mu_2(g)\leq\dots\leq\mu_n(g)$ are the eigenvalues of $\mu(g)$ and $c_n$ is a constant depending only on $n$. Also, by Corollary 4.4 of Sambarino \cite{Sam2},
\[\htop(\rho)=\lim_{T\to\infty}\frac{1}{T}\log|\{X\in\Gamma:\mu_n(\rho(X))-\mu_1(\rho(X))<T\}|\]
for any $\rho$ in $Hit_n(S)$. Using the fact that $\displaystyle\sum_{i=1}^n\mu_i(g)=0$ for all $g$ in $PSL(n,\Rbbb)$, we can deduce that
\[||\mu(g)||\geq\frac{c_n}{n}\big(\mu_n(g)-\mu_1(g)\big),\]
which implies that for any $\rho$ in $Hit_n(S)$,
\begin{eqnarray*}
\htop(\rho)&=&\frac{c_n}{n}\limsup_{T\to\infty}\frac{1}{T}\log\Big|\Big\{X\in\Gamma:\frac{c_n}{n}\Big(\mu_n\big(\rho(X)\big)-\mu_1\big(\rho(X)\big)\Big)<T\Big\}\Big|\\
&\geq&\frac{c_n}{n}\limsup_{T\to\infty}\frac{1}{T}\log|\{X\in\Gamma:||\mu(\rho(X))||<T\}|\\
&=&\frac{c_n}{n}h_M(\rho).
\end{eqnarray*}
The corollary follows immediately from Theorem \ref{main theorem}.
\end{proof}

This corollary has further implications for the minimal immersions that arise from Hitchin representations. For any $\rho$ in $Hit_n(S)$ and any conformal structure $\Sigma$ on $S$, a special case of the work of Corlette \cite{Cor1} or Eells-Sampson \cite{EelSam1} implies the existence of a unique (up to $PSL(n,\Rbbb)$ action) harmonic map 
\[f:\Sigma\to\rho(\Gamma)\backslash M.\] 
Labourie \cite{Lab2} then proved that for every $\rho$ in $Hit_n(S)$, there are conformal structures $\Sigma$ on $S$ so that $f$ is a branched minimal immersion. Recently, Sanders \cite{San1} showed that these harmonic maps $f$ are in fact always immersions, and they satisfy the following inequality:
\[\frac{1}{\Vol(f^*m)}\int_\Sigma\sqrt{-\Sec(T_{f(p)}f(\Sigma))+\frac{1}{2}||B_f(p)||^2}\,\mathrm{d}V(p)\leq h_M(\rho).\]
Here, $\Sec$ is the sectional curvature in $\rho(\Gamma)\backslash M$, $m$ is the Riemannian metric on $\rho(\Gamma)\backslash M$, $\mathrm{d}V$ is the volume measure of $f^*m$ and $B_f$ is the second fundamental form of $f$. By the Gauss equation, one sees (see Section 6.1 of Sanders \cite{San1}) that if $f$ is a minimal immersion, then $\Sec(T_{f(p)}f(\Sigma))\leq 0$ for all points $p$ in $\Sigma$. This, together with Corollary \ref{first corollary}, allows us to conclude the following.

\begin{cor}
Let $\{\rho_i\}_{i=1}^\infty$ be an internal sequence in $Hit_n(S)$, and let $\Sigma_i$ be a conformal structure on $S$ for which the harmonic immersion $f_i:\Sigma_i\to\rho_i(\Gamma)\backslash M$ is minimal. Then
\[\lim_{i\to\infty}\frac{1}{\Vol(f_i^*m_i)}\int_{\Sigma_i}\sqrt{-\Sec_i(T_{f_i(p)}f_i(\Sigma_i))}\,\mathrm{d}V_i(p)=0\]
and
\[\lim_{i\to\infty}\frac{1}{\Vol(f_i^*m_i)}\int_{\Sigma_i}||B_{f_i}(p)||\,\mathrm{d}V_i(p)=0.\]
Here, $\Sec_i$ is the sectional curvature in $\rho_i(\Gamma)\backslash M$, $m_i$ is the Riemannian metric on $\rho_i(\Gamma)\backslash M$, $dV_i$ is the volume measure of $f_i^*m_i$ and $B_{f_i}$ is the second fundamental form of $f_i$.
\end{cor}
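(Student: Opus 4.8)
The plan is to deduce everything directly from Sanders' inequality together with Corollary \ref{first corollary}, so no genuinely new analysis is needed. Fix $i$ and abbreviate $f=f_i$, $\Sigma=\Sigma_i$, $m=m_i$, $\Sec=\Sec_i$, $B_f=B_{f_i}$. Since $f_i$ is by hypothesis a minimal immersion, the Gauss equation (as recalled above, cf.\ Section 6.1 of \cite{San1}) gives $\Sec(T_{f(p)}f(\Sigma))\leq 0$ for every $p\in\Sigma$; hence $-\Sec(T_{f(p)}f(\Sigma))\geq 0$ and also $\frac{1}{2}||B_f(p)||^2\geq 0$. In particular the square root in the first claimed limit is a well-defined nonnegative function, and pointwise on $\Sigma$ we have
\[
\sqrt{-\Sec(T_{f(p)}f(\Sigma))}\leq\sqrt{-\Sec(T_{f(p)}f(\Sigma))+\frac{1}{2}||B_f(p)||^2},\qquad \frac{1}{\sqrt{2}}||B_f(p)||\leq\sqrt{-\Sec(T_{f(p)}f(\Sigma))+\frac{1}{2}||B_f(p)||^2},
\]
using $\sqrt{a}\leq\sqrt{a+b}$ and $\sqrt{b}\leq\sqrt{a+b}$ for $a,b\geq 0$.

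Next I would integrate both inequalities over $\Sigma$ against $\mathrm{d}V_i$ and divide by $\Vol(f_i^*m_i)>0$ (positive since $f_i$ is an immersion of a closed surface). On the right-hand side this produces exactly the quantity that Sanders' inequality bounds above by $h_M(\rho_i)$, so we obtain
\[
0\leq\frac{1}{\Vol(f_i^*m_i)}\int_{\Sigma_i}\sqrt{-\Sec_i(T_{f_i(p)}f_i(\Sigma_i))}\,\mathrm{d}V_i(p)\leq h_M(\rho_i)
\]
and
\[
0\leq\frac{1}{\sqrt{2}}\cdot\frac{1}{\Vol(f_i^*m_i)}\int_{\Sigma_i}||B_{f_i}(p)||\,\mathrm{d}V_i(p)\leq h_M(\rho_i).
\]
Since $\{\rho_i\}_{i=1}^\infty$ is an internal sequence, Corollary \ref{first corollary} gives $h_M(\rho_i)\to 0$, and applying the squeeze theorem to each of the two displayed chains yields the two asserted limits.

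I do not expect a real obstacle here: the conceptual content is already packaged into Corollary \ref{first corollary} (and, through it, into Theorem \ref{main theorem}), while the passage to the minimal-immersion statement is only the elementary inequalities above plus monotonicity of the integral. The one point that deserves a sentence of care is the sign convention: one must confirm that the curvature normalization in \cite{San1} is the one under which the Gauss equation forces $-\Sec(T_{f(p)}f(\Sigma))\geq 0$ along a minimal immersion (so that both integrands are nonnegative and the limits genuinely follow from upper bounds tending to $0$), and that $f_i$ being a harmonic immersion — which holds by \cite{San1} and is subsumed in the hypothesis that it is a minimal immersion for the chosen $\Sigma_i$ — is exactly what makes Sanders' inequality applicable.
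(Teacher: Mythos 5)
Your proof is correct and matches the route the paper implicitly intends: the paper deduces the corollary directly from Sanders' inequality combined with the Gauss-equation observation that $\Sec_i\leq 0$ along a minimal immersion and Corollary \ref{first corollary}; you have simply spelled out the elementary inequalities $\sqrt{-\Sec}\leq\sqrt{-\Sec+\tfrac12\|B_f\|^2}$ and $\tfrac{1}{\sqrt2}\|B_f\|\leq\sqrt{-\Sec+\tfrac12\|B_f\|^2}$ and the squeeze argument that the paper leaves unstated.
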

More informally, this corollary says that the minimal immersions corresponding to the Hitchin representations along any internal sequence are on average becoming flatter and more totally geodesic as we move along the sequence. Collier-Li \cite{ColLi1} also have results that are similar in flavor to this corollary.

We will now give a sketch of the proof of the main theorem in three main steps. Choose a hyperbolic metric on $S$ and consider the ideal triangulation on $S$ that is obtained by further subdividing each pair of pants given by $\Ppc$ into two ideal triangles. Also, fix $\rho$ in $Hit_n(S)$. For the first step, we obtain a combinatorial description (which depends on $\rho$) of every oriented closed geodesic $\gamma$ on $S$ using the intersection pattern of $\gamma$ with the ideal triangulation. Roughly, this combinatorial description keeps track of how many times $\gamma$ ``winds around" a collar neighborhood of a simple closed curve in $\Ppc$, and how many times $\gamma$ ``crosses between" these collar neighborhoods. By design, two oriented geodesics on $S$ have the same combinatorial description if and only if they are the same oriented geodesic (Proposition \ref{combinatorial prop}).

In the second step, we find, for any Hitchin representation $\rho$ and any $X$ in $\Gamma$, a lower bound for $l_\rho(X)$ of the form
\begin{equation}\label{inequality intro}
l_\rho(X)\geq r(X)\cdot K(\rho)+s_\rho(X)\cdot L(\rho).
\end{equation}
(Theorem \ref{length lower bound}). Here, $K,L:Hit_n(S)\to\Rbbb^+$ are continuous functions, $s_\rho(X)$ is the number of times the oriented closed geodesic $\gamma$ corresponding to $X$ ``winds around" collar neighborhoods of the simple closed curves in $\Ppc$, and $r(X)$ is the number of times $\gamma$ ``crosses between" these collar neighborhoods. The quantities $r(X)$ and $s_\rho(X)$ are non-negative integers that depend only on the combinatorial description of $X$ in the first step. Also, $L(\rho)$ is a multiple of the length of the shortest curve in $\Ppc$. Informally, $K(\rho)$ is the length picked up by $\gamma$ whenever it ``crosses between'' collar neighborhoods of the curves in $\Ppc$. 

Finally, in the third step, we show that $\displaystyle\lim_{i\to\infty}K(\rho_i)=\infty$ for any internal sequence $\{\rho_i\}_{i=1}^\infty$ in $Hit_n(S)$. This fact, combined with an elementary counting argument demonstrated in \cite{Zha1}, proves the main theorem.

The second and third steps of the proof rely heavily on the work of Labourie \cite{Lab1}. He proved that if a representation $\rho$ in $\Xpc_n(S)$ lies in the Hitchin component, then there exists a $\rho$-equivariant Frenet curve $\xi:\partial_\infty\Gamma\to\Fpc(\Rbbb^n)$.  Understanding the way the cross ratio interacts with $\xi$ is of central importance to the arguments used in the second and third steps.

The rest of the paper is structured as follows. In Section \ref{Configurations of flags}, we investigate how the cross ratio interacts with the $\rho$-equivariant Frenet curve and develop the tools required to perform the second and third steps. Then, in Section \ref{Shear-triangle parameterization}, we give a detailed explanation of the modified shear-triangle parameterization, and how to derive it from the work of Bonahon-Dreyer and Fock-Goncharov. The first step of the proof, i.e. the combinatorial description of the oriented closed curves on $S$, and also the proof of the lower bound (\ref{inequality intro}), are described in Section \ref{Lower bound for lengths of closed curves}. Finally, we combine all the proof ingredients and execute the second and third step of the proof of the main theorem in Section \ref{Degeneration along internal sequences}.

{\bf Acknowledgements:}
This work has benefitted from conversations with Andrew Sanders and Andres Sambarino. Much of the content in Section \ref{Parameterizing the Hitchin component} regarding the parameterization of the Hitchin component came from discussions with Sara Maloni, for which the author is very grateful. The author also especially wishes to thank Richard Canary for the invaluable help he provided, both with the mathematics and the exposition in this paper. Finally, the author would like to thank the referee for carefully reading this paper and providing many useful comments. 

\tableofcontents

\section{Configurations of flags}\label{Configurations of flags}

\subsection{Flags and the Hitchin component}\label{Flags and the Hitchin component}

Let $\Fmc(\Rbbb^n)$ be the space of complete flags in $\Rbbb^n$, i.e. a nested sequence of $n$ linear subspaces in $\Rbbb^n$, each properly contained in its predecessor. We start by defining a special embedding of the circle into $\Fpc(\Rbbb^n)$.

\begin{notation}
For any $F$ in $\Fpc(\Rbbb^n)$, let $F^{(l)}$ be the $l$-dimensional subspace for $F$. 
\end{notation}

\begin{definition}
A closed curve $\xi:S^1\to\Fmc(\Rbbb^n)$ is \emph{Frenet} if the following two conditions are satisfied:
\begin{enumerate}
\item Let $x_1,\dots,x_k$ be pairwise distinct points in $S^1$ and let $n_1,\dots,n_k$ be positive integers so that $\displaystyle\sum_{i=1}^k n_i=n$. Then 
\[\sum_{i=1}^k\xi(x_i)^{(n_i)}=\Rbbb^n.\]
\item Let $x_1,\dots,x_k$ be pairwise distinct points in $S^1$ and let $n_1,\dots,n_k$ be positive integers so that $\displaystyle m:=\sum_{i=1}^k n_i\leq n$. Then for any $x\in S^1$, 
\[\underset{x_i\neq x_j,\forall i\neq j}{\lim_{x_i\to x,\forall i}}\sum_{i=1}^k\xi(x_i)^{(n_i)}=\xi(x)^{(m)}.\]
\end{enumerate}
\end{definition}

One should think of Frenet curves as having the property that points along the curve are ``maximally transverse". Often, we will also refer to the image of the Frenet curve $\xi$ by $\xi$.

{\bf For the rest of the paper, let $S=S_g$ be a closed oriented smooth surface of genus $g>1$ and let $\Gamma:=\pi_1(S)$.} It is well-known that $\Gamma$ is Gromov hyperbolic, so the Cayley graph of $\Gamma$ has a natural boundary, denoted by $\partial_\infty\Gamma$. The action of $\Gamma$ on its Cayley graph also extends to an action on $\partial_\infty\Gamma$. Moreover, the fact that $S$ can be equipped with a complete hyperbolic metric allows us to identify $\partial_\infty\Gamma$ with the boundary of the Poincar\'e disc, so $\partial_\infty\Gamma$ is topologically a circle.

Let $\Pi:\Hom(\Gamma,PSL(n,\Rbbb))/PSL(n,\Rbbb)\to\Hom(\Gamma, PGL(n,\Rbbb))/PGL(n,\Rbbb)$ be the obvious projection. When $n$ is odd, $\Pi^{-1}(\Pi(Hit_n(S)))=Hit_n(S)$. On the other hand, when $n$ is even, $\Pi^{-1}(\Pi(Hit_n(S)))$ is a union of two homeomorphic connected components, one of which is $Hit_n(S)$. By the work of Labourie \cite{Lab1} and Guichard \cite{Gui1}, we have the following useful characterization of the representations in $Hit_n(S)$.

\begin{thm}[Guichard, Labourie] \label{Guichard, Labourie}
A representation $\rho$ in the character variety 
\[\Hom(\Gamma,PSL(n,\Rbbb))/ PSL(n,\Rbbb)\]
lies in $\Pi^{-1}(\Pi(Hit_n(S)))$ if and only if there exists a $\rho$-equivariant Frenet curve $\xi:\partial_\infty\Gamma\to\Fmc(\Rbbb^n)$. If $\xi$ exists, then it is uniquely determined (up to $PSL(n,\Rbbb)$) by $\rho$.
\end{thm}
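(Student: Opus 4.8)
The plan is to prove the two implications separately, anchored at the Fuchsian locus, and to settle uniqueness by a dynamical argument at the end. \emph{The Fuchsian case.} Write the irreducible $\iota\colon PSL(2,\Rbbb)\to PSL(n,\Rbbb)$ as $\mathrm{Sym}^{n-1}$ of the standard representation, and let $\nu\colon\Rbbb\Pbbb^1\to\Fmc(\Rbbb^n)$ send a point to the flag of osculating subspaces of the rational normal curve there. Then $\nu$ is $\iota$-equivariant; condition (1) in the definition of a Frenet curve reduces to the nonvanishing of a confluent Vandermonde determinant, and condition (2) is exactly the definition of osculating flags, so $\nu$ is Frenet. Identifying $\partial_\infty\Gamma$ with $\Rbbb\Pbbb^1$ via a Fuchsian representation $j$, the curve $\nu$ (reparametrized through $j$) is an $(\iota\circ j)$-equivariant Frenet curve, so the theorem holds throughout the Fuchsian locus.

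\emph{Hitchin implies a Frenet curve.} I would show that the set of $\rho$ in $Hit_n(S)$ admitting a $\rho$-equivariant Frenet curve is both open and closed; since $Hit_n(S)$ is connected (Hitchin) and meets this set, every representation in $Hit_n(S)$ admits such a curve, and the statement for $\Pi^{-1}(\Pi(Hit_n(S)))$ then follows because conjugating by an element of $PGL(n,\Rbbb)$ outside $PSL(n,\Rbbb)$ carries Frenet curves to Frenet curves. Openness rests on the fact that Hitchin representations carry a Borel-Anosov structure that is structurally stable: nearby representations have equivariant limit curves depending continuously on $\rho$ with transverse images, and the hyperconvexity that upgrades mere transversality to the full Frenet property is itself an open condition. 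Closedness follows because along a convergent sequence in $Hit_n(S)$ these curves are equicontinuous with uniform transversality bounds, so a subsequential limit is a $\rho$-equivariant curve that inherits conditions (1) and (2) in the limit.

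\emph{A Frenet curve implies Hitchin.} Given a $\rho$-equivariant Frenet curve $\xi$, every nontrivial $\gamma\in\Gamma$ has attracting and repelling fixed points $\gamma^\pm$ in $\partial_\infty\Gamma$, and equivariance forces $\rho(\gamma)$ to fix the transverse pair of flags $\xi(\gamma^\pm)$; combining this with the north--south dynamics of $\gamma$ on $\partial_\infty\Gamma$ and condition (1) forces $\rho(\gamma)$ to be loxodromic with eigenvalues of pairwise distinct modulus, after which a standard convergence-group / ping-pong argument shows $\rho$ is discrete and faithful. To see that $\rho$ lands in $\Pi^{-1}(\Pi(Hit_n(S)))$ rather than in some other component of the space of representations admitting such a curve, I would argue as Guichard does: that space is connected — a Frenet curve is automatically positive in the sense of Fock-Goncharov, and one can deform $\rho$ through representations of the same type to a Fuchsian one — hence it lies in the single component containing the Fuchsian locus.

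\emph{Uniqueness and the main obstacle.} If $\xi_1,\xi_2$ are $\rho$-equivariant Frenet curves, then for each nontrivial $\gamma$ both $\xi_1(\gamma^+)$ and $\xi_2(\gamma^+)$ must equal the unique attracting fixed flag of $\rho(\gamma)$ in $\Fmc(\Rbbb^n)$ (unique because $\rho(\gamma)$ is loxodromic with distinct eigenvalue moduli, as just shown); since such points $\gamma^+$ are dense in $\partial_\infty\Gamma$, continuity gives $\xi_1=\xi_2$, and replacing $\rho$ by $g\rho g^{-1}$ replaces $\xi$ by $g\cdot\xi$, which is the meaning of ``up to $PSL(n,\Rbbb)$''. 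The real content, and the main obstacle, is the dynamical input underlying both implications: that Hitchin representations carry the Borel-Anosov structure with the hyperconvexity needed for openness, which is the heart of Labourie's work and uses his H\"older reparametrization of the geodesic flow together with the irreducibility of $\iota$; and, on the converse side, the connectedness statement that identifies the component, which is where Guichard's argument does its work. The Vandermonde computation at the Fuchsian point, the equicontinuity argument for closedness, and the uniqueness are comparatively routine.
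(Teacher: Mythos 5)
The paper does not prove this theorem itself; it cites it as a combination of Labourie \cite{Lab1} (a Hitchin representation admits an equivariant Frenet curve) and Guichard \cite{Gui1} (the converse), so there is no in-paper proof to compare against. Your outline does track the architecture of those two sources: the Veronese (osculating-flag) curve at the Fuchsian anchor, an open-and-closed argument along the Hitchin component for Labourie's direction, Guichard's connectedness argument for the converse, and density of loxodromic fixed points for uniqueness, which you state correctly.

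The one genuine gap is the step you call ``comparatively routine'': closedness. Claiming the limit curves are ``equicontinuous with uniform transversality bounds, so a subsequential limit\ldots inherits conditions (1) and (2) in the limit'' assumes the conclusion. Uniform transversality is precisely what is at stake --- hyperconvexity is an open condition and can degenerate along limits unless something prevents it --- and establishing that it persists is where the bulk of Labourie's analysis lives; it is not a soft compactness argument. A secondary nitpick: in the ``Frenet implies Hitchin'' direction, extracting pairwise distinct eigenvalue moduli for $\rho(\gamma)$ from north--south dynamics quietly uses the continuity of $\xi$ near $\gamma^\pm$ (hence condition (2)), not condition (1) alone. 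With those caveats noted, your plan is a faithful summary of how Labourie and Guichard actually argue.
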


This theorem thus allows us to study any Hitchin representation $\rho$ via its corresponding Frenet curve $\xi$. 

\subsection{Projections of the Frenet curve}\label{Projections of the Frenet curve}

One way we can study a Frenet curve is by considering its projections onto some special projective lines. To that end, we develop the following notation.

\begin{notation}\label{moving subspaces} 
Let $M_1,\dots,M_k$ be pairwise distinct points along a Frenet curve $\xi$ and let $n_1,\dots,n_k$ be positive integers. For any positive integer $\displaystyle m\leq n-\sum_{i=1}^kn_i$ and for any $E$ on $\xi$, define $\sub{E}{m}$ as follows:
\begin{itemize}
\item if $E\neq M_i$ for all $i=1,\dots, k$ then $\sub{E}{m}=E^{(m)}$
\item if $E=M_i$ for some $i=1,\dots,k$, then $\sub{E}{m}$ is a choice of an $m$-dimensional subspace in $M_i^{(n_i+m)}$ that is transverse to $M_i^{(n_i)}$. 
\end{itemize}
\end{notation}
Any statement we make involving the above notation is true for all possible choices of the $m$-dimensional subspaces in $M_i^{(n_i+m)}$ that is transverse to $M_i^{(n_i)}$. 

\begin{lem}\label{important lemma}
Let $A,B$ be distinct points along a Frenet curve $\xi$, let $M_1,\dots,M_k$ be pairwise distinct points along $\xi$ and let $n_1,\dots,n_{k+1}$ be positive integers such that $\displaystyle\sum_{i=1}^{k+1} n_i=n-1$. 
\begin{enumerate}
\item If $n_{k+1}=1$, the map
\[f_1:\xi(S^1)\to\Pbbb(\sub{A}{1}+\sub{B}{1})\]
given by 
\[f_1(E)=\Pbbb\bigg(\sum_{i=1}^k M_i^{(n_i)}+\sub{E}{1}\bigg)\cap\Pbbb(\sub{A}{1}+\sub{B}{1})\]
is a homeomorphism with $f_1(A)=\sub{A}{1}$ and $f_1(B)=\sub{B}{1}$.
\item Let $s$ be a closed subsegment of $\xi$ with endpoints $A$ and $B$.  Then there exists a closed subsegment $t$ of $\Pbbb(\sub{A}{1}+\sub{B}{1})$ with endpoints $\sub{A}{1}$ and $\sub{B}{1}$, such that the map
\[f_{n_{k+1}}:s\to t\]
given by
\[f_{n_{k+1}}(E)=\Pbbb\bigg(\sum_{i=1}^k M_i^{(n_i)}+\sub{E}{n_{k+1}}\bigg)\cap\Pbbb(\sub{A}{1}+\sub{B}{1})\]
is a homeomorphism with $f_{n_{k+1}}(A)=\sub{A}{1}$ and $f_{n_{k+1}}(B)=\sub{B}{1}$.
\end{enumerate}
\end{lem}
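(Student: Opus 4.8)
The plan is to reduce both parts, via a single linear projection, to elementary facts about how a Frenet curve meets projective lines. Write $W:=\sum_{i=1}^{k}M_i^{(n_i)}$. Using Frenet condition (1) — and the fact that it forces $\sum_j\xi(x_j)^{(m_j)}$ to be a direct sum whenever $\sum_j m_j\le n$ — one first records a few transversality facts: $\dim W=n-1-n_{k+1}$; in the situation of part (1) one has $\sub{A}{1}\oplus\sub{B}{1}\oplus W=\Rbbb^n$; in general $\sub{E}{n_{k+1}}\cap W=0$ for every $E$ on $\xi$ (for $E=M_i$ this uses $M_i^{(n_i+n_{k+1})}\cap W=M_i^{(n_i)}$ together with the transversality built into the definition of $\sub{M_i}{n_{k+1}}$), and $W+\sub{M_i}{n_{k+1}}=W+M_i^{(n_i+n_{k+1})}$, so the choices in the $\sub{\cdot}{\cdot}$-notation become irrelevant after passing to $Q:=\Rbbb^n/W$. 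Let $\pi_W\colon\Rbbb^n\to Q$ be the quotient map.

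For part (1) ($\dim Q=2$), the isomorphism $\pi_W|_{\sub{A}{1}+\sub{B}{1}}$ has an inverse $\Phi\colon\Pbbb(Q)\to\Pbbb(\sub{A}{1}+\sub{B}{1})$, and since $W+\sub{E}{1}$ is a hyperplane not containing $\sub{A}{1}+\sub{B}{1}$ (again by condition (1)), one checks $f_1(E)=(\sub{A}{1}+\sub{B}{1})\cap\pi_W^{-1}(\pi_W(\sub{E}{1}))=\Phi(\pi_W(\sub{E}{1}))$. So it suffices to show $\bar\pi\colon\xi(S^1)\to\Pbbb(Q)$, $E\mapsto\pi_W(\sub{E}{1})$, is a homeomorphism; applying $\Phi$ and noting $\Phi(\pi_W(\sub{A}{1}))=\sub{A}{1}$, $\Phi(\pi_W(\sub{B}{1}))=\sub{B}{1}$ then yields the asserted values. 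Continuity of $\bar\pi$ away from $M_1,\dots,M_k$ is clear; at $M_i$ use condition (2): $M_i^{(n_i)}+E^{(1)}\to M_i^{(n_i+1)}$ as $E\to M_i$, and applying $\pi_W$ (which is continuous wherever the dimension of the image stays constant — it does, by the intersection computations above) gives $\pi_W(\sub{E}{1})=\pi_W(M_i^{(n_i)}+E^{(1)})\to\pi_W(M_i^{(n_i+1)})=\pi_W(\sub{M_i}{1})$. Injectivity of $\bar\pi$ is immediate from condition (1): if $\pi_W(\sub{E}{1})=\pi_W(\sub{E'}{1})$ with $E\neq E'$, then $W+\sub{E}{1}+\sub{E'}{1}$ is a hyperplane, contradicting that the corresponding sum of $\xi$-subspaces (total dimension $n$) is all of $\Rbbb^n$. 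Since condition (1) makes $\xi$ injective, $\xi(S^1)$ is a circle, and $\bar\pi$ is a continuous injection between two circles, hence a homeomorphism by invariance of domain together with compactness.

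For part (2), apply the same projection $\pi_W$ (now $\dim Q=n_{k+1}+1$): one checks that the osculating flags of $\xi$ push forward to a Frenet curve $\bar\xi$ in $Q$ with $\bar\xi(E)^{(n_{k+1})}=\pi_W(W+\sub{E}{n_{k+1}})$, and that $\pi_W$ identifies $\Pbbb(\sub{A}{1}+\sub{B}{1})$ with the projective line $\Pbbb(\bar\xi(A)^{(1)}+\bar\xi(B)^{(1)})\subseteq\Pbbb(Q)$; under these identifications $f_{n_{k+1}}$ becomes $E\mapsto\Pbbb(\bar\xi(E)^{(n_{k+1})})\cap\Pbbb(\bar\xi(A)^{(1)}+\bar\xi(B)^{(1)})$. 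Hence we may assume $k=0$, $W=0$, $n_{k+1}=n-1$, and must show that $E\mapsto\Pbbb(E^{(n-1)})\cap\Pbbb(A^{(1)}+B^{(1)})$ is a homeomorphism from $s$ onto a subsegment of $\Pbbb(A^{(1)}+B^{(1)})$ with endpoints $A^{(1)},B^{(1)}$. Passing to the dual Frenet curve $\xi^{*}\colon E\mapsto E^{(n-1)}\in\Pbbb((\Rbbb^n)^{*})$ (the dual of a Frenet curve is Frenet, with flag reversed) and writing $E^{(n-1)}=\ker\psi_E$ with $a,b$ spanning $A^{(1)},B^{(1)}$, the point in question is the zero of $\psi_E$ inside $\Pbbb(A^{(1)}+B^{(1)})$, i.e. $f_{n-1}(E)=[\,\psi_E(b)\,a-\psi_E(a)\,b\,]$. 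Condition (1) gives $\psi_E(a)\neq0$ for $E\neq A$ and $\psi_E(b)\neq0$ for $E\neq B$, so $f_{n-1}$ is defined and continuous on all of $\xi(S^1)$, and since $a\in A^{(n-1)}$ but $b\notin A^{(n-1)}$ one reads off $f_{n-1}(A)=A^{(1)}$ and likewise $f_{n-1}(B)=B^{(1)}$.

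The remaining point — and the main obstacle — is that $f_{n-1}$ is injective on $s$; this genuinely uses that $s$ is one of the two arcs cut out by $A,B$, since on all of $S^1$ the map is in general $(n-1)$-to-one. Set $R(E):=\psi_E(b)/\psi_E(a)\in\Rbbb\cup\{\infty\}$, so $f_{n-1}(E)=[\,R(E)\,a-b\,]$ and $R(A)=\infty$, $R(B)=0$; injectivity of $f_{n-1}$ on $s$ is equivalent to strict monotonicity of the continuous function $R$ on $\Int s$. If $R(E_1)=R(E_2)$ for distinct $E_1,E_2\in\Int s$, then the osculating hyperplanes $E_1^{(n-1)}$ and $E_2^{(n-1)}$ both pass through the fixed point $[\,R(E_1)a-b\,]$ of the open chord $\Pbbb(A^{(1)}+B^{(1)})\setminus\{A^{(1)},B^{(1)}\}$; I will exclude this using the hyperconvexity of Frenet curves — namely that a hyperplane meets a Frenet curve in at most $n-1$ points, and, more precisely, that along the convex arc $\xi|_s$ the osculating hyperplanes turn monotonically, so that intersecting them with the fixed chord produces a monotone family of points. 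Equivalently, for $A,E_1,E_2,B$ in cyclic order the cross ratio of $A^{(1)},f_{n-1}(E_1),f_{n-1}(E_2),B^{(1)}$ has the sign dictated by that cyclic order; this is exactly the kind of positivity statement for Frenet curves established in this section, and it is the step that needs the most care. Granting it, $f_{n-1}|_s$ is a continuous injection of the segment $s$ into the circle $\Pbbb(A^{(1)}+B^{(1)})$, hence a homeomorphism onto its image $t$, which is therefore a subsegment with endpoints $f_{n-1}(A)=A^{(1)}$ and $f_{n-1}(B)=B^{(1)}$, completing the proof.
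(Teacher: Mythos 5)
Part (1) of your argument is sound and is essentially the paper's own: you factor $f_1$ through the projection $\pi_W$, use Frenet condition (1) for injectivity, and the circle-to-circle argument for surjectivity. The reduction of part (2) through the quotient $Q=\Rbbb^n/W$ is also a legitimate simplification (the pushforward $\bar\xi$ is indeed Frenet, though that claim deserves a real verification of both Frenet axioms, including the limit condition, rather than ``one checks'').

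The problem is at the end of part (2), and it is not a small one. After reducing to $k=0$, $W=0$, $n_{k+1}=n-1$, the remaining claim is exactly Lemma \ref{important lemma}(2) itself in the special case $k=0$. You acknowledge this is ``the step that needs the most care'' and then say ``Granting it''; but what you grant is a restatement of the lemma, not a previously established fact. The assertion that ``along the convex arc $\xi|_s$ the osculating hyperplanes turn monotonically, so that intersecting them with the fixed chord produces a monotone family of points'' is precisely the conclusion you are asked to prove. The only Frenet-derived fact you actually cite --- a hyperplane meets the curve in at most $n-1$ points --- rules out $f_{n-1}$ being constant on a nondegenerate interval, but it does not distinguish the two arcs cut out by $A,B$ nor control the direction of travel on either one; and your own observation that the map is $(n-1)$-to-one on $S^1$ shows this is where the real content lives. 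Moreover, the cross-ratio positivity you invoke (``for $A,E_1,E_2,B$ in cyclic order the cross ratio \dots has the sign dictated by that cyclic order'') is the statement of Proposition \ref{useful cross ratio inequalities}, which in the paper is proved \emph{after} and \emph{using} this lemma, so appealing to it here is circular.

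The paper avoids this circularity by a different device: it proves part (2) by induction on $n_{k+1}$, with part (1) as the base case, and the inductive step compares $f_{m+1}$ with two auxiliary maps obtained by moving a point of $s$ into the fixed base set $\{M_1,\dots,M_k\}$ --- once with weight $1$ (so the moving weight drops to $m$) and once with weight $m$ (so it drops to $1$). Since those two auxiliary maps are homeomorphisms by the inductive hypothesis and the base case, and since they agree at a common value, one reads off $f_{m+1}(E_0)\neq f_{m+1}(E_1)$. If you want to keep your reduction-to-$k=0$ framing, you would still have to supply a genuine injectivity argument for the reduced case --- most naturally by re-introducing auxiliary base points exactly as the paper does --- at which point the reduction buys little.
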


\begin{proof}
Proof of (1). The continuity and well-definedness of $f_1$ is clear by the definition of the Frenet curve. Suppose for contradiction that there exist $E\neq E'$ such that $f_1(E)=f_1(E')$. Since $\xi$ is a Frenet curve, we have
\begin{eqnarray*}
\sum_{i=1}^k M_i^{(n_i)}+\sub{E}{1}&=&\sum_{i=1}^k M_i^{(n_i)}+f_1(E)\\
&=&\sum_{i=1}^k M_i^{(n_i)}+f_1(E')\\
&=&\sum_{i=1}^k M_i^{(n_i)}+\sub{E'}{1}.
\end{eqnarray*}
In particular, $\displaystyle\sum_{i=1}^k M_i^{(n_i)}+\sub{E}{1}+\sub{E'}{1}\neq\Rbbb^n$, which contradicts the fact that $\xi$ is a Frenet curve. This proves that $f_1$ is an injective continuous map between two spaces homeomorphic to $S^1$, so $f_1$ has to be a homeomorphism. It is easy to verify that $f_1(A)=\sub{A}{1}$ and $f_1(B)=\sub{B}{1}$.

Proof of (2). As before, the continuity of $f_{n_{k+1}}$ is clear. We will prove that $f_{n_{k+1}}$ is a homeomorphism by induction. The base case when $n_{k+1}=1$ follows from (1). For the inductive step, consider the case when $n_{k+1}=m+1$. Pick any pair of distinct points $E_0$ and $E_1$ in the interior of $s$, and assume without loss of generality that $E_1$ lies between $E_0$ and $B$ on $s$. Since $\xi$ is a Frenet curve, $f_{m+1}(E_0)\neq\sub{A}{1},\sub{B}{1}$, so there is a unique subsegment of $\Pbbb(\sub{A}{1}+\sub{B}{1})$ with endpoints $\sub{A}{1},\sub{B}{1}$ that contains $f_{m+1}(E_0)$. Let this subsegment be $t$.

By the inductive hypothesis, the map
\[f_m:s\to t\]
is a homeomorphism. Hence, the point 
\[f_m(E_1)=\Pbbb\bigg(\sum_{i=1}^k M_i^{(n_i)}+\sub{E_0}{1}+\sub{E_1}{m}\bigg)\cap\Pbbb(\sub{A}{1}+\sub{B}{1})\] 
lies on $t$, strictly between the points 
\[f_m(E_0)=\Pbbb\bigg(\sum_{i=1}^k M_i^{(n_i)}+\sub{E_0}{m+1}\bigg)\cap\Pbbb(\sub{A}{1}+\sub{B}{1})\text{ and }f_m(B)=\sub{B}{1}.\] 
By the base case, the map
\[f_1:s\to t\]
is a homeomorphism. Thus, we can conclude that the point 
\[f_1(E_1)=\Pbbb\bigg(\sum_{i=1}^k M_i^{(n_i)}+\sub{E_1}{m+1}\bigg)\cap\Pbbb(\sub{A}{1}+\sub{B}{1})\]
lies on $t$, strictly between the points 
\[f_1(E_0)=\Pbbb\bigg(\sum_{i=1}^k M_i^{(n_i)}+\sub{E_1}{m}+\sub{E_0}{1}\bigg)\cap\Pbbb(\sub{A}{1}+\sub{B}{1})\text{ and }f_1(B)=\sub{B}{1}.\] 
Since $f_m(E_1)=f_1(E_0)$, $f_{m+1}(E_0)=f_m(E_0)$ and $f_{m+1}(E_1)=f_1(E_1)$, we see in particular that $f_{m+1}(E_0)\neq f_{m+1}(E_1)$, so $f_{m+1}$ is injective. It is clear that $f_{m+1}(A)=\sub{A}{1}$ and $f_{m+1}(B)=\sub{B}{1}$, so the continuity of $f_{m+1}$ implies that it is surjective. This finishes the inductive step.
\end{proof}

The homeomorphisms $f_{n_{k+1}}$ should be thought of as projections of subsegments of $\xi$ (or all of $\xi$ in the case when $n_{k+1}=1$) onto the projective line $\Pbbb(\sub{A}{1}+\sub{B}{1})$ via the ``base" $(n-1-n_{k+1})$-dimensional subspace $\displaystyle\sum_{i=1}^kM_i^{(n_i)}$ of $\Rbbb^n$.

\subsection{Cartan and Jordan projections}\label{Cartan and Jordan projections}

The Cartan and Jordan projections are well-known maps which can be defined for any reductive Lie group of noncompact type. The former is useful for capturing translation distances in the corresponding symmetric space while the latter captures eigenvalues. We will now describe a special case of these two projections in the case when the Lie group is $PSL(n,\Rbbb)$, starting with the Cartan projection. For more details, see Benoist \cite{Ben1} and Chapter 2 of Eberlein \cite{Ebe1}.

Let $\amf^+$ be the set of traceless $n\times n$ diagonal matrices where the diagonal entries are (strictly) decreasing down the diagonal. This is a choice of a positive Weyl chamber in $\smf\lmf(n,\Rbbb)$ that we make once and for all in this paper. Then let $\overline{\amf^+}$ be the closure of $\amf^+$ in $\smf\lmf(n,\Rbbb)$. It is well-known that for any $g\in PSL(n,\Rbbb)$, there is a unique element $\mu(g)\in\overline{\amf^+}$ so that $g=k\cdot\exp(\mu(g))\cdot l$ for some $k,l$ in $PSO(n)$. The \emph{Cartan projection} (corresponding to $PSO(n)$) is then the map $\mu:PSL(n,\Rbbb)\to\overline{\amf^+}$ defined by $\mu:g\mapsto \mu(g)$. An easily verified but important property of this Cartan projection is the following: if $o$ is the point in $M$, the symmetric space of $PSL(n,\Rbbb)$, whose stabilizer is $PSO(n)$, then for any $g$ in $PSL(n,\Rbbb)$,
\begin{equation*}
d_M(o,g\cdot o)=||\mu(g)||:=c_n\sqrt{\sum_{i=1}^n\mu_i(g)^2}
\end{equation*}
where $\mu_i(g)$ is the $(i,i)$-th entry of $\mu(g)$, $c_n$ is a constant depending only on $n$, and $d_M$ is the distance in $M$ equipped with the Riemannian metric. 

The Jordan projection, sometimes also called the Lyapunov projection, is similar, except that we use the Jordan decomposition in place of the Cartan decomposition. The Jordan decomposition theorem states that for any $g\in PSL(n,\Rbbb)$, there are unique hyperbolic, elliptic and unipotent elements $g_h,g_e,g_u\in PSL(n,\Rbbb)$ respectively, so that $g=g_h\cdot g_e\cdot g_u$. Furthermore, the conjugacy class of $g_h$ intersects $\exp(\overline{\amf^+})$ at a unique point, which we denote by $\exp(\lambda(g))$. Since $\exp$ is injective when restricted to $\overline{\mathfrak{a}^+}$, this allows us to define the \emph{Jordan projection} $\lambda:PSL(n,\Rbbb)\to\overline{\amf^+}$ by $\lambda:g\mapsto\lambda(g)$. In the case when $g$ is a diagonalizable matrix, $\lambda(g)$ is then the matrix whose diagonal entries are the logarithms of the absolute values of the eigenvalues of $g$, listed in decreasing order down the diagonal. We will also denote the $(i,i)$-th entry of $\lambda(g)$ by $\lambda_i(g)$.

\subsection{Cross ratio}\label{Cross ratio}

We will now describe the main tool used to prove the main theorem (Theorem \ref{main theorem}) and establish some of its properties. Similar ideas were used to study surface group representations by Labourie \cite{Lab3}, Fock-Goncharov \cite{FocGon1}, Bonahon-Dreyer \cite{BonDre1}, Burger-Iozzi-Wienhard \cite{BurIozWie1} and many others. Here is its definition.

\begin{definition}\label{cross ratio definition}
Let $L_1=[l_1],\dots,L_4=[l_4]$ be four lines in $\Rbbb^n$ through the origin, and let $M=\Span\{m_1,\dots,m_{n-2}\}$ be a $(n-2)$-dimensional subspace of $\Rbbb^n$ not containing $L_i$ for any $i=1,\dots,4$, so that no three of the four $(n-1)$-dimensional subspaces $M+L_i$ agree. Define the \emph{cross ratio} of the lines $L_1,L_2,L_3,L_4$ based at $M$ by
\[(L_1,L_2,L_3,L_4)_M:=\frac{m_1\wedge\dots\wedge m_{n-2}\wedge l_1\wedge l_3\cdot m_1\wedge\dots\wedge m_{n-2}\wedge l_4\wedge l_2}{m_1\wedge\dots\wedge m_{n-2}\wedge l_1\wedge l_2\cdot m_1\wedge\dots\wedge m_{n-2}\wedge l_4\wedge l_3}.\]
\end{definition}

Here, we use the determinant map $\det:\bigwedge^n(\Rbbb^n)\to\Rbbb$ to evaluate the expression on the right as a number in the one point compactification $\Rbbb\cup\{\infty\}$ of $\Rbbb$. It is easy to verify that the cross ratio depends neither on the choice of basis $\{m_1,\dots,m_{n-2}\}$ for $M$, nor the choice of representatives $l_i$ for $L_i$. 

If we let $c_i$ be a linear functional on $\Rbbb^n$ with kernel $M+L_i$ for $i=1,\dots,4$, then this cross ratio takes a more familiar guise 
\[(L_1,L_2,L_3,L_4)_M=(c_1,c_4;l_2,l_3)=\frac{c_1(l_3)\cdot c_4(l_2)}{c_1(l_2)\cdot c_4(l_3)},\]
which is a well-studied object in projective geometry. 

The next proposition summarizes some basic properties of this cross ratio.

\begin{prop}\label{basic cross ratio}
Let $L_1,\dots,L_5$ be pairwise distinct lines in $\Rbbb^n$ through the origin. Let $M$, $M'$ be $(n-2)$-dimensional subspaces of $\Rbbb^n$ not containing $L_i$ for any $i=1,\dots,5$, so that no three of the five $(n-1)$-dimensional subspaces $M+L_i$ agree and no three of the five $(n-1)$-dimensional subspaces $M'+L_i$ agree.
\begin{enumerate}
\item For any $g$ in $PSL(n,\Rbbb)$, $(g\cdot L_1,\dots, g\cdot L_4)_{g\cdot M}=(L_1,\dots L_4)_M$.
\item For all $i$, let $L_i'$ be a line in $\Rbbb^n$ such that $L_i'\subset M+L_i$ and $L_i'\not\subset M$. Then 
\[(L_1',L_2',L_3',L_4')_M=(L_1,L_2,L_3,L_4)_M.\]
\item Suppose $L_1,L_2,L_3,L_4$ lie in a plane. Then 
\[(L_1,L_2,L_3,L_4)_M=(L_1,L_2,L_3,L_4)_{M'}.\]
\item If $M+L_1$, $M+L_2$, $M+L_3$ are pairwise distinct, then 
\[(L_1,L_1,L_2,L_3)_M=(L_1,L_2,L_3,L_3)_M=\infty.\] 
\item If $M+L_1$, $M+L_2$, $M+L_3$ are pairwise distinct, then 
\[(L_1,L_2,L_2,L_3)_M=(L_1,L_2,L_3,L_1)_M=1.\]
\item $(L_1,L_2,L_3,L_4)_M=(L_4,L_3,L_2,L_1)_M$.
\item $(L_1,L_2,L_3,L_4)_M=1-(L_2,L_1,L_3,L_4)_M$.
\item $(L_1,L_2,L_3,L_5)_M\cdot (L_1,L_3,L_4,L_5)_M=(L_1,L_2,L_4,L_5)_M$.
\end{enumerate}
\end{prop}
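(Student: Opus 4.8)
The plan is to reduce everything to the classical one-variable cross ratio. Writing $c_i$ for a linear functional with kernel $M+L_i$, Definition~\ref{cross ratio definition} gives $(L_1,L_2,L_3,L_4)_M=(c_1,c_4;l_2,l_3)=\frac{c_1(l_3)c_4(l_2)}{c_1(l_2)c_4(l_3)}$, as the excerpt already notes. Thus all eight identities become statements about the four scalars $c_i(l_j)$, and each can be verified by direct algebraic manipulation once the right functionals and representatives are chosen. I would state once at the start of the proof that since each $L_k$ is not contained in $M$ and no three of the $M+L_i$ agree, all the $c_i(l_j)$ appearing in any given identity are well-defined and nonzero (or, where one expects the value $\infty$ or $0$, exactly one factor vanishes), so the expressions make sense in $\Rbbb\cup\{\infty\}$.

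Then I would take the items in an order that lets later ones reuse earlier ones. Item~(1): for $g\in PSL(n,\Rbbb)$, if $c_i$ has kernel $M+L_i$ then $c_i\circ g^{-1}$ has kernel $g(M+L_i)=g\cdot M+g\cdot L_i$, and $(c_i\circ g^{-1})(g\cdot l_j)=c_i(l_j)$, so the four scalars are unchanged; this is immediate. Item~(2): if $L_i'\subset M+L_i$ and $L_i'\not\subset M$, then the kernel of $c_i$ still contains $M+L_i\supset M+L_i'$, so the \emph{same} functionals $c_i$ may be used, and we only need that $c_i(l_j')\ne 0$, i.e. $L_j'\not\subset M+L_i$, which holds because $M+L_j'=M+L_j\ne M+L_i$ for $j\ne i$ — wait, one must be slightly careful here since $c_1,c_4$ are still based at $L_1,L_4$ while $l_2,l_3$ are replaced; the cleanest route is to note $(L_1,\dots,L_4)_M$ depends only on the four hyperplanes $M+L_i$ together with which two indices play the ``line'' role, and replacing $L_i$ by $L_i'$ changes neither $M+L_i$ nor the role structure. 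Items~(4)--(5): plug equal indices into $\frac{c_1(l_3)c_4(l_2)}{c_1(l_2)c_4(l_3)}$ — e.g. $L_1=L_1$ in the first two slots forces $c_4(l_1)$ to cancel wrongly, giving $\infty$ or $1$; these are one-line checks. Item~(6) (symmetry under full reversal) and item~(7) ($1-$ relation): for (6) observe $(L_4,L_3,L_2,L_1)_M=(c_4,c_1;l_3,l_2)=\frac{c_4(l_2)c_1(l_3)}{c_4(l_3)c_1(l_2)}$, literally the same fraction. For (7), this is the classical identity $\lambda+(1-\lambda)$ for cross ratios of four collinear points; I would prove it by choosing an affine chart on the projective line $\Pbbb(M+L_1+L_2)/\Pbbb(M)$ — or, more concretely, reduce to $n=2$ using (2) and a projection, then it is the familiar $(a,b;c,d)+(b,a;c,d)=1$ computed with explicit coordinates.

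For items~(3) and~(8) I would do a little more. Item~(3) (independence of the base $(n-2)$-plane when $L_1,L_2,L_3,L_4$ are coplanar): here the key point is that if $L_1,\dots,L_4$ all lie in a $2$-plane $P$, then $M\cap P=0$ (since $M$ contains no $L_i$ and $\dim M+\dim P\le n$ in the relevant configuration), so $M+L_i=M\oplus(L_i)$ and the intersection $(M+L_i)\cap P=L_i$ is \emph{independent of $M$}; restricting the functionals $c_i$ to $P$ shows $(L_1,\dots,L_4)_M$ equals the ordinary cross ratio of the four points $[L_i]$ in $\Pbbb(P)$, which has nothing to do with $M$. The same statement for $M'$ gives the equality. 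Item~(8) is the multiplicativity (cocycle) identity for cross ratios; again I would first apply~(2) to replace $L_2,L_3,L_4$ by lines inside the plane $M+L_1+L_5$ spanned appropriately — actually the cleanest is to pick functionals $c_1$ with kernel $M+L_1$ and $c_5$ with kernel $M+L_5$, and representatives $l_2,l_3,l_4$; then $(L_1,L_2,L_3,L_5)_M=\frac{c_1(l_3)c_5(l_2)}{c_1(l_2)c_5(l_3)}$, and multiplying the three claimed factors the $c_1(l_3),c_5(l_3)$ and $c_1(l_4),c_5(l_4)$ terms telescope, leaving $\frac{c_1(l_4)c_5(l_2)}{c_1(l_2)c_5(l_4)}=(L_1,L_2,L_4,L_5)_M$. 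Hmm — I should double-check the index pattern against Definition~\ref{cross ratio definition}'s convention (which slots are the ``$c$'' slots $1,4$ and which are the ``$l$'' slots $2,3$) before writing the telescoping, since a mismatch would flip a factor; this bookkeeping is the one genuinely error-prone spot, so I would fix conventions explicitly at the outset and then (8) is pure cancellation.

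The main obstacle is not any single hard idea — it is entirely a matter of setting up notation so that the ``which two indices are lines, which two are hyperplanes'' convention from Definition~\ref{cross ratio definition} is tracked consistently across all eight identities, and handling the degenerate (value $1$ or $\infty$) cases of (4)--(5) without dividing by zero. Concretely, I would (i) fix once and for all the formula $(L_1,L_2,L_3,L_4)_M=\frac{c_1(l_3)c_4(l_2)}{c_1(l_2)c_4(l_3)}$ with $\ker c_i=M+L_i$; (ii) verify (1),(2) by functoriality of this formula; (3) by the coplanar reduction to $\Pbbb^1$; (6) by inspection; (4),(5) by substitution; (7) by reduction to the classical $n=2$ identity; and (8) by telescoping. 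Everything else is routine linear algebra.
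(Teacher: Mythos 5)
Your overall plan matches what the paper does: reduce everything to the functional form $(L_1,L_2,L_3,L_4)_M=\frac{c_1(l_3)c_4(l_2)}{c_1(l_2)c_4(l_3)}$ with $\ker c_i=M+L_i$, and verify (1), (2), (4)--(8) by inspection, substitution, or telescoping; your telescoping for (8) and your argument for (2) (depending only on the four hyperplanes and which two indices are ``line'' slots) are correct and are exactly the observations the paper is pointing at when it calls these items ``immediate from the formula.'' For (7), your reduction to the one-dimensional projective cross-ratio identity is the same as the paper's ``choosing a normalization.''

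Where you genuinely diverge from the paper is item (3). The paper chooses $g\in PSL(n,\Rbbb)$ with $g\cdot M=M'$ fixing $L_1,L_2,L_3$ (and hence $L_4$, since it lies in $\Span\{L_1,L_2\}$), and then invokes (1). You instead show directly that, when $L_1,\dots,L_4$ span a plane $P$ with $M\cap P=0$, the cross ratio is literally the classical cross ratio of the four lines in $\Pbbb(P)$, which visibly does not involve $M$. Both routes are valid and both in fact need the same transversality fact $M\cap P=0=M'\cap P$ (the paper does not say so explicitly; your version makes it visible). However, your parenthetical justification of $M\cap P=0$ --- ``since $M$ contains no $L_i$ and $\dim M+\dim P\le n$'' --- is not sufficient: $\dim M+\dim P=n$ exactly, so transversality is generic, not automatic. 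The actual reason is the ``no three of $M+L_i$ agree'' hypothesis: if $M\cap P$ were a line $L$, then for every $i$ one would have $L+L_i=P\subset M+L_i$, forcing $M+L_i=M+P$ for all $i$ simultaneously, contradicting the hypothesis. With that one sentence added, your argument for (3) is a clean alternative to the paper's. Everything else is fine.
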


\begin{proof}
It is clear that (1) holds because $g$ preserves the volume form. (2) follows from the observation that replacing any of the $l_i$ in Definition \ref{cross ratio definition} with a linear combination of $m_1,\dots,m_{n-2}$, $l_i$ so that the coefficient of $l_i$ is non-zero does not change the cross ratio. To prove (3), note that we can choose $g$ in $PSL(n,\Rbbb)$ so that $g\cdot M=M'$ and $g$ fixes $L_1$, $L_2$, $L_3$. Since $L_1$, $L_2$, $L_3$ and $L_4$ lie in a plane, $g$ also fixes $L_4$. The $PSL(n,\Rbbb)$-invariance of the cross ratio stated in (1) then proves (3). Parts (4), (5), (6) and (8) are immediate from the formula in Definition \ref{cross ratio definition}, and (7) can be checked via choosing a normalization and performing a simple computation.
\end{proof}

Part (2) of Proposition \ref{basic cross ratio} allows one to think of the cross ratio as a projective invariant associated to the four $(n-1)$-dimensional subspaces of $\Rbbb^n$ that intersect along a $(n-2)$-dimensional subspace. This is thus a natural generalization of the classical cross ratio of four lines intersecting at a point in $\Rbbb\Pbbb^2$.

\begin{notation}
In view of (3) of Proposition \ref{basic cross ratio}, we will denote $(L_1,L_2,L_3,L_4)_M$ by $(L_1,L_2,L_3,L_4)$ in the case when $L_1$, $L_2$, $L_3$, $L_4$ lie in the same plane.
\end{notation}

There will be two main ways we use the cross ratio. The first is a well-known method to capture the eigenvalue data of $g$ in $PSL(n,\Rbbb)$. This is described in the next proposition, whose proof is a simple computation which we omit. 

\begin{prop}\label{cross ratio and length}
Let $g$ in $PSL(n,\Rbbb)$ be diagonalizable. For any $i< j$, let $V_i$ and $V_j$ be the eigenspaces corresponding to $e^{\lambda_i(g)}$ and $e^{\lambda_j(g)}$ respectively, and let $M$ be an $n-2$-dimensional subspace that is invariant under $g$ and complementary to $V_i+V_j$. (Recall that $\lambda_i(g)$ is the $(i,i)$-th entry of $\lambda(g)$, the image of $g$ under the Jordan projection.) Then for any line $L$ in $\Rbbb^n$ through the origin such that $L\not\subset M+V_i$ and $L\not\subset M+V_j$, we have
\[(V_j,L,g\cdot L,V_i)_M=e^{\lambda_i(g)-\lambda_j(g)}.\]
\end{prop}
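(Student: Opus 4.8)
The plan is to reduce to a direct computation in a well-chosen basis, exploiting the fact that both the cross ratio and the quantities $\lambda_i(g)$ are invariant under conjugation by $PSL(n,\Rbbb)$ (part (1) of Proposition \ref{basic cross ratio} for the former). First I would diagonalize: choose a basis $e_1,\dots,e_n$ of $\Rbbb^n$ consisting of eigenvectors of $g$, with $g\cdot e_k = \mu_k e_k$ and $|\mu_k| = e^{\lambda_k(g)}$, arranged so that $e_i$ spans $V_i$ and $e_j$ spans $V_j$. Since $M$ is $g$-invariant and complementary to $V_i + V_j$, it is spanned by the remaining eigenvectors $\{e_k : k \neq i,j\}$ (here one uses that $g$ is diagonalizable, so any invariant subspace is a sum of eigenlines — if eigenvalues coincide one picks a compatible splitting). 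Write $m_1,\dots,m_{n-2}$ for this basis of $M$ and set $L = [l]$ with $l = \sum_k a_k e_k$; the hypotheses $L \not\subset M + V_i$ and $L \not\subset M+V_j$ say exactly that $a_j \neq 0$ and $a_i \neq 0$ respectively.

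Next I would plug into Definition \ref{cross ratio definition} with $L_1 = V_j$, $L_2 = L$, $L_3 = g\cdot L$, $L_4 = V_i$, using $l_1 = e_j$, $l_2 = l$, $l_3 = g\cdot l = \sum_k a_k\mu_k e_k$, $l_4 = e_i$. In each of the four wedge products $m_1 \wedge \cdots \wedge m_{n-2} \wedge (\cdot) \wedge (\cdot)$, the factor $m_1 \wedge \cdots \wedge m_{n-2}$ kills every $e_k$ with $k \neq i,j$, so only the $e_i$- and $e_j$-components of the remaining two vectors survive. Concretely, up to the common nonzero scalar $\det(m_1 \wedge \cdots \wedge m_{n-2} \wedge e_i \wedge e_j)$ (with a fixed sign convention), one gets $m_1\wedge\cdots\wedge e_j \wedge (g\cdot l) \mapsto a_i\mu_i$, $\; m_1\wedge\cdots\wedge e_i\wedge l \mapsto -a_j$ (or $a_j$, depending on ordering), $\; m_1\wedge\cdots\wedge e_j\wedge l \mapsto a_i$, and $\; m_1\wedge\cdots\wedge e_i \wedge (g\cdot l) \mapsto -a_j\mu_j$. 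Forming the cross ratio, the $a_i$'s and $a_j$'s and the signs all cancel, leaving $\dfrac{a_i\mu_i \cdot a_j}{a_i \cdot a_j\mu_j} = \dfrac{\mu_i}{\mu_j}$.

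Finally, since $g$ is in a Hitchin representation's image (or, more elementarily, since the statement only asserts equality with $e^{\lambda_i(g)-\lambda_j(g)}$), I should address the sign: $\lambda_k(g) = \log|\mu_k|$, so $\mu_i/\mu_j = \pm e^{\lambda_i(g)-\lambda_j(g)}$, and one needs the eigenvalues $\mu_i,\mu_j$ to have the same sign for the claimed identity to hold on the nose. In the intended application (Labourie's theorem) the relevant elements are diagonalizable with positive eigenvalues of distinct moduli, so $\mu_i,\mu_j>0$ and the identity is exact; I would either state this hypothesis explicitly or note that the ordering of the four arguments has been chosen to produce the positive ratio. I do not expect any genuine obstacle here — the only point requiring care is bookkeeping of signs in the exterior-algebra expansion and the eigenvalue-sign convention, which is why the paper (correctly) calls this "a simple computation."
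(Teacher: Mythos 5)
Your proposal is correct and is precisely the ``simple computation'' the paper alludes to when it omits the proof: diagonalize, write everything in the eigenbasis, observe that $m_1\wedge\cdots\wedge m_{n-2}$ annihilates the components of $l$ and $g\cdot l$ in $M$, and then the wedge products collapse to the ratio $\mu_i/\mu_j$. Your flag about the eigenvalue signs is a fair observation about the precision of the statement as written (the equality as stated needs $\mu_i$ and $\mu_j$ to agree in sign, which holds in the intended application since Hitchin holonomies are purely loxodromic with positive eigenvalues), not a gap in your argument.
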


Given three pairwise distinct $(n-1)$-dimensional subspaces in $\Rbbb^n$ that intersect along a common $(n-2)$-dimensional subspace $M$, we can also use this cross ratio to parameterize the set of $(n-1)$-dimensional subspaces in $\Rbbb^n$ that contain $M$. More precisely, we have the following standard proposition, whose proof we also omit.

\begin{prop}\label{cross ratio configuration}
Let $M$ be any $(n-2)$-dimensional subspace of $\Rbbb^n$, and let $N_1$, $N_2$, $N_3$ be pairwise distinct $(n-1)$-dimensional subspaces in $\Rbbb^n$ that contain $M$. For $i=1,2,3$, let $L_i$ be a line through the origin in $N_i$ that does not lie in $M$. Denote the space of $(n-1)$-dimensional subspaces of $\Rbbb^n$ containing $M$ by $\Spc$, and for any $N$ in $\Spc$, let $L_N$ be any line through the origin in $N$ but not in $M$. Then the map
\[f:\Spc\to\Rbbb\cup\{\infty\}\]
given by 
\[f(N)=(L_1,L_N,L_2,L_3)_M\]
is a homeomorphism. Moreover, $f(N_1)=\infty$, $f(N_2)=1$ and $f(N_3)=0$.
\end{prop}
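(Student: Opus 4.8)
The plan is to reduce the statement to a one-variable projective computation by quotienting out the common subspace $M$. First I would pass to the quotient vector space $W := \Rbbb^n/M$, which is two-dimensional, and let $\pi:\Rbbb^n\to W$ be the projection. Every $(n-1)$-dimensional subspace $N$ of $\Rbbb^n$ containing $M$ descends to a line $\pi(N)$ in $W$, and conversely every line in $W$ pulls back to such an $N$; this sets up a homeomorphism between $\Spc$ and $\Pbbb(W)\cong\Rbbb\Pbbb^1$. Under this identification, the line $L_N$ (chosen in $N$ but not in $M$) maps to a nonzero vector spanning $\pi(N)$, so $\pi(L_N)$ spans the line corresponding to $N$ in $W$. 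The point of part (2) of Proposition \ref{basic cross ratio} (or rather the observation behind it) is precisely that the cross ratio $(L_1,L_N,L_2,L_3)_M$ depends only on the four lines $\pi(L_1),\pi(L_N),\pi(L_2),\pi(L_3)$ in $W$, and equals the classical cross ratio of four points on $\Rbbb\Pbbb^1$. Concretely, picking the basis $m_1,\dots,m_{n-2}$ for $M$, the wedge $m_1\wedge\dots\wedge m_{n-2}\wedge l\wedge l'$ is, up to the fixed nonzero scalar $m_1\wedge\dots\wedge m_{n-2}$, the determinant of the pair $(\pi(l),\pi(l'))$ in $\bigwedge^2 W\cong\Rbbb$, so Definition \ref{cross ratio definition} literally becomes the $2\times 2$ determinant formula for the cross ratio on $\Pbbb(W)$.

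Having made this reduction, I would invoke (or quickly re-derive) the standard fact that, given three distinct points $p_1,p_2,p_3$ on $\Rbbb\Pbbb^1$, the map $p\mapsto [p_1,p,p_2,p_3]$ (with the cross ratio normalized so that $[p_1,p_2,p_3]=(\infty,1,0)$ in that order of the three arguments) is a homeomorphism $\Rbbb\Pbbb^1\to\Rbbb\cup\{\infty\}$ with the claimed values. In the notation of the proposition, this says $f(N_1)=\infty$ comes from Proposition \ref{basic cross ratio}(4) (since $(L_1,L_1,L_2,L_3)_M=\infty$), $f(N_3)=0$ comes from Proposition \ref{basic cross ratio}(5) read via symmetry (6), namely $(L_1,L_3,L_2,L_3)_M$, wait—more carefully, $f(N_3)=(L_1,L_3,L_2,L_3)_M$, and one applies (6) and (5) to evaluate this as $0$; and $f(N_2)=(L_1,L_2,L_2,L_3)_M=1$ is exactly Proposition \ref{basic cross ratio}(5). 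Thus the three special values are immediate from the already-proved algebraic identities, and only the bijectivity and continuity of $f$ require the one-variable argument.

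For the homeomorphism claim itself, continuity of $f$ is clear from the determinant formula since $\pi(L_N)$ varies continuously with $N$ and the cross ratio is a ratio of continuous functions into $\Rbbb\cup\{\infty\}$ (with the understanding that the target is the one-point compactification, so the places where denominators vanish are handled by the topology of $\Rbbb\cup\{\infty\}$). Injectivity follows because a Möbius-type map of $\Rbbb\Pbbb^1$ that is not constant is determined by three values; alternatively, if $f(N)=f(N')$ then the cross ratio identities force $\pi(L_N)$ and $\pi(L_{N'})$ to span the same line in $W$, hence $N=N'$. Since $f$ is a continuous injection from $\Spc\cong\Rbbb\Pbbb^1$ (compact) to the Hausdorff space $\Rbbb\cup\{\infty\}\cong\Rbbb\Pbbb^1$, and it is nonconstant (it takes the three distinct values $\infty,1,0$), it is surjective onto a connected compact subset containing those three points; a short topological argument (a continuous injection $S^1\to S^1$ hitting three points is onto) then upgrades it to a homeomorphism. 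The only mild obstacle is bookkeeping: one must check that the normalization in Definition \ref{cross ratio definition} really does send $(N_1,N_2,N_3)$ to $(\infty,1,0)$ in the stated order rather than some permutation, but this is exactly what parts (4)--(6) of Proposition \ref{basic cross ratio} were set up to handle, so there is no real difficulty—hence the proof is omitted in the paper.
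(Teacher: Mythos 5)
The paper does not actually supply a proof of this proposition---it is stated with the remark ``whose proof we also omit''---so there is nothing to compare against line by line. Your reduction to the two-dimensional quotient $W=\Rbbb^n/M$, under which $\Spc$ is identified with $\Pbbb(W)\cong\Rbbb\Pbbb^1$ and $(L_1,L_N,L_2,L_3)_M$ becomes the classical cross ratio of four points on $\Rbbb\Pbbb^1$, is the standard argument for this standard fact, and it is correct.

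One small bookkeeping slip, which you yourself flagged mid-sentence: invoking (6) on $(L_1,L_3,L_2,L_3)_M$ gives $(L_3,L_2,L_3,L_1)_M$, which is of the form $(X,Y,X,Z)_M$ and is \emph{not} one of the patterns covered by (4) or (5). The clean route within the listed identities is via (7) and (5):
\[
(L_1,L_3,L_2,L_3)_M \;=\; 1-(L_3,L_1,L_2,L_3)_M \;=\; 1-1 \;=\; 0,
\]
since $(L_3,L_1,L_2,L_3)_M$ has the pattern $(X,Y,Z,X)_M$ handled by (5). Alternatively, one reads $0$ directly from the wedge formula: with $(A_1,A_2,A_3,A_4)=(L_1,L_3,L_2,L_3)$, the numerator contains the factor $m_1\wedge\dots\wedge m_{n-2}\wedge l_3\wedge l_3=0$, while the denominator is nonzero. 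Either fix is immediate and does not affect the validity of the rest of the argument.
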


Next, we discuss how the cross ratio interacts with a Frenet curve. For that purpose, we introduce the following notation.

\begin{notation}\label{cross ratio notation}
Let $A,B,C,D$ be pairwise distinct points along a Frenet curve $\xi$. Let $M_1,\dots, M_k$ be another set of pairwise distinct points along $\xi$ and let $n_1,\dots n_k$ be positive integers such that $\displaystyle\sum_{i=1}^k n_i=n-2$. Let $\displaystyle M:={\sum_{i=1}^kM_i^{(n_i)}}$, and denote
\[(A,B,C,D)_M:=(\sub{A}{1},\sub{B}{1},\sub{C}{1},\sub{D}{1})_M,\]
where $\sub{A}{1}$, $\sub{B}{1}$, $\sub{C}{1}$, $\sub{D}{1}$ are defined as in Notation \ref{moving subspaces}.
\end{notation}

By (2) of Proposition \ref{basic cross ratio}, the cross ratio $(A,B,C,D)_M$ is independent of the choices (if any) made to define $\sub{A}{1}$, $\sub{B}{1}$, $\sub{C}{1}$ or $\sub{D}{1}$. Using this notation, we can state the following proposition, which is a collection of useful inequalities involving the cross ratio and the Frenet curve.

\begin{prop}\label{useful cross ratio inequalities}
Let $A,U,B,C,V,D$ be pairwise distinct points along $\xi$, in that order. Let $M_1,\dots,M_k$ be another collection of pairwise distinct points along $\xi$, let $n_1,\dots,n_k$ be positive integers such that $\displaystyle\sum_{i=1}^k n_i=n-2$ and let $\displaystyle M:=\sum_{i=1}^kM_i^{(n_i)}$. Then the following inequalities hold:
\begin{enumerate}
\item $(A,B,C,D)_M>1$.
\item $(A,B,C,D)_M<(U,B,C,D)_M$
\item $(A,B,C,D)_M<(A,U,C,D)_M$
\item $(A,B,C,D)_M<(A,B,V,D)_M$
\item $(A,B,C,D)_M<(A,B,C,V)_M$
\end{enumerate}
\end{prop}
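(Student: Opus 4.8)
The plan is to reduce everything to the classical cross ratio of four points on a projective line, via the projection homeomorphisms $f_1$ constructed in Lemma \ref{important lemma}, and then to use the order structure on that line together with Proposition \ref{basic cross ratio}. Concretely, pick any $(n-1)$-dimensional subspace of $\Rbbb^n$ of the form $M + \sub{E}{1}$ for a point $E$ on $\xi$; after choosing two reference points among $A,B,C,D$ I can set up the projective line $\Pbbb(\sub{A'}{1} + \sub{B'}{1})$ for a suitable pair and apply Lemma \ref{important lemma}(1) to obtain a homeomorphism $f_1 : \xi(S^1) \to \Pbbb(\sub{A'}{1}+\sub{B'}{1})$ which sends the point $E$ on $\xi$ to the image of the line $\sub{E}{1}$ under projection through the base $M = \sum_i M_i^{(n_i)}$. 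By part (2) of Proposition \ref{basic cross ratio}, applying $f_1$ to the four lines does not change the cross ratio based at $M$, so $(A,B,C,D)_M$ equals the classical cross ratio $(f_1(A), f_1(B), f_1(C), f_1(D))$ of four points on a projective line. Since $f_1$ is a homeomorphism of circles, it is monotone, so the cyclic order $A, U, B, C, V, D$ on $\xi$ is transported to the same cyclic order of the images on $\Pbbb(\sub{A'}{1}+\sub{B'}{1})$.

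With this reduction in hand, part (1) becomes the standard fact that the cross ratio of four points on a projective line, taken in cyclic order with the pairing dictated by Definition \ref{cross ratio definition}, is a real number greater than $1$ — one verifies this by normalizing three of the four images to $0, 1, \infty$ (using Proposition \ref{cross ratio configuration} or a direct Möbius normalization) and checking the sign and size of the resulting expression for the fourth point in the correct arc. For parts (2)–(5), the key input is the monotonicity of the classical cross ratio as a function of one of its four arguments when the other three are held fixed: on the relevant arc of the projective line, the map $x \mapsto (x, b, c, d)$ (and similarly in the other slots) is a strictly monotone homeomorphism onto an interval of $\Rbbb \cup \{\infty\}$, and the direction of monotonicity is pinned down by the boundary values supplied by parts (4) and (5) of Proposition \ref{basic cross ratio} (namely the values $1$ and $\infty$ at the degenerate configurations). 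Feeding in that $U$ lies strictly between $A$ and $B$ on $\xi$, that $V$ lies strictly between $C$ and $D$, etc., gives each of the four strict inequalities; part (1) guarantees we are on the branch where the inequalities go in the stated direction rather than the reversed one.

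I would organize the write-up as: first establish the reduction to the projective line via Lemma \ref{important lemma}(1) and Proposition \ref{basic cross ratio}(2); then prove a single monotonicity lemma for the classical four-point cross ratio (strict monotonicity in each slot, with the sign of the derivative read off from the degenerate values); then derive (1) by normalization; then derive (2)–(5) as immediate consequences of the monotonicity lemma applied to the appropriate arc, using the cyclic order $A,U,B,C,V,D$. The main obstacle is bookkeeping rather than depth: one must be careful that the base subspace $M$ and the auxiliary choices $\sub{E}{1}$ are handled uniformly — Proposition \ref{basic cross ratio}(2) and the remark after Notation \ref{cross ratio notation} make the cross ratio independent of those choices, so this is manageable — and one must track exactly which arc of the projective line each point lands in so that the correct branch of the monotone function is used and the inequality signs come out consistently. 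A clean way to sidestep sign-chasing is to normalize, in each of the five sub-arguments, three of the four relevant points to $0,1,\infty$ in an order compatible with the cyclic order on $\xi$, so that the fourth point ranges over a fixed open arc and the cross ratio is visibly a monotone function with the claimed value at the endpoints.
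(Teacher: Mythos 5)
Your proposal takes essentially the same route as the paper: both project to a projective line through two of the four points via Lemma \ref{important lemma}(1) and Proposition \ref{basic cross ratio}, reduce to the classical cross ratio, and verify the inequalities by tracking the order of the projected points on that line. The only difference is presentation — the paper carries out a single explicit convex-combination computation (writing the projected lines as $a\,l_B+(1-a)\,l_C$ with $0<c<b<a<1$) to prove (2) and remarks that (3)--(5) are analogous, whereas you propose packaging the calculation as a monotonicity lemma in each slot, which is a reasonable equivalent organization.
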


By (6) of Proposition \ref{basic cross ratio}, it does not matter if, in the above proposition, $A,U,B,C,V,D$ lie in clockwise or anti-clockwise order along $\xi$.

\begin{proof}
Part (1) follows from (1) of Lemma \ref{important lemma} and Proposition \ref{cross ratio configuration}. Since the proofs for (2) to (5) are very similar, we will only show the proof for (2).

Proof of (2). Consider the lines
\begin{eqnarray*}
L_1&:=&(\sub{U}{1}+M)\cap(\sub{B}{1}+\sub{C}{1})\\
L_2&:=&(\sub{A}{1}+M)\cap(\sub{B}{1}+\sub{C}{1})\\
L_3&:=&(\sub{D}{1}+M)\cap(\sub{B}{1}+\sub{C}{1})
\end{eqnarray*}
By (1) of Lemma \ref{important lemma}, we can choose vectors $l_1,l_2,l_3,l_B,l_C$ in $\Rbbb^n$ such that $[l_i]=L_i$ for $i=1,2,3$, $[l_B]=\sub{B}{1}$, $[l_C]=\sub{C}{1}$ and $l_1=a\cdot l_B+(1-a)\cdot l_C$, $l_2=b\cdot l_B+(1-b)\cdot l_C$, $l_3=c\cdot l_B+(1-c)\cdot l_C$ for $0<c<b<a<1$. Then
\begin{eqnarray*}
(A,B,C,D)_M&=&(L_2,\sub{B}{1},\sub{C}{1},L_3)_M\\
&=&\frac{(1-c)b}{(1-b)c}\\
&<&\frac{(1-c)a}{(1-a)c}\\
&=&(L_1,\sub{B}{1},\sub{C}{1},L_3)_M\\
&=&(U,B,C,D)_M.
\end{eqnarray*}
\end{proof}

\subsection{Triple ratio}\label{Triple ratio}

Aside from the cross ratio, there is another less well-known collection of projective invariants, called \emph{triple ratios}, that one can associate to any triple of generic flags. These were used by Fock-Goncharov and Bonahon-Dreyer to parameterize the Hitchin component. Understanding what these triple ratios mean geometrically is crucial to our proof, so we will devote this section to describe the triple ratios. Before we do so, we need the following definition.

\begin{definition}
A triple of flags $(F,G,H)$ in $\Fpc(\Rbbb^n)^3$ is \emph{generic} if for any triple of non-negative integers $(a,b,c)$ with $a+b+c=n$, we have that $F^{(a)}+G^{(b)}+H^{(c)}=\Rbbb^n$.
\end{definition}

With this, we can define the triple ratios associated to a generic triple of flags.

\begin{notation}\label{Apc}
Let $\Apc:=\{(x,y,z)\in(\Zbbb^+)^3:x+y+z=n\}.$
\end{notation}

\begin{definition}\label{triple ratio}
Let $(F,G,H)$ be a generic triple of flags in $\Fpc(\Rbbb^n)^3$. Then for any $(x,y,z)$ in $\Apc$, define the \emph{triple ratio} to be the quantity
\begin{align*}
&T_{x,y,z}(F,G,H):=\frac{F^{(x)}\wedge G^{(y-1)}\wedge H^{(z+1)}}{F^{(x)}\wedge G^{(y+1)}\wedge H^{(z-1)}}\cdot\frac{F^{(x+1)}\wedge G^{(y)}\wedge H^{(z-1)}}{F^{(x-1)}\wedge G^{(y)}\wedge H^{(z+1)}}\cdot\\
&\hspace{8cm}\frac{F^{(x-1)}\wedge G^{(y+1)}\wedge H^{(z)}}{F^{(x+1)}\wedge G^{(y-1)}\wedge H^{(z)}}.
\end{align*}
\end{definition}

We will now explain the notation used in the formula above. For any triple of flags $(F, G, H)$ in $\Fpc(\Rbbb^n)^3$, choose three bases $\{f_1,\dots,f_n\}$, $\{g_1,\dots,g_n\}$, $\{h_1,\dots,h_n\}$ of $\Rbbb^n$ so that for all $k=1,\dots,n$,
\[F^{(k)}=\Span\{f_1,\dots,f_k\},\] 
\[G^{(k)}=\Span\{g_1,\dots,g_k\},\] 
\[H^{(k)}=\Span\{h_1,\dots,h_k\}.\] 
Then for any triple $(x,y,z)$ in $(\Zbbb_{\geq 0})^3$ such that $x+y+z=n$, define
\[F^{(x)}\wedge G^{(y)}\wedge H^{(z)}:=f_1\wedge\dots\wedge f_x\wedge g_1\wedge\dots\wedge g_y\wedge h_1\wedge\dots\wedge h_z.\]
The determinant map $\det:\bigwedge^n(\Rbbb^n)\to\Rbbb$, again allows us to treat $F^{(x)}\wedge G^{(y)}\wedge H^{(z)}$ as a real number. One can check that $T_{x,y,z}(F,G,H)$ is well-defined and $PSL(n,\Rbbb)$-invariant.

From the definition of a triple ratio, one can immediately observe the next lemma.

\begin{lem}\label{symmetry}
Let $(F,G,H)$ be a triple of flags in $\Fpc(\Rbbb^n)^3$ and $(x,y,z)\in\Apc$. Then
\[T_{x,y,z}(F,G,H)=T_{y,z,x}(G,H,F)=T_{z,x,y}(H,F,G).\]
\end{lem}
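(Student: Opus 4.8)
The statement is a cyclic symmetry: $T_{x,y,z}(F,G,H) = T_{y,z,x}(G,H,F) = T_{z,x,y}(H,F,G)$, and by transitivity it suffices to establish the single equality $T_{x,y,z}(F,G,H) = T_{y,z,x}(G,H,F)$. The plan is to work directly from Definition \ref{triple ratio}, tracking how each of the three factors transforms under the cyclic relabelling $(F,G,H,x,y,z) \mapsto (G,H,F,y,z,x)$.

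First I would write out $T_{y,z,x}(G,H,F)$ explicitly by substituting into the formula: it is
\[
\frac{G^{(y)}\wedge H^{(z-1)}\wedge F^{(x+1)}}{G^{(y)}\wedge H^{(z+1)}\wedge F^{(x-1)}}\cdot
\frac{G^{(y+1)}\wedge H^{(z)}\wedge F^{(x-1)}}{G^{(y-1)}\wedge H^{(z)}\wedge F^{(x+1)}}\cdot
\frac{G^{(y-1)}\wedge H^{(z+1)}\wedge F^{(x)}}{G^{(y+1)}\wedge H^{(z-1)}\wedge F^{(x)}}.
\]
Then I would compare this term by term with the original $T_{x,y,z}(F,G,H)$. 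The key mechanism is that reordering the wedge factors $F^{(a)}\wedge G^{(b)}\wedge H^{(c)}$ into the cyclic order $G^{(b)}\wedge H^{(c)}\wedge F^{(a)}$ introduces a sign $(-1)^{a(b+c)} = (-1)^{a(n-a)}$, which depends only on $a$ (and $n$), not on $b$ or $c$. Applying the determinant map, $\det(G^{(b)}\wedge H^{(c)}\wedge F^{(a)}) = (-1)^{a(n-a)}\det(F^{(a)}\wedge G^{(b)}\wedge H^{(c)})$. The crucial bookkeeping observation is that in each ratio appearing in a triple ratio, the two wedge expressions in numerator and denominator have $F$-indices differing by $0$ or $\pm 2$; in every case the parity of (the $F$-index) times (the complementary dimension) is the same for numerator and denominator, so the sign $(-1)^{a(n-a)}$ cancels in each individual ratio. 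Hence reordering all the wedge products into cyclic order changes nothing, and one is left with a purely formal identity of products of ratios which can be verified by matching each of the three factors of $T_{y,z,x}(G,H,F)$ (after reordering) against a corresponding factor of $T_{x,y,z}(F,G,H)$.

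The second equality $T_{y,z,x}(G,H,F) = T_{z,x,y}(H,F,G)$ then follows by applying the first equality a second time with $(F,G,H)$ replaced by $(G,H,F)$. This completes the proof.

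The main obstacle is purely organizational: keeping the sign analysis honest. One must check that swapping a block of $a$ vectors past a block of $b+c = n-a$ vectors really does contribute $(-1)^{a(n-a)}$ and, more delicately, that when the $F$-index shifts by $\pm 2$ between numerator and denominator (as in the factors $F^{(x+1)}/F^{(x-1)}$) the parities of $a(n-a)$ still agree — which they do, since $(x{+}1)(n{-}x{-}1)$ and $(x{-}1)(n{-}x{+}1)$ differ by $2(n-2x)$, an even number. Once this sign cancellation is in hand, the remainder is a direct comparison of the nine wedge-factors on each side with no further content.
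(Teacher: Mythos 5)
Your proof is correct and does exactly what the paper intends — the paper declares the lemma "immediate from the definition" without writing out the verification, and your argument is that verification done carefully, with the sign bookkeeping under cyclic reordering of the wedge blocks spelled out. The per-factor parity cancellation (using that $(x{+}1)(n{-}x{-}1)$ and $(x{-}1)(n{-}x{+}1)$ differ by an even number) is the only non-obvious point, and you have it right.
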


Fock-Goncharov proved that the collection of triple ratios 
\[\{T_{x,y,z}(F,G,H):(x,y,z)\in\Apc\}\] 
determine the generic triple of flags $(F,G,H)$ up to the action of $PSL(n,\Rbbb)$. (Section 9.7 and 9.8 of Fock-Goncharov \cite{FocGon1}). In fact, we have the following lemma.

\begin{lem}\label{triangle lemma}
Let $(F,G,H)$ and $(F',G',H')$ be generic triples in $\Fpc(\Rbbb^n)^3$ so that $F=F'$, $H=H'$ and $G^{(i)}=G'^{(i)}$ for all $i=1,\dots,y_0$, where $y_0=1,\dots,n-2$. If 
\[T_{x,y_0,z}(F,G,H)=T_{x,y_0,z}(F',G',H')\]
for all $(x,y_0,z)$ in $\Apc$, then
then $G^{(y_0+1)}=G'^{(y_0+1)}$.
\end{lem}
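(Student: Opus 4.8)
The plan is to exploit the fact that both triples agree on the flags $F=F'$, $H=H'$ and on the first $y_0$ subspaces of $G$ and $G'$, so the only unknown is the choice of the line $G^{(y_0+1)}/G^{(y_0)}$ (equivalently, the $(y_0+1)$-st basis vector $g_{y_0+1}$ up to the already-fixed data). Concretely, I would fix bases $\{f_i\}$, $\{h_i\}$ as in the definition of the triple ratio, and choose the basis $\{g_1,\dots,g_n\}$ for $G$ so that $g_1,\dots,g_{y_0}$ also serve as the first $y_0$ basis vectors for $G'$ (possible since $G^{(i)}=G'^{(i)}$ for $i\le y_0$). Then $G^{(y_0+1)}=\Span\{g_1,\dots,g_{y_0},v\}$ and $G'^{(y_0+1)}=\Span\{g_1,\dots,g_{y_0},v'\}$ for some vectors $v,v'$, each well-defined modulo $G^{(y_0)}$ and modulo scaling; genericity of both triples guarantees $v,v'\notin F^{(n-y_0-1)}+H^{(0)}$-type degenerate positions, so the relevant wedge products below are nonzero.

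The key step is to isolate, among all the triple ratios $T_{x,y_0,z}(F,G,H)$ with $(x,y_0,z)\in\Apc$ (so $x+z=n-y_0$, $x,z\ge 1$), the dependence on $v$. Inspecting Definition \ref{triple ratio}, the wedge products appearing are of the form $F^{(a)}\wedge G^{(b)}\wedge H^{(c)}$ with $b\in\{y_0-1,y_0,y_0+1\}$. Those with $b=y_0-1$ or $b=y_0$ involve only $g_1,\dots,g_{y_0}$, hence are identical for $G$ and $G'$. The only terms that can differ are those with $b=y_0+1$, namely $F^{(x)}\wedge G^{(y_0+1)}\wedge H^{(z-1)}$ and $F^{(x-1)}\wedge G^{(y_0+1)}\wedge H^{(z)}$, and each of these, after expanding $G^{(y_0+1)}=g_1\wedge\dots\wedge g_{y_0}\wedge v$, is a linear functional in $v$ whose kernel is a fixed hyperplane determined by $F$, $H$ and $g_1,\dots,g_{y_0}$. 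Writing $\ell_{x}(v):=F^{(x)}\wedge (g_1\wedge\dots\wedge g_{y_0})\wedge v\wedge H^{(n-y_0-1-x)}$ for the appropriate functional, the hypothesis $T_{x,y_0,z}(F,G,H)=T_{x,y_0,z}(F',G',H')$ becomes, after cancelling all the common factors,
\[
\frac{\ell_{x-1}(v)}{\ell_{x}(v)}=\frac{\ell_{x-1}(v')}{\ell_{x}(v')}
\qquad\text{for all }x=1,\dots,n-y_0-1.
\]
(One must track signs/orderings carefully so that the $b=y_0+1$ factors land in numerator vs. denominator consistently, but this is a bookkeeping matter.) This says the two points $[v],[v']$ of the projective line $\Pbbb\big(\Rbbb^n/(F^{(?)}+\dots)\big)$ — more precisely, of the pencil of $(y_0+1)$-dimensional subspaces containing $G^{(y_0)}$, which is a projective line — have the same image under each of the "coordinate" functionals $\ell_x$.

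To conclude $v\equiv v'$ modulo $G^{(y_0)}$ (and hence $G^{(y_0+1)}=G'^{(y_0+1)}$), I would argue that the functionals $\ell_0,\ell_1,\dots,\ell_{n-y_0-1}$ (here $\ell_0$ and $\ell_{n-y_0-1}$ correspond to the "extreme" wedges $G^{(y_0+1)}\wedge H^{(n-y_0-1)}$ and $F^{(n-y_0-1)}\wedge G^{(y_0+1)}$) separate points on this projective line: the ratios $[\ell_0(v):\ell_1(v):\dots:\ell_{n-y_0-1}(v)]$ give a projective map of $\Pbbb^1$ into $\Pbbb^{n-y_0-1}$ which, by genericity of the flag triple, is injective — indeed it is (up to the fixed frame) the restriction of a Veronese-type embedding, and no two of the $\ell_x$ vanish simultaneously on an admissible $v$. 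Since the system of equations above forces $[\ell_{x-1}(v):\ell_x(v)]=[\ell_{x-1}(v'):\ell_x(v')]$ for every consecutive pair, the full projective tuples agree, so $[v]=[v']$, giving $G^{(y_0+1)}=G'^{(y_0+1)}$. The main obstacle I anticipate is not conceptual but organizational: getting the indexing in Definition \ref{triple ratio} exactly right so that precisely the $b=y_0+1$ wedges survive the cancellation and assemble into the clean ratio identity above, and verifying the genericity input that guarantees each $\ell_x(v)\ne 0$ and that consecutive ratios determine the point. An alternative, possibly cleaner, route is to invoke Lemma \ref{symmetry} to rewrite $T_{x,y_0,z}(F,G,H)=T_{y_0,z,x}(G,H,F)$ and induct, but the direct computation above seems most transparent.
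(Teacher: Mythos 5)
Your proposal takes essentially the same route as the paper: fix compatible bases, observe that in $T_{x,y_0,z}$ only the wedge products with the middle exponent equal to $y_0+1$ depend on the unknown vector $v$, write these as linear functionals $\ell_{x-1}(v)$, $\ell_x(v)$, telescope the resulting ratio identity, and invoke genericity to conclude. The paper packages the last step as an intersection of hyperplanes (each ratio equality forces $F^{(x-1)}+G^{(y_0+1)}+H^{(z-1)}=F^{(x-1)}+G'^{(y_0+1)}+H^{(z-1)}$, and $\bigcap_{(x,y_0,z)\in\Apc}(F^{(x-1)}+G^{(y_0+1)}+H^{(z-1)})=G^{(y_0+1)}$), while you package it dually as the functionals $\ell_x$ separating points; these are the same argument.

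However, your justification of the final step is off in a way worth fixing. The family of $(y_0+1)$-dimensional subspaces containing $G^{(y_0)}$ is $\Pbbb\big(\Rbbb^n/G^{(y_0)}\big)\cong\Pbbb^{n-y_0-1}$, not a projective line (it is a line only when $y_0=n-2$), and the map $[v]\mapsto[\ell_0(v):\dots:\ell_{n-y_0-1}(v)]$ is \emph{projective linear}, not a Veronese-type embedding. What the conclusion actually requires is that $\ell_0,\dots,\ell_{n-y_0-1}$ be \emph{linearly independent} as functionals on the $(n-y_0)$-dimensional space $\Rbbb^n/G^{(y_0)}$; your stated criterion, that no two vanish simultaneously on an admissible $v$, is weaker and does not imply injectivity of the map. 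The needed linear-independence claim is equivalent to the paper's assertion that $\bigcap_x\big(F^{(x)}+G^{(y_0)}+H^{(n-y_0-1-x)}\big)=G^{(y_0)}$; both you and the paper defer this to ``genericity'' without a detailed check, so you are no less rigorous than the source, but the projective line/Veronese language should be replaced with the straightforward linear-independence statement.
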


\begin{proof}
Let $\{f_1,\dots,f_n\}$, $\{g_1,\dots,g_n\}$, $\{h_1,\dots,h_n\}$ and $\{g'_1,\dots,g'_n\}$ be bases for $\Rbbb^n$ such that for $k=1,\dots,n$, $F'^{(k)}=F^{(k)}=\Span\{f_1,\dots,f_k\}$, $H'^{(k)}=H^{(k)}=\Span\{h_1,\dots,h_k\}$, $G^{(k)}=\Span\{g_1,\dots,g_k\}$ and $G'^{(k)}=\Span\{g'_1,\dots,g'_k\}$. Since $G^{(i)}=G'^{(i)}$ for all $i=1,\dots,y_0$, we can assume without loss of generality that $g'_i=g_i$ for all $i=1,\dots,y_0$. Also, the genericity of the triple $(F,G,H)$ implies that 
\[\{f_1,\dots,f_x,g_1,\dots,g_{y_0},h_1,\dots,h_z\}\] 
is a basis for $\Rbbb^n$ for any triple $(x,y_0,z)$ in $\Apc$. Hence, for any such triple $(x,y_0,z)$ in $\Apc$, we can write
\[f_{x+1}:=\sum_{i=1}^x\alpha_if_i+\sum_{i=1}^{y_0}\alpha_{x+i}g_i+\sum_{i=1}^z\alpha_{x+y_0+i}h_i,\] 
\[g_{y_0+1}:=\sum_{i=1}^x\beta_if_i+\sum_{i=1}^{y_0}\beta_{x+i}g_i+\sum_{i=1}^z\beta_{x+y_0+i}h_i,\] 
\[g'_{y_0+1}:=\sum_{i=1}^x\beta'_if_i+\sum_{i=1}^{y_0}\beta'_{x+i}g_i+\sum_{i=1}^z\beta'_{x+y_0+i}h_i,\] 
\[h_{z+1}:=\sum_{i=1}^x\gamma_if_i+\sum_{i=1}^{y_0}\gamma_{x+i}g_i+\sum_{i=1}^z\gamma_{x+y_0+i}h_i,\] 
for some real numbers $\alpha_i$, $\beta_i$, $\beta'_i$, $\gamma_i$. 

By an easy computation, we have that 
\begin{eqnarray*}
\frac{\bigwedge_{i=1}^{x+1}f_i\wedge\bigwedge_{i=1}^{y_0} g_i\wedge\bigwedge_{i=1}^{z-1} h_i}{\bigwedge_{i=1}^{x+1} f_i\wedge\bigwedge_{i=1}^{y_0-1} g_i\wedge\bigwedge_{i=1}^z h_i}&=&(-1)^z\cdot\frac{\alpha_n}{\alpha_{x+y_0}},\\
\frac{\bigwedge_{i=1}^{x-1} f_i\wedge\bigwedge_{i=1}^{y_0+1} g_i\wedge\bigwedge_{i=1}^z h_i}{\bigwedge_{i=1}^xf_i\wedge\bigwedge_{i=1}^{y_0+1} g_i\wedge\bigwedge_{i=1}^{z-1} h_i}&=&(-1)^{y_0-z+1}\cdot\frac{\beta_x}{\beta_n},\\
\frac{\bigwedge_{i=1}^{x-1} f_i\wedge\bigwedge_{i=1}^{y_0+1} g'_i\wedge\bigwedge_{i=1}^z h_i}{\bigwedge_{i=1}^xf_i\wedge\bigwedge_{i=1}^{y_0+1} g'_i\wedge\bigwedge_{i=1}^{z-1} h_i}&=&(-1)^{y_0-z+1}\cdot\frac{\beta'_x}{\beta'_n},\\
\frac{\bigwedge_{i=1}^xf_i\wedge\bigwedge_{i=1}^{y_0-1} g_i\wedge\bigwedge_{i=1}^{z+1} h_i}{\bigwedge_{i=1}^{x-1} f_i\wedge\bigwedge_{i=1}^{y_0} g_i\wedge\bigwedge_{i=1}^{z+1} h_i}&=&(-1)^{-y_0}\cdot\frac{\gamma_{x+y_0}}{\gamma_x}.
\end{eqnarray*}
Combining the definition of the triple ratio with the computation above yields
\begin{eqnarray}
-\frac{\alpha_n}{\alpha_{x+y_0}}\cdot\frac{\beta'_x}{\beta'_n}\cdot\frac{\gamma_{x+y_0}}{\gamma_x}&=&T_{x,y_0,z}(F',G',H')\label{triple ratio equation}\\
&=&T_{x,y_0,z}(F,G,H)\nonumber\\
&=&-\frac{\alpha_n}{\alpha_{x+y_0}}\cdot\frac{\beta_x}{\beta_n}\cdot\frac{\gamma_{x+y_0}}{\gamma_x}.\nonumber
\end{eqnarray}
The assumption that $(F,G,H)$ is a generic triple implies that $T_{x,y_0,z}(F,G,H)$ is a non-zero real number, so $\displaystyle\frac{\beta'_x}{\beta'_n}=\frac{\beta_x}{\beta_n}$. Since the $(n-1)$-dimensional subspaces $F^{(x-1)}+G^{(y_0+1)}+H^{(z-1)}$ and $F^{(x-1)}+G'^{(y_0+1)}+H^{(z-1)}$ are spanned by 
\[\{f_1,\dots,f_{x-1},g_1,\dots,g_{y_0},h_1,\dots,h_{z-1},\beta_xf_x+\beta_nh_z\}\]
and
\[\{f_1,\dots,f_{x-1},g_1,\dots,g_{y_0},h_1,\dots,h_{z-1},\beta'_xf_x+\beta'_nh_z\}\]
respectively, we see that $F^{(x-1)}+G^{(y_0+1)}+H^{(z-1)}=F^{(x-1)}+G'^{(y_0+1)}+H^{(z-1)}$ for all $(x,y_0,z)$ in $\Apc$. The genericity of the triples $(F,G,H)$ and $(F',G',H')$ then imply that
\[G^{(y_0+1)}=\bigcap_{(x,y_0,z)\in\Apc}(F^{(x-1)}+G^{(y_0+1)}+H^{(z-1)})\]
and
\[G'^{(y_0+1)}=\bigcap_{(x,y_0,z)\in\Apc}(F^{(x-1)}+G'^{(y_0+1)}+H^{(z-1)}),\]
so the lemma holds.
\end{proof}

Moreover, as a consequence of Lemma \ref{triangle lemma}, we can make the following observation, which we record as Proposition \ref{triple ratio escaping 1}. 

\begin{prop}\label{triple ratio escaping 1}
Let $\{(F_i,G_i,H_i)\}_{i=1}^\infty$ be a sequence of generic triples of flags in $\Fpc(\Rbbb^n)$ such that for all positive integers $i,j$, $F_i=F_j$, $H_i=H_j$ and  $G_i^{(1)}=G_j^{(1)}$. Suppose that there is some $y_0=1,\dots,n-2$ so that for any $(x,y,z)$ in $\Apc$ with $y<y_0$, $\displaystyle\lim_{i\to\infty}T_{x,y,z}(F_i,G_i,H_i)$ is a non-zero real number. Then for any integers $x_0$, $z_0$ such that $(x_0,y_0,z_0)$ is in $\Apc$,
\begin{enumerate}
\item $\displaystyle\lim_{i\to\infty}T_{x_0,y_0,z_0}(F_i,G_i,H_i)=\infty$ if and only if
\[\lim_{i\to\infty}F_i^{(x_0-1)}+G_i^{(y_0+1)}+H_i^{(z_0-1)}=\lim_{i\to\infty}F_i^{(x_0)}+G_i^{(y_0)}+H_i^{(z_0-1)}.\]
\item $\displaystyle\lim_{i\to\infty}T_{x_0,y_0,z_0}(F_i,G_i,H_i)=0$ if and only if
\[\lim_{i\to\infty}F_i^{(x_0-1)}+G_i^{(y_0+1)}+H_i^{(z_0-1)}=\lim_{i\to\infty}F_i^{(x_0-1)}+G_i^{(y_0)}+H_i^{(z_0)}.\]
\end{enumerate}
\end{prop}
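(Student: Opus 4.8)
The plan is to derive both statements from the single equation (\ref{triple ratio equation}) in the proof of Lemma \ref{triangle lemma}, reading it dynamically along the sequence. First I would fix $(x_0,y_0,z_0)\in\Apc$ and, for each $i$, choose adapted bases $\{f^i_k\}$, $\{g^i_k\}$, $\{h^i_k\}$ for $F_i$, $G_i$, $H_i$ as in that proof; by genericity the relevant coefficients $\alpha^i_\bullet$, $\beta^i_\bullet$, $\gamma^i_\bullet$ are well-defined, and up to passing to a subsequence and normalizing we may assume the bases converge. The hypothesis $F_i=F_j$, $H_i=H_j$, $G_i^{(1)}=G_j^{(1)}$ lets us take $f^i_k$, $h^i_k$ and $g^i_1$ independent of $i$; and the hypothesis that $\lim_i T_{x,y,z}(F_i,G_i,H_i)$ is a nonzero real for all $y<y_0$ should, via Lemma \ref{triangle lemma} applied inductively to $y=1,\dots,y_0-1$, force $\lim_i G_i^{(y)}$ to exist and be transverse to $F_i$ and $H_i$ in the appropriate sense, so that in particular the subspaces $F_i^{(x_0-1)}+G_i^{(y_0)}+H_i^{(z_0-1)}$, $F_i^{(x_0)}+G_i^{(y_0)}+H_i^{(z_0-1)}$, $F_i^{(x_0-1)}+G_i^{(y_0)}+H_i^{(z_0)}$ converge and the denominators appearing below stay bounded away from $0$ and $\infty$.

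Next I would compute, exactly as in the display preceding (\ref{triple ratio equation}), that $T_{x_0,y_0,z_0}(F_i,G_i,H_i)$ equals (up to a sign) $\frac{\alpha^i_n}{\alpha^i_{x_0+y_0}}\cdot\frac{\beta^i_{x_0}}{\beta^i_n}\cdot\frac{\gamma^i_{x_0+y_0}}{\gamma^i_{x_0}}$, where only the factor $\beta^i_{x_0}/\beta^i_n$ involves $G_i^{(y_0+1)}$ and the other two factors converge to nonzero reals (this is where the $y<y_0$ hypothesis is used). Therefore $\lim_i T_{x_0,y_0,z_0}=\infty$ iff $\beta^i_n/\beta^i_{x_0}\to 0$, and $\lim_i T_{x_0,y_0,z_0}=0$ iff $\beta^i_{x_0}/\beta^i_n\to 0$. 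Now I translate these coefficient statements into incidence statements: as noted in the proof of Lemma \ref{triangle lemma}, $F_i^{(x_0-1)}+G_i^{(y_0+1)}+H_i^{(z_0-1)}$ is spanned by $\{f_1,\dots,f_{x_0-1},g^i_1,\dots,g^i_{y_0},h_1,\dots,h_{z_0-1},\beta^i_{x_0}f_{x_0}+\beta^i_n h_{z_0}\}$. Hence $\beta^i_n/\beta^i_{x_0}\to 0$ means the last vector converges projectively to (a representative of) $F_i^{(x_0)}$ modulo the rest, i.e. $F_i^{(x_0-1)}+G_i^{(y_0+1)}+H_i^{(z_0-1)}\to F_i^{(x_0)}+G_i^{(y_0)}+H_i^{(z_0-1)}$; symmetrically $\beta^i_{x_0}/\beta^i_n\to 0$ gives convergence to $F_i^{(x_0-1)}+G_i^{(y_0)}+H_i^{(z_0)}$. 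Combined with the previous sentence this proves (1) and (2), at least along subsequences.

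The final step is to remove the passage to a subsequence: since the quantities $T_{x_0,y_0,z_0}(F_i,G_i,H_i)$ and the subspaces $F_i^{(x_0-1)}+G_i^{(y_0+1)}+H_i^{(z_0-1)}$ etc.\ are intrinsic (basis-independent) and the target spaces ($\Rbbb\cup\{\infty\}$ and the relevant Grassmannian) are compact metrizable, a limit statement holds for the whole sequence iff it holds along every subsequence; applying the subsequential argument to an arbitrary subsequence yields the claim. I expect the main obstacle to be the bookkeeping in the middle step: carefully justifying that the ``unused'' factors $\alpha^i_n/\alpha^i_{x_0+y_0}$ and $\gamma^i_{x_0+y_0}/\gamma^i_{x_0}$ converge to finite nonzero limits (which requires knowing $\lim_i G_i^{(y_0)}$ exists and is suitably generic against $F$ and $H$, extracted from the $y<y_0$ hypothesis via Lemma \ref{triangle lemma}), and that the convergence of the spanning vector $\beta^i_{x_0}f_{x_0}+\beta^i_n h_{z_0}$ is exactly equivalent to the asserted incidence limit — the ``iff'' in both directions must be checked, using that $f_{x_0}\notin F_i^{(x_0-1)}$ and $h_{z_0}\notin H_i^{(z_0-1)}$, again a consequence of genericity.
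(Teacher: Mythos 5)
Your overall strategy matches the paper's: reuse the coefficient computation that produces equation (\ref{triple ratio equation}) in the proof of Lemma \ref{triangle lemma}, isolate the factor $\beta_{i,x_0}/\beta_{i,n}$ as the only one depending on $G_i^{(y_0+1)}$, and translate the escape of that ratio into a statement about which limiting hyperplane the vector $\beta_{i,x_0}f_{x_0}+\beta_{i,n}h_{z_0}$ aligns with. The subsequential framing at the end is a legitimate variation (the paper avoids subsequences by constructing the limit flag directly), and the coefficient bookkeeping and the translation to incidence statements are fine once the prerequisites are in place.

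The gap is in the step where you claim that Lemma \ref{triangle lemma} ``applied inductively'' together with the hypothesis that $\lim_i T_{x,y,z}(F_i,G_i,H_i)$ is a nonzero real for $y<y_0$ forces $\lim_i G_i^{(y)}$ to exist and to be transverse to $F$ and $H$. Lemma \ref{triangle lemma} is a pure uniqueness statement: it says that if two flag triples share $F$, $H$, $G^{(\leq y)}$ and have equal triple ratios $T_{x,y,z}$, then their $G^{(y+1)}$'s coincide. It says nothing about convergence or about genericity of a subsequential limit. After passing to a subsequence where $G_i^{(y)}$ converges in the (compact) Grassmannian, there is no a priori reason the limit is in general position with respect to $F$ and $H$; a priori both numerator and denominator of a given triple ratio could tend to $0$ at comparable rates while the ratio still converges to a nonzero real. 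Your formula for $T_{x_0,y_0,z_0}$ and the claim that the $\alpha$- and $\gamma$-factors converge to finite nonzero limits both rest on this unestablished genericity. The paper handles exactly this point differently: it invokes the \emph{existence} half of the Fock--Goncharov parameterization (Sections 9.7--9.8 of \cite{FocGon1}) to build a genuinely generic limit flag $G$ with prescribed triple ratios (the limits for $y<y_0$, and $1$ for $y\geq y_0$), builds auxiliary flags $G_i'$ with the same normalization, uses Lemma \ref{triangle lemma} to get $G_i'^{(y)}=G_i^{(y)}$ for $y\leq y_0$, and only then concludes $G_i^{(y)}\to G^{(y)}$. That detour through $G$ and $G_i'$ is precisely what supplies the genericity and convergence you are assuming. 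Your proof can be repaired by importing that construction (or by giving an independent properness argument that nonzero limits of all triple ratios with $y<y_0$ rule out degenerate subsequential limits), but as written that step is missing.
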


\begin{proof}
Let $F^{(k)}:=F_i^{(k)}$ and $H^{(k)}:=H_i^{(k)}$ for all $k=1,\dots,n$. Also, for all positive integers $i$, let $G$ be the flag in $\Fpc(\Rbbb^n)$ so that the following hold: 
\begin{itemize}
\item $(F,G,H)$ is a generic triple of flags
\item $G^{(1)}=G_i^{(1)}$ for all positive integers $i$,
\item $\displaystyle T_{x,y,z}(F,G,H)=\lim_{i\to\infty}T_{x,y,z}(F_i,G_i,H_i)$
for all $(x,y,z)$ in $\Apc$ with $y<y_0$,
\item $T_{x,y,z}(F,G,H)=1$
for all $(x,y,z)$ in $\Apc$ with $y\geq y_0$.
\end{itemize}
The flag $G$ exist by Section 9.7 and 9.8 of Fock-Goncharov \cite{FocGon1} (alternatively, see Lemma 2.3.7 of \cite{Zha2}). Similarly, for any positive integer $i$, let $G'_i$ be the flag in $\Fpc(\Rbbb^n)$ so that the following hold: 
\begin{itemize}
\item $(F_i,G'_i,H_i)$ is a generic triple of flags,
\item ${G'_i}^{(1)}=G^{(1)}$ for all positive integers $i$,
\item $\displaystyle T_{x,y,z}(F_i,G'_i,H_i)=T_{x,y,z}(F_i,G_i,H_i)$
for all $(x,y,z)$ in $\Apc$ with $y<y_0$,
\item $T_{x,y,z}(F_i,G'_i,H_i)=1$
for all $(x,y,z)$ in $\Apc$ with $y\geq y_0$.
\end{itemize}

By Lemma \ref{triangle lemma}, we know that $G_i'^{(y)}=G_i^{(y)}$ for $y=1,\dots, y_0$, so the continuity of the triple ratio ensures that
\[G^{(y)}=\lim_{i\to\infty}G_i^{(y)}.\]
As in the proof of Lemma \ref{triangle lemma}, choose a basis
\[\{f_1,\dots,f_{x_0},g_1,\dots,g_{y_0},h_1,\dots,h_{z_0}\}\]
for $\Rbbb^n$ so that 
\begin{align*}
F^{(k)}&=\Span\{f_1,\dots,f_k\}\text{ for all }k=1,\dots,x_0,\\ 
G^{(k)}&=\Span\{g_1,\dots,g_k\}\text{ for all }k=1,\dots,y_0,\\ 
H^{(k)}&=\Span\{h_1,\dots,h_k\}\text{ for all }k=1,\dots,z_0.
\end{align*} 
Also, let $g_{i,1},\dots,g_{i,y_0}$ be vectors in $\Rbbb^n$ so that $G_i^{(k)}=\Span\{g_{i,1},\dots,g_{i,k}\}$ for all $k=1,\dots, y_0$ and $\displaystyle\lim_{i\to\infty}g_{i,j}=g_j$ for $j=1,\dots,y_0$. Observe that for all $i$, 
\[\{f_1,\dots,f_{x_0},g_{i,1},\dots,g_{i,y_0},h_1,\dots,h_{z_0}\}\]
is also a basis for $\Rbbb^n$. 

Let $f_{x_0+1}$, $g_{i,y_0+1}$, $h_{z_0+1}$ be vectors so that $F^{(x_0+1)}=\Span\{f_1,\dots,f_{x_0+1}\}$, $G_i^{(y_0+1)}=\Span\{g_{i,1},\dots,g_{i,y_0+1}\}$, $H^{(z_0+1)}=\Span\{h_1,\dots,h_{z_0+1}\}$. For all positive integers $i$ and for all $j=1,\dots,n$, let $\alpha_j$, $\gamma_j$ be real numbers so that
\[f_{x_0+1}:=\sum_{j=1}^{x_0}\alpha_jf_j+\sum_{j=1}^{y_0}\alpha_{x_0+j}g_j+\sum_{j=1}^{z_0}\alpha_{x_0+y_0+j}h_j\] 
\[h_{z_0+1}:=\sum_{j=1}^{x_0}\gamma_jf_j+\sum_{j=1}^{y_0}\gamma_{x_0+j}g_j+\sum_{j=1}^{z_0}\gamma_{x_0+y_0+j}h_j.\]
Also, let $\alpha_{i,j}$, $\beta_{i,j}$, $\gamma_{i,j}$ be real numbers so that
\[f_{x_0+1}:=\sum_{j=1}^{x_0}\alpha_{i,j}f_j+\sum_{j=1}^{y_0}\alpha_{i,x_0+j}g_{i,j}+\sum_{j=1}^{z_0}\alpha_{i,x_0+y_0+j}h_j\] 
\[g_{i,y_0+1}:=\sum_{j=1}^{x_0}\beta_{i,j}f_j+\sum_{j=1}^{y_0}\beta_{i,x_0+j}g_{i,j}+\sum_{j=1}^{z_0}\beta_{i,x_0+y_0+j}h_j\] 
\[h_{z_0+1}:=\sum_{j=1}^{x_0}\gamma_{i,j}f_j+\sum_{j=1}^{y_0}\gamma_{i,x_0+j}g_{i,j}+\sum_{j=1}^{z_0}\gamma_{i,x_0+y_0+j}h_{i,j},\]

Using Equation (\ref{triple ratio equation}), we have that 
\begin{equation*}
T_{x_0,y_0,z_0}(F_i,G_i,H_i)=-\frac{\alpha_{i,n}}{\alpha_{i,x_0+y_0}}\cdot\frac{\beta_{i,x_0}}{\beta_{i,n}}\cdot\frac{\gamma_{i,x_0+y_0}}{\gamma_{i,x_0}}.
\end{equation*}
Since $\displaystyle\lim_{i\to\infty} g_{i,j}=g_j$ for $j=1,\dots,y_0$, we see that $\displaystyle\lim_{i\to\infty}\alpha_{i,j}=\alpha_j$ and $\displaystyle\lim_{i\to\infty}\gamma_{i,j}=\gamma_j$ for all $j=1,\dots,n$. Note that $\alpha_n$, $\alpha_{x_0+y_0}$, $\gamma_{x_0+y_0}$ and $\gamma_{x_0}$ are all non-zero real numbers, so $\displaystyle\lim_{i\to\infty}T_{x_0,y_0,z_0}(F_i,G_i,H_i)=\infty$ if and only if $\displaystyle\lim_{i\to\infty}\bigg|\frac{\beta_{i,x_0}}{\beta_{i,n}}\bigg|=\infty$ and $\displaystyle\lim_{i\to\infty}T_{x_0,y_0,z_0}(F_i,G_i,H_i)=0$ if and only if $\displaystyle\lim_{i\to\infty}\bigg|\frac{\beta_{i,x_0}}{\beta_{i,n}}\bigg|=0$. This implies the proposition.
\end{proof}


\section{Shear-triangle parameterization}\label{Shear-triangle parameterization}

In the first parts of this section, we will briefly describe a particular case of what we call the shear-triangle parameterization of $Hit_n(S)$ given by Bonahon-Dreyer \cite{BonDre1}. A version of this parameterization can also be found in the monumental work of Fock-Goncharov \cite{FocGon1}, though in a much less explicit form. We will also give a geometric interpretation of the parameters in terms of flags. After that, we slightly modify this parameterization to obtain a parameterization of $Hit_n(S)$ that is more explicitly analogous to the Fenchel-Nielsen coordinates on $Hit_2(S)$ and the Goldman parameterization \cite{Gol1} on $Hit_3(S)$. 

\subsection{Ideal triangulations}\label{Ideal triangulations}

One main ingredient needed to describe the shear-triangle parameterization is an ideal triangulation of the surface $S$. Here, we will give a description of ideal triangulations of $S$ in terms of $\partial_\infty\Gamma$.

\begin{notation}
Denote by $\partial_\infty\Gamma^{(2)}$ and $\partial_\infty\Gamma^{[2]}$ the set of ordered and unordered pairs of distinct points in $\partial_\infty\Gamma$ respectively.
\end{notation}

We say $\{a,b\}$ and $\{c,d\}$ in $\partial_\infty\Gamma^{[2]}$ \emph{intersect} if neither of the closed subsegments of $\partial_\infty\Gamma$ with endpoints $a$, $b$ contain both $c$ and $d$.

\begin{definition}
An (undirected) \emph{ideal triangulation} of the universal cover $\Std$ of $S$ is a maximal $\Gamma$-invariant subset $\widetilde{\Tpc}$ of $\partial_\infty\Gamma^{[2]}$ such that the following hold:
\begin{enumerate}
\item Any two pairs $\{a,b\}$, $\{c,d\}$ in $\widetilde{\Tpc}$ do not intersect.
\item For any $\{a,b\}$ in $\widetilde{\Tpc}$, either one of the following must hold:
\begin{itemize}
\item There is some $c$ in $\partial_\infty\Gamma$ such that $\{b,c\}$ and $\{c,a\}$ both lie in $\widetilde{\Tpc}$.
\item There is some $X$ in $\Gamma$ such that $\{a,b\}$ is the set of fixed points of $X$.
\end{itemize}
\end{enumerate}
An \emph{ideal triangulation} of $S$ is the quotient of an ideal triangulation of $\Std$ by $\Gamma$. If $\widetilde{\Tpc}$ is an ideal triangulation of $\Std$, then we denote by $\Tpc$ its quotient by $\Gamma$.
\end{definition}

If $\{a,b\}$ in $\widetilde{\Tpc}$ is the set of fixed points of some $X$ in $\Gamma$, then we call $\{a,b\}$ a \emph{closed leaf}. Also, $[a,b]$ in $\Tpc$ is called a \emph{closed leaf} if some (or equivalently, all) of its representatives in $\widetilde{\Tpc}$ are closed leaves. By a \emph{triangle} in $\widetilde{\Tpc}$, we mean a subset of $\widetilde{\Tpc}$ that is of the form $\{\{a,b\},\{b,c\},\{c,a\}\}$, where $a,b,c$ are points in $\partial_\infty\Gamma$. Each of the three pairs in any triangle is called an \emph{edge} of that triangle, and a point in any edge is called a \emph{vertex} of that edge. Also, we say that two triangles in $\widetilde{\Tpc}$ are \emph{adjacent} if they share a common edge. We will denote by $\widetilde{\Delta}=\widetilde{\Delta}_{\widetilde{\Tpc}}$ the set of triangles in $\widetilde{\Tpc}$. There is an obvious $\Gamma$-action on $\widetilde{\Delta}$, so we can consider
\[\Delta=\Delta_\Tpc:=\widetilde{\Delta}/\Gamma\]
and call any element in $\Delta$ a \emph{triangle} in $\Tpc$. An \emph{edge} of a triangle $T$ in $\Delta$ is an element $e$ in $\Tpc$ so that $T$ has a representative (in $\widetilde{\Delta}$) which has a representative (in $\widetilde{\Tpc}$) of $e$ as an edge. As before, we say two triangles in $\Delta$ are \emph{adjacent} if they share an edge, or equivalently, if they have adjacent representatives in $\widetilde{\Delta}$.

If we choose a marked complete hyperbolic structure on $S$, then there is a canonical $\Gamma$-equivariant identification of $\partial_\infty\Gamma$ with the boundary of the Poincar\'e disc, $\partial\Dbbb$. The ideal triangulation $\widetilde{\Tpc}$ then gives us an ideal triangulation of $\Dbbb$ (in the classical sense) by assigning to each pair $\{a,b\}$ in $\widetilde{\Tpc}\subset(\partial\Dbbb\times\partial\Dbbb)/\Zbbb_2$ the unique geodesic in $\Dbbb$ between $a$ and $b$. Moreover, this ideal triangulation is $\Gamma$-invariant, so $\Tpc$ can be thought of as an ideal triangulation (in the classical sense) of $S$ equipped with the marked hyperbolic structure. 

\subsection{A special ideal triangulation}\label{A special ideal triangulation}

In \cite{BonDre1}, Bonahon-Dreyer gave an explicit description of the shear-triangle parameterization of $Hit_n(S)$ for any closed surface $S$ using the Frenet curve $\xi$ guaranteed by Theorem \ref{Guichard, Labourie} and the notion of positive configurations of flags developed by Fock-Goncharov \cite{FocGon1}. To specify this parameterization, one needs to first choose an ideal triangulation of the surface $S$. For our purposes, we will only be considering this parameterization for a particular ideal triangulation, which we will now describe.

{\bf For the rest of this paper, fix a pants decomposition $\Ppc$ for $S$, i.e. a maximal, pairwise disjoint, pairwise non-homotopic collection of simple closed curves in $S$.} This pants decomposition cuts $S$ into finitely many pairs of pants, $P_1,\dots,P_{2g-2}$. For each of these pairs of pants $P_j$, consider primitive conjugacy classes $[A_j]$, $[B_j]$ and $[C_j]$ in $\Gamma$ that correspond to the three boundary components  of $P_j$, oriented so that $P_j$ lies on the left of each of these boundary components. For each $j$, consider $A_j\in[A_j]$, $B_j\in[B_j]$ and $C_j=A_j^{-1}B_j^{-1}\in[C_j]$. 

The action of any non-identity element $X$ in $\Gamma$ on $\partial_\infty\Gamma$ has a repelling and attracting fixed point. Let $a_j^-$, $b_j^-$, $c_j^-$ be the repelling fixed points and $a_j^+$, $b_j^+$, $c_j^+$ be the attracting fixed points of $A_j$, $B_j$, $C_j$ respectively. Let $\widetilde{\Qpc}_j$ and $\widetilde{\Ppc}_j$ be the subsets  of $\partial_\infty\Gamma^{[2]}$ defined by
\begin{align*}
&\widetilde{\Qpc}_j:=\bigcup_{X\in\Gamma}\{X\cdot\{b_j^-,a_j^-\}, X\cdot\{a_j^-,c_j^-\}, X\cdot\{c_j^-,b_j^-\}\},\\
&\widetilde{\Ppc}_j:=\bigcup_{X\in\Gamma}\{X\cdot\{a_j^-,a_j^+\},X\cdot\{b_j^-,b_j^+\},X\cdot\{c_j^-,c_j^+\}\},
\end{align*}
and let
\begin{align*}
&\widetilde{\Qpc}:=\bigcup_{j=1}^{2g-2}\widetilde{\Qpc}_j,\\
&\widetilde{\Ppc}:=\bigcup_{j=1}^{2g-2}\widetilde{\Ppc}_j.
\end{align*}
One can check that $\widetilde{\Qpc}$ and $\widetilde{\Ppc}$ are disjoint, and $\widetilde{\Qpc}\cup\widetilde{\Ppc}$ is an ideal triangulation of $\Std$. {\bf For the rest of this paper, we denote this particular ideal triangulation by $\widetilde{\Tpc}$ and let $\Tpc$ be the quotient of $\widetilde{\Tpc}$ by $\Gamma$.} Since $\widetilde{\Qpc}$ and $\widetilde{\Ppc}$ are $\Gamma$-invariant, we can define $\Ppc:=\widetilde{\Ppc}/\Gamma$ and $\Qpc:=\widetilde{\Qpc}/\Gamma$. It is easy to see that $\Ppc$ is the pants decomposition we chose for $S$, $\Tpc=\Ppc\cup\Qpc$ and $\Ppc$ is exactly the set of closed leaves in $\Tpc$. (See Figure \ref{Q_e} for a picture of $\Tpc$ restricted to a pair of pants given by $\Ppc$.)

We will now state some easily verified properties of $\Tpc$. Since we can realize $\Tpc$ as an ideal triangulation (in the classical sense) of $S$ by choosing a hyperbolic metric on $S$, the Gauss-Bonnet theorem tells us that the cardinalities of $\Delta$, $\Tpc$, $\Ppc$ and $\Qpc$ are $4g-4$, $9g-9$, $3g-3$ and $6g-6$ respectively. 

\begin{figure}
\includegraphics[scale=0.5]{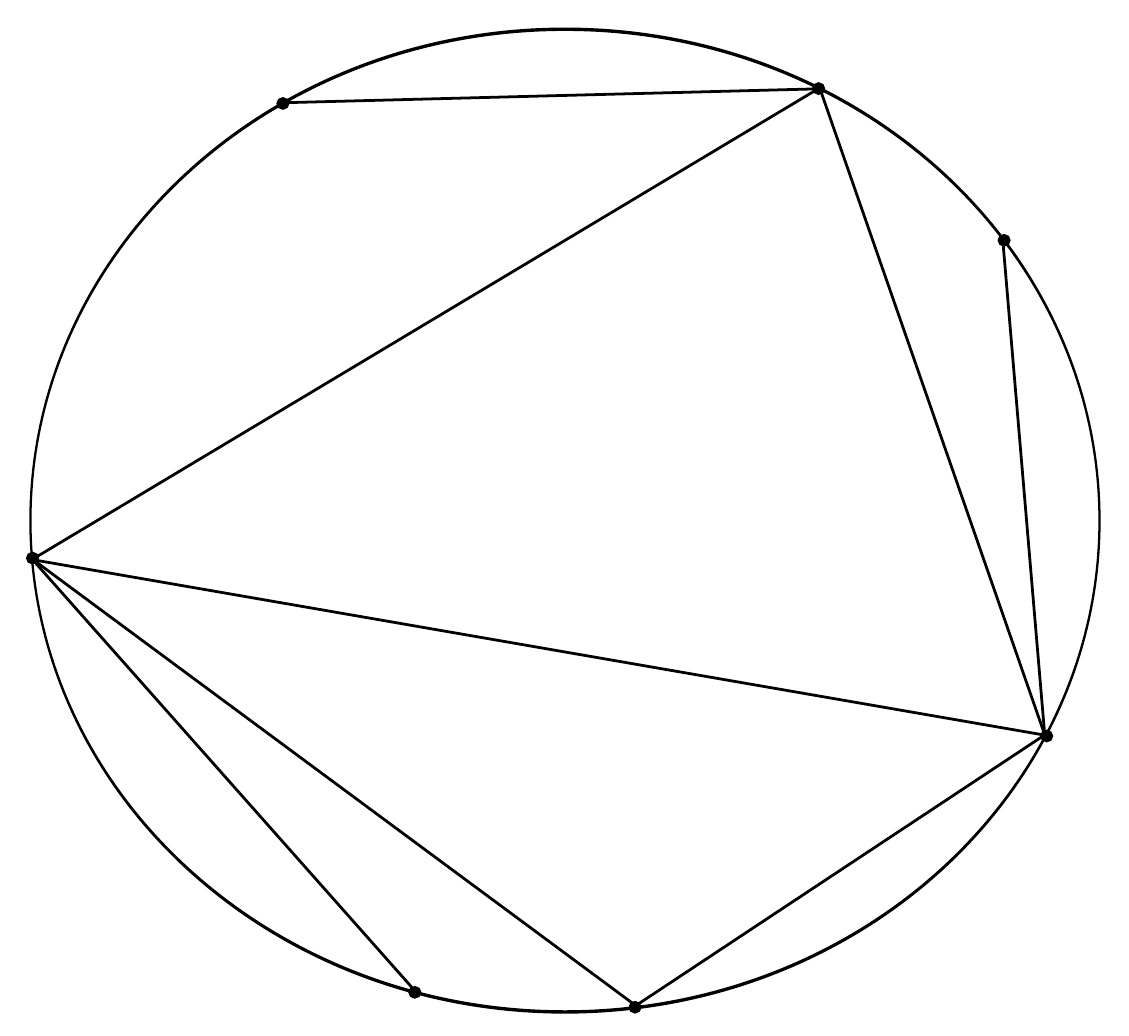}
\put (-369, 139){\makebox[0.7\textwidth][r]{$c_j^+$ }}
\put (-283, 140){\makebox[0.7\textwidth][r]{$c_j^-$ }}
\put (-255, 117){\makebox[0.7\textwidth][r]{$b_j^+$ }}
\put (-250, 35){\makebox[0.7\textwidth][r]{$b_j^-$ }}
\put (-300, -6){\makebox[0.7\textwidth][r]{$A_j\cdot c_j^-$ }}
\put (-345, -5){\makebox[0.7\textwidth][r]{$a_j^+$ }}
\put (-408, 70){\makebox[0.7\textwidth][r]{$a_j^-$ }}
\caption{$a_j^-$, $b_j^-$, $c_j^-$, $a_j^+$, $b_j^+$, $c_j^+$, $A_j\cdot c_j^-$ in $\partial_\infty\Gamma$.}
\label{standard}
\end{figure}

Define $\Qpc_j:=\widetilde{\Qpc}_j/\Gamma$ and observe that for any $j=1,\dots,2g-2$, $\Qpc_j$ has exactly three elements, which are the $\Gamma$ orbits of $\{b_j^-,a_j^-\}$, $\{a_j^-,c_j^-\}$ and $\{c_j^-,b_j^-\}$. Moreover, a triangle in $\Delta$ has an edge in $\Qpc_j$ if and only if all its edges lie in $\Qpc_j$. Furthermore, there are exactly two triangles in $\Delta$ with edges in $\Qpc_j$, and we can describe them explicitly. One of them, denoted $T_j$, is the $\Gamma$-orbit of the triangle $\{\{b_j^-,a_j^-\}, \{a_j^-,c_j^-\}, \{c_j^-,b_j^-\}\}$ and the other, denoted $T_j'$, is the $\Gamma$-orbit triangle $\{\{b_j^-,a_j^-\},\{a_j^-,A_j\cdot c_j^-\},\{A_j\cdot c_j^-,b_j^-\}\}$. (See Figure \ref{standard}.) The triangles $T_j$ and $T_j'$ share all their edges, and any adjacent pair of triangles in $\Delta$ is the pair $T_j$, $T_j'$ for some $j$. 

\subsection{Triangle invariants}\label{Triangle invariants}

In their description of the shear-triangle parameterization, Bonahon-Dreyer \cite{BonDre1} used two different kinds of parameters which they call the \emph{triangle invariants} and the \emph{shear invariants}. We will spend the next two subsections studying these invariants in detail. In this subsection, we describe the triangle invariants and what they mean geometrically. 

Choose any $\rho$ in $Hit_n(S)$ and let $\xi$ be the Frenet curve for $\rho$. For each $j=1,\dots,2g-2$, assign to each $(x,y,z)$ in $\Apc$ the real numbers $\tau_{(x,y,z),j}(\rho)$ and $\tau'_{(x,y,z),j}(\rho)$ given by the formulas
\begin{align*}
&\tau_{(x,y,z),j}(\rho):=\log\bigg(T_{x,z,y}(\xi(a_j^-),\xi(c_j^-),\xi(b_j^-))\bigg),\\
&\tau'_{(x,y,z),j}(\rho):=\log\bigg(T_{x,y,z}(\xi(a_j^-),\xi(b_j^-),\xi(A_j\cdot c_j^-))\bigg).
\end{align*}
The positivity condition on these flags described in Section 7.1 and 7.2 of Fock-Goncharov \cite{FocGon1} (alternatively, see Lemma 8.4.2 of Labourie-McShane \cite{LabMcS1} or Proposition 2.4.7 of \cite{Zha2}) ensures that 
\[T_{x,z,y}(\xi(a_j^-),\xi(c_j^-),\xi(b_j^-))\text{ and }T_{x,y,z}(\xi(a_j^-),\xi(b_j^-),\xi(A_j\cdot c_j^-))\] 
are positive, so $\tau'_{(x,y,z),j}(\rho)$ and $\tau_{(x,y,z),j}(\rho)$ are well-defined real numbers.

We can do this for every $\rho$ in $Hit_n(S)$, so $\tau'_{(x,y,z),j}$ and $\tau_{(x,y,z),j}$ can be viewed as functions 
\[\tau'_{(x,y,z),j},\tau_{(x,y,z),j}:Hit_n(S)\to\Rbbb.\]
Henceforth, we will call 
\[\Apc_j:=\{\tau_{(x,y,z),j}:(x,y,z)\in\Apc\}\text{ and }\Apc'_j:=\{\tau'_{(x,y,z),j}:(x,y,z)\in\Apc\}\] 
the set of \emph{triangle invariants for $T_j$ and $T_j'$} respectively. Also, we will call $\Apc_j\cup\Apc'_j$ the set of \emph{triangle invariants for $P_j$}. By Section 9.7 and 9.8 of Fock-Goncharov \cite{FocGon1} (see Lemma 2.3.7 of \cite{Zha2} for an alternative proof), $\Apc_j$ determines the triple of flags $(\xi(a_j^-), \xi(c_j^-), \xi(b_j^-))$ and $\Apc'_j$ determines the triple of flags $(\xi(a_j^-), \xi(b_j^-), \xi(A_j\cdot c_j^-))$ up to the action of $PSL(n,\Rbbb)$.

\subsection{Shear invariants}\label{Shear invariants}
The goal of this subsection is to describe the shear invariants, which are the second kind of invariants used in the shear-triangle parameterization. As before, let $\rho$ be a representation in $Hit_n(S)$ and let $\xi$ be the corresponding Frenet curve. We will use the set 
\[\Cpc:=\{(x,y,z)\in(\Zbbb_{\geq 0})^3:x+y+z=n\text{ and exactly one of }x,y,z\text{ is }0\}\]
to label the shear invariants. 

For each $j=1,\dots,2g-2$, assign to each vertex $(x,y,z)$ in $\Cpc$ the number $\sigma_{(x,y,z),j}(\rho)$ given by the formulas
\begin{align*}
&\sigma_{(x,y,0),j}(\rho):=\log\bigg(-(\xi(a_j^-),\xi(c_j^-),\xi(A_j\cdot c_j^-),\xi(b_j^-))_{\xi(a_j^-)^{(x-1)}+\xi(b_j^-)^{(y-1)}}\bigg),\\
&\sigma_{(x,0,z),j}(\rho):=\log\bigg(-(\xi(c_j^-),\xi(b_j^-),\xi(C_j\cdot b_j^-),\xi(a_j^-))_{\xi(c_j^-)^{(z-1)}+\xi(a_j^-)^{(x-1)}}\bigg),\\
&\sigma_{(0,y,z),j}(\rho):=\log\bigg(-(\xi(b_j^-),\xi(a_j^-),\xi(B_j\cdot a_j^-),\xi(c_j^-))_{\xi(b_j^-)^{(y-1)}+\xi(c_j^-)^{(z-1)}}\bigg),
\end{align*} 
if $z=0$, $y=0$ or $x=0$ respectively.
The cross ratio notation used in these formulas are as described in Notation \ref{cross ratio notation}. These invariants are called the \emph{shear invariants}. Just as in the case of the triangle invariants, Fock-Goncharov proved in Sections 7.1 and 7.2 of \cite{FocGon1} (alternatively, Lemma 8.4.2 of Labourie-McShane \cite{LabMcS1} or Proposition 2.4.7 of \cite{Zha2}) that 
\begin{align*}
&-(\xi(a_j^-),\xi(c_j^-),\xi(A_j\cdot c_j^-),\xi(b_j^-))_{\xi(a_j^-)^{(x-1)}+\xi(b_j^-)^{(y-1)}},\\
&-(\xi(c_j^-),\xi(b_j^-),\xi(C_j\cdot b_j^-),\xi(a_j^-))_{\xi(c_j^-)^{(z-1)}+\xi(a_j^-)^{(x-1)}},\\
&-(\xi(b_j^-),\xi(a_j^-),\xi(B_j\cdot a_j^-),\xi(c_j^-))_{\xi(b_j^-)^{(y-1)}+\xi(c_j^-)^{(z-1)}}
\end{align*}
are positive, so the shear invariants are well-defined. By allowing $\rho$ to vary over $Hit_n(S)$, we can view each shear invariant as a real valued function on $Hit_n(S)$.

We should think of the sets of invariants $\Cpc_{z,j}:=\{\sigma_{(x,y,0),j}:x+y=n\}$, $\Cpc_{y,j}:=\{\sigma_{(x,0,z),j}:x+z=n\}$ and $\Cpc_{x,j}:=\{\sigma_{(0,y,z),j}:y+z=n\}$ as being associated to the edges $[a_j^-,b_j^-]$, $[a_j^-,c_j^-]$, $[b_j^-,c_j^-]$ respectively. Geometrically, $\Cpc_{z,j}$ determines the pair of flags $\xi(a_j^-)$, $\xi(b_j^-)$ and the lines $\xi(c_j^-)^{(1)}$, $\xi(A_j\cdot c_j^-)^{(1)}$ up to $PSL(n,\Rbbb)$. To see this, choose the normalization so that 
\begin{itemize}
\item for all $k=1,\dots,n$, $\xi(a_j^-)^{(k)}=\Span\{e_1,\dots,e_k\}$, 
\item for all $k=1,\dots,n$, $\xi(b_j^-)^{(k)}=\Span\{e_n,\dots,e_{n-k+1}\}$ 
\item $\displaystyle\xi(c_j^-)^{(1)}=\Big[\sum_{i=1}^ne_i\Big]$,
\end{itemize} 
where $\{e_1,\dots,e_n\}$ is the standard basis of $\Rbbb^n$. We thus need to show that we can recover $\xi(A_j\cdot c_j^-)^{(1)}$ from $\Cpc_{z,j}$.

By Proposition \ref{cross ratio configuration}, the shear invariant $\sigma_{x,y,0}$ determines the $(n-1)$-dimensional subspace $\xi(a_j^-)^{(x-1)}+\xi(b_j^-)^{(y-1)}+\xi(A_j\cdot c_j^-)^{(1)}$ of $\Rbbb^n$. Thus, the $n-1$ elements in $\Cpc_{z,j}$ determine $n-1$ different $(n-1)$-dimensional subspaces of $\Rbbb^n$, all of which contain $\xi(A_j\cdot c_j^-)^{(1)}$. The Frenet property of $\xi$ ensures that the intersection of these $n-1$ subspaces is $1$-dimensional, so it has to be $\xi(A_j\cdot c_j^-)^{(1)}$. Similarly, $\Cpc_{y,j}$ determines the flags $\xi(c_j^-)$, $\xi(a_j^-)$ and the lines $\xi(b_j^-)^{(1)}$, $\xi(C_j\cdot b_j^-)^{(1)}$ up to $PSL(n,\Rbbb)$ while $\Cpc_{x,j}$ determines the flags $\xi(b_j^-)$, $\xi(c_j^-)$ and the lines $\xi(a_j^-)^{(1)}$, $\xi(B_j\cdot a_j^-)^{(1)}$ up to $PSL(n,\Rbbb)$.

\subsection{Parameterizing the Hitchin component}\label{Parameterizing the Hitchin component}

To obtain the shear-triangle parameterization, Bonahon-Dreyer found linear relations between the shear and triangle invariants,which we will now describe. 

As before, let $\rho$ be a representation in $Hit_n(S)$ and $\xi$ the corresponding Frenet curve. By a theorem of Labourie (Theorem 1.5 of \cite{Lab1}), we know that for every non-identity element $X$ in $\Gamma$, $\rho(X)$ is diagonalizable with real eigenvalues that have pairwise distinct norms. In particular, $\lambda(\rho(X))$, which is the image of $\rho(X)$ under the Jordan projection, lies in $\amf^+$. For any $k=1,\dots,n-1$ and any $j=1,\dots,2g-2$, Bonahon-Dreyer then established the following equalities (see Proposition 13 of \cite{BonDre1}):
\begin{eqnarray}
&&\lambda_k(A_j)-\lambda_{k+1}(A_j)\label{alpha equation}\\
&&\hspace{1.8cm}=\sigma_{(n-k,k,0),j}+\sigma_{(n-k,0,k),j}+\sum_{i=1}^{k-1}(\tau_{(n-k,i,k-i),j}+\tau'_{(n-k,i,k-i),j}),\nonumber\\
&&\lambda_k(B_j)-\lambda_{k+1}(B_j)\label{beta equation}\\
&&\hspace{1.8cm}=\sigma_{(0,n-k,k),j}+\sigma_{(k,n-k,0),j}+\sum_{i=1}^{k-1}(\tau_{(k-i,n-k,i),j}+\tau'_{(k-i,n-k,i),j}),\nonumber\\
&&\lambda_k(B_j)-\lambda_{k+1}(B_j)\label{gamma equation}\\
&&\hspace{1.8cm}=\sigma_{(k,0,n-k),j}+\sigma_{(0,k,n-k),j}+\sum_{i=1}^{k-1}(\tau_{(i,k-i,n-k),j}+\tau'_{(i,k-i,n-k),j}).\nonumber
\end{eqnarray}
The sum on the right hand side of Equations (\ref{alpha equation}), (\ref{beta equation}) and (\ref{gamma equation}) is the sum of the numbers assigned to all the points in $\Apc\cup\Cpc$ that lie on the $x=k$, $y=k$ and $z=k$ plane respectively. (See Figure \ref{triangle}.) These equations immediately imply that the sums on the right hand side have to be positive. Doing this over every pair of pants given by the pants decomposition gives us $(3n-3)(2g-2)$ linear inequalities involving the shear and triangle invariants. These inequalities are called the \emph{closed leaf inequalities}.

\begin{figure}
\includegraphics[scale=0.65]{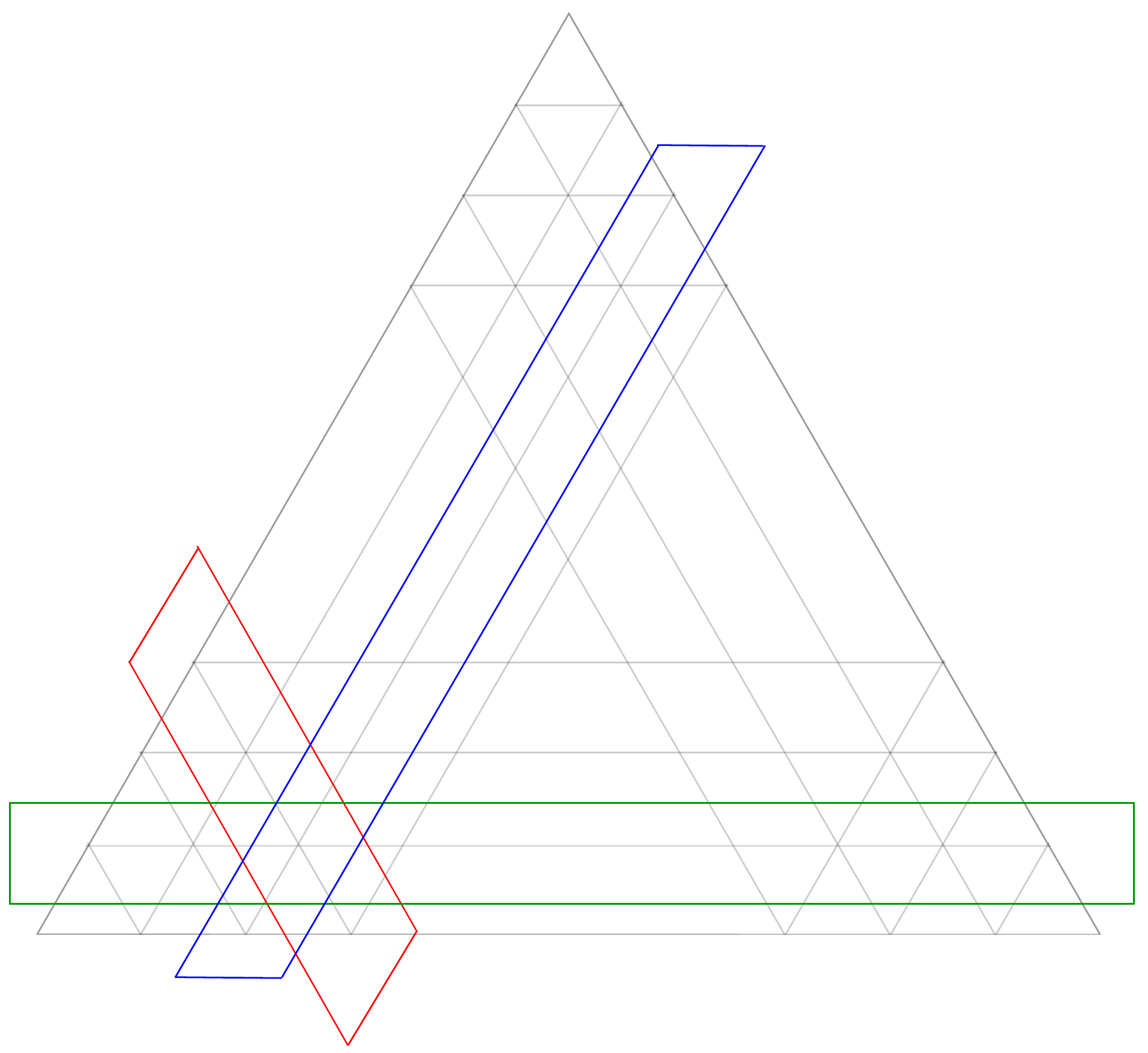}
\put (-432, 291){\makebox[0.7\textwidth][r]{\tiny$(1,0,n-1)$ }}
\put (-392, 291){\makebox[0.7\textwidth][r]{\tiny$(0,1,n-1)$ }}
\put (-452, 263){\makebox[0.7\textwidth][r]{\tiny$(2,0,n-2)$ }}
\put (-412, 263){\makebox[0.7\textwidth][r]{\tiny$(1,1,n-2)$ }}
\put (-372, 263){\makebox[0.7\textwidth][r]{\tiny$(0,2,n-2)$ }}
\put (-472, 235){\makebox[0.7\textwidth][r]{\tiny$(3,0,n-3)$ }}
\put (-432, 235){\makebox[0.7\textwidth][r]{\tiny$(2,1,n-3)$ }}
\put (-392, 235){\makebox[0.7\textwidth][r]{\tiny$(1,2,n-3)$ }}
\put (-352, 235){\makebox[0.7\textwidth][r]{\tiny$(0,3,n-3)$ }}
\put (-530, 116){\makebox[0.7\textwidth][r]{\tiny$(n-3,0,3)$ }}
\put (-550, 88){\makebox[0.7\textwidth][r]{\tiny$(n-2,0,2)$ }}
\put (-510, 88){\makebox[0.7\textwidth][r]{\tiny$(n-3,1,2)$ }}
\put (-570, 60){\makebox[0.7\textwidth][r]{\tiny$(n-1,0,1)$ }}
\put (-530, 60){\makebox[0.7\textwidth][r]{\tiny$(n-2,1,1)$ }}
\put (-490, 60){\makebox[0.7\textwidth][r]{\tiny$(n-3,2,1)$ }}
\put (-550, 32){\makebox[0.7\textwidth][r]{\tiny$(n-1,1,0)$ }}
\put (-510, 32){\makebox[0.7\textwidth][r]{\tiny$(n-2,2,0)$ }}
\put (-470, 32){\makebox[0.7\textwidth][r]{\tiny$(n-3,3,0)$ }}
\put (-295, 116){\makebox[0.7\textwidth][r]{\tiny$(0,n-3,3)$ }}
\put (-315, 88){\makebox[0.7\textwidth][r]{\tiny$(1,n-3,2)$ }}
\put (-275, 88){\makebox[0.7\textwidth][r]{\tiny$(0,n-2,2)$ }}
\put (-335, 60){\makebox[0.7\textwidth][r]{\tiny$(2,n-3,1)$ }}
\put (-295, 60){\makebox[0.7\textwidth][r]{\tiny$(1,n-2,1)$ }}
\put (-255, 60){\makebox[0.7\textwidth][r]{\tiny$(0,n-1,1)$ }}
\put (-355, 32){\makebox[0.7\textwidth][r]{\tiny$(3,n-3,0)$ }}
\put (-315, 32){\makebox[0.7\textwidth][r]{\tiny$(2,n-2,0)$ }}
\put (-275, 32){\makebox[0.7\textwidth][r]{\tiny$(1,n-1,0)$ }}
\put (-559, 140){\makebox[0.7\textwidth][r]{$(3.2)$ }}
\put (-350, 273){\makebox[0.7\textwidth][r]{$(3.3)$ }}
\put (-240, 83){\makebox[0.7\textwidth][r]{$(3.4)$ }}
\caption{Shear and triangle invariants}
\label{triangle}
\end{figure}

Now, pick any curve $\eta$ in $\Ppc$ and let $P_1$, $P_2$ be the two pairs of pants on either side of $\eta$. Assume without loss of generality that $[A_1]$ and $[A_2]=[A_1^{-1}]$ correspond to $\eta$. Thus, $\lambda_k(A_1)-\lambda_{k+1}(A_1)=\lambda_{n-k}(A_2)-\lambda_{n-k+1}(A_2)$ for all $k=1,\dots,n-1$. This implies the equality
\begin{align*}
&\sigma_{(n-k,k,0),1}+\sigma_{(n-k,0,k),1}+\sum_{i=1}^{k-1}(\tau_{(n-k,i,k-i),1}+\tau'_{(n-k,i,k-i),1})\\
&\hspace{2.4cm}=\sigma_{(k,n-k,0),2}+\sigma_{(k,0,n-k),2}+\sum_{i=1}^{n-k-1}(\tau_{(k,i,n-k-i),2}+\tau'_{(k,i,n-k-i),2}).
\end{align*}
Doing this for each curve in $\Ppc$, we have $(n-1)(3g-3)$ linear equations involving the shear and triangle invariants. These equations are known as the \emph{closed leaf equalities}.

Finally, for each $\eta$ in $\Ppc$, Bonahon-Dreyer also defined $(n-1)$ different numbers specifying how to ``glue" the structures on two pairs of pants together along $\eta$ if the boundary invariants for these two pairs of pants corresponding to $\eta$ are compatible. This gives us another $(3g-3)(n-1)$ \emph{gluing parameters}. (In \cite{BonDre1}, these are known as the \emph{shear invariants along closed leaves}.) However, we will not describe these parameters here as they will not matter for our purposes. 

Putting all of these together, Bonahon-Dreyer specified $(2g-2)(n^2-1)$ numbers associated to the pairs of pants given by $\Ppc$ (the shear and triangle invariants), $(3g-3)(n-1)$ numbers associated to each simple closed curve in $\Ppc$ (the gluing parameters), $(2g-2)(3n-3)$ closed leaf inequalities, and finally $(3g-3)(n-1)$ closed leaf equalities. They then proved (Theorem 2 of \cite{BonDre1}) that one can use this information to obtain a parameterization of $Hit_n(S)$.

\begin{thm}[Bonahon-Dreyer]\label{Bonahon-Dreyer}
The shear invariants, triangle invariants and gluing parameters give a real analytic diffeomorphism from $Hit_n(S)$ to a convex polytope in $\Rbbb^{(g-1)(2n^2+3n-5)}$ of dimension $(2g-2)(n^2-1)$ that is cut out by the $(3g-3)(n-1)$ closed leaf equalities and $(2g-2)(3n-3)$ closed leaf inequalities described above.
\end{thm}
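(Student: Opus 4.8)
The plan is to deduce this from the Fock--Goncharov parameterization of positive framed $PSL(n,\Rbbb)$-representations \cite{FocGon1} applied to the ideal triangulation $\Tpc$, reorganized as in Bonahon--Dreyer \cite{BonDre1}. First I would assemble the map: for $\rho\in Hit_n(S)$ with Frenet curve $\xi$ (Theorem \ref{Guichard, Labourie}), the triangle and shear invariants are logarithms of triple ratios and cross ratios of $\xi$-images of the ideal vertices of $\widetilde{\Tpc}$, and together with the gluing parameters they define a map $\Phi\colon Hit_n(S)\to\Rbbb^{(g-1)(2n^2+3n-5)}$; its well-definedness is exactly the positivity statements of Sections 7.1 and 7.2 of \cite{FocGon1} already quoted. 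To see that the image lies in the asserted polytope, recall that Equations (\ref{alpha equation})--(\ref{gamma equation}) express each eigenvalue gap $\lambda_k(\rho(A_j))-\lambda_{k+1}(\rho(A_j))$ as the sum of the invariants assigned to the points of $\Apc\cup\Cpc$ on a coordinate plane; since $\rho(A_j)$ is positive hyperbolic with eigenvalue moduli all distinct (Labourie, Theorem 1.5 of \cite{Lab1}) this sum is positive, giving the $(2g-2)(3n-3)$ closed leaf inequalities, while equating the two such expressions coming from the two pairs of pants adjacent to a curve of $\Ppc$ gives the $(3g-3)(n-1)$ closed leaf equalities. A direct count gives $(2g-2)(n^2-1)+(3g-3)(n-1)=(g-1)(2n^2+3n-5)$, and since the closed leaf equalities are linearly independent (each relates the otherwise disjoint blocks of variables of the two pants meeting along one curve of $\Ppc$) the affine subspace they cut out, hence the polytope, has dimension $(g-1)(2n^2+3n-5)-3(g-1)(n-1)=(2g-2)(n^2-1)$, which equals the dimension of $Hit_n(S)$.

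Next I would prove that $\Phi$ is a real-analytic bijection onto the polytope. \emph{Analyticity} is routine: the vertices of $\widetilde{\Tpc}$ are fixed points of elements of $\Gamma$, the attracting and repelling flags of $\rho(X)$ depend real-analytically on $\rho$, and triple and cross ratios are rational in flag coordinates, so $\Phi$ is a composition of analytic maps. \emph{Injectivity} is reconstruction along the triangulation: if $\Phi(\rho)=\Phi(\rho')$, conjugate so that $\xi$ and $\xi'$ agree on the three vertices of one triangle $T_{j_0}$; the triangle invariants of $T_{j_0}$ determine that triple of flags up to $PSL(n,\Rbbb)$ (Lemma \ref{triangle lemma} together with Sections 9.7 and 9.8 of \cite{FocGon1}), and the shear invariants on an edge, together with the triangle invariants of the triangle on the far side, determine the flags at the new vertices by Proposition \ref{cross ratio configuration} and the Frenet property of $\xi$ (this is the computation sketched just after the definition of the shear invariants); using the gluing parameters to cross the closed leaves of $\Tpc$, one propagates equality of $\xi$ and $\xi'$ to all vertices of $\widetilde{\Tpc}$, which are dense in $\partial_\infty\Gamma$, whence $\xi=\xi'$ and so $\rho=\rho'$.

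\emph{Surjectivity} runs the same propagation in reverse. Given a point of the polytope, start from a generic triple of flags realizing the triangle invariants of $T_{j_0}$ and successively adjoin the flags prescribed by the shear, triangle, and gluing parameters, obtaining a map from the vertices of $\widetilde{\Tpc}$ to $\Fpc(\Rbbb^n)$; the closed leaf equalities are precisely what makes the holonomy around each curve of $\Ppc$, hence around every loop, well defined, so this map is equivariant for a representation $\rho\colon\Gamma\to PSL(n,\Rbbb)$. Positivity of the triangle and shear invariants makes the configuration a positive configuration of flags in the sense of Fock--Goncharov, which limits continuously to a Frenet curve; by Theorem \ref{Guichard, Labourie}, $\rho\in Hit_n(S)$, and by construction $\Phi(\rho)$ is the chosen point. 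Finally, since this reconstruction is given by explicit rational and exponential formulas in the parameters it is itself real-analytic, and by injectivity and surjectivity it is a two-sided inverse of $\Phi$; therefore $\Phi$ is a real-analytic diffeomorphism from $Hit_n(S)$ onto the polytope.

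The main obstacle is the surjectivity step, i.e. building a $\Gamma$-equivariant positive flag configuration out of purely combinatorial data: one must check that positivity survives the infinite ``slithering'' across all triangles of $\widetilde{\Tpc}$, that the holonomies assemble into a bona fide group homomorphism, and that the developed flags converge to a continuous Frenet curve. This is exactly the machinery of Section 9 of \cite{FocGon1} and of \cite{BonDre1}, which I would invoke rather than reprove.
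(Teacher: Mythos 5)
The paper does not prove this theorem; it is cited verbatim as Theorem~2 of Bonahon--Dreyer \cite{BonDre1}, and the text preceding it in Section~\ref{Parameterizing the Hitchin component} is merely a recapitulation of how the closed leaf equalities and inequalities arise, not a proof of the parameterization. So there is no proof in the paper against which to compare your argument.

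That said, your sketch is a faithful outline of the actual Bonahon--Dreyer/Fock--Goncharov argument, and you are explicit and honest about delegating the genuinely hard step (surjectivity: developing a $\Gamma$-equivariant positive flag configuration from the combinatorial data, extending it continuously to a Frenet curve on $\partial_\infty\Gamma$, and verifying that the resulting representation lands in the Hitchin component rather than some other component of positive representations) to Section~9 of \cite{FocGon1} and to \cite{BonDre1}. One point worth noting: Fock--Goncharov's surjectivity machinery is built for punctured surfaces, and the extension to closed surfaces --- showing the developed flag curve is continuous and Frenet when $\partial_\infty\Gamma$ is a circle rather than the boundary of a Farey-type triangulation --- is precisely the new content of \cite{BonDre1}, so if you were to actually carry out the delegated step you would need their argument, not just Fock--Goncharov's. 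Your linear independence claim for the closed leaf equalities (``each relates the otherwise disjoint blocks of variables of the two pants meeting along one curve'') is slightly imprecise when a curve of $\Ppc$ bounds the same pair of pants on both sides, since then the two ``blocks'' are the same pair of pants; linear independence still holds because the two expressions for the eigenvalue gaps involve shear invariants attached to different edges of that pair of pants, but the reason you give does not directly apply in that case.

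Since the paper treats this as a black-box citation, your proposal is doing more work than the paper asks of itself; as a blind reconstruction of the cited proof it is structurally correct, with the caveats above.
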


\subsection{A reparameterization of the Hitchin component}\label{A reparameterization of the Hitchin component}
In this section, we give a linear reparameterization of the shear-triangle parameterization. This reparameterization will have exactly $(2g-2)(n^2-1)$ parameters (instead of $(g-1)(2n^2+3n-5)$ parameters), and will be more explicitly analogous to the Fenchel-Nielsen coordinates for $Hit_2(S)$, or the Goldman parameters for $Hit_3(S)$. 

To specify this parameterization, we again choose a pants decomposition $\Ppc$ and the same ideal triangulation $\Tpc$ for $S$ as described in Section \ref{A special ideal triangulation}. On top of that, we choose an orientation on each curve in $\Ppc$. {\bf Henceforth, $\Ppc$ will be an oriented pants decomposition.} 

\begin{notation}
Denote the set of group elements in $\Gamma$ corresponding to oriented closed curves in $\Ppc$ by $\Gamma_\Ppc$.
\end{notation}

We will have three different kinds of parameters. The first kind is the eigenvalue information of the holonomy about each of the oriented simple closed curves in $\Ppc$. More specifically, for any $\eta$ in $\Ppc$, choose any $X$ in $\Gamma$ that corresponds to $\eta$. Then for any $\rho$ in $Hit_n(S)$, the \emph{boundary invariant} corresponding to $\eta$ is
\[\beta_\eta:=\lambda(\rho(X))\in\amf^+.\]
Together, the boundary invariants over all curves in $\Ppc$ take values in $(\amf^+)^{3g-3}$, which is linearly isomorphic to $(\Rbbb^+)^{(3g-3)(n-1)}$. Since we can do this for every $\rho$ in $Hit_n(S)$, we can view these boundary invariants as maps $\beta_\eta: Hit_n(S)\to\amf^+$.

\begin{figure}
\includegraphics[scale=0.55]{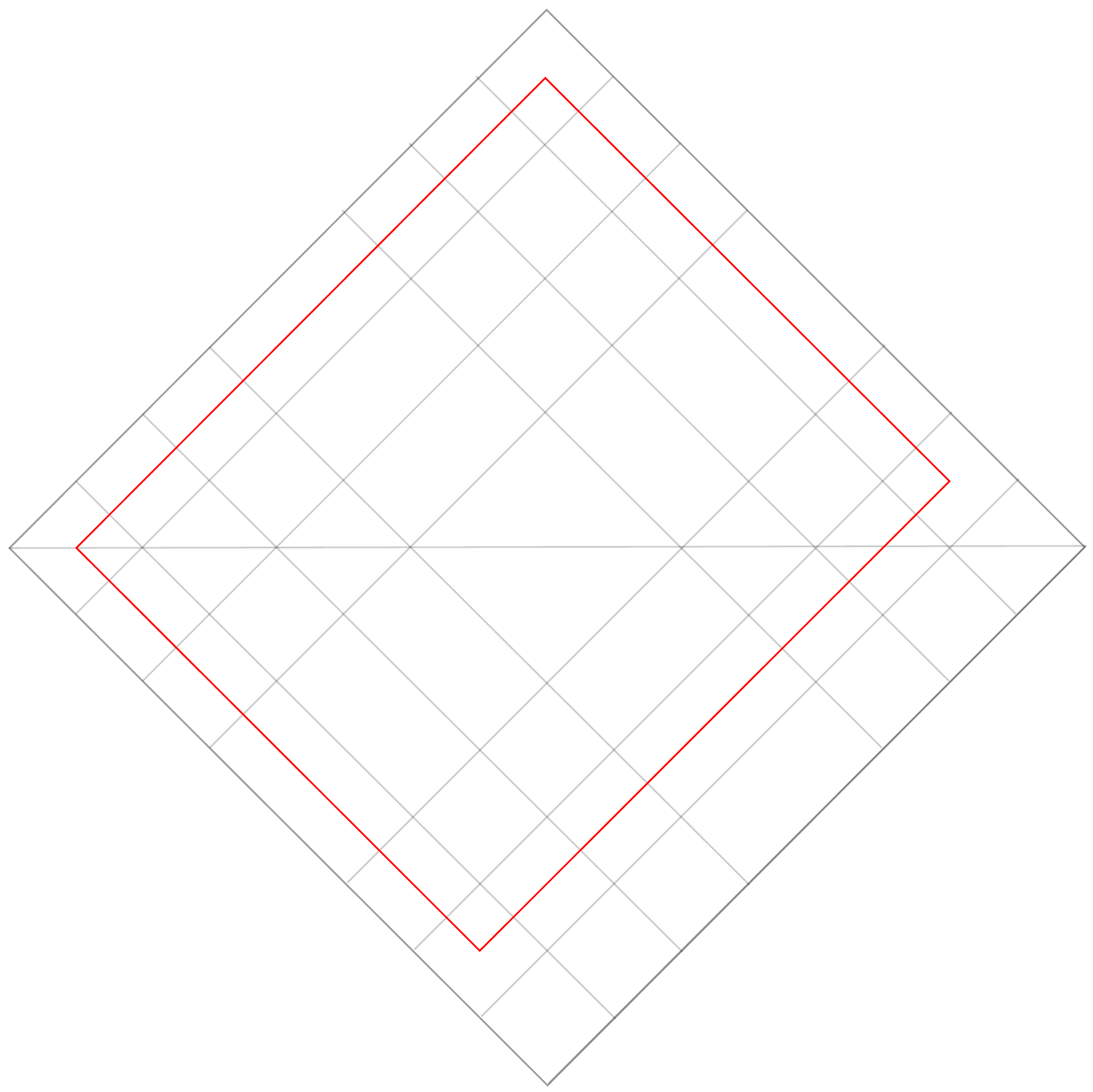}
\put (-397, 270){\makebox[0.7\textwidth][r]{\footnotesize$\sigma_{(1,0,n-1),j}$ }}
\put (-347, 270){\makebox[0.7\textwidth][r]{\footnotesize$\sigma_{(0,1,n-1),j}$ }}
\put (-420, 253){\makebox[0.7\textwidth][r]{\footnotesize$\sigma_{(2,0,n-2),j}$ }}
\put (-373, 253){\makebox[0.7\textwidth][r]{\footnotesize$\tau_{(1,1,n-2),j}$ }}
\put (-325, 253){\makebox[0.7\textwidth][r]{\footnotesize$\sigma_{(0,2,n-2),j}$ }}
\put (-397, 236){\makebox[0.7\textwidth][r]{\footnotesize$\tau_{(2,1,n-3),j}$ }}
\put (-347, 236){\makebox[0.7\textwidth][r]{\footnotesize$\tau_{(1,2,n-3),j}$ }}
\put (-477, 182){\makebox[0.7\textwidth][r]{\footnotesize$\sigma_{(n-2,0,2),j}$ }}
\put (-504, 165){\makebox[0.7\textwidth][r]{\footnotesize$\sigma_{(n-1,0,1),j}$ }}
\put (-455, 165){\makebox[0.7\textwidth][r]{\footnotesize$\tau_{(n-2,1,1),j}$ }}
\put (-477, 148){\makebox[0.7\textwidth][r]{\footnotesize$\sigma_{(n-1,1,0),j}$ }}
\put (-455, 131){\makebox[0.7\textwidth][r]{\footnotesize$\tau_{(1,2,n-3),j}$ }}
\put (-308, 182){\makebox[0.7\textwidth][r]{\footnotesize$\tau_{(1,n-3,2),j}$ }}
\put (-258, 182){\makebox[0.7\textwidth][r]{\footnotesize$\sigma_{(0,n-2,2),j}$ }}
\put (-285, 165){\makebox[0.7\textwidth][r]{\footnotesize$\tau_{(1,n-2,1),j}$ }}
\put (-240, 165){\makebox[0.7\textwidth][r]{\footnotesize$\sigma_{(0,n-1,1),j}$ }}
\put (-308, 148){\makebox[0.7\textwidth][r]{\footnotesize$\sigma_{(2,n-2,0),j}$ }}
\put (-258, 148){\makebox[0.7\textwidth][r]{\footnotesize$\sigma_{(1,n-1,0),j}$ }}
\put (-285, 131){\makebox[0.7\textwidth][r]{\footnotesize$\tau'_{(1,n-2,1),j}$ }}
\put (-330, 131){\makebox[0.7\textwidth][r]{\footnotesize$\tau'_{(2,n-3,1),j}$ }}
\put (-416, 77){\makebox[0.7\textwidth][r]{\footnotesize$\tau'_{(3,1,n-4),j}$ }}
\put (-366, 77){\makebox[0.7\textwidth][r]{\footnotesize$\tau'_{(2,2,n-4),j}$ }}
\put (-396, 59){\makebox[0.7\textwidth][r]{\footnotesize$\tau'_{(2,1,n-3),j}$ }}
\put (-346, 59){\makebox[0.7\textwidth][r]{\footnotesize$\tau'_{(1,2,n-3),j}$ }}
\put (-366, 41){\makebox[0.7\textwidth][r]{\footnotesize$\tau'_{(1,1,n-2),j}$ }}
\caption{Invariants that label points in the red box are the internal parameters.}
\label{parameters}
\end{figure}

The second kind of parameters are what we will call the \emph{internal parameters}, which are functions associated to each pair of pants $P_j$. In fact, these are a specially chosen subset of the shear and triangle invariants used in the shear-triangle parameterization. For all $j=1,\dots,2g-2$, these parameters are
\begin{enumerate}
\item $\tau_{(x,y,z),j}$ for all positive integers $x,y,z$ such that $x+y+z=n$,
\item $\tau'_{(x,y,z),j}$ for all positive integers $x,y,z$ such that $x+y+z=n$ and $x>1$,
\item $\sigma_{(x,y,0),j}$ for all positive integers $x,y$ such that $x+y=n$ and $x>1$.
\end{enumerate}
One can easily verify that there are $(2g-2)(n-1)(n-2)$ internal parameters. (See Figure \ref{parameters}.)

The third and final kind of parameters are the gluing parameters used in the shear-triangle parameterization. As mentioned before, there are $(3g-3)(n-1)$ of them. We claim that together, the boundary invariants, internal parameters and gluing parameters give us a global parameterization of $Hit_n(S)$.

\begin{prop}\label{parameterization}
The $(3g-3)$ boundary invariants, $(2g-2)(n-1)(n-2)$ internal parameters and $(3g-3)(n-1)$ gluing parameters described above define a real analytic diffeomorphism
\[\Xi:Hit_n(S)\to (\amf^+)^{3g-3}\times\Rbbb^{(2g-2)(n-1)(n-2)}\times\Rbbb^{(3g-3)(n-1)}.\]
\end{prop}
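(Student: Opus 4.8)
The plan is to derive $\Xi$ from the Bonahon--Dreyer parameterization of Theorem~\ref{Bonahon-Dreyer} by exhibiting an explicit \emph{linear} change of coordinates. Recall that the shear-triangle parameterization has, for each pair of pants $P_j$, the full collection of triangle invariants $\Apc_j\cup\Apc'_j$ (that is $2\cdot\frac{(n-1)(n-2)}{2}=(n-1)(n-2)$ numbers) together with all the shear invariants $\Cpc_{x,j}\cup\Cpc_{y,j}\cup\Cpc_{z,j}$ (that is $3(n-1)$ numbers), plus the $(3g-3)(n-1)$ gluing parameters; and the image is cut out by the $(2g-2)(3n-3)$ closed leaf inequalities and the $(3g-3)(n-1)$ closed leaf equalities. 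The target coordinates of $\Xi$ keep the gluing parameters untouched, replace the shear/triangle data per pair of pants by the $(2g-2)(n-1)(n-2)$ internal parameters listed in items (1)--(3), and introduce the $(3g-3)$ boundary invariants $\beta_\eta\in\amf^+$ in place of the remaining shear/triangle data. So the content is: the map from (all shear and triangle invariants) to (internal parameters $\sqcup$ boundary invariants) is a linear isomorphism onto its image, and under it the closed leaf inequalities become exactly the open condition $\beta_\eta\in\amf^+$ while the closed leaf equalities become vacuous.

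Concretely, the steps I would carry out are as follows. First, count: the shear and triangle invariants for $P_j$ number $(n-1)(n-2)+3(n-1)=(n-1)(n+1)=n^2-1$, matching $\dim(\amf^+)$ per boundary curve plus internal parameters once one accounts for the closed-leaf equalities identifying the two sides of each curve in $\Ppc$; a careful bookkeeping shows $(2g-2)(n^2-1)$ total shear+triangle invariants and $(3g-3)(n-1)+(2g-2)(n-1)(n-2)$ independent coordinates after imposing the $(3g-3)(n-1)$ equalities, and these agree. Second, for a fixed $P_j$ use Equations~(\ref{alpha equation}), (\ref{beta equation}), (\ref{gamma equation}): the left-hand sides $\lambda_k(A_j)-\lambda_{k+1}(A_j)$, $\lambda_k(B_j)-\lambda_{k+1}(B_j)$, $\lambda_k(C_j)-\lambda_{k+1}(C_j)$ for $k=1,\dots,n-1$ are precisely (an $\amf^+$-linear encoding of) the boundary invariants $\beta_{[A_j]},\beta_{[B_j]},\beta_{[C_j]}$; each such difference is expressed as an explicit sum of shear invariants plus triangle invariants lying on one edge-line of the triangle $\Apc$. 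I would solve these relations to express the shear invariants \emph{not} appearing in the internal-parameter list --- namely all $\sigma$ with a zero in the $y$ or $z$ slot, and $\sigma_{(1,n-1,0),j}$ --- as linear combinations of the $\beta$'s and the chosen internal parameters. Item by item one checks that the three families of Equations $(3.2)$--$(3.4)$ are triangular in the right variables (as suggested by Figure~\ref{triangle} and Figure~\ref{parameters}: the invariants on the boundary of the big triangle get solved for, the interior ones are the free internal parameters), so the linear change of variables is invertible, with inverse again linear.

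Third, I would show the image is the asserted product. For the gluing parameters this is immediate, since Bonahon--Dreyer already let them range freely over $\Rbbb^{(3g-3)(n-1)}$. For the internal parameters: the only constraints in Theorem~\ref{Bonahon-Dreyer} are the closed leaf equalities and inequalities. The equalities are exactly the statements $\beta_{[A_j]}=\beta_{[A_{j'}]}$-type identifications between the two pants glued along a curve of $\Ppc$, so after choosing one representative group element per curve they are automatically satisfied and impose no condition on the internal parameters. Each closed leaf inequality says that a certain sum of shear and triangle invariants is positive; by Equations~(\ref{alpha equation})--(\ref{gamma equation}) that sum equals $\lambda_k(\,\cdot\,)-\lambda_{k+1}(\,\cdot\,)$ for the appropriate boundary curve, so the full system of $(2g-2)(3n-3)$ inequalities is equivalent to $\lambda_k-\lambda_{k+1}>0$ for all $k$ and all $3g-3$ curves, i.e. to $\beta_\eta\in\amf^+$ for every $\eta\in\Ppc$. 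Since these constraints involve only the $\beta_\eta$ and not the internal parameters, the internal parameters are unconstrained; hence the image is $(\amf^+)^{3g-3}\times\Rbbb^{(2g-2)(n-1)(n-2)}\times\Rbbb^{(3g-3)(n-1)}$. Real analyticity and the diffeomorphism property are inherited from Theorem~\ref{Bonahon-Dreyer} because $\Xi$ is the composition of the Bonahon--Dreyer diffeomorphism with a global linear isomorphism of the ambient coordinate spaces.

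The main obstacle I anticipate is purely combinatorial bookkeeping: verifying that the particular subset of shear and triangle invariants singled out in items (1)--(3) is exactly a ``free'' set, i.e.\ that once the boundary invariants are prescribed, Equations~(\ref{alpha equation})--(\ref{gamma equation}) determine the \emph{complementary} shear invariants uniquely and linearly in terms of the free ones. This requires being careful about which $\sigma$'s are omitted from the internal-parameter list (those on the edges $[a_j^-,c_j^-]$ and $[b_j^-,c_j^-]$, and the vertex-adjacent $\sigma_{(1,n-1,0),j}$) and checking that the linear system relating them to the $\beta$'s has the right rank --- essentially a triangularity argument organized along the three edges of the triangle of indices in $\Apc\cup\Cpc$. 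Everything else is a formal consequence of Theorem~\ref{Bonahon-Dreyer} and the linearity of the relabeling.
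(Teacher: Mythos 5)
Your overall strategy is the same as the paper's: view $\Xi$ as a global linear reparameterization of the Bonahon--Dreyer coordinates and reduce the question, pants by pants, to showing that the boundary invariants together with the chosen internal parameters linearly determine the omitted shear and triangle invariants. The dimension count, the translation of the closed leaf inequalities into $\beta_\eta\in\amf^+$, and the inheritance of real analyticity and the diffeomorphism property from Theorem~\ref{Bonahon-Dreyer} are all as you describe.

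The one claim you should not take for granted is that the three families of Equations~(\ref{alpha equation})--(\ref{gamma equation}) are ``triangular in the right variables.'' In isolation they are not: Equation~(\ref{alpha equation}) at level $k=n-1$, for instance, simultaneously involves the unknowns $\sigma_{(1,n-1,0),j}$, $\sigma_{(1,0,n-1),j}$, and all of $\tau'_{(1,i,n-1-i),j}$ for $1\le i\le n-2$, so no variable can be peeled off from it directly, and the same entanglement recurs in the other two families. The paper resolves this by first forming, for each family, a weighted sum over $k$ in which every triangle invariant cancels; this produces the auxiliary identities~(\ref{bottom line 1})--(\ref{left line 1}), which express the edge sums $\sum_{k=1}^{n-1}\sigma_{(n-k,k,0),j}$, $\sum_{k=1}^{n-1}\sigma_{(0,n-k,k),j}$, $\sum_{k=1}^{n-1}\sigma_{(k,0,n-k),j}$ purely in terms of the boundary invariants. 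These derived relations supply the entry points $\sigma_{(1,n-1,0),j}$ and $\sigma_{(1,0,n-1),j}$, after which an induction on $k$ alternating between Equations~(\ref{beta equation}) and (\ref{gamma equation}) recovers $\sigma_{(0,n-k,k),j}$ and $\tau'_{(1,n-k-1,k),j}$. So the triangular structure you want does materialize, but only after this preprocessing step --- this is the actual content of the bookkeeping you rightly flag as the main obstacle, and it is not a per-edge back-substitution from the raw Equations~(\ref{alpha equation})--(\ref{gamma equation}).
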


\begin{proof}
Let $ST(n)$ be the convex polytope used to parameterize $Hit_n(S)$ in the shear-triangle parameterization (see Theorem \ref{Bonahon-Dreyer}). We will prove this proposition by showing that the map 
\[\Xi':ST(n)\to  (\amf^+)^{3g-3}\times\Rbbb^{(2g-2)(n-1)(n-2)}\times\Rbbb^{(3g-3)(n-1)}\] 
induced by $\Xi$ is a real-analytic bijection. Observe that Equations (\ref{alpha equation}), (\ref{beta equation}) and (\ref{gamma equation}) imply that $\Xi'$ is the restriction of a linear map to $ST(n)$. Since the dimensions of the domain and range of $\Xi'$ are equal, it is thus sufficient to show that $\Xi'$ is surjective, i.e. we need to show that given a triple of diagonal matrices
\[\big(b_1,b_2,b_3\big)\in(\amf^+)^3\]
and a tuple
\[\Big(\big(t_{(x,y,z)}\big)_{\{(x,y,z)\in\Apc\}},\big(t'_{(x,y,z)}\big)_{\{(x,y,z)\in\Apc,x>1\}},\big(s_{(x,y,0)}\big)_{\{(x,y,0)\in\Cpc,x>1\}}\Big)\in\Rbbb^{(n-1)(n-2)}\]
we can find 
\[\Big(\big(t_{(x,y,z)}\big)_{\{(x,y,z)\in\Apc\}},\big(t'_{(x,y,z)}\big)_{\{(x,y,z)\in\Apc\}},\big(s_{(x,y,z)}\big)_{\{(x,y,z)\in\Cpc\}}\Big)\in\Rbbb^{(n-1)(n+1)}\]
so that
\begin{align}
&b_{k,1}-b_{k+1,1}=s_{(n-k,k,0)}+s_{(n-k,0,k)}+\sum_{i=1}^{k-1}(t_{(n-k,i,k-i)}+t'_{(n-k,i,k-i)}),\label{a equation}\\
&b_{k,2}-b_{k+1,2}=s_{(0,n-k,k)}+s_{(k,n-k,0)}+\sum_{i=1}^{k-1}(t_{(k-i,n-k,i)}+t'_{(k-i,n-k,i)}),\label{b equation}\\
&b_{k,3}-b_{k+1,3}=s_{(k,0,n-k)}+s_{(0,k,n-k)}+\sum_{i=1}^{k-1}(t_{(i,k-i,n-k)}+t'_{(i,k-i,n-k)}).\label{c equation}
\end{align}
where $b_{k,i}$ is the $(k,k)$-th entry of the diagonal matrix $b_i$. Here, Equations (\ref{a equation}), (\ref{b equation}) and (\ref{c equation}) are simply Equations (\ref{alpha equation}), (\ref{beta equation}) and (\ref{gamma equation}) restated using the parameters.

From Equations (\ref{a equation}), (\ref{b equation}), (\ref{c equation}), we can obtain the relation
\begin{eqnarray}\label{bottom line 1}
&&\sum_{k=1}^{n-1}(n-k)(b_{k,1}-b_{k+1,1})+\sum_{k=1}^{n-1}(n-k)(b_{k,2}-b_{k+1,2})\\
&&\hspace{4cm}+\sum_{k=1}^{n-1}(-k)(b_{k,3}-b_{k+1,3})=n\sum_{k=1}^{n-1} s_{(n-k,k,0)}.\nonumber
\end{eqnarray}
To see that this equality holds, observe that $t_{(x,y,z)}$ is a term in the right hand side of 
\begin{itemize}
\item Equation (\ref{a equation}) if and only if $n-k=x$,
\item Equation (\ref{b equation}) if and only if $n-k=y$,
\item Equation (\ref{c equation}) if and only if $n-k=z$.
\end{itemize}
Hence, $t_{(x,y,z)}$ will appear $x$ times in the sum $\displaystyle\sum_{k=1}^{n-1}(n-k)(b_{k,1}-b_{k+1,1})$, $y$ times in the sum $\displaystyle\sum_{k=1}^{n-1}(n-k)(b_{k,2}-b_{k+1,2})$ and $z-n$ times in the sum $\displaystyle\sum_{k=1}^{n-1}(-k)(b_{k,3}-b_{k+1,3})$. Since $x+y+z-n=0$, this implies that $t_{(x,y,z)}$ does not appear as a term on the right hand side of Equation (\ref{bottom line 1}). The same inspection argument for $s_{(x,y,0)}$, $s_{(x,0,z)}$ and $s_{(0,y,z)}$ will yield Equation (\ref{bottom line 1}). Similarly, we can also show that 
\begin{eqnarray}\label{right line 1}
&&\sum_{k=1}^{n-1}(-k)(b_{k,1}-b_{k+1,1})+\sum_{k=1}^{n-1}(n-k)(b_{k,2}-b_{k+1,2})\\
&&\hspace{4cm}+\sum_{k=1}^{n-1}(n-k)(b_{k,3}-b_{k+1,3})=n\sum_{k=1}^{n-1} s_{(0,n-k,k)},\nonumber
\end{eqnarray}
\begin{eqnarray}\label{left line 1}
&&\sum_{k=1}^{n-1}(n-k)(b_{k,1}-b_{k+1,1})+\sum_{k=1}^{n-1}(-k)(b_{k,2}-b_{k+1,2})\\
&&\hspace{4cm}+\sum_{k=1}^{n-1}(n-k)(b_{k,3}-b_{k+1,3})=n\sum_{k=1}^{n-1} s_{(k,0,n-k)}.\nonumber
\end{eqnarray}

Now, observe that from the data we are given, Equation (\ref{bottom line 1}) determines $s_{(1,n-1,0)}$ and Equation (\ref{a equation}) determine $s_{(k,0,n-k)}$ for all $k>1$. By using Equation (\ref{left line 1}), we can also find $s_{(1,0,n-1)}$.

Next, we will show that from the given data, we can also find $s_{(0,n-k,k)}$ for $k=1,\dots,n-1$ and $t'_{(1,n-k-1,k)}$ for $k=1,\dots,n-2$. We will proceed by induction on $k$. For the base case, note that Equation (\ref{b equation}) determines $s_{(0,n-1,1)}$ because we have already found $s_{(1,n-1,0)}$. Then knowing $s_{(0,n-1,1)}$ and $s_{(n-1,0,1)}$ allows us to use Equation (\ref{c equation}) to find $t'_{(1,n-2,1)}$.

For the inductive step, suppose we already know $s_{(0,n-k,k)}$ and $t'_{(1,n-k-1,k)}$ for $k<l$. We need to demonstrate how to find $s_{(0,n-l,l)}$ and $t'_{(1,n-l-1,l)}$. To find $s_{(0,n-l,l)}$, use Equation (\ref{b equation}). Once we have $s_{(0,n-l,l)}$, we can then use Equation (\ref{c equation}) to obtain $t'_{(1,n-l-1,l)}$.
\end{proof}

The reparameterization of $Hit_n(S)$ given in the previous proposition will be called the \emph{modified shear-triangle parameterization}. Choose any Hitchin representation $\rho$. Note that in the above proof, we actually obtained the following identities relating the image of $\Gamma_\Ppc$ under $\lambda\circ\rho$ to the shear and triangle invariants for $\rho$ associated to the triangulation $\Tpc$. To make the notation cleaner, we will keep the $\rho$ dependence implicit and denote $\lambda_k(\rho(X))$ by $\lambda_k(X)$ for any $X\in\Gamma_\Ppc$. For any pair of pants $P_j$ given by $\Ppc$,

\begin{eqnarray}
&&\sum_{k=1}^{n-1}(n-k)(\lambda_k(A_j)-\lambda_{k+1}(A_j))+\sum_{k=1}^{n-1}(n-k)(\lambda_k(B_j)-\lambda_{k+1}(B_j))\label{bottom line}\\
&&\hspace{2.5cm}+\sum_{k=1}^{n-1}(-k)(\lambda_k(C_j)-\lambda_{k+1}(C_j))=n\sum_{k=1}^{n-1} \sigma_{(n-k,k,0),j},\nonumber
\end{eqnarray}
\begin{eqnarray}
&&\sum_{k=1}^{n-1}(-k)(\lambda_k(A_j)-\lambda_{k+1}(A_j))+\sum_{k=1}^{n-1}(n-k)(\lambda_k(B_j)-\lambda_{k+1}(B_j))\label{right line}\\
&&\hspace{2cm}+\sum_{k=1}^{n-1}(n-k)(\lambda_k(C_j)-\lambda_{k+1}(C_j))=n\sum_{k=1}^{n-1} \sigma_{(0,n-k,k),j},\nonumber
\end{eqnarray}
\begin{eqnarray}
&&\sum_{k=1}^{n-1}(n-k)(\lambda_k(A_j)-\lambda_{k+1}(A_j))+\sum_{k=1}^{n-1}(-k)(\lambda_k(B_j)-\lambda_{k+1}(B_j))\label{left line}\\
&&\hspace{2cm}+\sum_{k=1}^{n-1}(n-k)(\lambda_k(C_j)-\lambda_{k+1}(C_j))=n\sum_{k=1}^{n-1} \sigma_{(k,0,n-k),j}.\nonumber
\end{eqnarray}
These equations are a restatement of Equations (\ref{bottom line 1}), (\ref{right line 1}), (\ref{left line 1}) and will be useful for us later.

Recall that we want to study how some geometric properties of Hitchin representations degenerate as we deform in $Hit_n(S)$ along internal sequences. Now that we have a formal description of the modified shear-triangle parameterization, we can formally define these internal sequences. 

\begin{notation}
Define $\pi_j:Hit_n(S)\to\Rbbb^{(n-1)(n-2)}$ to be the projection given by 
\[\pi_j(\rho)=\bigg((\tau_{(x,y,z),j}(\rho))_{x+y+z=n},(\tau'_{(x,y,z),j}(\rho))_{x+y+z=n,x>1},(\sigma_{(x,y,0),j}(\rho))_{x+y=n,x>1}\bigg).\]
\end{notation}

The map $\pi_j$ sends each Hitchin representation to its internal parameters for the $j$-th pair of pants.

\begin{definition}\label{internal sequence}
A sequence $\{\rho_i\}_{i=1}^\infty$ is an \emph{internal sequence} if
\begin{enumerate}
\item There are constants $0<C_1<C_2<\infty$ so that for all $A$ in $\Gamma_\Ppc$, 
\[C_1<\lambda_k(\rho_i(A))-\lambda_{k+1}(\rho_i(A))<C_2\] 
for all $k=1,\dots,n-1$ and all positive integers $i$. In other words, $\lambda(\rho_i(A))$ is uniformly bounded (over all $i$ and $A$ in $\Gamma_\Ppc$) away from the walls of $\amf^+$.
\item For each $j=1,\dots,2g-2$ and for any compact subset $K\subset\Rbbb^{(n-1)(n-2)}$, there is some integer $N$ so that $\pi_j(\rho_i)$ is not in $K$ for $i>N$. 
\end{enumerate}
\end{definition}

One should think of the internal sequences as sequences where we hold the boundary invariants ``essentially fixed" and deform the internal parameters ``as much as possible". In this definition, we do not impose any condition on the gluing parameters because we do not require such a condition for Theorem \ref{main theorem} to hold. 

The rest of the paper will be devoted to the proof of Theorem \ref{main theorem}.

\section{Lower bound for lengths of closed curves}\label{Lower bound for lengths of closed curves}

In this section, we will fix $\rho$ in $Hit_n(S)$ and a non-identity element $X$ in $\Gamma$. The goal is to first give a combinatorial description of $\rho(X)$, and then use this combinatorial description to obtain the lower bound for $l_\rho(X)$ in Theorem \ref{length lower bound}.

We will denote the Frenet curve corresponding to $\rho$ by $\xi$, and the attracting and repelling fixed points of $X$ by $x^+$ and $x^-$ respectively. Choose an orientation on $\partial_\infty\Gamma$ (drawn as clockwise in all figures), and let $s_0$ and $s_1$ be the two closed subsegments of $\partial_\infty\Gamma$ with endpoints $x^-$ and $x^+$, oriented from $x^-$ to $x^+$ and such that the orientation on $s_0$ agrees with the orientation on $\partial_\infty\Gamma$.

\subsection{The mesh}\label{The mesh}

As mentioned in the introduction, if we choose a hyperbolic metric on $S$, the combinatorial description of $\rho(X)$ needs to capture how the directed geodesic in $S$ associated to $X$ ``winds around" collar neighborhoods of the curves in $\Ppc$ and how it ``crosses between" these collar neighborhoods. The ideal triangulation is good enough to describe the ``crossing", but not the ``winding". Hence, to describe the ``winding", we need to define some additional structure on $\partial_\infty\Gamma$ using $\rho$. We call this additional structure a mesh.

Pick any oriented closed curve $\eta$ in $\Ppc$, and choose any $A$ in $\Gamma_\Ppc$ corresponding to this closed curve. Let $a^-$ and $a^+$ be the repelling and attracting fixed points of $A$ in $\partial_\infty\Gamma$ respectively, and let $r_0$ and $r_1$ be the two closed subsegments of $\partial_\infty\Gamma$ with endpoints $a^-$ and $a^+$, oriented from $a^-$ to $a^+$, and so that the orientation on $r_0$ agrees with the orientation on $\partial_\infty\Gamma$. By reversing the orientation on $\eta$ if necessary, we can assume that there is some $x$ in $r_0\setminus\{a^+\}$ such that $\{x,a^-\}$ lies in $\widetilde{\Tpc}$. Choose any such $x$.

Observe that for all $A,B,C,D$ along $\xi$ in that order, 
\[(A^{(1)},B^{(1)},C^{(1)},D^{(1)})_{A^{(n-1)}\cap D^{(n-1)}}=\prod_{k=1}^{n-1}(A,B,C,D)_{A^{(k-1)}+D^{(n-k-1)}}.\]
Hence, it is an easy consequence of Proposition \ref{cross ratio configuration} that the map 
\begin{align*}
g:\hspace{0.1cm}&r_1\to\Rbbb_{\geq 0}\cup\{\infty\}\\
&z\mapsto\bigg|(\xi(a^+)^{(1)},\xi(x)^{(1)},\xi(z)^{(1)},\xi(a^-)^{(1)})_{\xi(a^+)^{(n-1)}\cap\xi(a^-)^{(n-1)}}\bigg|
\end{align*} 
is also a homeomorphism. Choose $y$ in $r_1\setminus\{a^-\}$ to be the point so that $g(y)$ is minimized subject to the conditions that $\{y,a^+\}$ lies in $\widetilde{\Tpc}$ and $g(y)\geq 1$. Note that if $y'$ is the point in $r_1$ so that $g(y')=1$, then $y$ necessarily lies between $y'$ and $A\cdot y'$. By (8) or Proposition \ref{basic cross ratio} and Proposition \ref{cross ratio and length}, we can then conclude that
\begin{equation}\label{mesh inequality}
1\leq g(y)<g(A\cdot y')=e^{\lambda_1(A)-\lambda_n(A)}.
\end{equation}

Using $x$ and $y$ as described, we can define some additional structure on $\partial_\infty\Gamma$.

\begin{figure}
\includegraphics[scale=0.6]{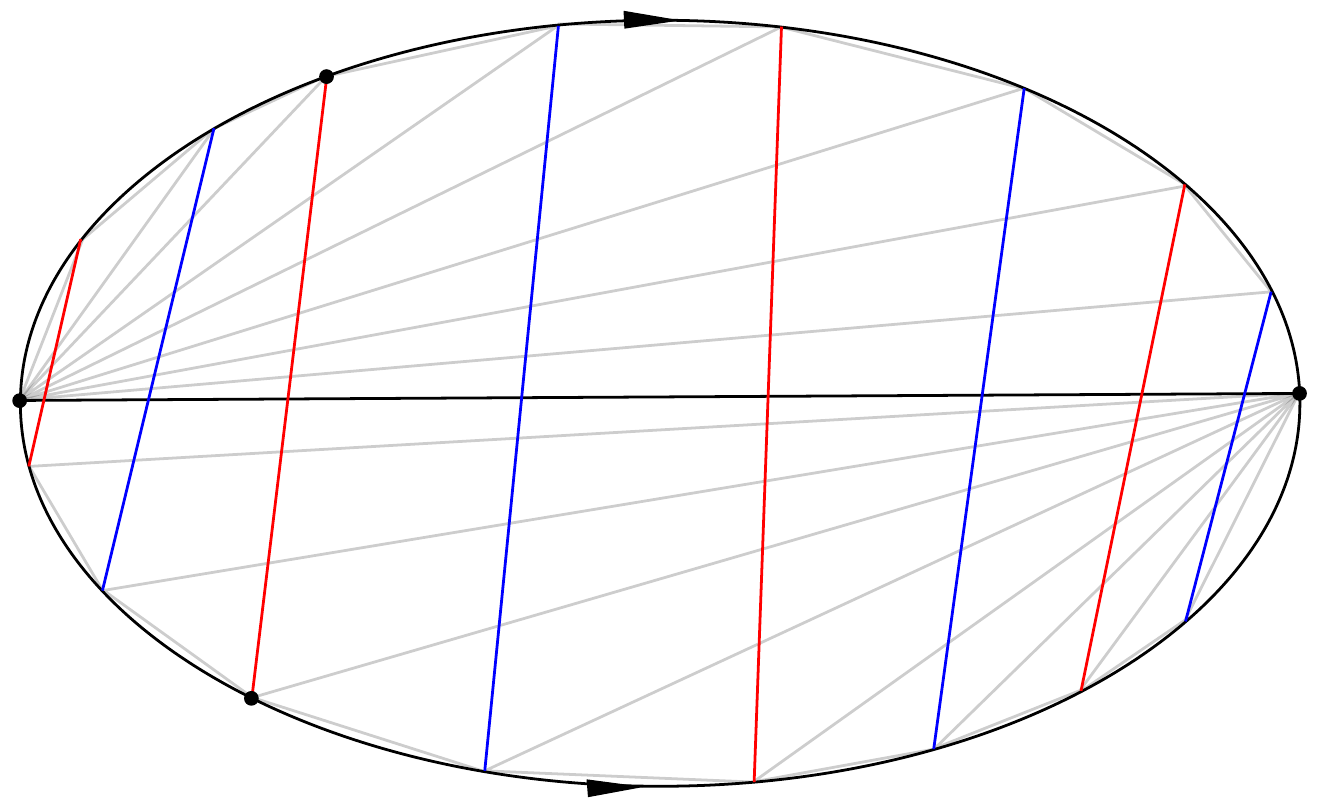}
\put (-350, 140){\makebox[0.7\textwidth][r]{$r_0$ }}
\put (-350, -3){\makebox[0.7\textwidth][r]{$r_1$ }}
\put (-477, 70){\makebox[0.7\textwidth][r]{$a^-$ }}
\put (-238, 70){\makebox[0.7\textwidth][r]{$a^+$ }}
\put (-415, 130){\makebox[0.7\textwidth][r]{$x$ }}
\put (-430, 10){\makebox[0.7\textwidth][r]{$y$ }}
\caption{Two possible meshes for $A$, in blue and red, depending on the choice of $x$.}
\label{mesh}
\end{figure}

\begin{definition}
Let $A$ be an element in $\Gamma_\Ppc$, and choose any $x$ in $r_0$ so that $\{x,a^-\}$ is an edge in $\widetilde{\Tpc}$. Let $y$ be the point in $r_1$ described as above. A \emph{mesh} (see Figure \ref{mesh}) of $A$ is the set of pairs $\{\{A^k\cdot x,A^k\cdot y\}:k\in\Zbbb\}$.
\end{definition}

One can check that if we use any $x'$ in $\langle A\rangle\cdot x$ in place of $x$ and perform this construction, then the mesh we obtain will be the same. This implies that there are only two possible meshes of $A$, because the set 
\[\{x\in\partial_\infty\Gamma:\{x,a^-\}\in\widetilde{\Tpc}\}\]
is the union of two $\langle A\rangle$-orbits. Once and for all, choose one of these meshes, denoted $\Epc_A$, for each $A$ in $\Gamma_\Ppc$, so that if $A'=YAY^{-1}$ for some $Y$ in $\Gamma$, then
\[\Epc_{A'}=Y\cdot\Epc_A.\] 
There is a natural ordering on $\Epc_A$ induced by the action of $A$.

\subsection{Finite combinatorial description of closed curves}\label{Finite combinatorial description of closed curves}
Now, we give a complete description of $\rho(X)$ by finitely many pieces of combinatorial data. This combinatorial data is similar in spirit to the combinatorial data used to describe closed curves in Sections 3.2 and 3.3 of \cite{Zha1}. First, we will use the pair $(x^-,x^+)$ and the subsegments $s_0$, $s_1$ to define several subsets of $\partial_\infty\Gamma$ and $\partial_\infty\Gamma^{[2]}$ that we use to give this combinatorial description. These are summarized in Notation \ref{combinatorial} below. 

\begin{notation}\label{combinatorial}
\begin{itemize}
\item Let $\widetilde{\Ipc}'_X=\widetilde{\Ipc}'$ be the set of edges in $\widetilde{\Tpc}$ that intersect $\{x^-,x^+\}$ and let $\widetilde{\Ipc}_X=\widetilde{\Ipc}$ be the subset of $\widetilde{\Ipc}'$ that are not closed leaves. Observe that both $\widetilde{\Ipc}$ and $\widetilde{\Ipc}'$ are $\langle X\rangle$-invariant, so we can define $\Ipc:=\widetilde{\Ipc}/\langle X\rangle$ and $\Ipc':=\widetilde{\Ipc}'/\langle X\rangle$.
\item A vertex in $\partial_\infty\Gamma$ is a \emph{node} if it is the common vertex of two distinct edges in $\widetilde{\Tpc}$ that intersect $\{x^-,x^+\}$. We call the edge $\{a,b\}$ in $\widetilde{\Ipc}'$ \emph{binodal} if $a$ and $b$ are both nodes. Denote the set of binodal edges in $\widetilde{\Ipc}$ by $\widetilde{\Bpc}_X=\widetilde{\Bpc}$ and let $\Bpc_X=\Bpc:=\widetilde{\Bpc}/\langle X\rangle$.
\item Let $\Vpc_i'$ be the set of vertices of the edges in $\widetilde{\Ipc}'$ that lie in $s_i$.
\end{itemize}
\end{notation}

\begin{figure}
\includegraphics[scale=0.9]{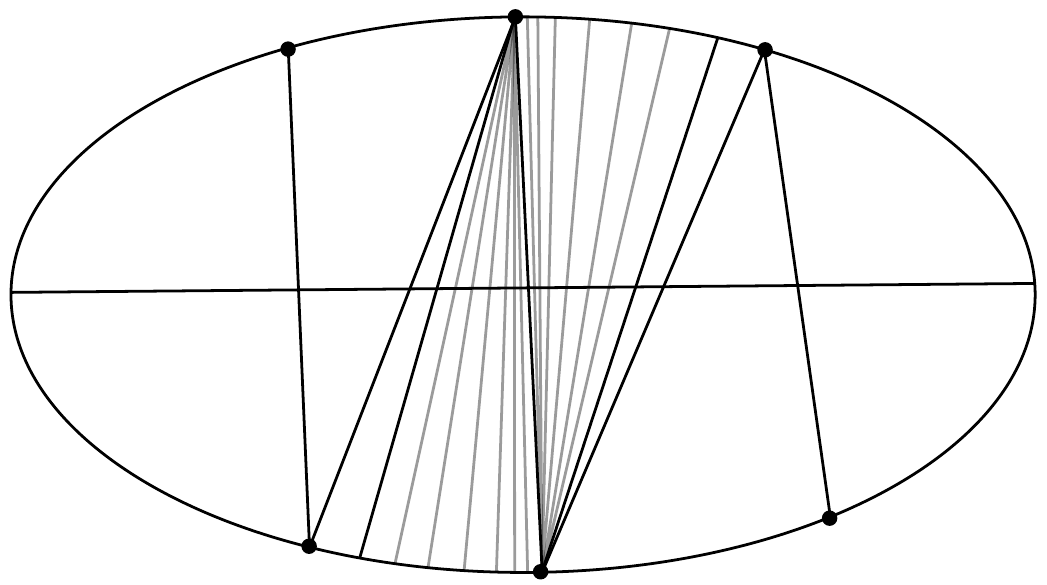}
\put (-385, 151){\makebox[0.7\textwidth][r]{\footnotesize$a$ }}
\put (-239, 77){\makebox[0.7\textwidth][r]{\footnotesize$x^+$ }}
\put (-375, -6){\makebox[0.7\textwidth][r]{\footnotesize$b_0$ }}
\put (-424, 0){\makebox[0.7\textwidth][r]{\footnotesize$b_2$ }}
\put (-440, 3){\makebox[0.7\textwidth][r]{\footnotesize$b_1$ }}
\put (-519, 75){\makebox[0.7\textwidth][r]{\footnotesize$x^-$ }}
\caption{$\Npc_a$ contains the vertices of the grey lines.}
\label{suc}
\end{figure}

Observe that $\Bpc$ is finite, and is empty if and only if $\{x^-,x^+\}$ is a closed leaf in $\widetilde{\Tpc}$. For the rest of this section, we will assume that $\Bpc$ is non-empty. Also, the orientations on $s_0$ and $s_1$ induce orderings $\leq$ on $\Vpc_0'$ and $\Vpc_1'$, which in turn induce an ordering $\preceq$ on $\widetilde{\Ipc}$ defined as follows. Suppose $\{a,b\}$ and $\{a',b'\}$ are edges in $\widetilde{\Ipc}$ so that $a,a'$ lie in $s_0$ and $b,b'$ lie in $s_1$. Then $\{a,b\}\preceq\{a',b'\}$ if and only if $a\leq a'$ and $b\leq b'$. Since the accumulation points of $\widetilde{\Ipc}'$ are exactly the closed leaves, we can define a bijective successor map $\suc:\widetilde{\Ipc}\to\widetilde{\Ipc}$. Moreover, the ordering $\preceq$ induces a cyclic order on $\Ipc$, and the successor map $\suc:\widetilde{\Ipc}\to\widetilde{\Ipc}$ descends to a successor map $\suc:\Ipc\to\Ipc$.

From the way $\widetilde{\Tpc}$ was defined, it is easy to see that every closed leaf in $\widetilde{\Ipc}'$ is binodal. Also, any node is the vertex of exactly two distinct binodal edges, and at most one of these binodal edges is a closed leaf. Thus, for any vertex $a$ of any closed leaf $\{a,b_0\}$ in $\widetilde{\Ipc}'$, there is a unique binodal edge $\{a,b_1\}$ that is not a closed leaf and has $a$ as a vertex. Let $\{a,b_2\}$ be the unique edge in $\widetilde{\Ipc'}$ that is adjacent to $\{a,b_1\}$ and also has $a$ as a vertex. Define 
\[\Npc_a:=\{b\in\partial_\infty\Gamma:\{a,b\}\in\widetilde{\Ipc'}, b\neq b_i\text{ for }i=0,1,2\},\] and let $\Wpc$ be the set of vertices for the closed leaves in $\widetilde{\Ipc}'$ (see Figure \ref{suc}). Then define 
\[\Vpc_i:=\Vpc_i'\setminus\bigcup_{a\in\Wpc}\Npc_a.\]
The main payoff we gain from considering $\Vpc_i$ instead of $\Vpc_i'$ is that $\Vpc_i$ is discrete, which allows us to define bijective successor functions $\suc:\Vpc_i\to\Vpc_i$ for both $i=0,1$.

\begin{figure}
\includegraphics[scale=0.7]{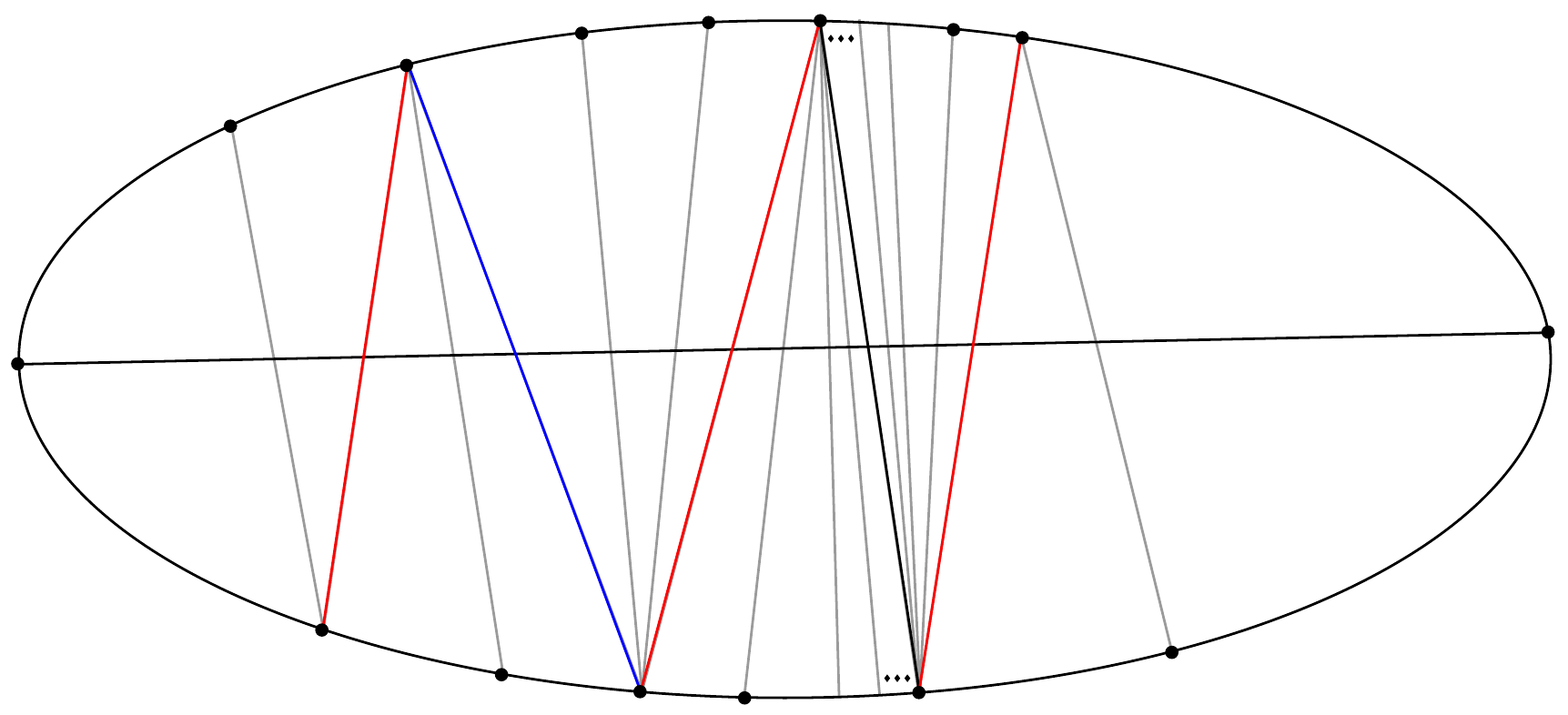}
\put (-595, 77){\makebox[0.7\textwidth][r]{\footnotesize$x^-$ }}
\put (-238, 82){\makebox[0.7\textwidth][r]{\footnotesize$x^+$ }}
\put (-545, 136){\makebox[0.7\textwidth][r]{\footnotesize$\suc^{-1}(a)$ }}
\put (-507, 147){\makebox[0.7\textwidth][r]{\footnotesize$a$ }}
\put (-467, 157){\makebox[0.7\textwidth][r]{\footnotesize$\suc(a)$ }}
\put (-425, 159){\makebox[0.7\textwidth][r]{\footnotesize$\suc^{-1}(a')$ }}
\put (-410, 158){\makebox[0.7\textwidth][r]{\footnotesize$a'$ }}
\put (-367, 157){\makebox[0.7\textwidth][r]{\footnotesize$\suc^{-1}(a'')$ }}
\put (-356, 152){\makebox[0.7\textwidth][r]{\footnotesize$a''$ }}
\put (-528, 13){\makebox[0.7\textwidth][r]{\footnotesize$b$ }}
\put (-480, 0){\makebox[0.7\textwidth][r]{\footnotesize$\suc(b)=\suc^{-1}(b')$ }}
\put (-452, -4){\makebox[0.7\textwidth][r]{\footnotesize$b'$ }}
\put (-420, -5){\makebox[0.7\textwidth][r]{\footnotesize$\suc(b')$ }}
\put (-387, -4){\makebox[0.7\textwidth][r]{\footnotesize$b''$ }}
\put (-325, 5){\makebox[0.7\textwidth][r]{\footnotesize$\suc(b'')$ }}
\caption{$\widetilde{\Ipc'}$ partially drawn. The closed leaf is $\{a',b''\}$, the S-type binodal edges are $\{a,b\}$, $\{a',b'\}$, $\{a'',b''\}$ and the Z-type binodal edge is $\{a,b'\}$.}
\label{binodal}
\end{figure}

\begin{definition}
Let $\{a,b\}$ be an edge in $\widetilde{\Bpc}$ and assume without loss of generality that $a$ lies in $s_0$ and $b$ lies in $s_1$. We say $\{a,b\}$ is 
\begin{itemize}
\item \emph{Z-type} if $\suc\{a,b\}=\{\suc(a),b\}$ and $\suc^{-1}\{a,b\}=\{a,\suc^{-1}(b)\}$, 
\item \emph{S-type} if $\suc\{a,b\}=\{a,\suc(b)\}$ and $\suc^{-1}\{a,b\}=\{\suc^{-1}(a),b\}$.
\end{itemize}
(See Figure \ref{binodal}.) Let $\widetilde{\Zpc}$ be the edges in $\widetilde{\Bpc}$ that are Z-type and $\widetilde{\Spc}$ be the edges in $\widetilde{\Bpc}$ that are S-type. Since $\widetilde{\Zpc}$ and $\widetilde{\Spc}$ are $\langle X\rangle$-invariant, we can define $\Zpc:=\widetilde{\Zpc}/\langle X\rangle$ and $\Spc:=\widetilde{\Spc}/\langle X\rangle$.
\end{definition}

Note that $\Zpc\cup\Spc=\Bpc$, and the cyclic order on $\Ipc$ induces cyclic orders on $\Zpc$, $\Spc$ and $\Bpc$. Let $e$ and $e'$ be consecutive edges in $\Bpc$ with $e$ preceding $e'$, and observe the following (see Figure \ref{binodal}):
\begin{enumerate}
\item If $e$ and $e'$ are not of the same type, then in $\widetilde{\Bpc}$, there are representatives $\etd$, $\etd'$ of $e$, $e'$ respectively so that $\etd\prec\etd'$ and $\etd$, $\etd'$ share a common vertex.
\item If $e$ and $e'$ are of the same type, then in $\widetilde{\Bpc}$, there are representatives $\etd$, $\etd'$ of $e$, $e'$ respectively so that $\etd\prec\etd'$ and there is exactly one closed leaf between them (in $\widetilde{\Ipc}'$).
\end{enumerate}

If $e$ and $e'$ are not of the same type, choose a pair $\etd$, $\etd'$ as described in (1) and let $A(\etd,\etd')$ be the element in $\Gamma_\Ppc$ that has the common vertex of $\etd$ and $\etd'$ as a fixed point. If $e$, $e'$ are of the same type, choose a pair $\etd$, $\etd'$ as described in (2) and let $A(\etd,\etd')$ be the element in $\Gamma_\Ppc$ whose attracting and repelling fixed points are the initial and terminal vertices of the oriented closed leaf between $\etd$ and $\etd'$. In either case, consider $\Epc_{A(\etd,\etd')}$, the mesh associated to $A(\etd,\etd')$.

\begin{notation}
Let $t(e,e')$ be the signed number of edges in $\Epc_{A(\etd,\etd')}$ that intersect $\{x^+,x^-\}$, where the sign is positive if the ordering on these edges induced by the ordering on $\Epc_{A(e,e')}$ is the same as the ordering induced by the orientation on $s_0$ and $s_1$, and negative otherwise. 
\end{notation}

Observe that $t(e,e')$ does not depend on the choice of $\etd$ and $\etd'$. Cyclically enumerate $\Bpc=\{e_{m+1}=e_1,e_2\dots,e_m\}$, and for each $i=1,\dots,m$, let $T_i$ be the type (Z or S) of $e_i$. Then define the cyclic sequence of tuples
\[\psi_\rho(X)=\psi(X):=\{(\suc^{-1}(e_i),e_i,\suc(e_i),T_i,t(e_i,e_{i+1}))\}_{i=1}^m.\]
This is the combinatorial data we associate to each $X$ in $\Gamma$. If we choose a hyperbolic metric on $S$ and let $\gamma$ be the closed geodesic in $S$ associated to $\rho(X)$, then the cyclic sequence $\{(\suc^{-1}(e_i),e_i,\suc(e_i),T_i)\}_{i=1}^m$ tells us how $\gamma$ ``crosses between" the collar neighborhoods of curves in $\Ppc$ and the cyclic sequence $\{(t(e_i,e_{i+1}))\}_{i=1}^m$ tells us how $\gamma$ ``winds around" these collar neighborhoods.

\begin{figure}
\includegraphics[scale=0.5]{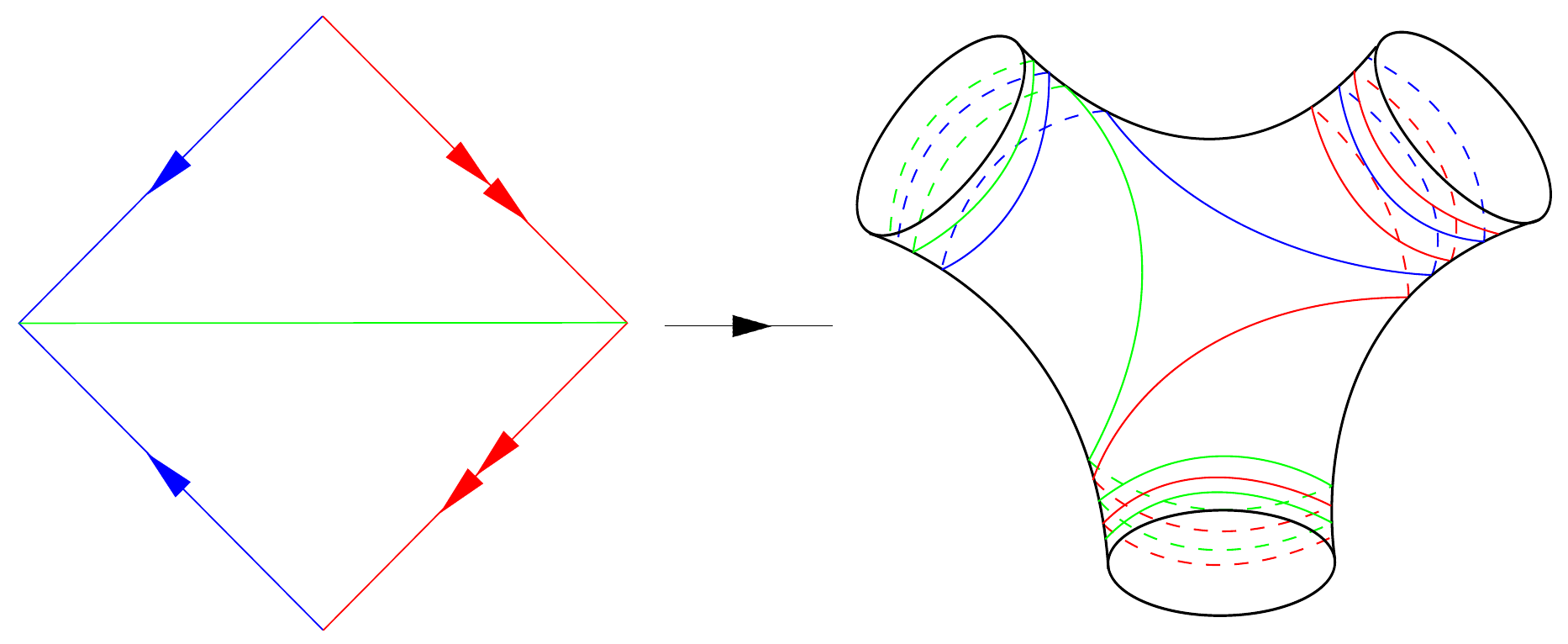}
\put (-516, 53){\makebox[0.7\textwidth][r]{$a$ }}
\put (-405, 53){\makebox[0.7\textwidth][r]{$b$ }}
\put (-460, 110){\makebox[0.7\textwidth][r]{$c$ }}
\put (-460, -5){\makebox[0.7\textwidth][r]{$d$ }}
\put (-460, 59){\makebox[0.7\textwidth][r]{$\etd$ }}
\put (-390, 59){\makebox[0.7\textwidth][r]{$\pi$ }}
\caption{$Q_{\etd}$ is mapped via $\pi$ to a pair of pants.}
\label{Q_e}
\end{figure}

\begin{prop}\label{combinatorial prop}
Let $X_0,X_1$ be elements in $\Gamma$. Then $\psi(X_0)=\psi(X_1)$ if and only if $X_0$ and $X_1$ are conjugate.
\end{prop}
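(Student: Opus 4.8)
The proof has two halves, and I would organize it as follows.

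\emph{The direction ``conjugate $\Rightarrow$ equal combinatorial data''.} This is the routine half: every object used to build $\psi(X)$ is natural under the $\Gamma$-action on $\partial_\infty\Gamma$. Concretely, if $X_1=YX_0Y^{-1}$ with $Y\in\Gamma$, then (since $S$ is oriented) $Y$ acts on $\partial_\infty\Gamma\cong S^1$ by an orientation-preserving homeomorphism taking the repelling and attracting fixed points of $X_0$ to those of $X_1$, hence taking the oriented subsegments $s_0,s_1$ attached to $X_0$ to those attached to $X_1$. Because $\widetilde{\Tpc}$ is $\Gamma$-invariant, $Y$ maps $\widetilde{\Ipc}'_{X_0}$, $\widetilde{\Ipc}_{X_0}$, the set of nodes, and the binodal edges $\widetilde{\Bpc}_{X_0}$ onto the corresponding objects for $X_1$, respecting the orders $\preceq$, the successor maps, the $\langle X_0\rangle$- versus $\langle X_1\rangle$-actions, and the $Z$/$S$ dichotomy. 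Finally, since the chosen family of meshes satisfies $\Epc_{YAY^{-1}}=Y\cdot\Epc_A$ for all $A\in\Gamma_\Ppc$, the signed twists $t(e_i,e_{i+1})$ are preserved. Hence $\psi(X_0)=\psi(X_1)$ as cyclic sequences of tuples.

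\emph{The direction ``equal combinatorial data $\Rightarrow$ conjugate''.} Here the plan is to reconstruct $X$ up to conjugacy from $\psi(X)$ by reading $\psi(X)$ as assembly instructions for the $\langle X\rangle$-periodic chain of edges and meshes surrounding $\{x^-,x^+\}$. Assume $\psi(X_0)=\psi(X_1)$; under the standing hypothesis of this section both $\Bpc_{X_0}$ and $\Bpc_{X_1}$ are nonempty, and equality of the cyclic sequences forces $|\Bpc_{X_0}|=|\Bpc_{X_1}|=:m$. Enumerate the binodal edges bi-infinitely in $\preceq$-order, $\widetilde{\Bpc}_{X_k}=\{\etd^{(k)}_j\}_{j\in\Zbbb}$, with $X_k\cdot\etd^{(k)}_j=\etd^{(k)}_{j+m}$, and with $\etd^{(0)}_0,\etd^{(1)}_0$ chosen over the same element $e_1\in\Bpc$ of the cyclic enumeration. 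The key structural fact is that $\Gamma$ acts \emph{freely} on the non-closed-leaf edges of $\widetilde{\Tpc}$ decorated by a choice of adjacent triangle: a nontrivial element of the torsion-free group $\Gamma$ has exactly two fixed points on $\partial_\infty\Gamma$, so it cannot stabilize a triangle of $\widetilde{\Tpc}$ nor a non-closed-leaf edge. Using this, I would argue that the decoration recorded by the tuple $(\suc^{-1}(e_i),e_i,\suc(e_i),T_i,t(e_i,e_{i+1}))$ pins down, up to a unique element of $\Gamma$, the local configuration of $\widetilde{\Tpc}$ and of $\{x^-,x^+\}$ between $\etd_i$ and the next binodal edge $\etd_{i+1}$ — namely the intervening non-binodal edges of $\widetilde{\Ipc}'$ (a fan across a shared node, or a double fan across the closed leaf between $\etd_i$ and $\etd_{i+1}$ when they have the same type), the element $A(\etd_i,\etd_{i+1})\in\Gamma_\Ppc$ bridging them, and, through the signed count $t(e_i,e_{i+1})$ of mesh edges of $\Epc_{A(\etd_i,\etd_{i+1})}$ crossing $\{x^-,x^+\}$, exactly how $\{x^-,x^+\}$ winds past the corresponding pants curve. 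An induction on $i$ in both directions then produces a single $Y\in\Gamma$ with $Y\cdot\etd^{(0)}_j=\etd^{(1)}_j$ for all $j$. Since the endpoints of $\etd^{(0)}_j$ converge to $x_0^+$ as $j\to+\infty$ and to $x_0^-$ as $j\to-\infty$, $Y$ carries $(x_0^-,x_0^+)$ to $(x_1^-,x_1^+)$; and since $Y$ intertwines the two shifts by $m$, the elements $YX_0Y^{-1}$ and $X_1$ fix the same ordered pair of points and agree on $\etd^{(1)}_0\mapsto\etd^{(1)}_m$, hence coincide (the stabilizer of $\{x_1^-,x_1^+\}$ in $\Gamma$ is infinite cyclic, and such an element is determined by its action on the chain). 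Therefore $X_0$ and $X_1$ are conjugate.

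\emph{The main obstacle.} The delicate point is entirely contained in the reconstruction step: one must verify carefully that the finite tuple attached to each binodal edge really recovers the adjacent part of the triangulation and the way $\{x^-,x^+\}$ threads through the collar neighborhood of a pants curve. This uses the explicit description of $\Tpc$ (two triangles per pair of pants, with the closed leaves forming $\Ppc$), the observation that the successor map on $\widetilde{\Ipc}'$ and the position of the next binodal edge are locally determined by the triangulation together with the $Z$/$S$-types, the equivariance $\Epc_{YAY^{-1}}=Y\cdot\Epc_A$ of the chosen meshes, and the inequality (\ref{mesh inequality}), which guarantees that a single mesh period accounts for exactly one full turn around a pants curve so that the integer $t(e_i,e_{i+1})$ unambiguously records the winding. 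Once this bookkeeping is established, the freeness of the $\Gamma$-action on decorated edges makes the whole configuration rigid and the induction closes without further choices.
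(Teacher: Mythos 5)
Your two directions are both handled in a way that is sound, but your second direction is organized quite differently from the paper's.

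For ``conjugate $\Rightarrow \psi(X_0)=\psi(X_1)$'' you give the same routine naturality argument the paper dismisses in one sentence, correctly isolating the one non-automatic point (the equivariance $\Epc_{YAY^{-1}}=Y\cdot\Epc_A$ built into the choice of meshes).

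For the converse your route is genuinely different. You stay entirely at the level of $\partial_\infty\Gamma$ and the abstract triangulation $\widetilde{\Tpc}$: you try to build a single $Y\in\Gamma$ intertwining the two bi-infinite chains of binodal edges, using that $\Gamma$ acts freely on the non-closed-leaf edges (a correct observation, since the stabilizer of such an edge would have both its endpoints as fixed points, forcing it to be a closed leaf). The paper instead chooses a hyperbolic metric, realizes $\widetilde{\Tpc}$ and $\widetilde{\Epc}$ as geodesic laminations in $\Dbbb$, and produces the conjugacy as a free homotopy between the closed geodesics $\gamma_{X_0}$, $\gamma_{X_1}$: it decomposes each $\gamma_{X_j}$ into segments $\alpha_{i,X_j}^{-1}\cdot\beta_{i,X_j}$ carried by quadrilaterals $Q_{\etd_i}$ and hexagons $H_{l,l'}$ whose interiors embed in $S$, and shows segment by segment that the tuple $(\suc^{-1}(e_i),e_i,\suc(e_i),T_i,t(e_i,e_{i+1}))$ determines which edges of the polygon contain the endpoints, hence the segments are homotopic rel those edges. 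What your approach buys is the avoidance of an auxiliary metric: the conjugating element $Y$ is produced directly, and the identification $YX_0Y^{-1}=X_1$ follows instantly from freeness once $Y$ is shown to carry the whole chain. What the paper's approach buys is a cleaner organizational device: the embedded polygons $Q_{\etd}$ and $H_{l,l'}$ localize the bookkeeping to ``which side of which polygon an endpoint lies on,'' which makes the verification that the tuple determines the local configuration more visual and easier to check. The burden of proof is the same in both — you must verify that the tuple determines the passage from $\etd_i$ to $\etd_{i+1}$ uniquely — and you correctly flag this as the delicate point, at a level of detail comparable to what the paper itself supplies. One small caveat: you invoke the mesh inequality (\ref{mesh inequality}) as guaranteeing the winding count is unambiguous, but the paper's proof of this proposition does not actually rely on it; what matters for the bookkeeping is only that $\Epc_A$ is a single cyclically ordered $\langle A\rangle$-orbit, which holds by construction.
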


\begin{proof}
It is clear that if $X_0$ and $X_1$ are conjugate, then $\psi(X_0)=\psi(X_1)$. We will now show the converse. Choose a hyperbolic metric on $S$. Then the ideal triangulation $\widetilde{\Tpc}$ can be viewed as a $\Gamma$-invariant ideal triangulation of the Poincar\'e disc $\Dbbb$, so $\Tpc$ is an ideal triangulation of the hyperbolic surface $S$. Also, the union of meshes
\[\widetilde{\Epc}:=\bigcup_{A\in\Gamma_\Ppc}\Epc_A\]
can be viewed as a $\Gamma$-invariant collection of geodesics in $\Dbbb$, so the quotient
\[\Epc:=\widetilde{\Epc}/\Gamma\]
is a collection of $3g-3$ geodesics in the hyperbolic surfaces $S$. Observe that $\gamma$ in $\Epc$ has a lift to $\widetilde{\Epc}$ that lies in $\Epc_A$ if and only if $\gamma$ intersects the closed geodesic corresponding to $A$. Moreover, $\gamma$ intersects $\Ppc$ only at this closed geodesic.

Let $\gamma_{X_0}$, $\gamma_{X_1}$ be the oriented closed geodesics in $S$ that correspond to $X_0$, $X_1$ respectively. It is sufficient to show that if \[\psi(X_0)=\psi(X_1)=\{(\suc^{-1}(e_i),e_i,\suc(e_i),T_i,t(e_i,e_{i+1}))\}_{i=1}^m,\] 
then $\gamma_{X_0}$ and $\gamma_{X_1}$ are homotopic as oriented curves. We will do this by constructing polygons in $S$ along the paths of $\gamma_{X_0}$ and $\gamma_{X_1}$, and show that we can homotope the subsegments of $\gamma_{X_0}$ and $\gamma_{X_1}$ that lie in these polygons relative to the edges of the polygons.

Let $\pi:\Std\to S$ be the covering map. For any non-closed leaf $\etd=\{a,b\}$ in $\widetilde{\Tpc}$, let $c$ and $d$ be points in $\partial\Dbbb$ so that $\{a,c\}$, $\{b,c\}$, $\{a,d\}$, $\{b,d\}$ are in $\widetilde{\Tpc}$. Then let $Q_{\etd}$ be the closed convex quadrilateral in $\Dbbb$ with vertices $a$, $b$, $c$, $d$. Observe that $\pi$ restricted to the interior of $Q_{\etd}$ is injective. (See Figure \ref{Q_e}.)

Pick any $A$ in $\Gamma_\Ppc$ and let $a^-$, $a^+$ be the repelling and attracting fixed points of $A$ respectively. Also, let $r_0$ and $r_1$ be two oriented subsegments of $\partial_\infty\Gamma=\partial\Dbbb$ with endpoints $a^-$ and $a^+$, oriented from $a^-$ to $a^+$, and such that the orientation on $r_0$ agrees with the clockwise orientation on $\partial\Dbbb$. Let $l:=\{b_0,b_1\}$ and $l':=\{b_0',b_1'\}$ be two consecutive geodesics in $\Epc_A$, with $b_0$ and $b_0'$ in $r_0$ and $l$ preceding $l'$. In $r_0$, there is a unique $c_0$ strictly between $b_0$ and $b_0'$ so that $\{c_0,a^-\}$ lies in $\widetilde{\Tpc}$. Similarly, in $r_1$, there is a unique $c_1$ strictly between $b_1$ and $b_1'$ so that $\{c_1,a^+\}$ lies in $\widetilde{\Tpc}$. Let $H_{l,l'}$ be the closed convex hexagon in $\Dbbb$ with vertices $b_0$, $c_0$, $b_0'$, $b_1'$, $c_1$, $b_1$, and observe that $\pi$ restricted to the interior of $H_{l,l'}$ is also injective. (See Figure \ref{H_{l,l'}}.)

\begin{figure}
\includegraphics[scale=0.48]{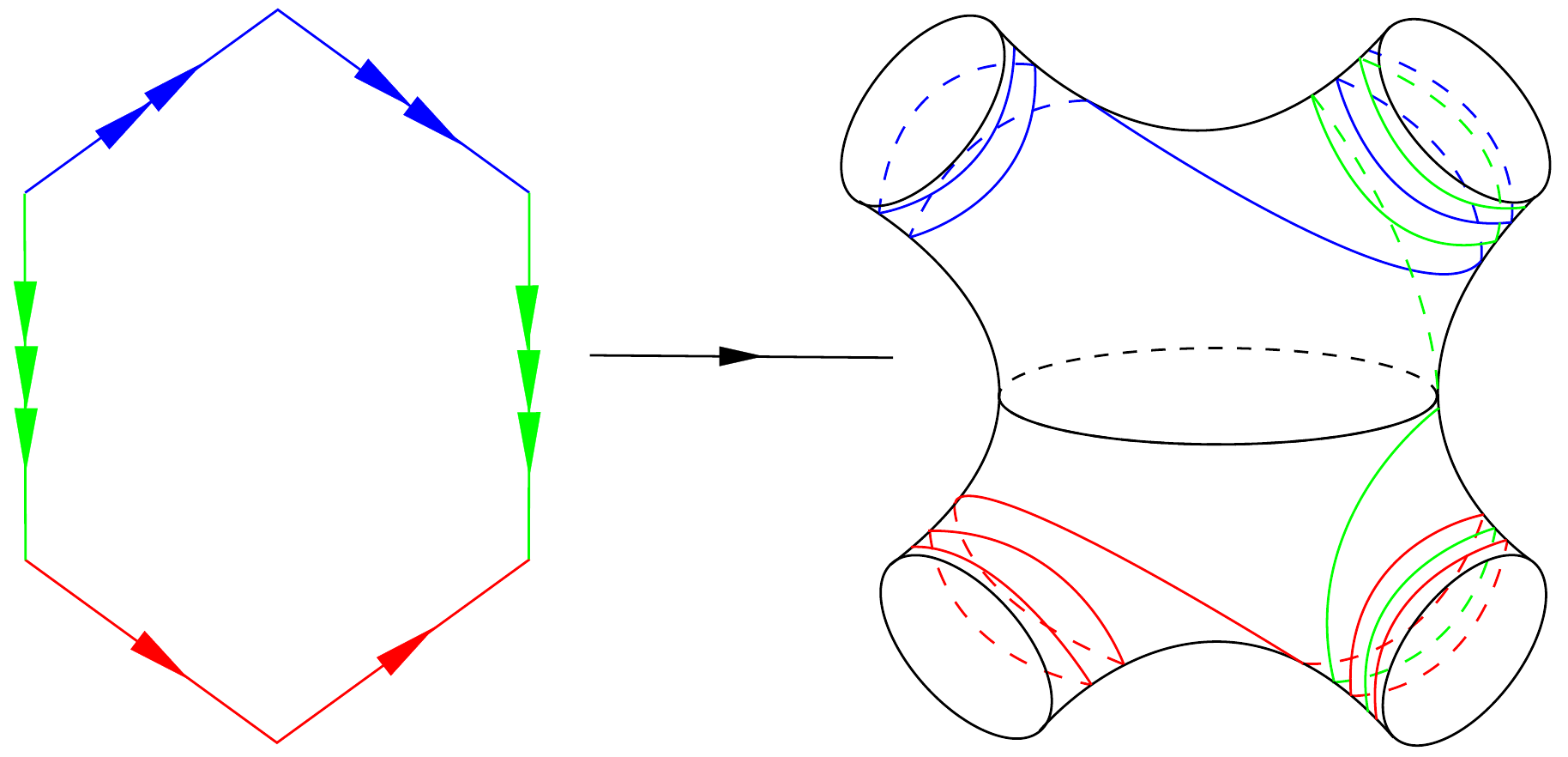}
\put (-500, 30){\makebox[0.7\textwidth][r]{$b_1$ }}
\put (-453, -5){\makebox[0.7\textwidth][r]{$c_1$ }}
\put (-404, 30){\makebox[0.7\textwidth][r]{$b'_1$ }}
\put (-404, 88){\makebox[0.7\textwidth][r]{$b'_0$ }}
\put (-453, 125){\makebox[0.7\textwidth][r]{$c_0$ }}
\put (-500, 88){\makebox[0.7\textwidth][r]{$b_0$ }}
\put (-490, 60){\makebox[0.7\textwidth][r]{$l$ }}
\put (-420, 60){\makebox[0.7\textwidth][r]{$l'$ }}
\put (-383, 69){\makebox[0.7\textwidth][r]{$\pi$ }}
\caption{$H_{l,l'}$ is mapped via $\pi$ to two pairs of pants.}
\label{H_{l,l'}}
\end{figure}

For $j=0,1$, let $\widetilde{\gamma}_{X_j}$ be the axis of $X_j$ and let $\etd_i$ be a lift of $e_i$ that intersects $\widetilde{\gamma}_{X_j}$. Then define the points
\[\ptd_{i,-,X_j}:=\suc^{-1}(\etd_i)\cap\widetilde{\gamma}_{X_j},\ \ptd_{i,+,X_j}:=\suc(\etd_i)\cap\widetilde{\gamma}_{X_j},\]
and let $p_{i,\pm,X_j}=\pi(\ptd_{i,\pm,X_j})$. Let $\alpha_{i,X_j}$ be the oriented closed subsegment of $\gamma_{X_j}$ containing $\pi(\etd_i\cap\widetilde{\gamma}_{X_j})$ and with endpoints $p_{i,-,X_j}$, $p_{i,+,X_j}$, oriented from $p_{i,-,X_j}$ to $p_{i,+,X_j}$. Also, let $\beta_{i,X_j}$ be the closed subsegment of $\gamma_{X_j}$ containing $\pi(\etd_i\cap\widetilde{\gamma}_{X_j})$ and with endpoints $p_{i,-,X_j}$, $p_{i+1,+,X_j}$, oriented from $p_{i,-,X_j}$ to $p_{i+1,+,X_j}$. Observe that $\gamma_{X_j}$ can be written as the cyclic concatenation
\[\alpha_{1,X_j}^{-1}\cdot\beta_{1,X_j}\cdot\alpha_{2,X_j}^{-1}\cdot\beta_{2,X_j}\cdot\dots\cdot\alpha_{m,X_j}^{-1}\cdot\beta_{m,X_j}\]
where $\cdot$ is concatenation and the inverse is reversing the parameterization. Since $\psi(X_0)=\psi(X_1)$, we know that the initial and terminal endpoints of $\alpha_{i,X_0}$ lie on the same edges of $\Tpc$ as those of $\alpha_{i,X_1}$ respectively. For the same reasons, the initial and terminal endpoints of $\beta_{i,X_0}$ lie on the same edges of $\Tpc$ as those of $\beta_{i,X_1}$. It is thus sufficient to show that for all $i=1,\dots,m$, 
\begin{enumerate}
\item $\alpha_{i,X_0}$ is homotopic to $\alpha_{i,X_1}$ and
\item $\beta_{i,X_0}$ is homotopic to $\beta_{i,X_1}$
\end{enumerate}
as oriented curves relative to the edges in $\Tpc$ containing their endpoints.

First, we will show that (1) holds. Observe that $\alpha_0:=\alpha_{i,X_0}$ and $\alpha_1:=\alpha_{i,X_1}$ lie in $\pi(Q_{\etd_i})$ for some lift $\etd_i$ of $e_i$. Also, for each vertex of $\etd_i$, the two edges of $Q_{\etd_i}$ adjacent to this vertex are mapped via $\pi$ to the same edge in $\Tpc$. (See Figure \ref{Q_e}.) Since we know $e_i$ is the same type (Z or S) for both $X_0$ and $X_1$, the lifts $\widetilde{\alpha}_0$, $\widetilde{\alpha}_1$ of $\alpha_0$, $\alpha_1$ respectively that lie in $Q_{\etd_i}$ have their initial endpoints in a common edge of $Q_{\etd_i}$ and their terminal endpoints in a common edge of $Q_{\etd_i}$. It is thus clear that (1) holds.

To show that (2) holds, further partition each $\beta_j:=\beta_{i,X_j}$ as follows. Let $\{q_{1,j},\dots,q_{|t(e_i,e_{i+1})|,j}\}$ be the $|t(e_i,e_{i+1})|$ points of intersection of $\beta_j$ with the mesh $\Epc_{A(e_i,e_{i+1})}$, ordered according to the orientation of $\beta_j$. For $k=0,\dots,|t(e_i,e_{i+1})|$, let $\beta_{k,j}$ be the subsegment of $\beta_j$ with endpoints 
\begin{itemize}
\item $p_{i,-,X_j}$ and $p_{i+1,+,X_j}$ if $t(e_i,e_{i+1})=0$, oriented from $p_{i,-,X_j}$ to $p_{i+1,+,X_j}$,
\item $p_{i,-,X_j}$ and $q_{1,j}$ if $|t(e_i,e_{i+1})|>0$ and $k=0$, oriented from $p_{i,-,X_j}$ to $q_{1,j}$,
\item $q_{k,j}$ and $q_{k+1,j}$ if $|t(e_i,e_{i+1})|>0$ and $0<k<|t(e_i,e_{i+1})|$, oriented from $q_{k,j}$ to $q_{k+1,j}$,
\item $q_{|t(e_i,e_{i+1})|,j}$ and $p_{i+1,+,X_j}$ if $|t(e_i,e_{i+1})|>0$ and $k=|t(e_i,e_{i+1})|$, oriented from $q_{|t(e_i,e_{i+1})|,j}$ to $p_{i+1,+,X_j}$.
\end{itemize}
We now need to show that for $k=0,\dots,|t(e_i,e_{i+1})|$, the segments $\beta_{k,0}$ and $\beta_{k,1}$ are homotopic relative to the edges in $\Epc_{A(e_i,e_{i+1})}$ and $\Tpc$ that contain their endpoints. Observe that $\beta_{k,0}$ and $\beta_{k,1}$ lie in $\pi(H_{l,l'})$ for any consecutive pair $l$, $l'$ in $\Epc_{A(e_i,e_{i+1})}$, with $l$ preceding $l'$. 

Consider the lift $\widetilde{\beta}_{0,j}$ of $\beta_{0,j}$ that lies in $H_{l,l'}$. Since the initial endpoint of $\widetilde{\beta}_{0,j}$ is $\ptd_{i,-,X_j}$, the triple $(\suc^{-1}(e_i),e_i,\suc(e_i))$ determines the edge of $H_{l,l'}$ that $\ptd_{i,-,X_j}$ lies in. Observe then that 
\begin{itemize}
\item if $t(e_i,e_{i+1})<0$, the terminal endpoint $q_{1,j}$ of $\widetilde{\beta}_{0,j}$ lies in $l$, 
\item if $t(e_i,e_{i+1})>0$, the terminal endpoint $q_{1,j}$ of $\widetilde{\beta}_{0,j}$ lies in $l'$,
\item if $t(e_i,e_{i+1})=0$, the triple $(\suc^{-1}(e_{i+1}),e_{i+1},\suc(e_{i+1}))$ determines the edge of $H_{l,l'}$ containing $\ptd_{i,-,X_j}$, which is the terminal endpoint of $\widetilde{\beta}_{0,j}$.
\end{itemize}
In any case, this proves that $\widetilde{\beta}_{0,0}$ and $\widetilde{\beta}_{0,1}$ have initial endpoints on the same edge of $H_{l,l'}$ and terminal endpoints on the same edge in $H_{l,l'}$. Similar arguments show the same for $\widetilde{\beta}_{k,0}$ and $\widetilde{\beta}_{k,1}$ for $k=1,\dots,|t(e_i,e_{i+1})|$, so (2) holds.
\end{proof}

\subsection{Crossing and winding $(p)$-subsegments of $X$}
For the rest of this section, we will use the combinatorial description $\psi(X)$ of $\rho(X)$ to obtain a lower bound for $l_{\rho}(X)$. Let $H$ be the plane $\xi(x^-)^{(1)}+\xi(x^+)^{(1)}$ in $\Rbbb^n$. The next two definitions describe two kinds of subsegments of $\Pbbb(H)$ that we will use to obtain our lower bound. These are the crossing $(p)$-subsegments and the winding $(p)$-subsegments

\begin{definition}
Let $\etd=\{a,b\}$ be an element in $\widetilde{\Bpc}$. Assume without loss of generality that $a$ lies in $s_0$ and $b$ lies in $s_1$. For $p=0,\dots, n-1$, define the projective points $L_{p,+}(\etd)$, $L_p(\etd)$, $L_{p,-}(\etd)$ as follows:
\begin{itemize}
\item $L_p(\etd):=\Pbbb(\xi(a)^{(p)}+\xi(b)^{(n-p-1)})\cap \Pbbb(H)$
\item If $\etd$ is in $\widetilde{\Zpc}$, let 
\begin{align*}
&L_{p,+}(\etd):=\Pbbb(\xi(\suc(a))^{(p)}+\xi(b)^{(n-p-1)})\cap \Pbbb(H),\\ 
&L_{p,-}(\etd):=\Pbbb(\xi(a)^{(p)}+\xi(\suc^{-1}(b))^{(n-p-1)})\cap \Pbbb(H).
\end{align*} 
\item If $\etd$ is in $\widetilde{\Spc}$, let 
\begin{align*}
&L_{p,+}(\etd):=\Pbbb(\xi(a)^{(p)}+\xi(\suc(b))^{(n-p-1)})\cap \Pbbb(H),\\ 
&L_{p,-}(\etd):=\Pbbb(\xi(\suc^{-1}(a))^{(p)}+\xi(b)^{(n-p-1)})\cap \Pbbb(H).
\end{align*} 
\end{itemize}
\end{definition}

\begin{definition}
Let $\etd=\{a,b\}$ and $\etd'=\{a',b'\}$ be two consecutive elements in $\widetilde{\Bpc}$, with $\etd$ preceding $\etd'$. 
\begin{itemize}
\item The \emph{crossing $(p)$-subsegment} of $\Pbbb(H)$ corresponding to $\etd$, denoted by $c_p(\etd)$, is the closed subsegment of $\Pbbb(H)$ containing $L_p(\etd)$ with endpoints $L_{p,-}(\etd)$ and $L_{p,+}(\etd)$.
\item The \emph{winding $(p)$-subsegment} of $\Pbbb(H)$ corresponding to $\etd$ and $\etd'$, denoted by $w_p(\etd,\etd')$, is the closed subsegment of $\Pbbb(H)$ containing $L_p(\etd)$ and $L_p(\etd')$, with endpoints $L_{p,-}(\etd)$ and $L_{p,+}(\etd')$.
\end{itemize}
\end{definition}

The crossing $(p)$-subsegments are generalizations of the crossing segments used in \cite{Zha1} while the winding $(p)$-subsegments are analogous to the winding and pants-changing segments used in \cite{Zha1}.

If we apply (2) of Lemma \ref{important lemma} to this setting for a fixed $p=0,\dots,n-1$, we see that there is some subsegment $\gamma$ of $\Pbbb(H)$ with endpoints $\xi(x^-)^{(1)}$, $\xi(x^+)^{(1)}$ that contains $L_{p,+}(\etd)$, $L_{p,-}(\etd)$ for all $\etd$ in $\widetilde{\Bpc}$. In particular, $c_p(\etd)$ and $w_p(\etd)$ are well-defined and lie in $\gamma$ for all $\etd$ in $\widetilde{\Bpc}$. In fact, we can say more. If we orient $\gamma$ from $\xi(x^-)^{(1)}$ to $\xi(x^+)^{(1)}$, then we also have orientations induced on $c_p(\etd)$ and $w_p(\etd)$. Part (2) of Lemma \ref{important lemma} then tells us that the orientation on $c_p(\etd)$ is from $L_{p,-}(\etd)$ to $L_{p,+}(\etd)$ and the induced orientation on $w_p(\etd,\etd')$ is from $L_{p,-}(\etd)$ to $L_{p,+}(\etd')$. 

Next, we define a notion of length for subsegments of $\gamma$. This gives us a notion of length for each crossing $(p)$-subsegment and winding $(p)$-subsegment, which we will use to obtain a lower bound for $l_\rho(X)$.

\begin{definition}
Let $\gamma$ be a subsegment of $\Pbbb(H)$ with endpoints $\xi(x^-)^{(1)}$ and $\xi(x^+)^{(1)}$. Let $y$, $z$ be two points in $\gamma$ so that $\xi(x^-)^{(1)},y,z,\xi(x^+)^{(1)}$ lie on $\gamma$ in that order. Then let $\eta$ be the closed subsegment of $\gamma$ with endpoints $y$, $z$. The length of $\eta$, denoted $l(\eta)$, is given by
\[l(\eta):=\log(\xi(x^-)^{(1)},y,z,\xi(x^+)^{(1)}).\]
\end{definition}

We will now obtain a lower bound for the length of $X$ in terms of the lengths of the crossing $(p)$-subsegments and winding $(p)$-subsegments of $H$. Choose an edge $\ztd$ in $\widetilde{\Zpc}$ and let $\ztd'=X\cdot\ztd$. Observe that the set of elements in $\widetilde{\Zpc}$ between $\ztd$ and $\ztd'$ is finite, so we can enumerate them according to the ordering on $\widetilde{\Zpc}$. In other words, the set of elements between $\ztd$ and $\ztd'$ can be written as
\[\{\ztd_1,\dots,\ztd_{|\Zpc|+1}\}\]
where 
\[\ztd=\ztd_1\prec\ztd_2\prec\dots\prec\ztd_{|\Zpc|+1}=\ztd'.\]

\begin{lem}\label{crossing lower bound 1}
Fix any $p=0,\dots,n-1$. Then
\[l_\rho(X)\geq\sum_{i=1}^{|\Zpc|}l(c_p(\ztd_i)).\]
\end{lem}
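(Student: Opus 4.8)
The plan is to exhibit $l_\rho(X)$ as the length (in the sense of the preceding definition) of a single subsegment $\sigma$ of $\gamma$ that contains the crossing subsegments $c_p(\ztd_1),\dots,c_p(\ztd_{|\Zpc|})$ with pairwise disjoint interiors, and then conclude by additivity and non-negativity of the length. For the first point: since $x^+,x^-$ are the attracting and repelling fixed points of $X$, the line $\xi(x^+)^{(1)}$ (resp.\ $\xi(x^-)^{(1)}$) is the eigenline of $\rho(X)$ of largest (resp.\ smallest) modulus, $H$ is $\rho(X)$-invariant, and $l_\rho(X)=\lambda_1(\rho(X))-\lambda_n(\rho(X))$. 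Take $M$ to be the span of the remaining $n-2$ eigenlines of $\rho(X)$, a $\rho(X)$-invariant $(n-2)$-dimensional complement of $H$, and put $Y:=L_{p,-}(\ztd_1)$. As $\ztd_1$ is binodal, the vertices used to define $Y$ are distinct from $x^\pm$, so $Y\neq\xi(x^\pm)^{(1)}$, i.e.\ $Y\not\subset M+\xi(x^+)^{(1)}$ and $Y\not\subset M+\xi(x^-)^{(1)}$; Proposition~\ref{cross ratio and length} (with $i=1$, $j=n$) then gives $(\xi(x^-)^{(1)},Y,\rho(X)\cdot Y,\xi(x^+)^{(1)})_M=e^{l_\rho(X)}$, and since all four lines lie in $\Pbbb(H)$ the base $M$ may be dropped. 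Using $\rho(X)$-equivariance of $\xi$ and $\rho(X)$-invariance of $H$ we also have $\rho(X)\cdot L_{p,\pm}(\etd)=L_{p,\pm}(X\cdot\etd)$, so $\rho(X)\cdot Y=L_{p,-}(\ztd_{|\Zpc|+1})$; hence the subsegment $\sigma$ of $\gamma$ from $Y$ to $\rho(X)\cdot Y$ satisfies $l(\sigma)=l_\rho(X)$.

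Next I would show the $c_p(\ztd_i)$ lie disjointly inside $\sigma$. Unwinding the definitions, $L_{p,-}(\ztd)=L_p(\suc^{-1}\ztd)$ and $L_{p,+}(\ztd)=L_p(\suc\ztd)$ for $\ztd\in\widetilde{\Zpc}$, so $c_p(\ztd_i)$ runs from $L_p(\suc^{-1}\ztd_i)$ through $L_p(\ztd_i)$ to $L_p(\suc\ztd_i)$. The map $\etd\mapsto L_p(\etd)$ is monotone for the ordering on $\widetilde{\Ipc}$ and the orientation of $\gamma$ — fixing the $s_1$-vertex it is the homeomorphism of Lemma~\ref{important lemma}(2) with base $\xi(b)^{(n-p-1)}$, and fixing the $s_0$-vertex the one with base $\xi(a)^{(p)}$ — so it is enough to prove $\suc\ztd_i\prec\ztd_{i+1}$ in $\widetilde{\Ipc}$ for each pair of consecutive Z-type edges $\ztd_i\prec\ztd_{i+1}$, $i=1,\dots,|\Zpc|$. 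Were $\suc\ztd_i=\ztd_{i+1}$: writing $\ztd_i=\{a,b\}$ with $a\in s_0$, $b\in s_1$, the Z-type rule gives $\suc\ztd_i=\{\suc(a),b\}$, and the Z-type rule applied to $\ztd_{i+1}=\{\suc(a),b\}$ gives $\suc^{-1}\ztd_{i+1}=\{\suc(a),\suc^{-1}(b)\}$; but $\suc^{-1}\ztd_{i+1}=\ztd_i=\{a,b\}$ would force $\suc(a)=a$, which is impossible. Hence $\suc\ztd_i\preceq\suc^{-1}\ztd_{i+1}$, so $L_{p,+}(\ztd_i)\preceq L_{p,-}(\ztd_{i+1})$ along $\gamma$. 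Consequently $c_p(\ztd_1),\dots,c_p(\ztd_{|\Zpc|})$ occur in this order along $\gamma$ with pairwise disjoint interiors, the first starting at $L_{p,-}(\ztd_1)=Y$ and the last ending no later than $L_{p,-}(\ztd_{|\Zpc|+1})=\rho(X)\cdot Y$; in particular they are all contained in $\sigma$.

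Finally, cutting $\sigma$ at the endpoints of the $c_p(\ztd_i)$ and of the complementary ``gap'' subsegments, Proposition~\ref{basic cross ratio}(8) telescopes the lengths of these pieces to $l(\sigma)=l_\rho(X)$, while each piece, being a subsegment of $\gamma$ traversed in the positive direction, has non-negative length (Proposition~\ref{useful cross ratio inequalities}(1), or a one-line computation in coordinates on $H$). Since the $c_p(\ztd_i)$ are among these pieces and the rest contribute non-negatively, $\sum_{i=1}^{|\Zpc|}l(c_p(\ztd_i))\leq l_\rho(X)$, which is the claim. The step I expect to be the real obstacle is the purely combinatorial one in the middle paragraph: verifying that consecutive Z-type binodal edges are spread far enough apart along $\widetilde{\Ipc}$ that the crossing subsegments do not overlap; once that is settled, the rest is a direct application of the cross-ratio identities already established.
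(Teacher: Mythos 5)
Your proof is correct and follows essentially the same path as the paper's: identify $l_\rho(X)$ with the cross-ratio from $L_{p,-}(\ztd_1)$ to $\rho(X)\cdot L_{p,-}(\ztd_1)=L_{p,-}(\ztd_{|\Zpc|+1})$ via Proposition~\ref{cross ratio and length}, establish that $\suc\ztd_i\preceq\suc^{-1}\ztd_{i+1}$, and then use the cocycle/telescope property of the cross ratio together with Lemma~\ref{important lemma}(2) to drop the non-negative ``gap'' contributions. The paper phrases the last step as a direct telescoping of cross ratios via Proposition~\ref{basic cross ratio}(8) and discards factors $\geq 1$, whereas you phrase it as additivity of length over pairwise disjoint subsegments inside $\sigma$; these are the same argument in different clothing. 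One thing you do that the paper does not: you actually prove the ordering claim $\suc\ztd_i\preceq\suc^{-1}\ztd_{i+1}$ (by showing two Z-type binodal edges cannot be adjacent in $\widetilde{\Ipc}$), which the paper simply asserts — this is a genuine, if small, gap you have filled. Minor nit: for the positivity of the gap lengths, citing Proposition~\ref{useful cross ratio inequalities}(1) is slightly off since the arguments are lines in $\Pbbb(H)$ rather than points of $\xi$ and the inequality there is strict; the cleaner reference is Lemma~\ref{important lemma}(2) combined with Proposition~\ref{cross ratio configuration}, which gives the needed $\geq 1$, matching what the paper does.
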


\begin{proof}
By Proposition \ref{cross ratio and length}, we know 
\[\log\bigg(\frac{\omega_n}{\omega_1}\bigg)=\log(\xi(x^-)^{(1)},L_{p,-}(\ztd_1),L_{p,-}(\ztd_{|\Zpc|+1}),\xi(x^+)^{(1)}).\]
Moreover, since $\suc(\ztd_i)\preceq\suc^{-1}(\ztd_{i+1})$, we can use (2) of Lemma \ref{important lemma} to see that
\[(\xi(x^-)^{(1)},L_{p,+}(\ztd_i),L_{p,-}(\ztd_{i+1}),\xi(x^+)^{(1)})\geq1\] 
for $i=1,\dots,|\Zpc|$. Appying (8) of Proposition \ref{basic cross ratio}, we then have 
\begin{eqnarray*}
&&(\xi(x^-)^{(1)},L_{p,-}(\ztd_1),L_{p,-}(\ztd_{|\Zpc|+1}),\xi(x^+)^{(1)})\\
&=&\prod_{i=1}^{|\Zpc|}(\xi(x^-)^{(1)},L_{p,-}(\ztd_i),L_{p,+}(\ztd_i),\xi(x^+)^{(1)})\cdot(\xi(x^-)^{(1)},L_{p,+}(\ztd_i),L_{p,-}(\ztd_{i+1}),\xi(x^+)^{(1)})\\
&\geq&\prod_{i=1}^{|\Zpc|}(\xi(x^-)^{(1)},L_{p,-}(\ztd_i),L_{p,+}(\ztd_i),\xi(x^+)^{(1)}).
\end{eqnarray*}
Taking the logarithm gives us the lemma.
\end{proof}

Similarly, if we choose an edge $\std$ in $\widetilde{\Spc}$ and let $\std'=X\cdot\std$, then we can label the edges in $\widetilde{\Spc}$ between $\std$ and $\std'$ by
\[\{\std_1,\dots,\std_{|\Spc|+1}\}\]
where 
\[\std=\std_1\prec\std_2\prec\dots\prec\std_{|\Spc|+1}=\std'.\]
The same proof as in Lemma \ref{crossing lower bound 1} will give the following lemma.

\begin{lem}\label{crossing lower bound 2}
Fix any $p=0,\dots,n-1$. Then
\[l_\rho(X)\geq\sum_{i=1}^{|\Spc|}l(c_p(\std_i)).\]
\end{lem}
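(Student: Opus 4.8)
The plan is to mirror the proof of Lemma \ref{crossing lower bound 1} verbatim, with $\widetilde{\Zpc}$ replaced by $\widetilde{\Spc}$ throughout. The one point that requires checking is that the inequality
\[(\xi(x^-)^{(1)},L_{p,+}(\std_i),L_{p,-}(\std_{i+1}),\xi(x^+)^{(1)})\geq1\]
still holds for consecutive edges $\std_i\prec\std_{i+1}$ of $\widetilde{\Spc}$. For this I would argue, as in the Z-type case, that $\suc(\std_i)\preceq\suc^{-1}(\std_{i+1})$: indeed $\std_i$ and $\std_{i+1}$ are consecutive in $\widetilde{\Spc}$, but between them there may be Z-type binodal edges, and in any case $\suc(\std_i)$ is an edge in $\widetilde{\Ipc}$ that weakly precedes $\suc^{-1}(\std_{i+1})$ in the ordering $\preceq$. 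Unwinding the definitions of $L_{p,+}$ and $L_{p,-}$ for S-type edges, the projective points $L_{p,+}(\std_i)$ and $L_{p,-}(\std_{i+1})$ are images of points on $\xi$ that occur (weakly) in the order $x^-, L_{p,+}(\std_i), L_{p,-}(\std_{i+1}), x^+$ along the segment $\gamma$ from part (2) of Lemma \ref{important lemma}; then part (1) of Proposition \ref{useful cross ratio inequalities} (together with Proposition \ref{cross ratio configuration} when the two points coincide, which gives cross ratio exactly $1$) yields that the cross ratio is $\geq 1$.

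Granting this, the computation is identical: by Proposition \ref{cross ratio and length},
\[l_\rho(X)=\log\bigg(\frac{\omega_n}{\omega_1}\bigg)=\log(\xi(x^-)^{(1)},L_{p,-}(\std_1),L_{p,-}(\std_{|\Spc|+1}),\xi(x^+)^{(1)}),\]
and by the cocycle identity (8) of Proposition \ref{basic cross ratio} applied repeatedly along the chain $\std_1\prec\std_2\prec\dots\prec\std_{|\Spc|+1}$,
\begin{eqnarray*}
&&(\xi(x^-)^{(1)},L_{p,-}(\std_1),L_{p,-}(\std_{|\Spc|+1}),\xi(x^+)^{(1)})\\
&=&\prod_{i=1}^{|\Spc|}(\xi(x^-)^{(1)},L_{p,-}(\std_i),L_{p,+}(\std_i),\xi(x^+)^{(1)})\cdot(\xi(x^-)^{(1)},L_{p,+}(\std_i),L_{p,-}(\std_{i+1}),\xi(x^+)^{(1)})\\
&\geq&\prod_{i=1}^{|\Spc|}(\xi(x^-)^{(1)},L_{p,-}(\std_i),L_{p,+}(\std_i),\xi(x^+)^{(1)}).
\end{eqnarray*}
Since $c_p(\std_i)$ is by definition the subsegment of $\gamma$ from $L_{p,-}(\std_i)$ to $L_{p,+}(\std_i)$, the $i$-th factor is exactly $e^{l(c_p(\std_i))}$, so taking logarithms gives the claim. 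Here I am also implicitly using that all the points $L_{p,\pm}(\std_i)$ for $i=1,\dots,|\Spc|+1$ lie on a common subsegment $\gamma$ of $\Pbbb(H)$ with endpoints $\xi(x^-)^{(1)},\xi(x^+)^{(1)}$, in the order dictated by $\preceq$, which again is the content of part (2) of Lemma \ref{important lemma}.

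The main (and really only) obstacle is the bookkeeping in the order-verification step: one must be careful that when $\std_i$ and $\std_{i+1}$ are consecutive \emph{within} $\widetilde{\Spc}$ but not within $\widetilde{\Bpc}$, the intervening Z-type edges and closed leaves do not disrupt the inequality $\suc(\std_i)\preceq\suc^{-1}(\std_{i+1})$ and the resulting cyclic ordering of the projective points. This is handled exactly as in the Z-type case — the successor maps on $\widetilde{\Ipc}$ and on each $\Vpc_i$ are order-preserving, and the S-type definitions of $L_{p,+},L_{p,-}$ shift the relevant vertex by one step in $\Vpc_1$ (resp.\ $\Vpc_0$) in the direction compatible with $\preceq$ — so I would simply remark that ``the same proof as in Lemma \ref{crossing lower bound 1} applies'', after noting the symmetric roles of $s_0$ and $s_1$ interchange the roles of the two vertices of the binodal edge, which changes nothing in the argument.
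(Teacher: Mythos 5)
Your proposal is correct and takes precisely the same route as the paper, which simply states that ``the same proof as in Lemma~\ref{crossing lower bound 1} will give the following lemma'' and does not write out the argument. Your added care about the ordering $\suc(\std_i)\preceq\suc^{-1}(\std_{i+1})$ is sound: two consecutive edges of $\widetilde{\Spc}$ can never be adjacent in $\widetilde{\Ipc}$ (an S-type edge followed immediately by another S-type edge is impossible by the definition of S-type), so the inequality holds (with possible equality), and the rest of the computation transfers verbatim.
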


We now want a similar lower bound for $l_\rho(X)$ in terms of the lengths of the winding $(p)$-subsegments. As before, let $\btd$ be any edge in $\widetilde{\Bpc}$, let $\btd'=X\cdot\btd$ and label the edges in $\widetilde{\Bpc}$ between $\btd$ and $\btd'$ by
\[\{\btd_1,\dots,\btd_{|\Bpc|+1}\}\]
where 
\[\btd=\btd_1\prec\btd_2\prec\dots\prec\btd_{|\Bpc|+1}=\btd'.\]
Also, define
\begin{align*}
&\widetilde{\Dpc_1}:=\{(\btd_i,\btd_{i+1}):\btd_i,\btd_{i+1}\text{ that are of the same type}\},\\
&\widetilde{\Dpc_2}:=\{(\btd_i,\btd_{i+1}):\btd_i,\btd_{i+1}\text{ that are not of the same type}\}.
\end{align*}

\begin{lem}\label{winding lower bound}
Fix $p=0,\dots,n-1$. Then
\[3\cdot l_\rho(X)\geq\sum_{i=1}^{|\Bpc|}l(w_p(\btd_i,\btd_{i+1})).\]
\end{lem}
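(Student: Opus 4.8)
The plan is to mimic the proof of Lemma~\ref{crossing lower bound 1}, but to account for the fact that consecutive binodal edges of the same type, versus of different type, behave slightly differently, which is why we only get a factor of $3$ rather than equality. First I would write, exactly as in Lemma~\ref{crossing lower bound 1}, the length of $X$ as a single cross ratio:
\[
l_\rho(X)=\log\big(\xi(x^-)^{(1)},L_{p,-}(\btd_1),L_{p,-}(\btd_{|\Bpc|+1}),\xi(x^+)^{(1)}\big),
\]
using Proposition~\ref{cross ratio and length} together with the fact that $L_{p,-}(\btd_{|\Bpc|+1})=X\cdot L_{p,-}(\btd_1)$ (since $\btd_{|\Bpc|+1}=X\cdot\btd_1$ and the whole construction is $\langle X\rangle$-equivariant). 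Then, using (8) of Proposition~\ref{basic cross ratio} repeatedly to telescope along $\btd_1\prec\btd_2\prec\dots\prec\btd_{|\Bpc|+1}$, I would expand this cross ratio as a product over $i=1,\dots,|\Bpc|$ of the factors $\big(\xi(x^-)^{(1)},L_{p,-}(\btd_i),L_{p,+}(\btd_{i+1}),\xi(x^+)^{(1)}\big)$, inserting the intermediate points $L_{p,+}(\btd_{i+1})$ and then $L_{p,-}(\btd_{i+1})$. Concretely, each term of the product will be split as
\[
\big(\xi(x^-)^{(1)},L_{p,-}(\btd_i),L_{p,+}(\btd_{i+1}),\xi(x^+)^{(1)}\big)
=l(w_p(\btd_i,\btd_{i+1}))\text{-term}\times\big(\xi(x^-)^{(1)},L_{p,+}(\btd_{i+1}),L_{p,-}(\btd_{i+1}),\xi(x^+)^{(1)}\big),
\]
where the last factor, by (6)/(7) of Proposition~\ref{basic cross ratio}, is the reciprocal (or $1$ minus, depending on normalization) of $\big(\xi(x^-)^{(1)},L_{p,-}(\btd_{i+1}),L_{p,+}(\btd_{i+1}),\xi(x^+)^{(1)}\big)$, which is $\geq 1$ by (2) of Lemma~\ref{important lemma} since $L_{p,-}(\btd_{i+1})$ precedes $L_{p,+}(\btd_{i+1})$ on the oriented arc $\gamma$. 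So dropping those ``backward'' factors costs us a bounded amount, and I get
\[
l_\rho(X)\geq\sum_{i=1}^{|\Bpc|}l\big(w_p(\btd_i,\btd_{i+1})\big)-\sum_{i=1}^{|\Bpc|}l\big(c_p(\btd_i)\big)
\]
or a similar inequality comparing the winding segments to the crossing segments.

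The point where the factor of $3$ enters is that the winding subsegments $w_p(\btd_i,\btd_{i+1})$ overlap: $w_p(\btd_i,\btd_{i+1})$ contains both $L_p(\btd_i)$ and $L_p(\btd_{i+1})$, and the crossing subsegment $c_p(\btd_{i+1})$ sits inside both $w_p(\btd_i,\btd_{i+1})$ and $w_p(\btd_{i+1},\btd_{i+2})$. The cleanest way to handle this is to observe, via (2) of Lemma~\ref{important lemma} and the ordering of all the relevant projective points $L_{p,\pm}(\btd_i)$ on $\gamma$, that each of the three ``disjoint one-step'' cross ratios — say $\big(\xi(x^-)^{(1)},L_{p,-}(\btd_i),L_{p,+}(\btd_i),\xi(x^+)^{(1)}\big)$, $\big(\xi(x^-)^{(1)},L_{p,+}(\btd_i),L_{p,-}(\btd_{i+1}),\xi(x^+)^{(1)}\big)$, and the corresponding $c_p(\btd_{i+1})$-piece — is $\geq 1$, so that $w_p(\btd_i,\btd_{i+1})$'s length is bounded above by a sum of (at most) three consecutive non-overlapping pieces whose total, summed over all $i$ in one $\langle X\rangle$-period, telescopes into at most $3\log(\omega_n/\omega_1)=3\,l_\rho(X)$. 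In other words: write the circular arc $\gamma/\langle X\rangle$ as a concatenation of the elementary pieces between consecutive points of $\{L_{p,-}(\btd_i),L_{p,+}(\btd_i):i\}$; each such elementary piece has nonnegative length and appears in at most $3$ of the winding subsegments $w_p(\btd_i,\btd_{i+1})$; hence $\sum_i l(w_p(\btd_i,\btd_{i+1}))\leq 3\cdot(\text{total length of one period})=3\,l_\rho(X)$.

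I would structure the written proof as: (i) recall from the discussion preceding the lemma that $L_{p,\pm}(\btd_i)$ and $L_p(\btd_i)$ all lie on the oriented arc $\gamma$, and identify precisely how they interleave using the ordering $\btd_1\prec\dots\prec\btd_{|\Bpc|+1}$ and (2) of Lemma~\ref{important lemma} — in particular that $L_{p,-}(\btd_i)\preceq L_p(\btd_i)\preceq L_{p,+}(\btd_i)\preceq L_{p,-}(\btd_{i+1})$ along $\gamma$; (ii) conclude each elementary cross-ratio factor between consecutive such points is $\geq 1$, equivalently each elementary subsegment has nonnegative length; (iii) observe $w_p(\btd_i,\btd_{i+1})$ decomposes as the concatenation of the consecutive elementary subsegments from $L_{p,-}(\btd_i)$ to $L_{p,+}(\btd_{i+1})$, and that any fixed elementary subsegment lies in at most three of the $w_p(\btd_i,\btd_{i+1})$ (those indexed by $i-1,i,i+1$ roughly); (iv) sum the lengths, use additivity of $l$ under concatenation (which follows from the multiplicativity in (8) of Proposition~\ref{basic cross ratio}), and bound the total by $3$ times $l_\rho(X)=\log(\xi(x^-)^{(1)},L_{p,-}(\btd_1),X\cdot L_{p,-}(\btd_1),\xi(x^+)^{(1)})$. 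The main obstacle I anticipate is step~(iii): making the ``at most three'' multiplicity bound precise, since for consecutive $\btd_i,\btd_{i+1}$ of the same type there is a closed leaf (and a whole mesh's worth of winding) between them, so I need to be careful that the \emph{endpoints} $L_{p,-}(\btd_i),L_{p,+}(\btd_{i+1})$ of $w_p(\btd_i,\btd_{i+1})$ still satisfy $L_{p,+}(\btd_{i+1})\preceq L_{p,-}(\btd_{i+2})$ — i.e. that consecutive winding subsegments overlap only in at most one crossing subsegment's worth — and that this is exactly why $3$ (rather than $2$) is the right constant. Everything else is a routine application of Proposition~\ref{basic cross ratio}, Proposition~\ref{cross ratio and length} and Lemma~\ref{important lemma}.
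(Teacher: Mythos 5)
Your second, ``cleanest way'' is essentially the paper's proof: partition the arc into elementary subsegments between consecutive points of $\{L_{p,-}(\btd_j),L_p(\btd_j),L_{p,+}(\btd_j)\}$, note that each elementary piece has nonnegative length and appears in at most three of the $w_p(\btd_i,\btd_{i+1})$, and sum. The first, telescoping-with-correction-terms sketch is not what the paper does and I would drop it.

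That said, step~(i) of your outline contains a claim that is in tension with the rest of the argument, and your diagnosis in step~(iii) is aimed in the wrong direction. You assert $L_{p,-}(\btd_i)\preceq L_p(\btd_i)\preceq L_{p,+}(\btd_i)\preceq L_{p,-}(\btd_{i+1})$; the first two inequalities do follow from Lemma~\ref{important lemma}(2), but the third says $\suc(\btd_i)\preceq\suc^{-1}(\btd_{i+1})$, since $L_{p,+}(\btd_i)=L_p(\suc(\btd_i))$ and $L_{p,-}(\btd_{i+1})=L_p(\suc^{-1}(\btd_{i+1}))$. If that held for every $i$, then $w_p(\btd_{i-1},\btd_i)$ and $w_p(\btd_{i+1},\btd_{i+2})$ would be disjoint except possibly at endpoints, consecutive winding segments would overlap only on $c_p(\btd_{i+1})$, the multiplicity would be $2$, and you would have proved the sharper inequality $2\,l_\rho(X)\geq\sum_i l(w_p(\btd_i,\btd_{i+1}))$. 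So either you should commit to proving that ordering and get a factor of $2$, or you should acknowledge it can fail and explain exactly where. The only way it can fail is $\suc(\btd_i)\succ\suc^{-1}(\btd_{i+1})$, which (since there are no binodal edges strictly between $\btd_i$ and $\btd_{i+1}$) forces $\btd_{i+1}=\suc(\btd_i)$; that can only happen when $\btd_i$ and $\btd_{i+1}$ are of \emph{different} type and share a vertex, and are in fact adjacent in $\widetilde{\Ipc}$. In that situation $L_{p,-}(\btd_{i+1})=L_p(\btd_i)\preceq L_p(\btd_{i+1})=L_{p,+}(\btd_i)$, and the three segments $w_p(\btd_{i-1},\btd_i)$, $w_p(\btd_i,\btd_{i+1})$, $w_p(\btd_{i+1},\btd_{i+2})$ all contain $[L_p(\btd_i),L_p(\btd_{i+1})]$, while no four can share an interior point because $\suc^{-1}(\btd_{i+2})\succeq\btd_{i+1}\succeq\suc(\btd_i)$. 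Your ``main obstacle'' paragraph blames consecutive same-type pairs, but those are safe: the closed leaf between a same-type pair $\btd_i,\btd_{i+1}$ guarantees many non-binodal edges between them, hence $\suc(\btd_i)\prec\suc^{-1}(\btd_{i+1})$ and $L_{p,+}(\btd_i)\preceq L_{p,-}(\btd_{i+1})$ automatically. Fixing the attribution and dropping the unjustified inequality in step~(i) would bring your write-up in line with the paper's proof.
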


\begin{proof}
Label the set $\{L_{p,-}(\btd_i), L_p(\btd_i),L_{p,+}(\btd_i), L_{p,-}(\btd_{i+1}), L_p(\btd_{i+1}),L_{p,+}(\btd_{i+1})\}$ by $\{d_{i,1},\dots,d_{i,k_i}\}$ with $d_{i,1},\dots,d_{i,k_i}$ arranged in this order along $w_p(\btd_i,\btd_{i+1})$. For each $j=1,\dots,k_i-1$, let $s_{i,j}$ be the closed subsegment of $w_p(\btd_i,\btd_{i+1})$ with endpoints $d_{i,j}$ and $d_{i,j+1}$, and let 
\[\Spc:=\{s_{i,j}:i=1,\dots,|\Bpc|;j=1,\dots,k_i-1\}.\]
Note that either $s_{i,j}=s_{i',j'}$ or $s_{i,j}\cap s_{i',j'}$ is at most a single point. Also, it is clear that 
\[\bigcup_{j=1}^{k_i-1}s_{i,j}=w_p(\btd_i,\btd_{i+1}).\] 

Observe also that any point in the subsegment between $L_{p,-}(\btd_1)$ and $L_{p,+}(\btd_{|\Bpc|+1})$ is contained in the interior of at most three different winding $(p)$-segments. Hence, 
\begin{align*}
\sum_{i=1}^{|\Bpc|}l(w_p(\btd_i,\btd_{i+1}))&=\sum_{i=1}^{|\Bpc|}\sum_{j=1}^{k_i-1}l(s_{i,j})\\
&\leq 3\sum_{s\in\Spc}l(s)\\
&= 3\cdot l_\rho(X).
\end{align*}
\end{proof}

Now, we can give a lower bound for $l_\rho(X)$ in terms of the lengths of the crossing $(p)$-subsegments and the winding $(p)$-subsegments. 

\begin{prop}\label{segment lower bound}
Fix $p=0,\dots,n-1$. Then
\begin{eqnarray*}l_\rho(X)&\geq&\frac{1}{11n}\sum_{p=0}^{n-1}\bigg(\sum_{i=1}^{|\Zpc|}l(c_p(\ztd_i))+\sum_{i=1}^{|\Spc|}l(c_p(\std_i))+\sum_{\widetilde{\Dpc}_1}l(w_p(\btd_i,\btd_{i+1}))\\
\hspace{1cm}&&\hspace{4cm}+\sum_{\widetilde{\Dpc}_2}(l(w_1(\btd_i,\btd_{i+1}))+l(w_{n-2}(\btd_i,\btd_{i+1})))\bigg).
\end{eqnarray*}
\end{prop}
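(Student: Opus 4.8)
The plan is to combine the four previously-established lower bounds (Lemmas \ref{crossing lower bound 1}, \ref{crossing lower bound 2}, \ref{winding lower bound}) in a weighted average over $p$, and absorb the multiplicative constants into the factor $\frac{1}{11n}$. First I would note that Lemma \ref{crossing lower bound 1} gives, for each fixed $p\in\{0,\dots,n-1\}$, the bound $l_\rho(X)\geq\sum_{i=1}^{|\Zpc|}l(c_p(\ztd_i))$, and Lemma \ref{crossing lower bound 2} gives $l_\rho(X)\geq\sum_{i=1}^{|\Spc|}l(c_p(\std_i))$. Summing each of these over the $n$ values of $p$ yields $n\cdot l_\rho(X)\geq\sum_{p=0}^{n-1}\sum_i l(c_p(\ztd_i))$ and similarly for the S-type sum. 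For the winding terms, Lemma \ref{winding lower bound} gives $3 l_\rho(X)\geq\sum_{i=1}^{|\Bpc|}l(w_p(\btd_i,\btd_{i+1}))$ for each fixed $p$; in particular, applied with the single index $p$ ranging over $\{0,\dots,n-1\}$ and summed, this gives $3n\cdot l_\rho(X)\geq\sum_{p=0}^{n-1}\sum_{i=1}^{|\Bpc|}l(w_p(\btd_i,\btd_{i+1}))\geq\sum_{p=0}^{n-1}\sum_{\widetilde{\Dpc}_1}l(w_p(\btd_i,\btd_{i+1}))$, since $\widetilde{\Dpc}_1$ indexes a sub-collection of consecutive pairs. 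For the $\widetilde{\Dpc}_2$ contribution I only need the $p=1$ and $p=n-2$ slices: applying Lemma \ref{winding lower bound} with $p=1$ and with $p=n-2$ and adding gives $6 l_\rho(X)\geq\sum_{\widetilde{\Dpc}_2}(l(w_1(\btd_i,\btd_{i+1}))+l(w_{n-2}(\btd_i,\btd_{i+1})))$ after discarding the non-$\widetilde{\Dpc}_2$ pairs (all the winding lengths are nonnegative by Proposition \ref{useful cross ratio inequalities}(1) together with (2) of Lemma \ref{important lemma}, which ensures the relevant cross ratios are $\geq 1$, so dropping terms only decreases the sum).

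Next I would add these inequalities with appropriate weights. Concretely: $n\cdot l_\rho(X)$ dominates $\sum_{p}\sum_i l(c_p(\ztd_i))$; another $n\cdot l_\rho(X)$ dominates $\sum_{p}\sum_i l(c_p(\std_i))$; $3n\cdot l_\rho(X)$ dominates $\sum_{p}\sum_{\widetilde{\Dpc}_1}l(w_p)$; and $6 l_\rho(X)$ dominates $\sum_{\widetilde{\Dpc}_2}(l(w_1)+l(w_{n-2}))$. Adding all four and bounding $6\leq 6n$ gives $(n+n+3n+6n)l_\rho(X)=11n\cdot l_\rho(X)$ is at least the whole parenthesized expression, which is exactly the claimed inequality after dividing by $11n$. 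The one point requiring a sentence of justification is nonnegativity of every crossing and winding length: a crossing segment $c_p(\etd)$ has endpoints $L_{p,-}(\etd)$, $L_{p,+}(\etd)$ lying on the oriented arc $\gamma$ from $\xi(x^-)^{(1)}$ to $\xi(x^+)^{(1)}$ in the order dictated by part (2) of Lemma \ref{important lemma}, so $l(c_p(\etd))=\log(\xi(x^-)^{(1)},L_{p,-}(\etd),L_{p,+}(\etd),\xi(x^+)^{(1)})\geq\log 1=0$ by Proposition \ref{useful cross ratio inequalities}(1); the same reasoning applies to $w_p(\etd,\etd')$.

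I do not expect a genuine obstacle here — this is a bookkeeping argument — but the step that needs the most care is verifying that one may legitimately \emph{discard} terms when passing from the full index set $\{1,\dots,|\Bpc|\}$ of consecutive binodal pairs to the sub-collections $\widetilde{\Dpc}_1$ and $\widetilde{\Dpc}_2$, and from $\{1,\dots,|\Bpc|+1\}$ of all binodal edges to $\widetilde{\Zpc}$ and $\widetilde{\Spc}$. This is legitimate precisely because all the summands are nonnegative, which is why the nonnegativity remark above is not optional. A secondary subtlety is that the weak inequalities $\suc(\ztd_i)\preceq\suc^{-1}(\ztd_{i+1})$ used inside the cited lemmas degenerate to equalities when consecutive $\widetilde{\Zpc}$-edges share a vertex, but this only makes the intermediate cross ratios equal to $1$ rather than strictly greater, so the lemmas and hence the bound still hold.

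\begin{proof}
By (1) of Proposition \ref{useful cross ratio inequalities} and (2) of Lemma \ref{important lemma}, for every $\etd$ in $\widetilde{\Bpc}$ the points $\xi(x^-)^{(1)}$, $L_{p,-}(\etd)$, $L_{p,+}(\etd)$, $\xi(x^+)^{(1)}$ lie in this order along $\gamma$, so
\[l(c_p(\etd))=\log(\xi(x^-)^{(1)},L_{p,-}(\etd),L_{p,+}(\etd),\xi(x^+)^{(1)})\geq 0,\]
and likewise $l(w_p(\etd,\etd'))\geq 0$ for every consecutive pair $\etd,\etd'$ in $\widetilde{\Bpc}$. In particular all summands appearing in the statement are nonnegative.

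Fix $p$ in $\{0,\dots,n-1\}$. Summing the inequality of Lemma \ref{crossing lower bound 1} over $p=0,\dots,n-1$ gives
\[n\cdot l_\rho(X)\geq\sum_{p=0}^{n-1}\sum_{i=1}^{|\Zpc|}l(c_p(\ztd_i)),\]
and similarly, from Lemma \ref{crossing lower bound 2},
\[n\cdot l_\rho(X)\geq\sum_{p=0}^{n-1}\sum_{i=1}^{|\Spc|}l(c_p(\std_i)).\]
Since $\widetilde{\Dpc}_1$ indexes a sub-collection of the consecutive pairs $(\btd_i,\btd_{i+1})$, $i=1,\dots,|\Bpc|$, and the winding lengths are nonnegative, Lemma \ref{winding lower bound} summed over $p=0,\dots,n-1$ yields
\[3n\cdot l_\rho(X)\geq\sum_{p=0}^{n-1}\sum_{i=1}^{|\Bpc|}l(w_p(\btd_i,\btd_{i+1}))\geq\sum_{p=0}^{n-1}\sum_{\widetilde{\Dpc}_1}l(w_p(\btd_i,\btd_{i+1})).\]
Finally, applying Lemma \ref{winding lower bound} with $p=1$ and with $p=n-2$, adding the two resulting inequalities, and discarding the nonnegative terms indexed by pairs not in $\widetilde{\Dpc}_2$ gives
\[6n\cdot l_\rho(X)\geq 6\cdot l_\rho(X)\geq\sum_{\widetilde{\Dpc}_2}\big(l(w_1(\btd_i,\btd_{i+1}))+l(w_{n-2}(\btd_i,\btd_{i+1}))\big).\]
Adding the four displayed inequalities, we obtain
\[11n\cdot l_\rho(X)\geq\sum_{p=0}^{n-1}\bigg(\sum_{i=1}^{|\Zpc|}l(c_p(\ztd_i))+\sum_{i=1}^{|\Spc|}l(c_p(\std_i))+\sum_{\widetilde{\Dpc}_1}l(w_p(\btd_i,\btd_{i+1}))\bigg)+\sum_{\widetilde{\Dpc}_2}\big(l(w_1(\btd_i,\btd_{i+1}))+l(w_{n-2}(\btd_i,\btd_{i+1}))\big),\]
which, after dividing by $11n$, is the asserted inequality.
\end{proof}
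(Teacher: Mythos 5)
Your approach matches the paper's in spirit --- summing the four lower bounds (Lemmas \ref{crossing lower bound 1}, \ref{crossing lower bound 2}, \ref{winding lower bound}) with total weight $1+1+3+6=11$ and averaging over $p$ --- but there is an arithmetic slip in the last step that leaves you with a strictly weaker inequality than the one asserted.

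The $\widetilde{\Dpc}_2$ sum sits \emph{inside} the parenthesis of $\sum_{p=0}^{n-1}(\cdots)$ in the statement of the proposition, even though it does not depend on $p$; expanding, the right-hand side of the claim (times $11n$) is
\[
\sum_{p=0}^{n-1}\Big(\sum_{i}l(c_p(\ztd_i))+\sum_{i}l(c_p(\std_i))+\sum_{\widetilde{\Dpc}_1}l(w_p(\btd_i,\btd_{i+1}))\Big)\ +\ n\sum_{\widetilde{\Dpc}_2}\big(l(w_1(\btd_i,\btd_{i+1}))+l(w_{n-2}(\btd_i,\btd_{i+1}))\big),
\]
with a factor of $n$ on the $\widetilde{\Dpc}_2$ term. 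You obtained $6\,l_\rho(X)\geq\sum_{\widetilde{\Dpc}_2}\big(l(w_1)+l(w_{n-2})\big)$ and then promoted the left-hand side via $6\leq 6n$ to write $6n\,l_\rho(X)\geq 6\,l_\rho(X)\geq\sum_{\widetilde{\Dpc}_2}(\cdots)$. That inflates only the left side, so after summing with the other three bounds your displayed conclusion $11n\,l_\rho(X)\geq A+\sum_{\widetilde{\Dpc}_2}(\cdots)$ is missing the factor of $n$ in front of the $\widetilde{\Dpc}_2$ sum, and dividing by $11n$ does \emph{not} reproduce the asserted inequality. The fix is immediate: multiply both sides of $6\,l_\rho(X)\geq\sum_{\widetilde{\Dpc}_2}(\cdots)$ by $n$ to get $6n\,l_\rho(X)\geq n\sum_{\widetilde{\Dpc}_2}(\cdots)$, and then everything goes through. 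This is precisely the point the paper's phrasing ("sum these two inequalities \dots and then take average over $p$") handles cleanly: the $\widetilde{\Dpc}_2$ bound is added to the other three for \emph{each} fixed $p$, so it automatically acquires $n$ copies when the $p$-average is taken. The nonnegativity observations you make (needed to discard terms outside $\widetilde{\Dpc}_1$, $\widetilde{\Dpc}_2$) are correct and do belong in a complete write-up.
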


\begin{proof}
Lemma \ref{winding lower bound} implies that 
\[6\cdot l_\rho(X)\geq\sum_{\widetilde{\Dpc}_2}(l(w_1(\btd_i,\btd_{i+1}))+l(w_{n-2}(\btd_i,\btd_{i+1})))\]
and for all $p=0,\dots,n-1$,
\[3\cdot l_\rho(X)\geq\sum_{\widetilde{\Dpc}_1}l(w_p(\btd_i,\btd_{i+1})).\]

Sum these two inequalities with the inequalities in Lemma \ref{crossing lower bound 1} and Lemma \ref{crossing lower bound 2}, and then take average over $p$.
\end{proof}

\subsection{Lower bound for the length of a closed curve}\label{Lower bound for the length of a closed curve}
Let $\etd,\etd'$ in $\widetilde{\Bpc}$ be any consecutive pair with $\etd$ preceding $\etd'$, and let $e$, $e'$ be the equivalence classes in $\Bpc$ containing $\etd$ and $\etd'$ respectively. We now want to define numbers $K=K(\rho)$ and $L=L(\rho)$ which depend only on $\rho$, so that 
\[\frac{1}{n}\sum_{p=0}^{n-1}l(c_p(\etd))\geq K,\]  
\[\frac{1}{n}\sum_{p=0}^{n-1}l(w_p(\etd,\etd'))\geq\max\{0,|t(e,e')|-2\}\cdot L\] 
when $(\etd,\etd')$ is in $\widetilde{\Dpc}_1$, and  
\[l(w_1(\etd,\etd'))+l(w_{n-2}(\etd,\etd'))\geq\max\{0,|t(e,e')|-2\}\cdot L\] 
when $(\etd,\etd')$ is in $\widetilde{\Dpc}_2$. These estimates, together with Proposition \ref{segment lower bound}, will allow us to obtain a lower bound for $l_\rho(X)$ in Theorem \ref{length lower bound}. Let us start with $K$.

\begin{notation}
Consider any triangle $\{\{a,b\},\{b,c\},\{c,a\}\}$ in $\widetilde{\Tpc}$. For $p=1,\dots,n-1$, consider the collection of subspaces
\[\Mpc^\xi_p(a,b,c)=\Mpc_p(a,b,c):=\{\xi(a)^{(p-r)}+\xi(b)^{(n-p-1)}+\xi(c)^{(r-1)}:r=1,\dots,p\}\]
\end{notation}

\begin{lem}\label{Z length inequality}
Let $\etd=\{a,b\}$ be an edge in $\widetilde{\Zpc}$, and suppose $a$ lies in $s_0$ and $b$ lies in $s_1$. Then the following hold.
\begin{enumerate}
\item For all $p=1,\dots,n-1$ and for all $M$ in $\Mpc_p(a,b,\suc(a))$, we have
\[l(c_p(\etd))\geq\log(\xi(\suc^{-1}(b)),\xi(a),\xi(\suc(a)),\xi(b))_M.\]
\item For all $p=0,\dots,n-2$ and for all $M$ in $\Mpc_{n-p-1}(b,a,\suc^{-1}(b))$, we have 
\[l(c_p(\etd))\geq\log(\xi(\suc^{-1}(b)),\xi(a),\xi(\suc(a)),\xi(b))_M.\]
\end{enumerate}
\end{lem}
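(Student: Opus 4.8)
The plan is to realize both inequalities as instances of the general principle that the length $l(c_p(\etd))$ of the crossing $(p)$-subsegment is a cross ratio which, by Proposition \ref{basic cross ratio}(8), factors as a product over smaller cross ratios, and then to isolate one factor using the monotonicity inequalities of Proposition \ref{useful cross ratio inequalities}. Concretely, for part (1), fix $p$ and a subspace $M=\xi(a)^{(p-r)}+\xi(b)^{(n-p-1)}+\xi(\suc(a))^{(r-1)}$ in $\Mpc_p(a,b,\suc(a))$. I would first unwind the definition of $l(c_p(\etd))$: since $\etd$ is $Z$-type, the endpoints of $c_p(\etd)$ are $L_{p,-}(\etd)=\Pbbb(\xi(a)^{(p)}+\xi(\suc^{-1}(b))^{(n-p-1)})\cap\Pbbb(H)$ and $L_{p,+}(\etd)=\Pbbb(\xi(\suc(a))^{(p)}+\xi(b)^{(n-p-1)})\cap\Pbbb(H)$, and by the definition of length
\[
l(c_p(\etd))=\log\big(\xi(x^-)^{(1)},L_{p,-}(\etd),L_{p,+}(\etd),\xi(x^+)^{(1)}\big),
\]
which by Proposition \ref{cross ratio and length}–style reasoning (or rather the product formula for the full cross ratio recorded just before Lemma \ref{crossing lower bound 1}) is a cross ratio of four $(n-1)$-dimensional subspaces sharing the common $(n-2)$-plane $\xi(x^-)^{(n-2)}\cap\xi(x^+)^{(n-2)}$.

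The second step is the key algebraic move: rewrite this cross ratio, using the $PSL(n,\Rbbb)$-invariance (Proposition \ref{basic cross ratio}(1)) and part (2) of that proposition (which lets us replace each line by any other line spanning the same $(n-1)$-space over $M$), so that the base $(n-2)$-subspace becomes the chosen $M$ and the four lines become (representatives of) $\xi(\suc^{-1}(b))^{(1)}$, $\xi(a)^{(1)}$, $\xi(\suc(a))^{(1)}$, $\xi(b)^{(1)}$ projected through $M$. This is where Lemma \ref{important lemma} enters — it guarantees that the relevant projections through $M$ are homeomorphisms onto a projective line and that the four points land in the correct cyclic order, so that the identification of the length with $\log(\xi(\suc^{-1}(b)),\xi(a),\xi(\suc(a)),\xi(b))_M$ is legitimate up to a cross-ratio factor that is $\geq 1$. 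The inequality (rather than equality) then comes precisely from discarding that extra factor via Proposition \ref{useful cross ratio inequalities}(1), exactly as in the proofs of Lemmas \ref{crossing lower bound 1} and \ref{crossing lower bound 2}: the full length cross ratio telescopes (Proposition \ref{basic cross ratio}(8)) into the desired cross ratio times other cross ratios of points in convex position along $\xi$, each bounded below by $1$.

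Part (2) is the mirror image: for $p=0,\dots,n-2$, set $q=n-p-1$ and $M\in\Mpc_{q}(b,a,\suc^{-1}(b))$, i.e. $M=\xi(b)^{(q-r)}+\xi(a)^{(n-q-1)}+\xi(\suc^{-1}(b))^{(r-1)}=\xi(b)^{(q-r)}+\xi(a)^{(p)}+\xi(\suc^{-1}(b))^{(r-1)}$. Here one projects through $M$ onto $\Pbbb(H)$ from the ``$b$-side'' rather than the ``$a$-side'' — reflecting that now the endpoint $L_{p,-}(\etd)$ involves $\xi(\suc^{-1}(b))^{(n-p-1)}$, which contains a piece of $M$. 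The same sequence of steps (unwind the definitions of $L_{p,\pm}(\etd)$ and $l(c_p(\etd))$; apply Proposition \ref{basic cross ratio}(1),(2) to rebase at $M$; invoke Lemma \ref{important lemma} for the ordering and well-definedness; drop the surplus factor using Proposition \ref{useful cross ratio inequalities}(1)) yields the claim, with the ordering of the four arguments $\xi(\suc^{-1}(b)),\xi(a),\xi(\suc(a)),\xi(b)$ preserved because Proposition \ref{basic cross ratio}(6) makes the cross ratio insensitive to reading $\xi$ clockwise versus anti-clockwise.

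\textbf{Main obstacle.} The real work — and the step I expect to be delicate — is the bookkeeping in the second step: verifying that, after rebasing the length cross ratio at a \emph{general} element $M$ of $\Mpc_p(a,b,\suc(a))$ (resp. $\Mpc_{n-p-1}(b,a,\suc^{-1}(b))$), the four resulting lines really do appear in the cyclic order that makes Proposition \ref{useful cross ratio inequalities}(1) applicable, and that the ``leftover'' cross-ratio factors are genuinely of the form (points in convex position along $\xi$) so that each is $\geq 1$. This requires carefully tracking, for each $r=1,\dots,p$, how the flag $\xi(\suc(a))$ or $\xi(\suc^{-1}(b))$ sits relative to $s_0$, $s_1$ and the $Z$-type hypothesis on $\etd$ — in particular that $\suc(a)$ lies on $s_0$ between $a$ and $x^+$, while $\suc^{-1}(b)$ lies on $s_1$ between $x^-$ and $b$ — so that all the relevant points are in the right order along the Frenet curve. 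Once the ordering is pinned down, the inequalities are immediate from Proposition \ref{useful cross ratio inequalities}(1); the content is entirely in setting up the correct projection and order.
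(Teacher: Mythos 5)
Your overall plan matches the paper's: unwind the definition of $l(c_p(\etd))$, use Lemma \ref{important lemma}(2) to rebase the length cross ratio at the chosen $M$, and then compare with $(\xi(\suc^{-1}(b)),\xi(a),\xi(\suc(a)),\xi(b))_M$ using the cross-ratio estimates of Proposition \ref{useful cross ratio inequalities}. That is exactly what the paper does, so the route is essentially the same.

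Two points in your execution are not quite right, though. First, the decisive use of Lemma \ref{important lemma}(2) is more specific than "projections through $M$ are homeomorphisms": it produces auxiliary points $a'\in s_0$ between $\suc(a)$ and $x^+$, and $a''\in s_0$ between $x^-$ and $a$, such that for the given $M=\xi(a)^{(p-r)}+\xi(b)^{(n-p-1)}+\xi(\suc(a))^{(r-1)}$ one has $L_{p,+}(\etd)=\Pbbb(M+\xi(a')^{(1)})\cap\Pbbb(H)$ and $L_{p,-}(\etd)=\Pbbb(M+\xi(a'')^{(1)})\cap\Pbbb(H)$. After Proposition \ref{basic cross ratio}(2), the length thus becomes $\log(\xi(x^-),\xi(a''),\xi(a'),\xi(x^+))_M$, and the four points that "land on $\Pbbb(H)$ in cyclic order" are $x^-,a'',a',x^+$ — not $\suc^{-1}(b),a,\suc(a),b$. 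Your writeup leaves the auxiliary points implicit, which obscures where the comparison actually happens. Second, the comparison of $(\xi(x^-),\xi(a''),\xi(a'),\xi(x^+))_M$ against $(\xi(\suc^{-1}(b)),\xi(a),\xi(\suc(a)),\xi(b))_M$ cannot be done by telescoping via Proposition \ref{basic cross ratio}(8) alone: that identity keeps $L_1$ and $L_5$ fixed and only splits the middle entries, so it lets you pass from $(a'',a')$ to $(a,\suc(a))$, but it never touches the outer pair $(x^-,x^+)\rightsquigarrow(\suc^{-1}(b),b)$. Replacing the outer arguments requires either the dual cocycle identity $(L_1,L_2,L_3,L_4)_M=(L_1,L_2,L_3,L_1')_M\,(L_1',L_2,L_3,L_4)_M$ (which follows from the linear-functional formula but is not recorded in Proposition \ref{basic cross ratio}) or, as the paper does, a direct appeal to the monotonicity statements Proposition \ref{useful cross ratio inequalities}(2)--(5), which handle all four positions at once. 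So replace "telescope via (8) then drop factors $\geq1$" by "apply Proposition \ref{useful cross ratio inequalities}(2)--(5) to move $\suc^{-1}(b)\to x^-$, $a\to a''$, $\suc(a)\to a'$, $b\to x^+$, each step increasing the cross ratio," and part (1) is complete; part (2) is the symmetric argument through $b$, with $b',b''\in s_1$ in place of $a',a''$.
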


\begin{proof}
Proof of (1). Let $s'_0$, $s''_0$ be the closed subinterval of $s_0$ with endpoints $\suc(a)$ and $x^+$, $x^-$ and $a$ respectively. By (2) of Lemma \ref{important lemma}, we know that for any $p=1,\dots,n-1$ and any $r=1,\dots,p$, there exists $a'$ in $s'_0$ and $a''$ in $s''_0$ (see Figure \ref{Ztype}) so that 
\begin{eqnarray*}
L_{p,+}(\etd)&=&\Pbbb(\xi(b)^{(n-p-1)}+\xi(\suc(a))^{(p)})\cap\Pbbb(H)\\
&=&\Pbbb(\xi(a)^{(p-r)}+\xi(b)^{(n-p-1)}+\xi(\suc(a))^{(r-1)}+\xi(a')^{(1)})\cap \Pbbb(H),\\
L_{p,-}(\etd)&=&\Pbbb(\xi(\suc^{-1}(b))^{(n-p-1)}+\xi(a)^{(p)})\cap \Pbbb(H)\\
&=&\Pbbb(\xi(a)^{(p-r)}+\xi(b)^{(n-p-1)}+\xi(\suc(a))^{(r-1)}+\xi(a'')^{(1)})\cap \Pbbb(H).
\end{eqnarray*}
These imply that 
\begin{eqnarray*}
&&(\xi(x^-)^{(1)},L_{p,-}(\etd),L_{p,+}(\etd),\xi(x^+)^{(1)})\\
&=&(\xi(x^-)^{(1)},\xi(a'')^{(1)},\xi(a')^{(1)},\xi(x^+)^{(1)})_{\xi(a)^{(p-r)}+\xi(b)^{(n-p-1)}+\xi(\suc(a))^{(r-1)}}\\
&\geq&(\xi(\suc^{-1}(b)),\xi(a),\xi(\suc(a)),\xi(b))_{\xi(a)^{(p-r)}+\xi(b)^{(n-p-1)}+\xi(\suc(a))^{(r-1)}}
\end{eqnarray*}
where the final inequality is a consequence of Proposition \ref{useful cross ratio inequalities}. 

\begin{figure}
\includegraphics[scale=0.45]{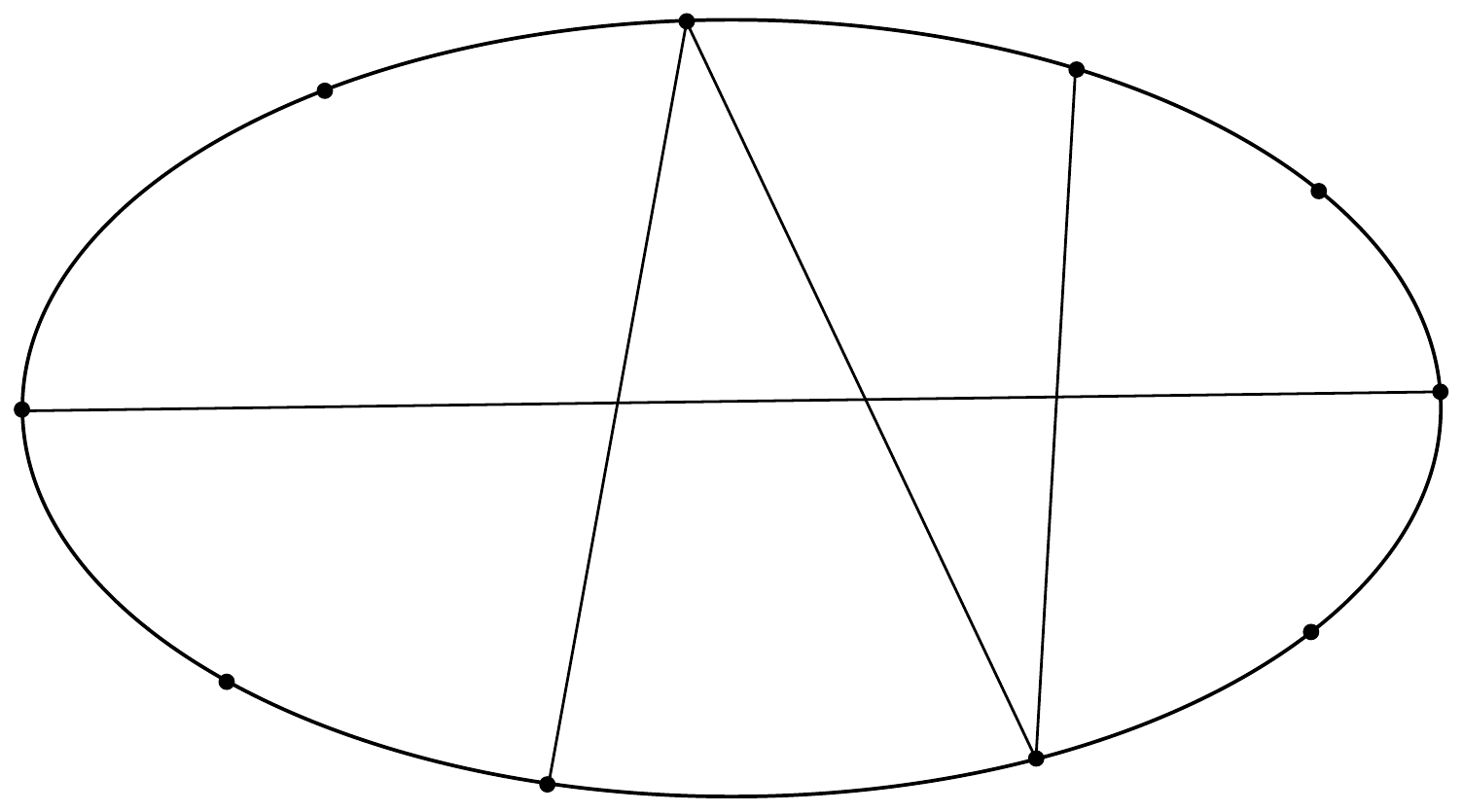}
\put (-442, 51){\makebox[0.7\textwidth][r]{$x^-$ }}
\put (-236, 53){\makebox[0.7\textwidth][r]{$x^+$ }}
\put (-398, 98){\makebox[0.7\textwidth][r]{$a''$ }}
\put (-353, 108){\makebox[0.7\textwidth][r]{$a$ }}
\put (-273, 102){\makebox[0.7\textwidth][r]{$\suc(a)$ }}
\put (-257, 82){\makebox[0.7\textwidth][r]{$a'$ }}
\put (-410, 6){\makebox[0.7\textwidth][r]{$b''$ }}
\put (-360, -7){\makebox[0.7\textwidth][r]{$\suc^{-1}(b)$ }}
\put (-302, -2){\makebox[0.7\textwidth][r]{$b$ }}
\put (-261, 15){\makebox[0.7\textwidth][r]{$b'$ }}
\caption{}
\label{Ztype}
\end{figure}

Proof of (2). Similarly, let $s'_1$, $s''_1$ be the closed subinterval of $s_1$ with endpoints $b$ and $x^+$, $x^-$ and $\suc^{-1}(b)$ respectively. For $p=0,\dots,n-2$ and any $r=1,\dots,n-p-1$, there exists $b'$ in $s'_1$ and $b''$ in $s''_1$ (see Figure \ref{Ztype}) so that 
\begin{eqnarray*}
L_{p,+}(\etd)&=&(\xi(b)^{(n-p-1)}+\xi(\suc(a))^{(p)})\cap\Pbbb(H)\\
&=&(\xi(a)^{(p)}+\xi(b)^{(n-p-r-1)}+\xi(\suc^{-1}(b))^{(r-1)}+\xi(b')^{(1)})\cap\Pbbb(H),\\
L_{p,-}(\etd)&=&(\xi(\suc^{-1}(b))^{(n-p-1)}+\xi(a)^{(p)})\cap\Pbbb(H)\\
&=&(\xi(a)^{(p)}+\xi(b)^{(n-p-r-1)}+\xi(\suc^{-1}(b))^{(r-1)}+\xi(b'')^{(1)})\cap\Pbbb(H).
\end{eqnarray*}
Hence, we have
\begin{eqnarray*}
&&(\xi(x^-)^{(1)},L_{p,-}(\etd),L_{p,+}(\etd),\xi(x^+)^{(1)})\\
&=&(\xi(x^-)^{(1)},\xi(b'')^{(1)},\xi(b')^{(1)},\xi(x^+)^{(1)})_{\xi(a)^{(p)}+\xi(b)^{(n-p-r-1)}+\xi(\suc^{-1}(b))^{(r-1)}}\\
&\geq&(\xi(a),\xi(\suc^{-1}(b)),\xi(b),\xi(\suc(a)))_{\xi(a)^{(p)}+\xi(b)^{(n-p-r-1)}+\xi(\suc^{-1}(b))^{(r-1)}}\\
&=&(\xi(\suc^{-1}(b)),\xi(a),\xi(\suc(a)),\xi(b))_{\xi(a)^{(p)}+\xi(b)^{(n-p-r-1)}+\xi(\suc^{-1}(b))^{(r-1)}}.
\end{eqnarray*}
where the last equality follows from (6) and (7) of Proposition \ref{basic cross ratio}.
\end{proof}

A proof similar to the one for Lemma \ref{Z length inequality} gives the following lemma.

\begin{lem}\label{S length inequality}
Let $\etd=\{a,b\}$ be an edge in $\widetilde{\Spc}$, and suppose $a$ lies in $s_0$ and $b$ lies in $s_1$. Then we have the following.
\begin{enumerate}
\item For all $p=1,\dots,n-1$ and for all $M\in\Mpc_p(a,b,\suc^{-1}(a))$, we have
\[l(c_p(\etd))\geq\log(\xi(b),\xi(\suc^{-1}(a)),\xi(a),\xi(\suc(b)))_M.\]
\item For all $p=0,\dots,n-2$ and for all $M\in\Mpc_{n-p-1}(b,a,\suc(b))$, we have 
\[l(c_p(\etd))\geq\log(\xi(b),\xi(\suc^{-1}(a)),\xi(a),\xi(\suc(b)))_M.\]
\end{enumerate}
\end{lem}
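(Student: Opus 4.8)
\textbf{Proof plan for Lemma \ref{S length inequality}.}

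The strategy is to mirror the proof of Lemma \ref{Z length inequality}, exploiting the fact that S-type edges are, roughly speaking, Z-type edges with the roles of the two endpoints swapped in the successor structure. Recall that if $\etd=\{a,b\}\in\widetilde{\Spc}$ with $a\in s_0$, $b\in s_1$, then $\suc\{a,b\}=\{a,\suc(b)\}$ and $\suc^{-1}\{a,b\}=\{\suc^{-1}(a),b\}$, so that $L_{p,+}(\etd)=\Pbbb(\xi(a)^{(p)}+\xi(\suc(b))^{(n-p-1)})\cap\Pbbb(H)$ and $L_{p,-}(\etd)=\Pbbb(\xi(\suc^{-1}(a))^{(p)}+\xi(b)^{(n-p-1)})\cap\Pbbb(H)$. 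For part (1), fix $p\in\{1,\dots,n-1\}$ and $r\in\{1,\dots,p\}$, and write $M:=\xi(a)^{(p-r)}+\xi(b)^{(n-p-1)}+\xi(\suc^{-1}(a))^{(r-1)}\in\Mpc_p(a,b,\suc^{-1}(a))$. Apply part (2) of Lemma \ref{important lemma}: projecting the subsegment of $\xi$ between $\suc^{-1}(a)$ and $x^+$ through the base subspace $M$ onto $\Pbbb(H)$ produces a point $a'$ on the appropriate subinterval of $s_0$ with $L_{p,+}(\etd)=\Pbbb(M+\xi(a')^{(1)})\cap\Pbbb(H)$, and likewise (projecting the piece between $x^-$ and $\suc^{-1}(a)$) a point $a''$ with $L_{p,-}(\etd)=\Pbbb(M+\xi(a'')^{(1)})\cap\Pbbb(H)$. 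Here one checks that the line through $\xi(\suc^{-1}(a))^{(r-1)}$ inside $M$ lets us rewrite both the ``$+$'' subspace $\xi(a)^{(p)}+\xi(\suc(b))^{(n-p-1)}$ and the ``$-$'' subspace $\xi(\suc^{-1}(a))^{(p)}+\xi(b)^{(n-p-1)}$ in the form $M+(\text{line on }\xi)$; the Frenet property guarantees that the resulting points $a'$, $a''$ lie on $\xi$ on the expected sides.

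Once this normalization is in place, the length $l(c_p(\etd))=\log(\xi(x^-)^{(1)},L_{p,-}(\etd),L_{p,+}(\etd),\xi(x^+)^{(1)})$ equals $\log(\xi(x^-)^{(1)},\xi(a'')^{(1)},\xi(a')^{(1)},\xi(x^+)^{(1)})_M$ by part (2) of Proposition \ref{basic cross ratio}. Since $a''$ lies between $x^-$ and $\suc^{-1}(a)$ while $a'$ lies between $\suc(b)$ and $x^+$ — i.e. the cyclic order along $\xi$ refines $x^-,\, \suc^{-1}(a),\, a,\, \suc(b),\, x^+$ and places $a''$ before $\suc^{-1}(a)$ and $a'$ after $\suc(b)$ — the monotonicity inequalities in Proposition \ref{useful cross ratio inequalities} (moving $x^-\to\suc^{-1}(a)$ via (2), and $x^+\to\suc(b)$ via (5), possibly together with parts (3)–(4) to move $a''\to b$ and $a'\to a$ as in the Z-case) yield
\[
(\xi(x^-)^{(1)},\xi(a'')^{(1)},\xi(a')^{(1)},\xi(x^+)^{(1)})_M\;\geq\;(\xi(b),\xi(\suc^{-1}(a)),\xi(a),\xi(\suc(b)))_M,
\]
which is the claimed bound for part (1). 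For part (2), the argument is the dual one: fix $p\in\{0,\dots,n-2\}$ and $r\in\{1,\dots,n-p-1\}$, set $M:=\xi(b)^{(p)}+\xi(a)^{(n-p-r-1)}+\xi(\suc(b))^{(r-1)}\in\Mpc_{n-p-1}(b,a,\suc(b))$, and run the same projection-and-monotonicity argument on the $s_1$ side (producing points $b'$, $b''$ on the two subintervals of $s_1$ cut out by $b$ and $\suc(b)$), finishing with a reordering of the four points via parts (6) and (7) of Proposition \ref{basic cross ratio} exactly as at the end of the proof of Lemma \ref{Z length inequality}.

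The main obstacle — and the only place where care is genuinely needed rather than transcription — is getting the combinatorics of the successor relation right: for S-type edges the successor increments the $s_1$-vertex and the predecessor decrements the $s_0$-vertex, which is the opposite of the Z-type case, so the roles of the two arguments $\sub{A}{p}$-type subspaces and hence the choice of the ``base'' subspace $M$ (drawn from $\Mpc_p(a,b,\suc^{-1}(a))$ versus $\Mpc_p(a,b,\suc(a))$) must be tracked through Lemma \ref{important lemma} to ensure the auxiliary points $a',a''$ (resp. $b',b''$) land on the correct arcs of $\xi$ and that the final cross-ratio comparison has its four points in the order demanded by Proposition \ref{useful cross ratio inequalities}. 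Since Lemma \ref{important lemma}, Proposition \ref{basic cross ratio} and Proposition \ref{useful cross ratio inequalities} are symmetric enough to accommodate both orientations, no new estimate is required; the proof is complete modulo this bookkeeping, which is why the paper states it can be obtained ``by a proof similar to'' that of Lemma \ref{Z length inequality}.
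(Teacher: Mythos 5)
Your overall strategy — mirroring the proof of Lemma~\ref{Z length inequality} by projecting through the base $M$ via Lemma~\ref{important lemma} and then invoking the monotonicity in Proposition~\ref{useful cross ratio inequalities} — is the right one, and it is what the paper intends when it says the proof is ``similar.'' However, the bookkeeping you flag as ``the only place where care is genuinely needed'' is precisely where the proposal goes wrong. The stated cyclic order ``$x^-,\ \suc^{-1}(a),\ a,\ \suc(b),\ x^+$'' is incorrect: since $\etd$ is S-type with $a\in s_0$, $b\in s_1$, the successor map increments the $s_1$-vertex, so $\suc(b)$ lies on $s_1$ strictly between $x^+$ and $b$. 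The correct cyclic order is $x^-,\ \suc^{-1}(a),\ a,\ x^+,\ \suc(b),\ b$. Consequently, the assertion that ``$a'$ lies between $\suc(b)$ and $x^+$'' cannot be right ($a'$ lies on $s_0$ while $\suc(b)$ is on $s_1$); what is actually needed, and true, is that $a'$ lies on $s_0$ between $a$ and $x^+$.

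This ordering error propagates into the chain of replacements. To pass from $(\xi(x^-)^{(1)},\xi(a'')^{(1)},\xi(a')^{(1)},\xi(x^+)^{(1)})_M$ down to $(\xi(b),\xi(\suc^{-1}(a)),\xi(a),\xi(\suc(b)))_M$ one should move $x^-\to b$ and $x^+\to\suc(b)$ outward (reverse of parts (2) and (5) of Proposition~\ref{useful cross ratio inequalities}), and move $a''\to\suc^{-1}(a)$ and $a'\to a$ inward (reverse of parts (3) and (4)), exactly paralleling the Z-case moves $x^-\to\suc^{-1}(b)$, $a''\to a$, $a'\to\suc(a)$, $x^+\to b$. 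You instead wrote ``$x^-\to\suc^{-1}(a)$'' and ``$a''\to b$'', interchanging the roles of the first and second arguments; applying these literally yields $(\xi(\suc^{-1}(a)),\xi(b),\xi(a),\xi(\suc(b)))_M$, which is not the stated target and for which the monotonicity does not even run in the right direction. (You did correctly identify that $a''$ lies between $x^-$ and $\suc^{-1}(a)$ and that $a'\to a$ is one of the moves, so the structure is partly there — but as written the application of Proposition~\ref{useful cross ratio inequalities} would fail.) To repair the argument one should delete the incorrect cyclic order, state instead that $a''\in[x^-,\suc^{-1}(a)]$ and $a'\in[a,x^+]$ on $s_0$, and perform the four replacements $x^-\to b$, $a''\to\suc^{-1}(a)$, $a'\to a$, $x^+\to\suc(b)$.
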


Now, we will define the quantity $K$. For any $[a,b]$ in $\Tpc$ that is not a closed leaf, choose a lift $\{a,b\}$ in $\widetilde{\Tpc}$ of $[a,b]$. Let $c$, $d$ be points in $\partial_\infty\Gamma$ such that $\{a,c\}$, $\{b,c\}$, $\{a,d\}$, $\{b,d\}$ lie in $\widetilde{\Tpc}$. For $p=1,\dots,n-2$ define
\begin{align*}
&K_p':=\max\{\log(\xi(d),\xi(a),\xi(c),\xi(b))_M:M\in\Mpc_p(a,b,c)\cup\Mpc_{n-p-1}(b,a,d)\}\\
&K_p'':=\max\{\log(\xi(b),\xi(d),\xi(a),\xi(c))_M:M\in\Mpc_p(a,b,d)\cup\Mpc_{n-p-1}(b,a,c)\}
\end{align*} and define 
\begin{align*}
&K_0':=\max\{\log(\xi(d),\xi(a),\xi(c),\xi(b))_M:M\in\Mpc_{n-1}(b,a,d)\}\\
&K_0'':=\max\{\log(\xi(b),\xi(d),\xi(a),\xi(c))_M:M\in\Mpc_{n-1}(b,a,c)\}\\
&K_{n-1}':=\max\{\log(\xi(d),\xi(a),\xi(c),\xi(b))_M:M\in\Mpc_{n-1}(a,b,c)\}\\
&K_{n-1}'':=\max\{\log(\xi(b),\xi(d),\xi(a),\xi(c))_M:M\in\Mpc_{n-1}(a,b,d)\}
\end{align*}
Finally, define
\[K[a,b]:=\min\bigg\{\frac{1}{n}\sum_{p=0}^{n-1}K_p',\frac{1}{n}\sum_{p=0}^{n-1}K_p''\bigg\}.\]
Note that if we switch the roles of $a$ with $b$, then the quantities $K_p'$ and $K_{n-p-1}''$ are switched. Also, switching $c$ with $d$ causes the quantities $K_p'$ and $K_p''$ to be switched. Thus, permuting $a$ and $b$ or permuting $c$ and $d$ leaves $K[a,b]$ invariant. Moreover, the $PSL(n,\Rbbb)$ invariance of the cross ratio implies that $K[a,b]$ does not depend on the choice of lift $\{a,b\}$ of $[a,b]$. This allows us to define 
\[K(\rho)=K:=\min_{[a,b]\in\Qpc}K[a,b].\]
Recall that $\Qpc=\Tpc\setminus\Ppc$ as defined in Section \ref{A special ideal triangulation}.

\begin{prop}\label{K lower bound}
For any $\etd$ in $\widetilde{\Bpc}$, 
\[\frac{1}{n}\sum_{p=0}^{n-1}l(c_p(\etd))\geq K.\]
\end{prop}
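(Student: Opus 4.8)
The plan is to reduce the claim to the two pointwise estimates in Lemmas~\ref{Z length inequality} and~\ref{S length inequality}, and then to match the subspaces appearing there against those in the definition of $K[a,b]$. Fix $\etd = \{a,b\}$ in $\widetilde{\Bpc}$ with $a \in s_0$ and $b \in s_1$, and let $[e]$ be the class of $\etd$ in $\Tpc$. Since $\etd$ is not a closed leaf (it lies in $\widetilde{\Bpc}\subset\widetilde{\Ipc}$), the edge $[e]$ lies in $\Qpc$, so $K\le K[e]$. Thus it suffices to prove $\frac1n\sum_{p=0}^{n-1}l(c_p(\etd))\ge K[e]$.

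The first step is to split into the two cases $\etd\in\widetilde{\Zpc}$ and $\etd\in\widetilde{\Spc}$, and in each case to express $l(c_p(\etd))$ via the appropriate part of the relevant lemma. Suppose $\etd\in\widetilde{\Zpc}$; write $c:=\suc(a)$ and note $d:=\suc^{-1}(b)$ satisfies $\{a,c\},\{b,c\},\{a,d\},\{b,d\}\in\widetilde{\Tpc}$, so $c,d$ are exactly (a valid choice of) the points used to define $K[e]=K[a,b]$. By Lemma~\ref{Z length inequality}(1), for every $p=1,\dots,n-1$ and every $M\in\Mpc_p(a,b,c)$ we have $l(c_p(\etd))\ge\log(\xi(d),\xi(a),\xi(c),\xi(b))_M$; taking the maximum over $M\in\Mpc_p(a,b,c)$ and, for the $p=0$ contribution, using Lemma~\ref{Z length inequality}(2) with $\Mpc_{n-1}(b,a,d)$, we obtain $l(c_p(\etd))\ge K_p'$ for $p=1,\dots,n-1$ and $l(c_0(\etd))\ge K_0'$. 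Hence $\frac1n\sum_{p=0}^{n-1}l(c_p(\etd))\ge\frac1n\sum_{p=0}^{n-1}K_p'\ge K[a,b]\ge K$. For the case $\etd\in\widetilde{\Spc}$, set $c:=\suc^{-1}(a)$, $d:=\suc(b)$; then Lemma~\ref{S length inequality}(1) gives $l(c_p(\etd))\ge\log(\xi(b),\xi(c),\xi(a),\xi(d))_M$ for $M\in\Mpc_p(a,b,c)$, and comparing with the definition of $K_p''$ (which uses $(\xi(b),\xi(d),\xi(a),\xi(c))$ with $d$ and $c$ in the roles here played by $c$ and $d$) we get, after relabeling $c\leftrightarrow d$ in the definition of $K[a,b]$ — which leaves $K[a,b]$ invariant, as noted after the definition — the bound $l(c_p(\etd))\ge K_p''$ for all $p$, with the $p=0$ term handled by Lemma~\ref{S length inequality}(2) and $\Mpc_{n-1}(b,a,d)$. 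Averaging yields $\frac1n\sum_p l(c_p(\etd))\ge\frac1n\sum_p K_p''\ge K[a,b]\ge K$.

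The main obstacle I anticipate is purely bookkeeping: making sure the orderings and the assignments $c=\suc(a)$ versus $c=\suc^{-1}(a)$, together with the $s_0/s_1$ conventions, line up correctly with the argument order in the cross ratios $(\xi(d),\xi(a),\xi(c),\xi(b))_M$ versus $(\xi(b),\xi(d),\xi(a),\xi(c))_M$ and with the choice $\Mpc_p$ versus $\Mpc_{n-p-1}$. In particular one must verify that in the Z-type case the points labeled $c,d$ in $K[a,b]$ can indeed be taken to be $\suc(a),\suc^{-1}(b)$ (any other valid choice of $c,d$ gives the same $K[a,b]$ by $PSL(n,\Rbbb)$-invariance of the cross ratio, as observed before the definition of $K$), and that for $p=0$ and $p=n-1$ the reduced collections $\Mpc_{n-1}(\cdot)$ used in the definitions of $K_0',K_0'',K_{n-1}',K_{n-1}''$ are precisely the ones supplied by parts (1) and (2) of Lemmas~\ref{Z length inequality} and~\ref{S length inequality} in those boundary ranges of $p$. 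Once these identifications are checked, the inequality $l(c_p(\etd))\ge K_p'$ (resp.\ $K_p''$) holds termwise, and summing over $p$, dividing by $n$, and using $K[e]\ge K$ completes the proof.
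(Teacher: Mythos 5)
Your proposal is essentially the paper's own argument: split on whether $\etd$ is Z-type or S-type, identify the auxiliary vertices $c,d$ with $\suc(a),\suc^{-1}(b)$ (resp.\ $\suc(b),\suc^{-1}(a)$), invoke Lemmas~\ref{Z length inequality} and~\ref{S length inequality} to get the termwise bounds $l(c_p(\etd))\ge K_p'$ (resp.\ $K_p''$), average over $p$, and finish with $K[a,b]\ge K$. Your $c\leftrightarrow d$ labeling in the S-type case is the opposite of the paper's, but as you note the definition of $K[a,b]$ is invariant under that swap, so this is harmless. One small bookkeeping slip: in the Z-type case you write that Lemma~\ref{Z length inequality}(1), maximized over $M\in\Mpc_p(a,b,c)$, already yields $l(c_p(\etd))\ge K_p'$ for $p=1,\dots,n-1$, reserving part (2) only for $p=0$; but for $p=1,\dots,n-2$ the quantity $K_p'$ is a maximum over the union $\Mpc_p(a,b,c)\cup\Mpc_{n-p-1}(b,a,d)$, so you need part (2) as well for those middle values of $p$ (and symmetrically in the S-type case). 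Since you have both parts of the lemma in hand, this does not affect the validity of the conclusion, but the attribution of which part covers which $M$ should be corrected.
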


\begin{proof}
Let $\etd=[a,b]$ with $a$ in $s_0$ and $b$ in $s_1$. By Lemma \ref{Z length inequality}, we see that when $\etd$ is in $\Zpc$, by taking $d=\suc^{-1}(b)$, $c=\suc(a)$, we have $l(c_p(\etd))\geq K_p'$ for all $p=0,\dots,n-1$. Thus,  
\[\frac{1}{n}\sum_{p=0}^{n-1}l(c_p(\etd))\geq\frac{1}{n}\sum_{p=0}^{n-1}K_p'\geq K.\]
Similarly, by Lemma \ref{S length inequality}, we see that when $\etd$ is in $\Spc$, by taking $d=\suc^{-1}(a)$, $c=\suc(b)$, we have 
\[\frac{1}{n}\sum_{p=0}^{n-1}l(c_p(\etd))\geq\frac{1}{n}\sum_{p=0}^{n-1}K_p''\geq K.\]
\end{proof}

Next, denote
\[L(\rho):=\min\Big\{\frac{l_\rho(A)}{n}:A\in\Gamma_\Ppc\Big\}.\]
We will show how $L(\rho)$ provides a lower bound for winding $(p)$-subsegments. 

For the remainder of this section, let $\etd$ and $\etd'$ be consecutive elements in $\widetilde{\Bpc}$ with $\etd$ preceding $\etd'$, and let $e$, $e'$ be the equivalence classes in $\Bpc$ that contain $\etd$ and $\etd'$. Also, let $a^-$ and $a^+$ be the repelling and attracting fixed points for $A=A(\etd,\etd')$ respectively.

\begin{lem}\label{not same type}
If $e$ and $e'$ are not of the same type, then 
\[l(w_1(\etd,\etd'))+l(w_{n-2}(\etd,\etd'))\geq\max\{0,|t(e,e')|-1\}\cdot L(\rho).\]
\end{lem}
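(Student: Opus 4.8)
\textbf{Proof plan for Lemma \ref{not same type}.}

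The plan is to reduce the winding lengths $l(w_1(\etd,\etd'))$ and $l(w_{n-2}(\etd,\etd'))$ to a product of cross ratios involving the points of the mesh $\Epc_A$ that intersect $\{x^-,x^+\}$, and then estimate each factor from below using Proposition \ref{cross ratio and length} (which identifies $(V_j,L,A\cdot L,V_i)_M$ with $e^{\lambda_i(A)-\lambda_j(A)}$) together with the monotonicity inequalities in Proposition \ref{useful cross ratio inequalities}. First I would set up notation: since $e$ and $e'$ are not of the same type, $A=A(\etd,\etd')$ is the element of $\Gamma_\Ppc$ fixing the common vertex of $\etd$ and $\etd'$, and by definition $|t(e,e')|$ counts (with sign) the edges of the mesh $\Epc_A=\{\{A^k\cdot x,A^k\cdot y\}: k\in\Zbbb\}$ that intersect $\{x^-,x^+\}$. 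Order these intersecting mesh edges $\ftd_1\prec\dots\prec\ftd_{|t(e,e')|}$ along the segment $\gamma\subset\Pbbb(H)$, and note that $x^-$ is an endpoint of $A$-translates of $y$ while $x^+$ is an endpoint of $A$-translates of $x$ (or vice versa) — I'd fix the orientation convention so that $L_{1,\pm}$ or $L_{n-2,\pm}$ of these mesh edges interleave correctly between $L_{1,-}(\etd)$ and $L_{1,+}(\etd')$ inside $w_1(\etd,\etd')$, and likewise for $w_{n-2}$.

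The key step is the following telescoping estimate. Using (8) of Proposition \ref{basic cross ratio} to factor the cross ratio $(\xi(x^-)^{(1)}, L_{1,-}(\etd), L_{1,+}(\etd'), \xi(x^+)^{(1)})$ along the chain of intermediate mesh points, and discarding (via (2)--(5) of Proposition \ref{useful cross ratio inequalities}, each of which is $\geq 1$ by part (1) of that Proposition) all but the ``core'' factors, one isolates $|t(e,e')|-1$ consecutive factors each of which is a cross ratio of the form $(\xi(a^-)^{(1)}, L, A\cdot L, \xi(a^+)^{(1)})_M$ for an appropriate $A$-invariant base $M$ — this is because consecutive mesh edges $\ftd_k, \ftd_{k+1}=A\cdot\ftd_k$ differ precisely by the action of $A$. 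By Proposition \ref{cross ratio and length} (applied with $i=1$, $j=n$, so $V_1=\xi(a^+)^{(1)}$, $V_n=\xi(a^-)^{(1)}$, noting the fixed points of $A$ are exactly the top and bottom eigenlines), each such factor equals $e^{\lambda_1(A)-\lambda_n(A)}=e^{l_\rho(A)}$ after the projection via $p=1$ to $\Pbbb(H)$; combining with $p=n-2$ I get two chains, and the total picks up $\geq(|t(e,e')|-1)\cdot l_\rho(A)$. Since $l_\rho(X)\geq l(w_1(\etd,\etd')) $ follows from the length being computed by the cross ratio $(\xi(x^-)^{(1)},\cdot,\cdot,\xi(x^+)^{(1)})$ along $\gamma$ — and more precisely the two winding lengths add up to at least that chain — dividing appropriately and using $L(\rho)=\min\{l_\rho(A)/n : A\in\Gamma_\Ppc\}\leq l_\rho(A)/n \leq l_\rho(A)$ yields the claimed bound. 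Actually since the statement only asks for the weaker constant $\max\{0,|t(e,e')|-1\}\cdot L(\rho)$ rather than a multiple of $l_\rho(A)$ directly, the final step is just $l_\rho(A)\geq L(\rho)$, which is immediate from the definition of $L(\rho)$; the factor $n$ there gives slack.

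The main obstacle I expect is bookkeeping the orientations and the interleaving of the projective points $L_{p,\pm}$ of the mesh edges between $L_{p,-}(\etd)$ and $L_{p,+}(\etd')$ inside the winding segment, so that the telescoping in Proposition \ref{basic cross ratio}(8) is valid and every discarded factor is genuinely $\geq 1$ by Proposition \ref{useful cross ratio inequalities} — one has to check that the six points $L_{p,-}(\etd), L_p(\etd), L_{p,+}(\etd), \dots$ together with the mesh-edge points really do occur in the claimed cyclic order along $\xi$, which rests on the definition of the mesh (inequality \eqref{mesh inequality}) and on part (2) of Lemma \ref{important lemma} guaranteeing they all lie on a common subsegment $\gamma$ of $\Pbbb(H)$. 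A secondary subtlety is the ``$-1$'' in $\max\{0,|t(e,e')|-1\}$: with $|t(e,e')|$ mesh edges crossing $\{x^-,x^+\}$ one gets $|t(e,e')|-1$ ``gaps'' between consecutive ones, and if $|t(e,e')|\leq 1$ there may be no such gap, so the bound is vacuous — I'd handle the cases $|t(e,e')|=0$ and $|t(e,e')|=1$ separately at the start, where the right-hand side is $0$ and there is nothing to prove since all the relevant cross ratios are $\geq 1$ by Proposition \ref{useful cross ratio inequalities}(1).
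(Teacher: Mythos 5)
Your overall strategy (relate the winding lengths to cross ratios, use the order on $\Pbbb(H)$ from Lemma \ref{important lemma} and the monotonicity of Proposition \ref{useful cross ratio inequalities} to discard excess factors, then plug in Proposition \ref{cross ratio and length} and finish with $l_\rho(A)\geq L(\rho)$) is the right skeleton, and the paper's argument does follow this template. But there is a quantitative gap at the heart of the estimate: you claim that, after projecting to $\Pbbb(H)$ via $p=1$, each of your telescoping factors equals $e^{\lambda_1(A)-\lambda_n(A)}=e^{l_\rho(A)}$. This is false. For the winding $(p)$-subsegment one can only use a base $M_p=\xi(a^-)^{(p)}+\xi(a^+)^{(n-p-2)}$ (or one of the other $A$-invariant $(n-2)$-dimensional spaces of this type), and Proposition \ref{cross ratio and length} with such a base yields only the single gap $e^{\lambda_{n-p-1}(A)-\lambda_{n-p}(A)}$, not the full spread. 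The invariant subspace $\Span\{e_2,\dots,e_{n-1}\}$ that would produce $\lambda_1-\lambda_n$ is \emph{not} of the form $\sum_i M_i^{(n_i)}$ for points $M_i$ on the Frenet curve, so neither Proposition \ref{useful cross ratio inequalities} nor the reduction from $\Pbbb(H)$ to a cross ratio in Notation \ref{cross ratio notation} applies to it. Your proposed bound $l(w_1)\geq(|t|-1)l_\rho(A)$ is therefore a factor of $n-1$ stronger than what the method supports.

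The fix, and what the paper actually does, is to vary $p$ over $0,\dots,n-2$, getting $(n-1)$ cross-ratio lower bounds, one for each eigenvalue gap $\lambda_{n-p-1}(A)-\lambda_{n-p}(A)$. Multiplying these and taking logarithms yields $(n-1)\,l(w_1(\etd,\etd'))\geq(t(e,e')-1)(\lambda_1(A)-\lambda_n(A))$; then $\lambda_1(A)-\lambda_n(A)\geq n\,L(\rho)$ absorbs the $(n-1)$ (this is exactly the ``slack'' you noticed, but it is needed to repair the estimate, not just a bonus). A secondary issue: you write that $x^\pm$ are endpoints of $A$-translates of $x$ and $y$; this conflates the fixed points $x^\pm$ of $X$ with the vertices of the mesh $\Epc_A$, which lie in $\langle A\rangle\cdot x\cup\langle A\rangle\cdot y$. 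The fixed points $x^\pm$ are flanked by the mesh but are not part of it. Finally, note that the paper does not actually need to telescope along individual mesh edges: it applies Proposition \ref{cross ratio and length} directly with $g=A^{t(e,e')-1}$, which shortcuts your chain of $|t|-1$ factors into a single cross ratio; your telescoping version would give the same answer but requires more bookkeeping. Also, because the ``not same type'' hypothesis places both $x^-$ and $x^+$ on the same side of the closed leaf $\{a^-,a^+\}$, only one of $w_1$ or $w_{n-2}$ actually contributes; the other is simply nonnegative, so there is no need to sum two chains.
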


\begin{proof}
Let $r_0$, $r_1$ be the oriented subsegments of $\partial_\infty\Gamma$ with endpoints $a^-$, $a^+$, oriented from $a^-$ to $a^+$, so that the orientation on $r_0$ agrees with the clockwise orientation on $\partial_\infty\Gamma$. Since $e$ and $e'$ are not of the same type, either both of $x^-$, $x^+$ lie in $r_0$ or both of $x^-$, $x^+$ lie in $r_1$. If the former holds, we will show that $l(w_1(\etd,\etd'))\geq\max\{0,|t(e,e')|-1\}\cdot L(\rho)$ and if the latter holds, we will show that $l(w_{n-2}(\etd,\etd'))\geq\max\{0,|t(e,e')|-1\}\cdot L(\rho)$. 

By taking inverses, we can assume without loss of generality that $a^-$, $x^-$, $x^+$, $a^+$ lie on $r_0$ in that clockwise order. In this case, $t(e,e')\geq 0$, and we need to show that $l(w_1(\etd,\etd'))\geq\max\{0,t(e,e')-1\}\cdot L(\rho)$. Clearly, this holds when $t(e,e')\leq 1$ so we will assume that $t(e,e')\geq 2$. Let $c$ be the point in $r_0$ so that $\{c,c'\}$ is an edge in $\Epc_A$ and $A^{-1}\cdot c$, $x^-$, $c$ lie in $r_0$ in that order. This then implies that $A^{t(e,e')-1}\cdot c$, $x^+$, $A^{t(e,e')}\cdot c$ lie in $r_0$ in that order. (See Figure \ref{different type}.) For any $p=0,\dots,n-2$, define
\begin{align*}
&\alpha_p:=\Pbbb(\xi(a^-)^{(p)}+\xi(a^+)^{(n-p-2)}+\xi(c)^{(1)})\cap\Pbbb(H),\\
&\beta_p:=\Pbbb(\xi(a^-)^{(p)}+\xi(a^+)^{(n-p-2)}+\xi(A^{t(e,e')-1}\cdot c)^{(1)})\cap\Pbbb(H).
\end{align*}

\begin{figure}
\includegraphics[scale=0.7]{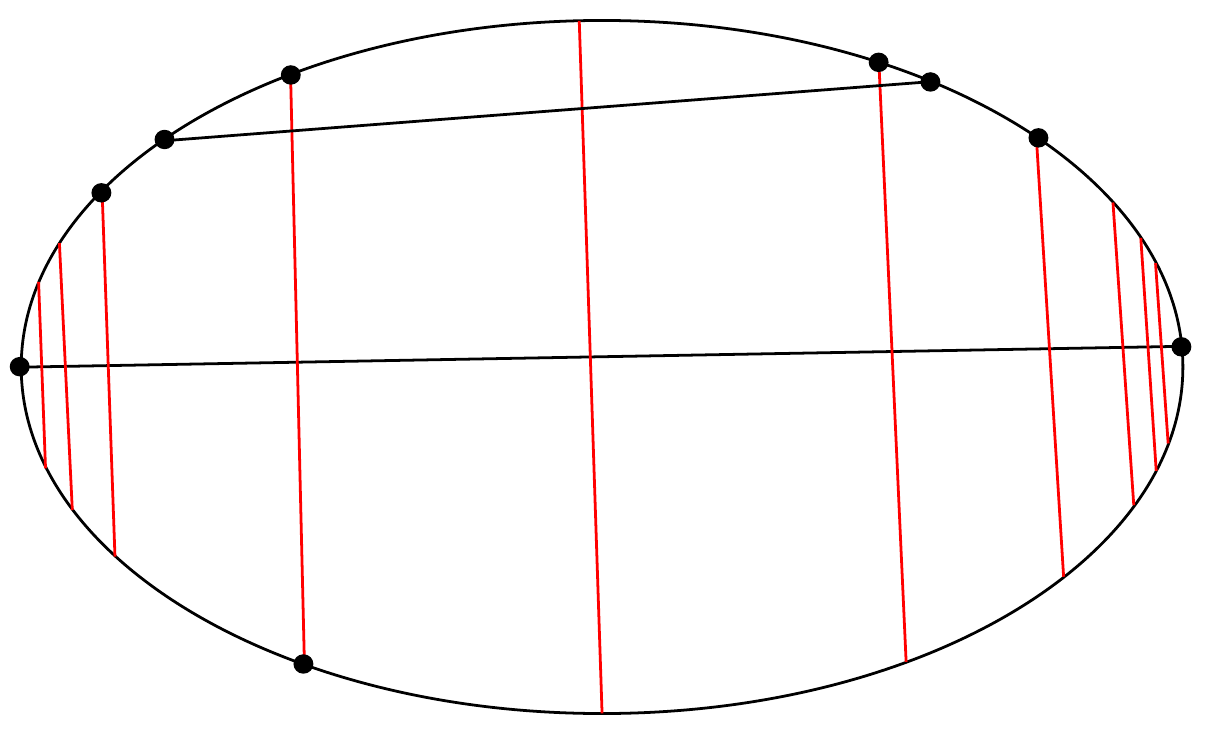}
\put (-432, 3){\makebox[0.7\textwidth][r]{$c'$ }}
\put (-490, 72){\makebox[0.7\textwidth][r]{$a^-$ }}
\put (-470, 114){\makebox[0.7\textwidth][r]{$A^{-1}\cdot c$ }}
\put (-450, 125){\makebox[0.7\textwidth][r]{$x^-$ }}
\put (-432, 137){\makebox[0.7\textwidth][r]{$c$ }}
\put (-265, 140){\makebox[0.7\textwidth][r]{$A^{t(e,e')-1}\cdot c$ }}
\put (-288, 132){\makebox[0.7\textwidth][r]{$x^+$ }}
\put (-238, 120){\makebox[0.7\textwidth][r]{$A^{t(e,e')}\cdot c$ }}
\put (-237, 77){\makebox[0.7\textwidth][r]{$a^+$ }}
\caption{$e$, $e'$ are not of the same type.}
\label{different type}
\end{figure}

Now, let $b$ be the vertex of $\widetilde{\Tpc}$ that lies on $r_0$ between $A^{-1}\cdot c$ and $c$, so that $\{a^-,b\}$ is an edge of $\widetilde{\Tpc}$. Observe that all the edges in $\widetilde{\Tpc}$ with $a^-$ as an endpoint has its other endpoint in $\langle A\rangle\cdot c\cup\langle A\rangle\cdot b$. Hence, if $b$ lies between $A^{-1}\cdot c$ and $x^-$, then 
\[\etd=\{a^-,c\},\ \suc^{-1}(\etd)=\{b,c\},\ \suc(\etd)=\{a^-,A\cdot b\},\]
which implies that $L_{1,-}(\etd)=\Pbbb(\xi(b)^{(n-2)}+\xi(c)^{(1)})\cap\Pbbb(H)$. On the other hand, if $b$ lies between $x^-$ and $c$, then
\[\etd=\{a^-,b\},\ \suc^{-1}(\etd)=\{A^{-1}\cdot c,b\},\ \suc(\etd)=\{a^-,c\}, \]
which implies that $L_{1,-}(\etd)=\Pbbb(\xi(b)^{(1)}+\xi(A^{-1}\cdot c)^{(n-2)})\cap\Pbbb(H)$.

By a similar reasoning, if $A^{t(e,e')}\cdot b$ lies between $A^{t(e,e')-1}\cdot c$ and $x^+$,  then $L_{1,+}(\etd')=\Pbbb(\xi(A^{t(e,e')}\cdot b)^{(1)}+\xi(A^{t(e,e')}\cdot c)^{(n-2)})\cap\Pbbb(H)$ and if $A^{t(e,e')}\cdot b$ lies between $x^+$ and $A^{t(e,e')}\cdot c$, then $L_{1,+}(\etd')=\Pbbb(\xi(A^{t(e,e')}\cdot b)^{(n-2)}+\xi(A^{t(e,e')-1}\cdot c)^{(1)})\cap\Pbbb(H)$.

In any case, by (2) of Lemma \ref{important lemma}, we see that $L_{1,-}(\etd)$, $\alpha_p$, $\beta_p$, $L_{1,+}(\etd)$ lie in the same subsegment of $\Pbbb(H)$ with endpoints $\xi(x^-)^{(1)}$, $\xi(x^+)^{(1)}$, in that order. Proposition \ref{cross ratio configuration} thus implies that for all $p=0,\dots,n-2$, we have
\[l(w_1(\etd,\etd'))\geq\log(\xi(x^-)^{(1)},\alpha_p,\beta_p,\xi(x^+)^{(1)}).\]

Also, one can compute that
\begin{eqnarray*}
&&(\xi(x^-)^{(1)},\alpha_p,\beta_p,\xi(x^+)^{(1)})\\
&=&(\xi(x^-)^{(1)},\alpha_p,\beta_p,\xi(x^+)^{(1)})_{\xi(a^-)^{(p)}+\xi(a^+)^{(n-p-2)}}\\
&\geq&(\xi(a^-),\xi(c),\xi(A^{t(e,e')-1}\cdot c),\xi(a^+))_{\xi(a^-)^{(p)}+\xi(a^+)^{(n-p-2)}}\\
&=&\big(e^{\lambda_{n-p-1}(A)-\lambda_{n-p}(A)}\big)^{t(e,e')-1},
\end{eqnarray*}
where the inequality is a consequence of Proposition \ref{useful cross ratio inequalities}, and the last equality is a special case of Proposition \ref{cross ratio and length}.

By taking the product of these inequalities over $p=0,\dots,n-2$, and then taking logarithm, we obtain
\[(n-1)\cdot l(w_1(\etd,\etd'))\geq\big(t(e,e')-1\big)\cdot\big(\lambda_1(A)-\lambda_n(A)\big).\]
This implies the lemma.
\end{proof}

\begin{lem}\label{same type}
If $e$ and $e'$ are of the same type, then 
\[\frac{1}{n}\sum_{p=0}^{n-1}l(w_p(\etd,\etd'))\geq\max\{0,|t(e,e')|-2\}\cdot L(\rho).\]
\end{lem}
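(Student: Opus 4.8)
The plan is to follow the proof of Lemma~\ref{not same type} almost line for line, the one genuinely new feature being the closed leaf that now separates $\etd$ from $\etd'$. Since $e$ and $e'$ are of the same type, there is a unique closed leaf of $\widetilde{\Ipc}'$ lying between $\etd$ and $\etd'$ in the ordering $\prec$, and by the definition of $A=A(\etd,\etd')$ its two endpoints are precisely the repelling and attracting fixed points $a^-,a^+$ of $A$. As this closed leaf lies in $\widetilde{\Ipc}'$ it is crossed by the axis of $X$, so $x^-$ and $x^+$ lie in different components of $\partial_\infty\Gamma\setminus\{a^-,a^+\}$; after replacing $X$ by $X^{-1}$ if necessary, we may assume $x^-\in r_0$ and $x^+\in r_1$, where $r_0,r_1$ are the two arcs of $\partial_\infty\Gamma$ from $a^-$ to $a^+$, oriented from $a^-$ to $a^+$, with $r_0$ carrying the clockwise orientation. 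With this normalization $t(e,e')$ has a definite sign; if $|t(e,e')|\le 2$ the right-hand side of the asserted inequality is $0$ and there is nothing to prove, so assume $|t(e,e')|\ge 3$.

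Write $\Epc_A=\{\{A^k x,A^k y\}:k\in\Zbbb\}$ with $x\in r_0$ and $y\in r_1$. Using that $A^k x$ and $A^k y$ move monotonically toward $a^+$ as $k\to+\infty$ and toward $a^-$ as $k\to-\infty$, while $a^-,a^+$ separate $x^-$ from $x^+$, a short linking argument in $\partial_\infty\Gamma$ shows that the mesh edges of $\Epc_A$ crossed by the axis of $X$ form one block of $|t(e,e')|$ consecutive edges, and that the closed leaf $\{a^-,a^+\}$ is crossed by the axis of $X$ between two consecutive edges of this block. Choosing $c\in r_0$ with $\{c,c'\}\in\Epc_A$ appropriately (as in the proof of Lemma~\ref{not same type}), we may take this block to be $\{A^jc,(A^jc)'\}$, $j=0,1,\dots,|t(e,e')|-1$. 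For $p=0,\dots,n-2$ I would then set, exactly as in Lemma~\ref{not same type},
\begin{align*}
\alpha_p&:=\Pbbb\bigl(\xi(a^-)^{(p)}+\xi(a^+)^{(n-p-2)}+\xi(c)^{(1)}\bigr)\cap\Pbbb(H),\\
\beta_p &:=\Pbbb\bigl(\xi(a^-)^{(p)}+\xi(a^+)^{(n-p-2)}+\xi(A^{|t(e,e')|-2}c)^{(1)}\bigr)\cap\Pbbb(H).
\end{align*}

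The next step is to check, by the same kind of case analysis as in the proof of Lemma~\ref{not same type} (two cases at each end, according to the position of a node of $\etd$, and of $\etd'$, relative to the outermost crossed mesh edge on that side) together with Lemma~\ref{important lemma}(2), that $\xi(x^-)^{(1)}$, $L_{p,-}(\etd)$, $\alpha_p$, $\beta_p$, $L_{p,+}(\etd')$, $\xi(x^+)^{(1)}$ lie in this order along a common subsegment of $\Pbbb(H)$; the extra mesh step lost relative to Lemma~\ref{not same type} (an exponent $|t(e,e')|-2$ rather than $|t(e,e')|-1$) reflects that here $\etd$ and $\etd'$ straddle the closed leaf instead of meeting at a common vertex. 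Granting this ordering, Proposition~\ref{useful cross ratio inequalities} gives, for each $p=0,\dots,n-2$,
\[
l\bigl(w_p(\etd,\etd')\bigr)\ \ge\ \log\bigl(\xi(x^-)^{(1)},\alpha_p,\beta_p,\xi(x^+)^{(1)}\bigr)\ \ge\ \log\bigl(\xi(a^-),\xi(c),\xi(A^{|t(e,e')|-2}c),\xi(a^+)\bigr)_{\xi(a^-)^{(p)}+\xi(a^+)^{(n-p-2)}},
\]
and Proposition~\ref{cross ratio and length} (reading $\xi(a^-)$ and $\xi(a^+)$ modulo the base, via Notation~\ref{moving subspaces} and Proposition~\ref{basic cross ratio}(2), as the two eigenlines of $\rho(A)$ not contained in it) evaluates the last quantity as $\bigl(|t(e,e')|-2\bigr)\bigl(\lambda_{n-p-1}(A)-\lambda_{n-p}(A)\bigr)$.

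Finally, since $w_{n-1}(\etd,\etd')$ has nonnegative length and $|t(e,e')|-2\ge 0$, summing these inequalities over $p=0,1,\dots,n-1$ and telescoping $\sum_{p=0}^{n-2}\bigl(\lambda_{n-p-1}(A)-\lambda_{n-p}(A)\bigr)=\lambda_1(A)-\lambda_n(A)=l_\rho(A)$ gives
\[
\sum_{p=0}^{n-1}l\bigl(w_p(\etd,\etd')\bigr)\ \ge\ \bigl(|t(e,e')|-2\bigr)\,l_\rho(A)\ \ge\ \bigl(|t(e,e')|-2\bigr)\,n\,L(\rho),
\]
the last step because $A=A(\etd,\etd')\in\Gamma_\Ppc$, so $l_\rho(A)\ge n\,L(\rho)$ by the definition of $L(\rho)$; dividing by $n$ yields the lemma. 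I expect the main obstacle to be the positional bookkeeping of the third paragraph: confirming that the crossed mesh edges form one block of length $|t(e,e')|$, locating the closed-leaf crossing within it, and identifying $L_{p,\pm}(\etd)$ and $L_{p,\pm}(\etd')$ precisely enough to apply Lemma~\ref{important lemma}(2) --- in particular, pinning down exactly how much of the mesh must be sacrificed.
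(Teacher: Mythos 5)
The plan is genuinely different from the paper's: you want a cross ratio of the exact form $(V_j,L,gL,V_i)_M$ so that Proposition~\ref{cross ratio and length} evaluates it immediately, avoiding the paper's detour through the fake edges $\{A^{-1}c_0,c_0\}$, $\{A^{t-1}c_1,A^tc_1\}$, the intermediate points $\alpha_p',\beta_p'$, the explicit $\delta_i$-computation in a fixed normalization, and the mesh inequality~(\ref{mesh inequality}). That is an attractive simplification, but it breaks at the step you flagged as the most dangerous, and in a way that cannot be repaired by more careful bookkeeping.

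The precise failure is in the second displayed inequality,
\[
\log\bigl(\xi(x^-)^{(1)},\alpha_p,\beta_p,\xi(x^+)^{(1)}\bigr)\ \geq\ \log\bigl(\xi(a^-),\xi(c),\xi(A^{|t|-2}c),\xi(a^+)\bigr)_{M},
\]
which you attribute to Proposition~\ref{useful cross ratio inequalities}. After replacing $\alpha_p,\beta_p$ by $\xi(c)^{(1)},\xi(A^{|t|-2}c)^{(1)}$ via Proposition~\ref{basic cross ratio}(2), this is a comparison of two cross ratios with the same middle two arguments and base $M=\xi(a^-)^{(p)}+\xi(a^+)^{(n-p-2)}$, differing only in the outer arguments ($x^\pm$ versus $a^\pm$). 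But in the same-type case the cyclic order on $\partial_\infty\Gamma$ (for $t>0$, say) is
\[
a^-,\ x^-,\ c,\ \dots,\ A^{|t|-2}c,\ A^{|t|-1}c,\ a^+,\ A^{|t|}c',\ x^+,
\]
because the closed leaf $\{a^-,a^+\}$ separates $x^-$ from $x^+$. So while $a^-$ lies \emph{outside} $x^-$ (farther from $c$), the point $a^+$ lies \emph{between} $A^{|t|-2}c$ and $x^+$. By Proposition~\ref{useful cross ratio inequalities}(5), replacing the last argument $x^+$ by the nearer point $a^+$ \emph{increases} the cross ratio, so the two substitutions $x^-\mapsto a^-$ and $x^+\mapsto a^+$ push in opposite directions and the inequality does not follow. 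This is exactly the contrast with Lemma~\ref{not same type}, where $x^-,x^+$ lie on the same side of the closed leaf, so the order there is $a^-,x^-,c,A^{t-1}c,x^+,a^+$: both $a^\pm$ lie outside $x^\pm$ and both substitutions decrease the cross ratio. Your ``one lost mesh step'' heuristic correctly predicts the final exponent $|t|-2$, but passing from $|t|-1$ to $|t|-2$ does not make $a^+$ jump to the other side of $x^+$, so it does not rescue the argument.

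This is why the paper does something more roundabout: it keeps $a^+$ as an \emph{inner} argument of the final cross ratio $(\xi(A^{-1}c_0),\xi(c_0),\xi(a^+),\xi(A^{t-1}c_1))_{\xi(a^-)^{(n-p-1)}+\xi(a^+)^{(p-1)}}$, which is no longer of the $(V_j,L,gL,V_i)$ shape. That forces the explicit computation in a chosen normalization, producing the factors $\delta_p/\delta_{p+1}$, whose product $\delta_1/\delta_n$ is then controlled by the defining inequality~(\ref{mesh inequality}) for the mesh. The loss of two (rather than one) powers of the eigenvalue gap in the statement is precisely the cost of absorbing the unknown $\log|\delta_1/\delta_n|\in[0,\lambda_1(A)-\lambda_n(A))$. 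Your proposal eliminates the $\delta$'s by fiat and so has no mechanism to pay this cost; the gap it leaves is not a bookkeeping detail but the whole content of the mesh normalization. Separately, your first inequality (the ordering $L_{p,-}(\etd),\alpha_p,\beta_p,L_{p,+}(\etd')$) is also not established by the same case analysis as in Lemma~\ref{not same type}, since there $\alpha_p,\beta_p$ are built from points on the \emph{same} side of the closed leaf as $\etd,\etd'$, whereas here $\etd'$ sits on the opposite side of $\{a^-,a^+\}$ from $c,A^{|t|-2}c$; but the failure of the second inequality is already fatal.
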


\begin{proof}
This inequality clearly holds when $t(e,e')=-2,-1,0,1,2$, so for the rest of the proof, we will assume that $|t(e,e')|\geq 3$. As before, let $r_0$, $r_1$ be the two subsegments of $\partial_\infty\Gamma$ with endpoints $a^-$ and $a^+$, oriented from $a^-$ to $a^+$, and such that the orientation on $r_0$ agrees with the orientation on $\partial_\infty\Gamma$. 

By taking inverses, we can assume without loss of generality that $x^-$ lies in $r_0$ and $x^+$ lies in $r_1$. Let $\{c_0,c_1\}$ be the edge in $\Epc_{A}$ so that $c_0$ lies in $s_0$ and $A^{-1}\cdot c_0$ lies in $s_1$. This implies that $A^{t(e,e')-1}\cdot c_1$ lies in $s_1$ and $A^{t(e,e')}\cdot c_1$ lies in $s_0$. For any $p\in\{0,\dots,n-1\}$, let 
\begin{align*}
&\alpha_p:=\Pbbb(\xi(A^{-1}\cdot c_0)^{(n-p-1)}+\xi(c_0)^{(p)})\cap\Pbbb(H),\\
&\beta_p:=\Pbbb(\xi(A^{t(e,e')-1}\cdot c_1)^{(n-p-1)}+\xi(A^{t(e,e')}\cdot c_1)^{(p)})\cap\Pbbb(H).
\end{align*}
Using a similar argument as in the proof of Lemma \ref{not same type}, we have
\[l(w_p(\etd,\etd'))\geq\log(\xi(x^-)^{(1)},\alpha_p,\beta_p,\xi(x^+)^{(1)}).\]

Choose a normalization so that $\xi(a^-)^{(n-k+1)}\cap\xi(a^+)^{(k)}=[e_k]$ for all $k=1,\dots,n$ and $\xi(c_0)^{(1)}=[e_1+\dots+e_n]$. In this normalization, $\rho(A)$ is the projectivization of a diagonal matrix and 
\[\xi(c_1)^{(1)}=\bigg[\sum_{i=1}^n\delta_ie_i\bigg]\]
for some real numbers $\delta_i$. 

From here, the proof will proceed in two cases, depending on the sign of $t(e,e')$.

\begin{figure}
\includegraphics[scale=0.7]{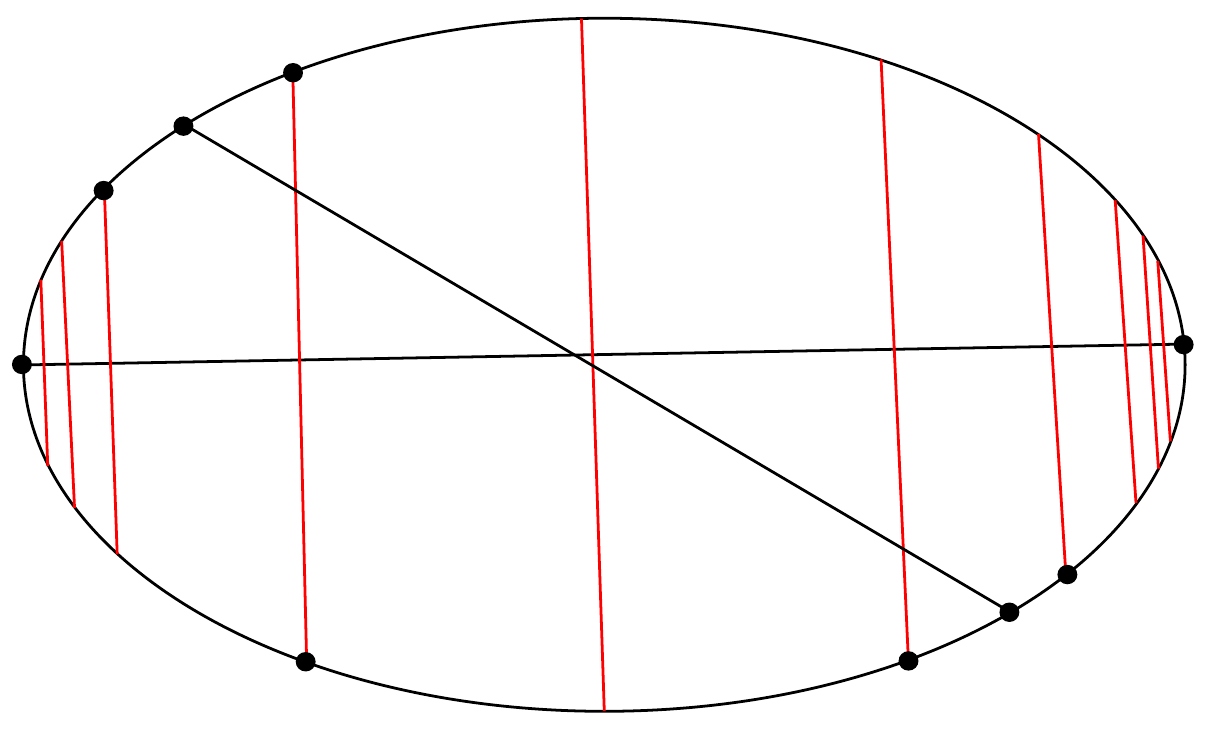}
\put (-429, 7){\makebox[0.7\textwidth][r]{$c_1$ }}
\put (-490, 72){\makebox[0.7\textwidth][r]{$a^-$ }}
\put (-470, 115){\makebox[0.7\textwidth][r]{$A^{-1}\cdot c_0$ }}
\put (-450, 125){\makebox[0.7\textwidth][r]{$x^-$ }}
\put (-432, 138){\makebox[0.7\textwidth][r]{$c_0$ }}
\put (-265, 3){\makebox[0.7\textwidth][r]{$A^{t(e,e')-1}\cdot c_1$ }}
\put (-278, 16){\makebox[0.7\textwidth][r]{$x^+$ }}
\put (-230, 25){\makebox[0.7\textwidth][r]{$A^{t(e,e')}\cdot c_1$ }}
\put (-237, 77){\makebox[0.7\textwidth][r]{$a^+$ }}
\caption{$e$, $e'$ are of the same type and $t(e,e')>0$.}
\label{t>0}
\end{figure}

{\bf Case 1:} Suppose that $t(e,e')>0$. (See Figure \ref{t>0}.) For $p=1,\dots,n-1$, define
\begin{align*}
&\alpha_p':=\Pbbb(\xi(a^-)^{(n-p-1)}+\xi(a^+)^{(p-1)}+\xi(c_0)^{(1)})\cap\Pbbb(H),\\
&\beta_p':=\Pbbb(\xi(a^-)^{(n-p-1)}+\xi(a^+)^{(p)})\cap\Pbbb(H).
\end{align*}
By (2) of Lemma \ref{important lemma}, we see that $\alpha_p$, $\alpha_p'$, $\beta_p'$, $\beta_p$ lie on $\Pbbb(H)$ in that order. Hence, we can apply Proposition \ref{useful cross ratio inequalities} to see that
\begin{eqnarray*}
&&(\xi(x^-)^{(1)},\alpha_p,\beta_p,\xi(x^+)^{(1)})\\
&\geq&(\xi(x^-)^{(1)},\alpha_p',\beta_p',\xi(x^+)^{(1)})\\
&=&(\xi(x^-)^{(1)},\alpha_p',\beta_p',\xi(x^+)^{(1)})_{\xi(a^-)^{(n-p-1)}+\xi(a^+)^{(p-1)}}\\
&\geq&(\xi(A^{-1}\cdot c_0),\xi(c_0),\xi(a^+),\xi(A^{t(e,e')-1}\cdot c_1))_{\xi(a^-)^{(n-p-1)}+\xi(a^+)^{(p-1)}}\\
&=&\frac{1}{1-e^{\lambda_{p+1}(A)-\lambda_p(A)}}\cdot\bigg(1-\frac{\delta_p}{\delta_{p+1}}\cdot\Big(e^{\lambda_p(A)-\lambda_{p+1}(A)}\Big)^{t(e,e')-1}\bigg).
\end{eqnarray*}

Observe that for any integer $k$, (1) of Proposition \ref{useful cross ratio inequalities} says that 
\[(\xi(A^{-1}\cdot c_0),\xi(c_0),\xi(a^+),\xi(A^{k-1}\cdot c_1))_{\xi(a^-)^{(n-p-1)}+\xi(a^+)^{(p-1)}}>1.\] 
This then implies that $\frac{\delta_{p}}{\delta_{p+1}}<0$.
Thus, we have
\[(\xi(x^-)^{(1)},\alpha_p,\beta_p,\xi(x^+)^{(1)})\geq\bigg|\frac{\delta_p}{\delta_{p+1}}\bigg|\cdot\Big(e^{\lambda_p(A)-\lambda_{p+1}(A)}\Big)^{t(e,e')-1}.\]
By doing this for all $p=1,\dots,n-1$ and taking the product, we get that
\[\prod_{p=1}^{n-1}(\xi(x^-)^{(1)},\alpha_p,\beta_p,\xi(x^+)^{(1)})\geq\bigg|\frac{\delta_1}{\delta_n}\bigg|\cdot\Big(e^{\lambda_1(A)-\lambda_n(A)}\Big)^{t(e,e')-1},\]
which implies 
\begin{equation}\label{type 2 inequality 1}
\sum_{p=1}^{n-1} l(w_p(\etd,\etd'))\geq\log\bigg|\frac{\delta_1}{\delta_n}\bigg|+\big(t(e,e')-1\big)\cdot\big(\lambda_1(A)-\lambda_n(A)\big).
\end{equation}

{\bf Case 2:} Suppose that $t(e,e')<0$. (See Figure \ref{t<0}.) For all $0\leq p\leq n-2$, define
\begin{align*}
&\alpha_p'':=\Pbbb(\xi(a^-)^{(n-p-2)}+\xi(a^+)^{(p)}+\xi(A^{-1}\cdot c_0)^{(1)})\cap\Pbbb(H),\\
&\beta_p'':=\Pbbb(\xi(a^-)^{(n-p-1)}+\xi(a^+)^{(p)})\cap\Pbbb(H).
\end{align*}

As before, (2) of Lemma \ref{important lemma}, implies that $\alpha_p$, $\alpha_p''$, $\beta_p''$, $\beta_p$ lie on $\Pbbb(H)$ in that order, thus allowing us to use Proposition \ref{cross ratio configuration} and Proposition \ref{useful cross ratio inequalities} to compute

\begin{figure}
\includegraphics[scale=0.7]{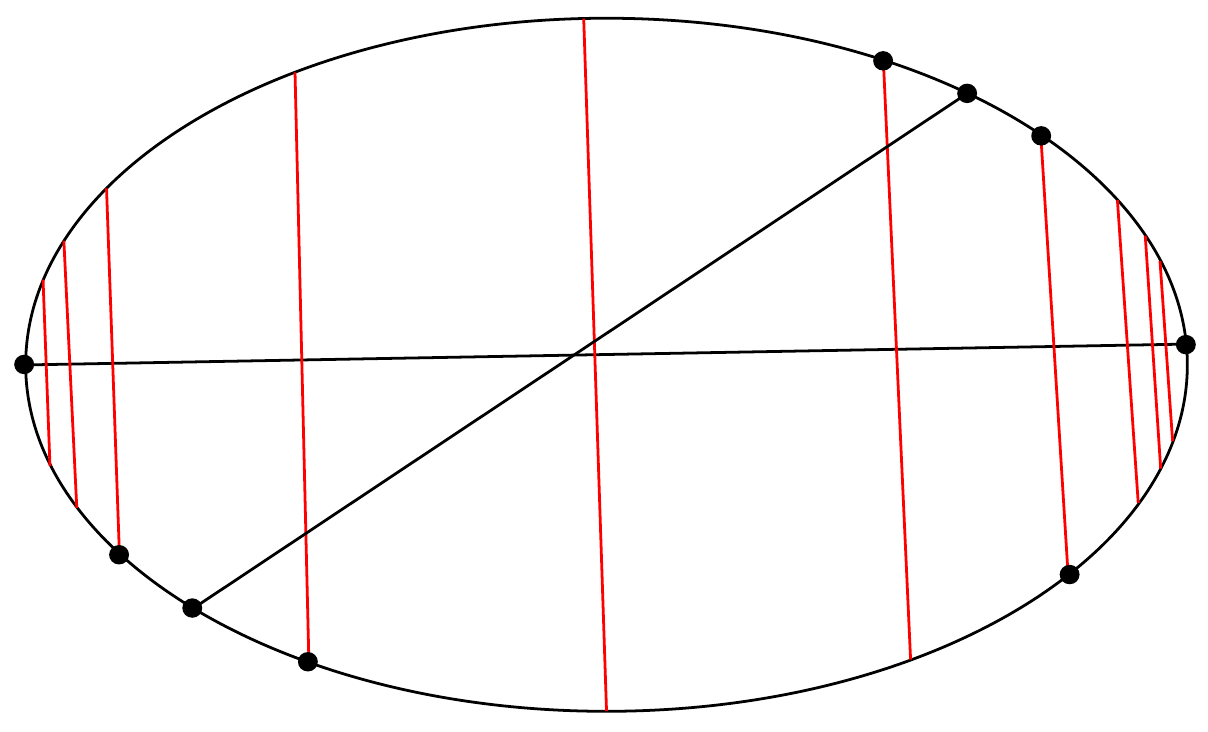}
\put (-425, 5){\makebox[0.7\textwidth][r]{$A^{t(e,e')}\cdot c_1$ }}
\put (-454, 16){\makebox[0.7\textwidth][r]{$x^+$ }}
\put (-465, 27){\makebox[0.7\textwidth][r]{$A^{t(e,e')-1}\cdot c_1$ }}
\put (-490, 72){\makebox[0.7\textwidth][r]{$a^-$ }}
\put (-287, 140){\makebox[0.7\textwidth][r]{$A^{-1}\cdot c_0$ }}
\put (-281, 130){\makebox[0.7\textwidth][r]{$x^-$ }}
\put (-272, 123){\makebox[0.7\textwidth][r]{$c_0$ }}
\put (-237, 77){\makebox[0.7\textwidth][r]{$a^+$ }}
\put (-265, 27){\makebox[0.7\textwidth][r]{$c_1$ }}
\caption{$e$, $e'$ are of the same type and $t(e,e')<0$.}
\label{t<0}
\end{figure}

\begin{eqnarray*}
&&(\xi(x^-)^{(1)},\alpha_p,\beta_p,\xi(x^+)^{(1)})\\
&\geq&(\xi(x^-)^{(1)},\alpha_p'',\beta_p'',\xi(x^+)^{(1)})\\
&=&(\xi(x^-)^{(1)},\alpha_p'',\beta_p'',\xi(x^+)^{(1)})_{\xi(a^-)^{(n-p-2)}+\xi(a^+)^{(p)}}\\
&\geq&(\xi(c_0),\xi(A^{-1}\cdot c_0),\xi(a^-),\xi(A^{t(e,e')}\cdot c_1))_{\xi(a^-)^{(n-p-2)}+\xi(a^+)^{(p)}}\\
&=&\frac{1}{1-e^{\lambda_{p+2}(A)-\lambda_{p+1}(A)}}\cdot\bigg(1-\frac{\delta_{p+2}}{\delta_{p+1}}\cdot\Big(e^{\lambda_{p+2}(A)-\lambda_{p+1}(A)}\Big)^{t(e,e')+1}\bigg).
\end{eqnarray*}
As before, $\frac{\delta_{p+2}}{\delta_{p+1}}<0$, so
\[(\xi(x^-)^{(1)},\alpha_p,\beta_p,\xi(x^+)^{(1)})\geq\bigg|\frac{\delta_{p+2}}{\delta_{p+1}}\bigg|\cdot\Big(e^{\lambda_{p+2}(A)-\lambda_{p+1}(A)}\Big)^{t(e,e')+1}.\]
If we take the product of all these inequalities for $p=0,\dots,n-2$ and then take logarithm, we obtain the inequality
\begin{equation}\label{type 2 inequality 2}
\sum_{p=0}^{n-2} l(w_p(\etd,\etd'))\geq\log\bigg|\frac{\delta_n}{\delta_1}\bigg|+\big(-t(e,e')-1\big)\cdot\big(\lambda_1(A)-\lambda_n(A)\big)
\end{equation}

By the way we defined a mesh (see (\ref{mesh inequality})), we have that 
\[1\leq\bigg|\frac{\delta_1}{\delta_n}\bigg|=\bigg|(\xi(a^+)^{(1)},\xi(c_0)^{(1)},\xi(c_1)^{(1)},\xi(a^-)^{(1)})_{\xi(a^+)^{(n-1)}\cap\xi(a^-)^{(n-1)}}\bigg|< e^{\lambda_1(A)-\lambda_n(A)}.\] 
Thus, both inequalities (\ref{type 2 inequality 1}) and (\ref{type 2 inequality 2}) imply that 
\[\sum_{p=0}^{n-1} l(w_p(\etd,\etd'))\geq\big(|t(e,e')|-2\big)\cdot\big(\lambda_1(A)-\lambda_n(A)\big).\]
The inequality in the lemma follows immediately from this.
\end{proof}

In order to emphasize that the lower bound in Theorem \ref{length lower bound} depends only on the combinatorial description $\psi(X)$ of $X$, we will use the notation 
\[r(\psi(X)):=|\Bpc|\text{ and }s(\psi(X)):=\underset{(e,e')\in\widetilde{\Dpc}_1\cup\widetilde{\Dpc}_2}{\sum}\max\{0,|t(e,e')|-2\}.\]
As a corollary of the estimates in Proposition \ref{segment lower bound}, Proposition \ref{K lower bound}, Lemma \ref{not same type} and Lemma \ref{same type}, we obtain the following theorem.

\begin{thm}\label{length lower bound}
Pick any $\rho$ in $Hit_n(S)$ and any $X$ in $\Gamma$ such that $r(\psi(X))\neq 0$. Then
\[l_{\rho}(X)\geq r(\psi(X))\cdot\frac{K(\rho)}{11}+s(\psi(X))\cdot\frac{L(\rho)}{11}.\]
\end{thm}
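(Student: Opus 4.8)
The statement is essentially a bookkeeping consequence of the four estimates already established, so the plan is to assemble them carefully while tracking the constant. First I would fix $\rho$ and $X$ with $r(\psi(X)) = |\Bpc| \neq 0$, so that $\widetilde{\Bpc}$ is non-empty and all the constructions of Section~\ref{Lower bound for lengths of closed curves} apply. Recall the master inequality from Proposition~\ref{segment lower bound}:
\begin{eqnarray*}
l_\rho(X)&\geq&\frac{1}{11n}\sum_{p=0}^{n-1}\bigg(\sum_{i=1}^{|\Zpc|}l(c_p(\ztd_i))+\sum_{i=1}^{|\Spc|}l(c_p(\std_i))+\sum_{\widetilde{\Dpc}_1}l(w_p(\btd_i,\btd_{i+1}))\bigg)\\
&&\hspace{3cm}+\frac{1}{11n}\sum_{p=0}^{n-1}\sum_{\widetilde{\Dpc}_2}\big(l(w_1(\btd_i,\btd_{i+1}))+l(w_{n-2}(\btd_i,\btd_{i+1}))\big).
\end{eqnarray*}
(I would double-check the placement of the $\frac{1}{n}$ and the inner sum over $\widetilde{\Dpc}_2$; the displayed Proposition has a summation over $p$ on the right of the $\widetilde{\Dpc}_2$ term that should probably just be the single summand, since $w_1$ and $w_{n-2}$ do not depend on a running index $p$ — I expect the intent is that the $\widetilde{\Dpc}_2$ term is not averaged over $p$ at all, matching Lemma~\ref{not same type}.) The point is that the right-hand side is $\frac{1}{11}$ times a sum of (a) averaged-over-$p$ crossing lengths, one per element of $\widetilde{\Zpc}\cup\widetilde{\Spc}$ in a fundamental domain, i.e.\ one per element of $\Bpc$, plus (b) winding contributions, one per consecutive pair $(\btd_i,\btd_{i+1})$.

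Next I would bound the crossing part. By Proposition~\ref{K lower bound}, for every $\etd$ in $\widetilde{\Bpc}$ we have $\frac{1}{n}\sum_{p=0}^{n-1}l(c_p(\etd))\geq K(\rho)$. Since the indices $\ztd_1,\dots,\ztd_{|\Zpc|}$ together with $\std_1,\dots,\std_{|\Spc|}$ enumerate exactly one representative of each class in $\Bpc$ (using $\Zpc\cup\Spc=\Bpc$ and $|\Zpc|+|\Spc|=|\Bpc|$), summing over all of them yields
\[
\frac{1}{n}\sum_{p=0}^{n-1}\bigg(\sum_{i=1}^{|\Zpc|}l(c_p(\ztd_i))+\sum_{i=1}^{|\Spc|}l(c_p(\std_i))\bigg)\geq |\Bpc|\cdot K(\rho)=r(\psi(X))\cdot K(\rho).
\]
For the winding part, I would split the consecutive pairs according to whether they lie in $\widetilde{\Dpc}_1$ or $\widetilde{\Dpc}_2$. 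For a pair in $\widetilde{\Dpc}_1$, Lemma~\ref{same type} gives $\frac{1}{n}\sum_{p}l(w_p(\btd_i,\btd_{i+1}))\geq\max\{0,|t(e_i,e_{i+1})|-2\}\cdot L(\rho)$; for a pair in $\widetilde{\Dpc}_2$, Lemma~\ref{not same type} gives $l(w_1(\btd_i,\btd_{i+1}))+l(w_{n-2}(\btd_i,\btd_{i+1}))\geq\max\{0,|t(e_i,e_{i+1})|-1\}\cdot L(\rho)\geq\max\{0,|t(e_i,e_{i+1})|-2\}\cdot L(\rho)$ (weakening $-1$ to $-2$ so the two cases match). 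Summing over all consecutive pairs $(e_i,e_{i+1})$ around the cycle $\Bpc$ and recognizing the definition $s(\psi(X))=\sum_{(e,e')\in\widetilde{\Dpc}_1\cup\widetilde{\Dpc}_2}\max\{0,|t(e,e')|-2\}$, the total winding contribution is at least $s(\psi(X))\cdot L(\rho)$.

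Finally I would combine: the bracketed quantity on the right of Proposition~\ref{segment lower bound} (after inserting the $\frac{1}{n}$ normalizations correctly) is at least $r(\psi(X))\cdot K(\rho)+s(\psi(X))\cdot L(\rho)$, and dividing by $11$ gives exactly $l_\rho(X)\geq r(\psi(X))\cdot\frac{K(\rho)}{11}+s(\psi(X))\cdot\frac{L(\rho)}{11}$. The only genuinely delicate point — the ``main obstacle'' — is the bookkeeping of normalizations and the role of the averaging over $p$: one must verify that the $\frac{1}{n}\sum_p$ appearing in Propositions~\ref{K lower bound} and Lemma~\ref{same type} is precisely the same averaging appearing in Proposition~\ref{segment lower bound}, and that the $\widetilde{\Dpc}_2$ term is handled with the un-averaged Lemma~\ref{not same type} (the factor $6\cdot l_\rho(X)$ versus $3\cdot l_\rho(X)$ in the proof of Proposition~\ref{segment lower bound} reflects exactly this asymmetry, so the constant $11$ is what makes everything fit). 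Once the constants are matched, the rest is a direct summation, and I would also note that the hypothesis $r(\psi(X))\neq 0$ is exactly what guarantees $\Bpc$ is non-empty so that all these estimates are non-vacuous; this corresponds precisely to the requirement in Theorem~\ref{main theorem}(1) that $X$ not be homotopic to a multiple of a curve in $\Ppc$.
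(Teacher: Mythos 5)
Your proof is correct and takes exactly the route the paper intends: Theorem~\ref{length lower bound} is stated as an immediate consequence of Proposition~\ref{segment lower bound}, Proposition~\ref{K lower bound}, Lemma~\ref{not same type} and Lemma~\ref{same type}, and your assembly (apply Proposition~\ref{K lower bound} termwise to the crossing sums, apply the two winding lemmas termwise — weakening $-1$ to $-2$ in the $\widetilde{\Dpc}_2$ case — and divide by $11$) is precisely that. Your parenthetical worry about the placement of the $\frac{1}{n}\sum_p$ over the $\widetilde{\Dpc}_2$ term is a non-issue: that term is constant in $p$, so averaging it over $p$ returns the same quantity, and Proposition~\ref{segment lower bound} as displayed is correct.
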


\section{Degeneration along internal sequences}\label{Degeneration along internal sequences}
In this section, we will use the analysis in Section \ref{Lower bound for lengths of closed curves} to prove Theorem \ref{main theorem}. 

\subsection{Proof of (1) of main theorem}
Observe that $X$ is an element of $\Gamma$ such that $X\neq A^k$ for any $A$ in $\Gamma_\Ppc$ and any integer $k$ if and only if $r(\psi(X))\neq 0$. Theorem \ref{length lower bound} then implies that for any $\rho$ in $Hit_n(S)$, $K(\rho)$ is a lower bound for $\Theta(\rho)$. Thus, to prove (1) of Theorem \ref{main theorem}, it is sufficient to prove the following theorem.

\begin{thm}\label{K to infinity}
Let $\{\rho_i\}_{i=1}^\infty$ be an internal sequence in $Hit_n(S)$. Then 
\[\lim_{i\to\infty}K(\rho_i)=\infty.\]
\end{thm}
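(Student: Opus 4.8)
\textbf{Proof proposal for Theorem \ref{K to infinity}.}

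The plan is to show that along an internal sequence, at least one of the triangle or shear invariants entering the definition of some $K[a,b]$ with $[a,b]\in\Qpc$ forces the corresponding cross ratio quantity to blow up. Recall that $K(\rho)=\min_{[a,b]\in\Qpc}K[a,b]$, and each $K[a,b]$ is a minimum of two averages of the $K_p', K_p''$, each of which is a maximum of logarithms of cross ratios of the form $(\xi(d),\xi(a),\xi(c),\xi(b))_M$ over $M$ ranging in the families $\Mpc_p(\cdot,\cdot,\cdot)$. Since $\Qpc$ is finite, it suffices to show that for \emph{every} edge $[a,b]\in\Qpc$ we have $\lim_{i\to\infty}K[a,b](\rho_i)=\infty$; and since $K[a,b]$ is itself a finite minimum of finitely many averages, it suffices to show that \emph{at least one} of the relevant cross ratio logarithms tends to $+\infty$ for \emph{each} choice of the minimum branch. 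The first step is therefore to re-express these cross ratios in terms of the triangle ratios $T_{x,y,z}$ and the shear cross ratios, using the relation $(A^{(1)},\dots)_{A^{(n-1)}\cap D^{(n-1)}}=\prod_k (A,\dots)_{A^{(k-1)}+D^{(n-k-1)}}$ together with Proposition \ref{basic cross ratio}(8) and the definitions of $\sigma$ and $\tau$ in Sections \ref{Triangle invariants} and \ref{Shear invariants}, so that $K[a,b]$ becomes an explicit (piecewise-linear-in-the-invariants, exponentiated) function of the shear and triangle invariants associated to the two pairs of pants adjacent to $[a,b]$.

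Next I would bring in the hypotheses of an internal sequence (Definition \ref{internal sequence}): the boundary invariants $\lambda_k(A_j)-\lambda_{k+1}(A_j)$ are trapped in $[C_1,C_2]$, while $\pi_j(\rho_i)$ leaves every compact set for each $j$. The closed-leaf equations (\ref{bottom line})–(\ref{left line}) and (\ref{alpha equation})–(\ref{gamma equation}) express the bounded boundary data as sums of the $\sigma$'s and $\tau$'s along the three lines $x=k$, $y=k$, $z=k$ of the triangle; since each such sum is positive and bounded above by a constant (a fixed linear combination of the $\lambda_k-\lambda_{k+1}$, hence in an interval determined by $C_1,C_2$), \emph{every} individual $\sigma_{(x,y,z),j}$ and $\tau_{(x,y,z),j}$ that appears is bounded \emph{above}. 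The key dichotomy is then: either all the internal parameters $\pi_j(\rho_i)$ stay bounded below as well (impossible, since then $\pi_j(\rho_i)$ would stay in a compact set, contradicting the definition), or some internal parameter $\to -\infty$ along a subsequence. Passing to such a subsequence, some triangle ratio $T_{x,y,z}$ or shear cross ratio $\to 0$; by Proposition \ref{triple ratio escaping 1} (and the analogous behavior of the shear cross ratios via Proposition \ref{cross ratio configuration}) this forces a degeneration of the flag configuration, which by Proposition \ref{useful cross ratio inequalities} and the monotonicity there makes one of the cross ratios $(\xi(d),\xi(a),\xi(c),\xi(b))_M$ appearing in the definition of $K_p'$ or $K_p''$ blow up. The point is that a parameter going to $-\infty$ on one side of a line pushes the ``complementary'' cross ratio, which is the one that appears in the maxima defining $K_p', K_p''$, to $+\infty$; one must check that the branch of the minimum in $K[a,b]$ that is actually selected still contains such a blowing-up term, using that the $K_p', K_p''$ are \emph{maxima} over $M\in\Mpc_\bullet$ so it is enough that a single $M$ works.

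The main obstacle, I expect, is the bookkeeping in the last step: matching up \emph{which} internal parameter escapes with \emph{which} cross ratio in \emph{which} $K[a,b]$, uniformly over all edges of $\Qpc$ and over both branches of each minimum, so that no matter which internal parameter of which pair of pants goes to $-\infty$, one of the finitely many $K[a,b]$ it affects is driven to infinity — and then, crucially, upgrading this from ``along a subsequence'' to ``for the whole sequence'' by a standard argument (every subsequence has a further subsequence along which $K(\rho_i)\to\infty$, hence $K(\rho_i)\to\infty$). A clean way to organize this is to prove the contrapositive-flavored statement: if $K(\rho_i)\not\to\infty$, then along a subsequence all the cross ratios defining $K$ are bounded, which by the correspondence above forces all $\sigma$'s and $\tau$'s (both the boundary-related ones and the internal ones) to be bounded above \emph{and} below, hence $\pi_j(\rho_i)$ bounded for some $j$, contradicting Definition \ref{internal sequence}(2). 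This reformulation avoids having to track exactly which parameter escapes and instead only requires the two-sided bound ``$K$ bounded $\iff$ all invariants bounded,'' which is the technical heart and should follow from the explicit formulas together with Propositions \ref{triple ratio escaping 1} and \ref{useful cross ratio inequalities}.
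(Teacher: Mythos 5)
Your high-level strategy is aligned with the paper's: reduce to showing $\lim_i K[a,b](\rho_i)=\infty$ for each non-closed-leaf edge, and use Proposition~\ref{triple ratio escaping 1} together with the cross-ratio machinery to translate divergence of the invariants into blow-up of the cross ratios $(\xi(d),\xi(a),\xi(c),\xi(b))_M$. However, there is a concrete false step and a genuine gap where the paper does real work that your sketch elides.

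The false step is the claim that the closed-leaf equations (\ref{alpha equation})--(\ref{gamma equation}) force ``\emph{every} individual $\sigma_{(x,y,z),j}$ and $\tau_{(x,y,z),j}$ to be bounded \emph{above}.'' Those equations only bound the \emph{sums} along the lines $x=k$, $y=k$, $z=k$; individual terms can run off to $+\infty$ provided others go to $-\infty$ in compensation, and in fact the paper's Lemma~\ref{triangle linear relations} shows precisely this two-sided blow-up \emph{must} occur. Your dichotomy (``bounded above always; hence if not bounded below, some parameter $\to-\infty$'') therefore does not hold as stated, and without it you cannot conclude along a subsequence that some invariant goes to $-\infty$. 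What the paper actually proves is stronger and structured differently: either a pair of shear invariants on $[a_j^-,b_j^-]$ diverge to $+\infty$ and $-\infty$ respectively, or there is a minimal level $z_0$ at which a pair of triangle invariants diverge in opposite directions while all triangle invariants at levels $z<z_0$ stay bounded. Both the $+\infty$ direction and the $-\infty$ direction are needed because $K[a,b]$ is a minimum of two averages ($K_p'$ and $K_p''$), and each branch is driven to infinity by a different sign of divergence; your ``bounded above'' claim, if true, would actually make it impossible to handle one of the two branches.

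The second gap is the passage from ``some $\tau\to\pm\infty$'' to ``some cross ratio in $K_p'$ or $K_p''$ blows up.'' Proposition~\ref{triple ratio escaping 1} requires, as a hypothesis, that the triple ratios at all levels $y<y_0$ converge to finite nonzero limits; this is exactly why the paper's Lemma~\ref{triangle linear relations} bothers to locate a \emph{minimal} $z_0$ with the sub-$z_0$ invariants bounded. Your proposal never establishes this boundedness at lower levels, so Proposition~\ref{triple ratio escaping 1} is not applicable. Likewise, the proposed ``clean'' contrapositive -- ``$K$ bounded $\iff$ all invariants bounded'' -- asserts an equivalence that is not obvious and is never proved; the forward direction would again need the full Lemma~\ref{triangle linear relations}-style analysis. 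In short: right target, right toolbox, but the central combinatorial lemma that makes the tools apply (and that resolves the which-branch-of-the-minimum bookkeeping you flag as the obstacle) is missing, and the shortcut you propose to avoid it rests on a false bound.
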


We start by using the closed leaf equalities (\ref{alpha equation}), (\ref{beta equation}) and (\ref{gamma equation}) to prove the following lemma.

\begin{lem}\label{triangle linear relations}
Let $\{\rho_i\}_{i=1}^\infty$ be an internal sequence such that the shear and triangle invariants converge (possibly to $\infty$ or $-\infty$) along $\{\rho_i\}_{i=1}^\infty$. Then for any $j=1,\dots,2g-2$, one of the following hold:
\begin{enumerate}
\item There is a pair of shear invariants, call them $\sigma_{1,[a_j^-,b_j^-]}$ and $\sigma_{2,[a_j^-,b_j^-]}$ for the edge $[a_j^-,b_j^-]$, so that 
\[\lim_{i\to\infty}\sigma_{1,[a_j^-,b_j^-]}(\rho_i)=\infty\text{ and }\lim_{i\to\infty}\sigma_{2,[a_j^-,b_j^-]}(\rho_i)=-\infty.\]
\item There is some number $z_0\in\{1,\dots,n-2\}$ so that there are triangle invariants $\tau_1,\tau_2$ in 
$\{\tau_{(x,y,z),j}:z=z_0\}\cup\{\tau'_{(x,y,z),j}:z=z_0\}$ satisfying
\[\lim_{i\to\infty}\tau_1(\rho_i)=\infty\text{ and }\lim_{i\to\infty}\tau_2(\rho_i)=-\infty,\]
and the triangle invariants in $\{\tau_{(x,y,z),j}:z<z_0\}\cup\{\tau'_{(x,y,z),j}:z<z_0\}$ 
are bounded above and below along $\{\rho_i\}_{i=1}^\infty$.
\end{enumerate}
\end{lem}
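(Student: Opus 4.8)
The plan is to work with the three closed leaf equations (\ref{alpha equation}), (\ref{beta equation}), (\ref{gamma equation}) for the fixed pair of pants $P_j$, together with their sums (\ref{bottom line}), (\ref{right line}), (\ref{left line}), and to track which parameters can be bounded. First I would observe that since $\{\rho_i\}$ is an internal sequence, the boundary invariant condition forces each $\lambda_k(A_j)-\lambda_{k+1}(A_j)$ (and the analogous quantities for $B_j$, $C_j$) to stay inside the compact interval $(C_1,C_2)$, so the left-hand sides of all the closed leaf equations and of (\ref{bottom line})--(\ref{left line}) remain bounded along the sequence. On the other hand, the internal parameter condition says that the vector $\pi_j(\rho_i)$ escapes every compact set, so at least one internal parameter must have a subsequence along which it tends to $+\infty$ or $-\infty$; having assumed (by passing to a subsequence, as the hypothesis allows) that all shear and triangle invariants converge in $[-\infty,\infty]$, at least one internal parameter tends to $\pm\infty$.

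Next I would run a descent argument on the ``level" $z$. The key point is that the right-hand side of (\ref{left line}) equals $n\sum_{k}\sigma_{(k,0,n-k),j}$ and is bounded, and more relevantly, Equation (\ref{gamma equation}) written for successive values of $k$ lets one control the sum of all invariants on the plane $z=k$. Concretely: the sum over the plane $z=1$ consists of the two shear invariants $\sigma_{(n-1,0,1),j}$, $\sigma_{(0,n-1,1),j}$ together with the triangle invariants on that plane, and this whole sum is bounded (it equals a difference of adjacent $\lambda$'s). If \emph{all} triangle invariants on the planes $z<z_0$ are bounded, then I can iterate: bounded triangle invariants on planes $z<z_0$ plus bounded sums on each plane $z=k$ for $k<z_0$ force the relevant shear invariants $\sigma_{(\cdot,0,\cdot),j}$ on low-level planes to be bounded too. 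The dichotomy in the statement is then: either this bounding propagates all the way, in which case the escaping internal parameter must be a shear invariant $\sigma_{(x,y,0),j}$ on the edge $[a_j^-,b_j^-]$ — and since the bounded sum $\sum_k \sigma_{(n-k,k,0),j}$ from (\ref{bottom line}) is bounded while the individual terms are not all bounded, there must be one going to $+\infty$ and one going to $-\infty$, giving case (1); or the propagation halts at some first level $z_0\in\{1,\dots,n-2\}$, meaning the triangle invariants on planes $z<z_0$ are bounded but not all of those on plane $z=z_0$ are, and again the bounded-sum argument on the plane $z=z_0$ (via Equation (\ref{gamma equation}) with $k=z_0$, using that the $\sigma_{(z_0,0,n-z_0),j}$ and $\sigma_{(0,z_0,n-z_0),j}$ terms are controlled) forces two triangle invariants on that plane, one to $+\infty$ and one to $-\infty$, giving case (2).

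The main obstacle I anticipate is bookkeeping: making precise the claim that ``bounded triangle invariants on all lower planes force the shear invariants on those planes to be bounded," since the shear invariants appear in several of the three families of closed leaf equations and one has to be careful that the system (\ref{alpha equation})--(\ref{gamma equation}), restricted to low levels, genuinely determines the low-level shear invariants in terms of bounded quantities. This is essentially the same triangular solvability that was used in the proof of Proposition \ref{parameterization} (the inductive determination of $s_{(0,n-k,k)}$ and $t'_{(1,n-k-1,k)}$), so I would reuse that structure: reading the equations in the right order expresses each low-level shear invariant as a bounded linear combination of the boundary invariants and of already-controlled (bounded) lower-level invariants. Once that solvability is in hand, the dichotomy follows by choosing $z_0$ to be the smallest level at which boundedness fails, and the $+\infty$/$-\infty$ pairing in both cases is forced by the ``bounded sum, unbounded summands" principle applied to the appropriate plane.
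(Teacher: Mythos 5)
Your high-level plan is broadly in the same spirit as the paper's proof: both use the closed leaf equalities (\ref{alpha equation})--(\ref{gamma equation}) together with their aggregate forms (\ref{bottom line})--(\ref{left line}), the boundedness of the left-hand sides coming from the internal sequence hypothesis, and a ``bounded sum, unbounded summands'' principle to produce the $\pm\infty$ pair. The idea of descending on the level $z$ and locating a first ``bad'' level is also what the paper does. However, there is a genuine gap in the way you set up the dichotomy, and it lands precisely on the point you flagged as ``bookkeeping.''

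The gap is this: your descent step claims that if the triangle invariants on all planes $z<z_0$ are bounded, then the shear invariants $\sigma_{(n-z,0,z),j}$ and $\sigma_{(0,n-z,z),j}$ on those planes are bounded. But equations (\ref{gamma equation}) and (\ref{left line}) only control the sums $\sigma_{(n-z,0,z),j}+\sigma_{(0,n-z,z),j}$ and $\sum_k\sigma_{(k,0,n-k),j}$ respectively; they cannot separate the two shears on a given plane. To isolate them one must use (\ref{alpha equation}) and (\ref{beta equation}), and each of these expresses $\sigma_{(n-k,0,k),j}$ (resp.\ $\sigma_{(0,n-k,k),j}$) in terms of triangle invariants on lower planes \emph{and} a shear invariant on the edge $[a_j^-,b_j^-]$, namely $\sigma_{(n-k,k,0),j}$ (resp.\ $\sigma_{(k,n-k,0),j}$). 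So your propagation argument silently requires the shears on $[a_j^-,b_j^-]$ to be bounded — but your dichotomy is ``all triangle invariants bounded or not,'' and in the second branch you have not ruled out a shear on $[a_j^-,b_j^-]$ escaping. This is not just bookkeeping; as written the second branch of the dichotomy does not close.

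The fix is exactly what the paper does: run the argument on the dichotomy ``(1) holds or (1) fails,'' not ``all triangle invariants bounded or not.'' If (1) fails, then by (\ref{bottom line}) every shear invariant on $[a_j^-,b_j^-]$ must be bounded (bounded sum, and no two can escape in opposite directions by assumption), after which your descent via (\ref{alpha equation}), (\ref{beta equation}), (\ref{gamma equation}) does work. The paper also interposes an explicit ``claim'' (if (1) fails, some triangle invariant tends to $-\infty$) before choosing $z_0$; your choice of $z_0$ as the first level where some triangle invariant is unbounded is a workable alternative launching point once the shears on $[a_j^-,b_j^-]$ are under control. Two smaller slips: the relevant shears on the plane $z=z_0$ are $\sigma_{(n-z_0,0,z_0),j}$ and $\sigma_{(0,n-z_0,z_0),j}$ (not $\sigma_{(z_0,0,n-z_0),j}$, $\sigma_{(0,z_0,n-z_0),j}$), and the instance of (\ref{gamma equation}) you want has $k=n-z_0$, not $k=z_0$.
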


\begin{proof}
First, we prove the claim that if (1) does not hold, then there must be some triangle invariant $\tau_2$ in $\Apc_j\cup\Apc_j'$ so that 
\[\lim_{i\to\infty}\tau_2(\rho_i)=-\infty.\]
Suppose that every triangle invariant in $\Apc_j\cup\Apc_j'$ is bounded below by a real number when evaluated along $\{\rho_i\}_{i=1}^\infty$. Equations (\ref{alpha equation}), (\ref{beta equation}), (\ref{gamma equation}) and the definition of an internal sequence then imply that there is some shear invariant that is converging to $-\infty$ along $\{\rho_i\}_{i=1}^\infty$. Assume without loss of generality that this shear invariant is associated to the edge $[b_j^-,c_j^-]$, and denote it by $\sigma_{2,[b_j^-,c_j^-]}$. 

By Equation (\ref{right line}), we know that there is some other shear invariant $\sigma_{1,[b_j^-,c_j^-]}$ associated to the edge $[b_j^-,c_j^-]$ so that 
\[\lim_{i\to\infty}\sigma_{1,[b_j^-,c_j^-]}(\rho_i)=\infty.\]
Then, by Equation (\ref{gamma equation}) and the assumption that every triangle invariant for $P_j$ is bounded below by a real number when evaluated along $\{\rho_i\}_{i=1}^\infty$, we can deduce that there is some shear invariant $\sigma_{2,[c_j^-,a_j^-]}$ associated to the edge $[c_j^-,a_j^-]$ so that 
\[\lim_{i\to\infty}\sigma_{2,[c_j^-,a_j^-]}(\rho_i)=-\infty.\]
Equation (\ref{left line}) now implies that there is some other shear invariant $\sigma_{1,[c_j^-,a_j^-]}$ associated to the edge $[c_j^-,a_j^-]$ so that 
\[\lim_{i\to\infty}\sigma_{1,[c_j^-,a_j^-]}(\rho_i)=\infty.\]
Using the same arguments as above, Equations (\ref{alpha equation}) and (\ref{bottom line}) together imply that there are shear invariants $\sigma_{1,[a_j^-,b_j^-]}$ and $\sigma_{2,[a_j^-,b_j^-]}$ associated to the edge $[b_j^-,c_j^-]$ so that 
\[\lim_{i\to\infty}\sigma_{1,[a_j^-,b_j^-]}(\rho_i)=\infty\text{ and }\lim_{i\to\infty}\sigma_{2,[a_j^-,b_j^-]}(\rho_i)=-\infty.\]

We have thus proven that under the hypothesis that every triangle invariant in $\Apc_j\cup\Apc_j'$ is bounded below by a real number when evaluated along $\{\rho_i\}_{i=1}^\infty$, the three edges $[a_j^-,b_j^-]$, $[b_j^-,c_j^-]$ and $[a_j^-,c_j^-]$ each have a pair shear invariants associated to them with the property that one of them converges to $-\infty$ while the other converges to $\infty$. In particular, (1) holds. The contrapositive of this is the claim.

Next, we prove that if (1) does not hold, then (2) must hold. Suppose (1) does not hold. By Equation (\ref{bottom line}), we see that every shear invariant for $[a_j^-,b_j^-]$ is bounded below along $\{\rho_i\}_{i=1}^\infty$ if and only if every shear invariant for $[a_j^-,b_j^-]$ is bounded above along $\{\rho_i\}_{i=1}^\infty$. Hence, the shear invariants for the edge $[a_j^-,b_j^-]$ are bounded both above and below.

Also, we can assume without loss of generality that $\tau_2$ in the above claim lies in 
\[\{\tau_{(x,y,z),j}:z=z_0\}\cup\{\tau'_{(x,y,z),j}:z=z_0\}\] 
for some $z_0$ so that the triangle invariants in 
\[\{\tau_{(x,y,z),j}:z<z_0\}\cup\{\tau'_{(x,y,z),j}:z<z_0\}\]
are bounded below along $\{\rho_i\}_{i=1}^\infty$. Equations (\ref{alpha equation}) and (\ref{beta equation}) then imply respectively that the sequences 
\[\{\sigma_{(n-z_0,0,z_0),j}(\rho_i)\}_{i=1}^\infty\text{ and }\{\sigma_{(0,n-z_0,z_0),j}(\rho_i)\}_{i=1}^\infty\] are bounded above. Thus, by Equation (\ref{gamma equation}), there exists a triangle invariant $\tau_1$ in $\{\tau_{(x,y,z),j}:z=z_0\}\cup\{\tau'_{(x,y,z),j}:z=z_0\}$ with 
\[\lim_{i\to\infty}\tau_1(\rho_i)=\infty.\]

Finally, suppose for contradiction that there is some $z_0'<z_0$ with the property that there is some triangle invariant $\tau_1'$ in $\{\tau_{(x,y,z),j}:z=z_0'\}\cup\{\tau'_{(x,y,z),j}:z=z_0'\}$ such that
\[\lim_{i\to\infty}\tau_1'(\rho_i)=\infty.\]
We can assume that $z_0'$ is the minimal such number. A similar proof as the one given above then implies that there is some $\tau_2'$ in $\{\tau_{(x,y,z),j}:z=z_0'\}\cup\{\tau'_{(x,y,z),j}:z=z_0'\}$ with 
\[\lim_{i\to\infty}\tau_2'(\rho_i)=-\infty.\]
However, this contradicts the definition of $z_0$.
\end{proof}

Armed with Lemma \ref{triangle linear relations}, we are now ready to prove Theorem \ref{K to infinity}.

\begin{proof}[Proof of Theorem \ref{K to infinity}]
For any subsequence of $\{\rho_i\}_{i=1}^\infty$, choose a further subsequence, denoted $\{\rho_{i_k}\}_{k=1}^\infty$, so that the shear and triangle invariants converge (possibly to $\infty$ or $-\infty$) along $\{\rho_{i_k}\}_{i=1}^\infty$. It is sufficient to show that 
\[\lim_{k\to\infty}K(\rho_{i_k})=\infty.\]
For the rest of the proof, we will simplify notation by relabeling the sequence $\{\rho_{i_k}\}_{k=1}^\infty$ as $\{\rho_i\}_{i=1}^\infty$.

Let $\{a,b\}$ be any edge in $\widetilde{\Tpc}$ that is not a closed leaf, and let $c,d$ be the unique pair of points in $\partial_\infty\Gamma$ such that $\{a,c\}$, $\{b,c\}$, $\{a,d\}$, $\{b,d\}$ are all edges in $\widetilde{\Tpc}$. It is sufficient to show that both of the following hold:
\begin{enumerate}[(a)]
\item There is a sequence $\{M_i\}_{i=1}^\infty$ of $(n-2)$-dimensional subspaces in $\Rbbb^n$ so that $M_i$ is an element of  
\[\bigg(\bigcup_{p=1}^{n-1}\Mpc^{\xi_i}_p(a,b,c)\bigg)\cup\bigg(\bigcup_{p=0}^{n-2}\Mpc^{\xi_i}_{n-p-1}(b,a,d)\bigg)\]
for all $i$, and
\[\lim_{i\to\infty}\log(\xi_i(d),\xi_i(a),\xi_i(c),\xi_i(b))_{M_i}=\infty.\]
\item There is a sequence $\{M_i\}_{i=1}^\infty$ of $(n-2)$-dimensional subspaces in $\Rbbb^n$ so that $M_i$ is an element of 
\[\bigg(\bigcup_{p=1}^{n-1}\Mpc^{\xi_i}_p(a,b,d)\bigg)\cup\bigg(\bigcup_{p=0}^{n-2}\Mpc^{\xi_i}_{n-p-1}(b,a,c)\bigg)\]
for all $i$, and
\[\lim_{i\to\infty}\log(\xi_i(b),\xi_i(d),\xi_i(a),\xi_i(c))_{M_i}=\infty.\]
\end{enumerate}

By Lemma \ref{symmetry}, we can assume without loss of generality that the edge $a=a_j^-$, $b=b_j^-$, $c=c_j^-$ and $d=A_j\cdot c_j^-$ for some pair of pants $P_j$. In this setting, either (1) or (2) of Lemma \ref{triangle linear relations} must hold.

Suppose (1) holds. Let $p,q\in\{1,\dots,n-2\}$ be such that 
\[\lim_{i\to\infty}\sigma_{(q,n-q,0),j}(\rho_i)=\infty\text{ and }\lim_{i\to\infty}\sigma_{(p,n-p,0),j}(\rho_i)=-\infty.\] 
Then
\begin{eqnarray*}
&&(\xi_i(d),\xi_i(a),\xi_i(c),\xi_i(b))_{\xi_i(a)^{(p-1)}+\xi_i(b)^{(n-p-1)}}\\
&=&1-(\xi_i(a),\xi_i(d),\xi_i(c),\xi_i(b))_{\xi_i(a)^{(p-1)}+\xi_i(b)^{(n-p-1)}}\\
&=&1-\frac{1}{(\xi_i(a),\xi_i(c),\xi_i(d),\xi_i(b))_{\xi_i(a)^{(p-1)}+\xi_i(b)^{(n-p-1)}}}\\
&=&1+e^{-\sigma_{(p,n-p,0),j}(\rho_i)}\to\infty\text{ as }i\to\infty
\end{eqnarray*}
and
\begin{eqnarray*}
&&(\xi_i(b),\xi_i(d),\xi_i(a),\xi_i(c))_{\xi_i(a)^{(q-1)}+\xi_i(b)^{(n-q-1)}}\\
&=&1-(\xi_i(b),\xi_i(d),\xi_i(c),\xi_i(a))_{\xi_i(a)^{(q-1)}+\xi_i(b)^{(n-q-1)}}\\
&=&1+e^{\sigma_{(q,n-q,0),j}(\rho_i)}\to\infty\text{ as }i\to\infty
\end{eqnarray*}
so (a) and (b) hold. 

Next, suppose that (2) holds. By Theorem \ref{Bonahon-Dreyer} and Lemma \ref{triangle lemma}, we know that there is some representation $\rho$ in $Hit_n(S)$ with corresponding Frenet curve $\xi$ such that under a suitably chosen normalization, the following hold.
\begin{itemize}
\item $\xi_i(a_j^-)^{(k)}=\xi(a_j^-)^{(k)}$ and $\xi_i(b_j^-)^{(k)}=\xi(b_j^-)^{(k)}$ for all $k=1,\dots,n-1$ and for all $i$, 
\item $\xi_i(c_j^-)^{(1)}=\xi(c_j^-)^{(1)}$ for all $i$,
\item $\underset{i\to\infty}{\lim}\xi_i(c_j^-)^{(k)}=\xi(c_j^-)^{(k)}$ and $\underset{i\to\infty}{\lim}\xi_i(A_j\cdot c_j^-)^{(k)}=\xi(A_j\cdot c_j^-)^{(k)}$ for all $k=1,\dots,z_0$.
\end{itemize}
In particular, for any $(x,y,z_0)$ in $\Apc$, the triple
\begin{align}\label{hyperplane triple 1}
&\lim_{i\to\infty}\xi_i(A_j\cdot c_j^-)^{(1)}+\xi_i(a_j^-)^{(x-1)}+\xi_i(b_j^-)^{(y-1)}+\xi_i(c_j^-)^{(z_0)},\nonumber\\
&\lim_{i\to\infty}\xi_i(a_j^-)^{(x)}+\xi_i(b_j^-)^{(y-1)}+\xi_i(c_j^-)^{(z_0)}\text{ and }\\
&\lim_{i\to\infty}\xi_i(a_j^-)^{(x-1)}+\xi_i(b_j^-)^{(y)}+\xi_i(c_j^-)^{(z_0)}\nonumber
\end{align}
and the triple
\begin{align}\label{hyperplane triple 2}
&\lim_{i\to\infty}\xi_i(c_j^-)^{(1)}+\xi_i(a_j^-)^{(x-1)}+\xi_i(b_j^-)^{(y-1)}+\xi_i(A_j\cdot c_j^-)^{(z_0)},\nonumber\\
&\lim_{i\to\infty}\xi_i(a_j^-)^{(x)}+\xi_i(b_j^-)^{(y-1)}+\xi_i(A_j\cdot c_j^-)^{(z_0)}\text{ and }\\
&\lim_{i\to\infty}\xi_i(a_j^-)^{(x-1)}+\xi_i(b_j^-)^{(y)}+\xi_i(A_j\cdot c_j^-)^{(z_0)}\nonumber
\end{align}
are both pairwise distinct triples of hyperplanes. 

By Proposition \ref{triple ratio escaping 1}, we see that $\displaystyle\lim_{i\to\infty}\tau_{(x,y,z_0),j}(\rho_i)=\infty$ if and only if 
\[\lim_{i\to\infty}\xi_i(a_j^-)^{(x-1)}+\xi_i(b_j^-)^{(y-1)}+\xi_i(c_j^-)^{(z_0+1)}=\lim_{i\to\infty}\xi_i(a_j^-)^{(x)}+\xi_i(b_j^-)^{(y-1)}+\xi_i(c_j^-)^{(z_0)}\]
and $\displaystyle\lim_{i\to\infty}\tau_{(x,y,z_0),j}(\rho_i)=-\infty$ if and only if 
\[\lim_{i\to\infty}\xi_i(a_j^-)^{(x-1)}+\xi_i(b_j^-)^{(y-1)}+\xi_i(c_j^-)^{(z_0+1)}=\lim_{i\to\infty}\xi_i(a_j^-)^{(x-1)}+\xi_i(b_j^-)^{(y)}+\xi_i(c_j^-)^{(z_0)}.\]
Then (4) of Proposition \ref{basic cross ratio} together with the fact that the triple of hyperplanes (\ref{hyperplane triple 1}) are pairwise distinct imply that if $M={\xi_i(a_j^-)^{(x-1)}+\xi_i(b_j^-)^{(y-1)}+\xi_i(c_j^-)^{(z_0)}}$, then
\begin{eqnarray*}
\lim_{i\to\infty}\tau_{(x,y,z_0),j}(\rho_i)=\infty&\iff&\lim_{i\to\infty}(\xi_i(b_j^-),\xi_i(A_j\cdot c_j^-),\xi_i(a_j^-),\xi_i(c_j^-))_M=\infty.\\
\lim_{i\to\infty}\tau_{(x,y,z_0),j}(\rho_i)=-\infty&\iff&\lim_{i\to\infty}(\xi_i(A_j\cdot c_j^-),\xi_i(a_j^-),\xi_i(c_j^-),\xi_i(b_j^-))_M=\infty,
\end{eqnarray*}
 
The same argument, using the hyperplanes (\ref{hyperplane triple 2}) in place of (\ref{hyperplane triple 1}), proves that if $M=\xi_i(a_j^-)^{(x-1)}+\xi_i(b_j^-)^{(y-1)}+\xi_i(A\cdot c_j^-)^{(z_0)}$, then
\begin{eqnarray*}
\lim_{i\to\infty}\tau'_{(x,y,z_0),j}(\rho_i)=-\infty&\iff&\lim_{i\to\infty}(\xi_i(A_j\cdot c_j^-),\xi_i(a_j^-),\xi_i(c_j^-),\xi_i(b_j^-))_M=\infty.\\
\lim_{i\to\infty}\tau'_{(x,y,z_0),j}(\rho_i)=\infty&\iff&\lim_{i\to\infty}(\xi_i(b_j^-),\xi_i(A_j\cdot c_j^-),\xi_i(a_j^-),\xi_i(c_j^-))_M=\infty,
\end{eqnarray*}
Hence, (a) and (b) also hold.
\end{proof}

\subsection{Proof of (2) of main theorem}
Let $h:\Gamma\to\Zbbb_{\geq 0}$ be the function given by $h(X)=|\Bpc_X|$. Observe that $h$ is invariant under conjugation because the triangulation $\widetilde{\Tpc}$ is $\Gamma$-invariant. Hence, we can define 
\[\Gamma_0:=h^{-1}(0),\Gamma_1:=h^{-1}(\Zbbb_+),\]
and let $[\Gamma_i]$ be the set of conjugacy classes in $\Gamma_i$ for $i=0,1$. Observe also that a conjugacy class $[X]$ lies in $[\Gamma_0]$ if and only if $X=\id$ or $X=A^k$ for some integer $k$ and some $A$ in $\Gamma_\Ppc$.

Given a representation $\rho$ in $Hit_n(S)$, we want to produce an upper bound for $\htop(\rho)$. To do so, we will first find upper bounds for the size of the sets
\[\{[X]\in[\Gamma_0]:l_\rho(X)\leq T\}\text{ and }\{[X]\in[\Gamma_1]:l_\rho(X)\leq T\}\]
For some fixed $T>0$. These will give an upper bound on the size of 
\[\{[X]\in[\Gamma]:l_\rho(X)\leq T\},\] 
which we can then use to control $\htop(\rho)$.

\begin{lem}\label{Gamma_0}
Let $T>0$ and $\rho$ be a representation in $Hit_n(S)$. Then
\[|\{[X]\in[\Gamma_0]:l_\rho(X)\leq T\}|\leq(6g-6)\bigg\lfloor\frac{T}{L(\rho)}\bigg\rfloor+1,\]
where $g$ is the genus of $S$. Recall that $L(\rho)$ is the minimum of $l_\rho(A)$ over all $A$ in $\Gamma_\Ppc$, divided by $n$.
\end{lem}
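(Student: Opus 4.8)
The plan is a direct counting argument, in the spirit of the corresponding step in \cite{Zha1}. Recall from the discussion preceding the lemma that a conjugacy class lies in $[\Gamma_0]$ if and only if it is $[\id]$ or of the form $[A^k]$ with $A\in\Gamma_\Ppc$ and $k$ a nonzero integer. Since $\Ppc$ consists of $3g-3$ curves, I would fix primitive elements $A_1,\dots,A_{3g-3}$ representing the oriented curves of $\Ppc$; then every nontrivial conjugacy class in $[\Gamma_0]$ equals $[A_j^k]$ for some $j\in\{1,\dots,3g-3\}$ and some $k\in\Zbbb\setminus\{0\}$. The assignment $(j,k)\mapsto[A_j^k]$ need not be injective, but surjectivity onto the nontrivial classes of $[\Gamma_0]$ is all that is needed for an upper bound.

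The second ingredient is the homogeneity $l_\rho(A_j^k)=|k|\cdot l_\rho(A_j)$. Recall that $l_\rho(X)=\log\bigl|\omega_n(\rho(X))/\omega_1(\rho(X))\bigr|=\lambda_1(\rho(X))-\lambda_n(\rho(X))$, where $\omega_n$ and $\omega_1$ are the eigenvalues of largest and smallest modulus. Since $\rho$ is a Hitchin representation, $\rho(A_j)$ is diagonalizable with real eigenvalues of pairwise distinct moduli, so the eigenvalues of $\rho(A_j^k)=\rho(A_j)^k$ are the $k$-th powers of those of $\rho(A_j)$; comparing the largest and smallest moduli gives
\[
\lambda_1(\rho(A_j^k))-\lambda_n(\rho(A_j^k))=|k|\bigl(\lambda_1(\rho(A_j))-\lambda_n(\rho(A_j))\bigr),
\]
that is, $l_\rho(A_j^k)=|k|\cdot l_\rho(A_j)$; in particular $l_\rho(A_j)>0$.

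Next I would combine these facts with the definition of $L(\rho)$. By definition $L(\rho)=\min\{l_\rho(A)/n:A\in\Gamma_\Ppc\}\leq l_\rho(A_j)/n\leq l_\rho(A_j)$. Hence if $l_\rho(A_j^k)\leq T$ then $|k|\,l_\rho(A_j)\leq T$, so $|k|\leq T/l_\rho(A_j)\leq T/L(\rho)$, and since $|k|$ is a positive integer this forces $|k|\leq\lfloor T/L(\rho)\rfloor$. For each fixed $j$ there are therefore at most $2\lfloor T/L(\rho)\rfloor$ values of $k\in\Zbbb\setminus\{0\}$ with $l_\rho(A_j^k)\leq T$, so at most $(3g-3)\cdot 2\lfloor T/L(\rho)\rfloor=(6g-6)\lfloor T/L(\rho)\rfloor$ nontrivial conjugacy classes in $[\Gamma_0]$ have length at most $T$. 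Adding the single class $[\id]$ (for which $l_\rho(\id)=0\leq T$) yields the stated bound.

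I expect no serious obstacle here; the argument is entirely elementary. The only two points deserving a word of care are that the enumeration $(j,k)\mapsto[A_j^k]$ really covers all of $[\Gamma_0]$, which is immediate from the description of $\Gamma_0$ recalled above, and that $l_\rho$ restricted to each cyclic subgroup $\langle A_j\rangle$ is homogeneous (and inversion-invariant), which is exactly where the diagonalizability of the Hitchin holonomy is used.
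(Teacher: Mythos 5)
Your proof is correct and follows essentially the same route as the paper: enumerate the nontrivial classes in $[\Gamma_0]$ by powers $A_j^k$ of the $3g-3$ chosen representatives, use the homogeneity $l_\rho(A_j^k)=|k|\,l_\rho(A_j)$ (coming from the diagonalizability of the Hitchin holonomy) together with $l_\rho(A_j)\geq L(\rho)$ to bound $|k|$, and add $1$ for the identity class. The only cosmetic difference is that you avoid asserting the injectivity of $(j,k)\mapsto[A_j^k]$ and rely only on surjectivity, which is all an upper bound requires.
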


\begin{proof}
Choose group elements $A_1,\dots, A_{3g-3}$ in $\Gamma$ corresponding to the $3g-3$ oriented simple closed curves in $\Ppc$. Observe that any conjugacy class in $[\Gamma_0]$ has a unique representative of the form $A_i^k$ for some $i=1,\dots,3g-3$ and some integer $k$. Moreover, for any representation $\rho$ in $Hit_n(S)$, we have
\[l_\rho(A_i^k)=|k|\cdot l_\rho(A_i)\geq|k|\cdot L(\rho).\] 
These observations imply that
\begin{eqnarray*}
|\{[X]\in[\Gamma_0]:l_\rho(X)\leq T\}|&\leq&\bigg|\bigg\{A_i^k\in\Gamma:i=1,\dots,3g-3;|k|\leq\frac{T}{L(\rho)}\bigg\}\bigg|\\
&=&(3g-3)\bigg(2\bigg\lfloor\frac{T}{L(\rho)}\bigg\rfloor\bigg)+1.
\end{eqnarray*}
\end{proof}

\begin{lem}\label{Gamma_1}
Let $T>0$ and $\rho$ be a representation in $Hit_n(S)$. Then
\[|\{[X]\in[\Gamma_1]:l_\rho(X)\leq T\}|\leq\sum_{a=1}^{\lfloor\frac{11T}{K(\rho)}\rfloor}\frac{(120g-120)^a}{a}\cdot{\lfloor\frac{11T-a\cdot K(\rho)}{L(\rho)}\rfloor+a\choose a},\]
where $g$ is the genus of $S$.
\end{lem}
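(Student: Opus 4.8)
The strategy is to encode each conjugacy class $[X]$ in $[\Gamma_1]$ by a combinatorial datum determined by $\psi(X)$, count the possible data, and then use Theorem \ref{length lower bound} to restrict to those data that can arise from $X$ with $l_\rho(X)\le T$. Fix $[X]\in[\Gamma_1]$, so $r(\psi(X))=|\Bpc_X|=a\ge 1$. By Proposition \ref{combinatorial prop}, $[X]$ is determined by its cyclic sequence $\psi(X)=\{(\suc^{-1}(e_i),e_i,\suc(e_i),T_i,t(e_i,e_{i+1}))\}_{i=1}^a$, up to cyclic rotation. First I would bound the number of choices for the ``discrete part'' $\{(\suc^{-1}(e_i),e_i,\suc(e_i),T_i)\}_{i=1}^a$: each $e_i$ lies in $\Bpc\subset\Ipc$, which is a finite set whose size is bounded in terms of $g$ (coming from the $9g-9$ edges of $\Tpc$ and the two triangles per pair of pants), and once $e_i$ is fixed its successor and predecessor in the cyclic order $\suc$ are determined, while $T_i\in\{Z,S\}$ is then also forced; a crude count gives at most $(120g-120)^a$ choices for an ordered such sequence, hence (dividing by the cyclic symmetry of length $a$) at most $(120g-120)^a/a$ unordered ones — this accounts for the first factor in the claimed bound.

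Next I would count the ``winding part'' $\{t(e_i,e_{i+1})\}_{i=1}^a$ subject to the length constraint. By Theorem \ref{length lower bound},
\[
T\ge l_\rho(X)\ge a\cdot\frac{K(\rho)}{11}+s(\psi(X))\cdot\frac{L(\rho)}{11},
\]
so $a\le\lfloor 11T/K(\rho)\rfloor$ (this is the upper limit of the outer sum) and
\[
s(\psi(X))=\sum_{i=1}^a\max\{0,|t(e_i,e_{i+1})|-2\}\le\frac{11T-a\,K(\rho)}{L(\rho)}=:N.
\]
So it remains to bound the number of integer tuples $(t_1,\dots,t_a)$ with $\sum_{i=1}^a\max\{0,|t_i|-2\}\le N$. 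Writing $t_i=\pm(u_i+2)$ or $|t_i|\le 2$ appropriately, the count of such tuples is bounded by the number of ways to distribute a total ``excess'' of at most $N$ among $a$ slots, which is $\binom{N+a}{a}$ (a stars-and-bars estimate), up to harmless constant factors that the signs and the small values $|t_i|\le 2$ contribute — these are exactly the combinatorial bookkeeping one needs to reconcile with $\binom{\lfloor N\rfloor+a}{a}$ as written. Multiplying the two counts and summing over $a$ from $1$ to $\lfloor 11T/K(\rho)\rfloor$ gives the stated inequality.

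The main obstacle is getting the constants and the stars-and-bars bound to land exactly on the claimed expression: one must be careful that the signs of the $t_i$ and the ``free'' small values $|t_i|\le 1$ (which contribute $0$ to $s$) are absorbed into the base $120g-120$ rather than inflating the binomial coefficient, and that the division by $a$ (cyclic symmetry) is applied to the discrete part only. A secondary technical point is justifying the concrete bound $|\Bpc|\le\text{const}\cdot g$ and the branching factor $120g-120$ for the successor/predecessor/type data; this is a finite combinatorial check using the structure of $\Tpc$ described in Section \ref{A special ideal triangulation}, together with the observation (from Proposition \ref{combinatorial prop} and the definitions preceding it) that once $e_i\in\Bpc$ is chosen, $\suc^{\pm1}(e_i)$ and $T_i$ are determined, so the branching is really only over the choice of $e_i\in\Bpc$ and the sign of $t(e_i,e_{i+1})$.
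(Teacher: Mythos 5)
Your overall strategy is the same as the paper's: pass from $[\Gamma_1]$ to $\Psi_\rho$ via Proposition \ref{combinatorial prop}, invoke Theorem \ref{length lower bound} to constrain $a=r(\sigma)$ to lie in $\{1,\dots,\lfloor 11T/K(\rho)\rfloor\}$ and $s(\sigma)$ to be at most $\lfloor(11T-aK(\rho))/L(\rho)\rfloor$, then separately bound the ``discrete'' cyclic sequence $\pi_0(\sigma)=\{(\suc^{-1}(e_i),e_i,\suc(e_i),T_i)\}$ and the ``winding'' tuple $\{t(e_i,e_{i+1})\}$. However, there are two substantive problems in how you account for the factor $(120g-120)^a$.

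First, your assertion that ``once $e_i$ is fixed its successor and predecessor in the cyclic order $\suc$ are determined, while $T_i\in\{Z,S\}$ is then also forced'' is false in the counting context. The successor map and the type assignment depend on $X$, not merely on $e_i\in\Qpc_j$; when you range over all possible $[X]$ with $r(\psi(X))=a$, both vary. The paper's count is that once $e_i$ is placed in $\Qpc_j$ (with $|\Qpc|=6g-6$), there are exactly four possible tuples $(\suc^{-1}(e_i),e_i,\suc(e_i),T_i)$, namely $(e_i'',e_i,e_i',Z)$, $(e_i'',e_i,e_i',S)$, $(e_i',e_i,e_i'',Z)$, $(e_i',e_i,e_i'',S)$ where $e_i',e_i''$ are the other two edges of $\Qpc_j$; this gives $4(6g-6)=24g-24$ per slot, not $120g-120$.

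Second, and this is the piece you explicitly flag as unresolved, the winding count is not exactly $\binom{N+a}{a}$: to pass from integer tuples $(t_1,\dots,t_a)$ to nonnegative tuples $(\max\{0,|t_i|-2\})_{i=1}^a$ you must multiply by the maximal fiber size of $t\mapsto\max\{0,|t|-2\}$, which is $5$ (the values $-2,-1,0,1,2$ all map to $0$; each $b>0$ has two preimages $\pm(b+2)$). This contributes the factor $5^a$, and $(24g-24)\cdot 5=120g-120$, which is how the paper arrives at the stated constant. Your remark that the signs and small values of $t_i$ should be ``absorbed into the base'' is the right intuition, but you never make it precise, and the small values going into $s$-cost zero are $|t_i|\le 2$, not $|t_i|\le 1$. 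So the approach is right but the branching-factor accounting has a genuine gap that needs the fiber-size-$5$ observation to close.
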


\begin{proof}
Let $\Psi_\rho:=\{\psi_\rho(X):X\in\Gamma_1\}$, where $\psi_\rho(X)$ is the combinatorial data defined in Section \ref{Finite combinatorial description of closed curves}. By Proposition \ref{combinatorial prop}, the map $\psi_\rho:\Gamma_1\to\Psi_\rho$ descends to a bijection $\widehat{\psi}_\rho:[\Gamma_1]\to\Psi_\rho$. Hence, Theorem \ref{length lower bound} implies that 
\begin{eqnarray}\label{fixing binodal number}
|\{[X]\in[\Gamma_1]:l_\rho(X)\leq T\}|&\leq&|\{\sigma\in\Psi_\rho:r(\sigma)\cdot K(\rho)+s(\sigma)\cdot L(\rho)\leq 11T\}|\nonumber\\
&=&\sum_{a=1}^{\lfloor\frac{11T}{K(\rho)}\rfloor}\bigg|\bigg\{\sigma\in\Psi_\rho:r(\sigma)=a, s(\sigma)\leq\bigg\lfloor\frac{11T-a\cdot K(\rho)}{L(\rho)}\bigg\rfloor\bigg\}\bigg|.
\end{eqnarray}

For any cyclic sequence $\sigma=\{(\suc^{-1}(e_i),e_i,\suc(e_i),T_i,t(e_i,e_{i+1}))\}_{i=1}^{r(\sigma)}\in\Psi_\rho$, let $\pi_0(\sigma)$ be the cyclic sequence
\[\pi_0(\sigma):=\{(\suc^{-1}(e_i),e_i,\suc(e_i),T_i)\}_{i=1}^{r(\sigma)}\] 
and define $\Psi_{\rho,0}:=\{\pi_0(\sigma):\sigma\in\Psi_\rho\}$. For any $i$, let $j\in\{1,\dots,2g-2\}$ be the number such that $e_i$ lies in $\Qpc_j$. There are exactly two other edges in $\Qpc_j$, call them $e_i'$ and $e_i''$. This means that there are at most four possibilities for what $(\suc^{-1}(e_i),e_i,\suc(e_i),T_i)$ can be, namely 
\[(e_i'',e_i,e_i',Z), (e_i'',e_i,e_i',S), (e_i',e_i,e_i'',Z)\text{ or }(e_i',e_i,e_i'',S).\]
Hence, for any $a\geq 1$,
\begin{equation}\label{binodal possibilities}
|\{\pi_0(\sigma)\in\Psi_{\rho,0}:r(\sigma)=a\}|\leq \frac{(4\cdot(6g-6))^a}{a}.
\end{equation}

Next, we make two easy observations. First, consider the map $f:\Zbbb\to\Zbbb_{\geq 0}$ given by $f(a)=\max\{0,|a|-2\}$. Then for any non-negative integer $b$, observe that $|f^{-1}(b)|\leq 5$. Second, observe that for all positive integers $a$ and $k$,
\[\Big|\{(t_1,\dots, t_a)\in(\Zbbb_{\geq 0})^a:\sum_{i=1}^at_i\leq k\}\Big|={k+a\choose a}.\] 
These two observations, together with the inequality (\ref{binodal possibilities}), allow us to conclude that 
\begin{eqnarray*}
&&\bigg|\bigg\{\sigma\in\Psi_\rho:r(\sigma)=a, s(\sigma)\leq\bigg\lfloor\frac{11T-a\cdot K(\rho)}{L(\rho)}\bigg\rfloor\bigg\}\bigg|\\
&\leq&\frac{(4\cdot(6g-6))^a}{a}\cdot5^a\cdot{\lfloor\frac{11T-a\cdot K(\rho)}{L(\rho)}\rfloor+a\choose a}\\
&=& \frac{(120g-120)^a}{a}\cdot{\lfloor\frac{11T-a\cdot K(\rho)}{L(\rho)}\rfloor+a\choose a}.
\end{eqnarray*}
The above inequality together with inequality (\ref{fixing binodal number}) imply the lemma.
\end{proof}

\begin{prop}\label{final estimate}
Let $\rho$ be a representation in $Hit_n(S)$. Then
\[\htop(\rho)\leq11\limsup_{T\to\infty}\frac{1}{T}\log{\lfloor\frac{T-Q\cdot K(\rho)}{L(\rho)}\rfloor+Q\choose Q}+\frac{11\log(120g-120)}{K(\rho)},\]
where $Q\in\{0,1,\dots,\lfloor\frac{T}{K(\rho)}\rfloor\}$ is the integer so that 
\[{\lfloor\frac{T-Q\cdot K(\rho)}{L(\rho)}\rfloor+Q\choose Q}=\max_{a\in\{0,1,\dots,\lfloor\frac{T}{K(\rho)}\rfloor\}}{\lfloor\frac{T-a\cdot K(\rho)}{L(\rho)}\rfloor+a\choose a}.\]
\end{prop}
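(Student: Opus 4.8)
The plan is to combine the two counting estimates in Lemmas \ref{Gamma_0} and \ref{Gamma_1}, crudely bound the sum appearing in the latter, and then pass to the exponential growth rate using the Bowen--Pollicott formula for $\htop(\rho)$ recalled in the introduction.

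First I would note that $[\Gamma]=[\Gamma_0]\cup[\Gamma_1]$ and $\{l_\rho<T\}\subseteq\{l_\rho\leq T\}$, so that for every $T>0$,
\[
|\{[X]\in[\Gamma]:l_\rho(X)<T\}|\leq\Big((6g-6)\big\lfloor\tfrac{T}{L(\rho)}\big\rfloor+1\Big)+\sum_{a=1}^{\lfloor\frac{11T}{K(\rho)}\rfloor}\frac{(120g-120)^a}{a}\binom{\lfloor\frac{11T-aK(\rho)}{L(\rho)}\rfloor+a}{a}.
\]
The first parenthesis is polynomial in $T$ and so contributes nothing to the growth rate. For the sum, write $A:=\lfloor 11T/K(\rho)\rfloor$; since $120g-120\geq1$ we have $\frac{(120g-120)^a}{a}\leq(120g-120)^A$ for $1\leq a\leq A$, and each binomial coefficient is at most $\binom{\lfloor\frac{11T-Q_TK(\rho)}{L(\rho)}\rfloor+Q_T}{Q_T}$, where $Q_T\in\{0,\dots,A\}$ is chosen so as to maximize it. Hence the sum is bounded by $A\,(120g-120)^A\binom{\lfloor\frac{11T-Q_TK(\rho)}{L(\rho)}\rfloor+Q_T}{Q_T}$.

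Next I would take $\tfrac1T\log$ of the resulting upper bound and let $T\to\infty$. Using $\log(x+y)\leq\log 2+\max(\log x,\log y)$ repeatedly, together with the facts that $\tfrac1T\log$ of the polynomial term and of $A$ tend to $0$, that $\tfrac1T\log(120g-120)^A=\tfrac{A}{T}\log(120g-120)\to\tfrac{11}{K(\rho)}\log(120g-120)$, and that the remaining factors are $\geq1$, we reduce to bounding $\limsup_{T\to\infty}\tfrac1T\log\binom{\lfloor\frac{11T-Q_TK(\rho)}{L(\rho)}\rfloor+Q_T}{Q_T}$. Substituting $S=11T$ (so $\tfrac1T=\tfrac{11}{S}$, and $Q_T$ becomes exactly the maximizing index $Q$ of the statement for the parameter $S$), this last $\limsup$ equals $11\,\limsup_{S\to\infty}\tfrac1S\log\binom{\lfloor\frac{S-QK(\rho)}{L(\rho)}\rfloor+Q}{Q}$. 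Assembling the three contributions gives the inequality claimed in the proposition.

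The only real obstacle is bookkeeping: keeping the two ``time'' scalings ($T$ versus $11T$) straight so that the maximizing index $Q_T$ at scale $11T$ matches the index $Q$ at scale $S$ in the statement, and handling the $\limsup$ of a sum with one genuinely exponential term and several sub-exponential or convergent ones. No new geometric input is needed; everything follows from Lemmas \ref{Gamma_0} and \ref{Gamma_1} and the entropy formula.
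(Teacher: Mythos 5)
Your proposal is correct and follows essentially the same route as the paper: split $[\Gamma]$ into $[\Gamma_0]\cup[\Gamma_1]$, apply Lemmas \ref{Gamma_0} and \ref{Gamma_1}, bound the sum by the number of terms times the largest summand (separating the $(120g-120)^a$ factor from the binomial coefficient), discard the subexponential contributions after dividing by $T$, and rescale $S=11T$ to match the statement's normalization. The only cosmetic difference is that you use $\log(x+y)\le\log 2+\max(\log x,\log y)$ where the paper uses the product bound $P_1+\Sigma\le P_1\cdot\Sigma$ for large $T$; both handle the bookkeeping correctly.
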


\begin{proof}
Since $[\Gamma]=[\Gamma_0]\cup[\Gamma_1]$, Lemma \ref{Gamma_0} and Lemma \ref{Gamma_1} imply that for sufficiently large $T$,
\begin{eqnarray*}
&&\frac{1}{T}\log|\{[X]\in[\Gamma]:l_\rho(X)\leq T\}|\\
&\leq&\frac{1}{T}\log\bigg((6g-6)\bigg\lfloor\frac{T}{L(\rho)}\bigg\rfloor+1+\sum_{a=1}^{\lfloor\frac{11T}{K(\rho)}\rfloor}\frac{(120g-120)^a}{a}\cdot{\lfloor\frac{11T-a\cdot K(\rho)}{L(\rho)}\rfloor+a\choose a}\bigg)\\
&\leq&\frac{1}{T}\log\bigg((6g-6)\bigg\lfloor\frac{T}{L(\rho)}\bigg\rfloor+1\bigg)+\frac{1}{T}\log\bigg(\frac{(120g-120)^{\lfloor\frac{11T}{K(\rho)}\rfloor}}{{\lfloor\frac{11T}{K(\rho)}\rfloor}}\bigg)\\
&&+\frac{1}{T}\log\bigg\lfloor\frac{11T}{K(\rho)}\bigg\rfloor+\frac{1}{T}\log{\lfloor\frac{11T-R\cdot K(\rho)}{L(\rho)}\rfloor+R\choose R}\\
&=&\frac{1}{T}\log\bigg((6g-6)\bigg\lfloor\frac{T}{L(\rho)}\bigg\rfloor+1\bigg)+\frac{\lfloor\frac{11T}{K(\rho)}\rfloor}{T}\log(120g-120)\\
&&+\frac{1}{T}\log{\lfloor\frac{11T-R\cdot K(\rho)}{L(\rho)}\rfloor+R\choose R}
\end{eqnarray*}
where $R\in\{0,1,\dots,\lfloor\frac{11T}{K(\rho)}\rfloor\}$ is the integer so that 
\[{\lfloor\frac{11T-R\cdot K(\rho)}{L(\rho)}\rfloor+R\choose R}=\max_{a\in\{0,1,\dots,\lfloor\frac{11T}{K(\rho)}\rfloor\}}{\lfloor\frac{11T-a\cdot K(\rho)}{L(\rho)}\rfloor+a\choose a}.\]

Since
\[\lim_{T\to\infty}\frac{1}{T}\log\bigg((6g-6)\bigg\lfloor\frac{T}{L(\rho)}\bigg\rfloor+1\bigg)=0,\]
we have 
\begin{eqnarray*}
&&\limsup_{T\to\infty}\frac{1}{T}\log|\{[X]\in[\Gamma]:l_\rho(X)\leq T\}|\\
&\leq&\limsup_{T\to\infty}\frac{1}{T}\log{\lfloor\frac{11T-R\cdot K(\rho)}{L(\rho)}\rfloor+R\choose R}+\frac{11\log(120g-120)}{K(\rho)}\\
&=&11\limsup_{T\to\infty}\frac{1}{T}\log{\lfloor\frac{T-Q\cdot K(\rho)}{L(\rho)}\rfloor+Q\choose Q}+\frac{11\log(120g-120)}{K(\rho)}.\\
\end{eqnarray*}
\end{proof}

By Proposition \ref{final estimate}, to finish the proof of (2) of Theorem \ref{main theorem}, it is now sufficient to prove the following proposition.

\begin{prop}
Let $\{\rho_i\}_{i=1}^\infty$ be an internal sequence in $Hit_n(S)$. Then
\[\lim_{i\to\infty}\limsup_{T\to\infty}\frac{1}{T}\log{\lfloor\frac{T-Q\cdot K(\rho_i)}{L(\rho_i)}\rfloor+Q\choose Q}=0,\]
where $Q\in\{0,1,\dots,\lfloor\frac{T}{K(\rho_i)}\rfloor\}$ is the integer so that 
\[{\lfloor\frac{T-Q\cdot K(\rho_i)}{L(\rho_i)}\rfloor+Q\choose Q}=\max_{a\in\{0,1,\dots,\lfloor\frac{T}{K(\rho_i)}\rfloor\}}{\lfloor\frac{T-a\cdot K(\rho_i)}{L(\rho_i)}\rfloor+a\choose a}.\]
\end{prop}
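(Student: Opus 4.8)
The plan is to reduce the double limit to an elementary estimate on binomial coefficients, using two features of internal sequences: that $K(\rho_i)\to\infty$ by Theorem \ref{K to infinity}, and that $L(\rho_i)$ stays in a fixed compact subset of $(0,\infty)$. The second fact I would extract directly from condition (1) of Definition \ref{internal sequence}: summing the inequalities $C_1<\lambda_k(\rho_i(A))-\lambda_{k+1}(\rho_i(A))<C_2$ over $k=1,\dots,n-1$ gives $(n-1)C_1<l_{\rho_i}(A)<(n-1)C_2$ for every $A\in\Gamma_\Ppc$, hence $\tfrac{(n-1)C_1}{n}<L(\rho_i)<\tfrac{(n-1)C_2}{n}$ for all $i$. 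In particular $K(\rho_i)\ge L(\rho_i)$ once $i$ is large.

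Next I would fix a large $i$, abbreviate $K:=K(\rho_i)$ and $L:=L(\rho_i)$, and bound $f(T):=\max_{0\le a\le\lfloor T/K\rfloor}\binom{\lfloor(T-aK)/L\rfloor+a}{a}$ uniformly in $T$. For the term $a=0$ the binomial is $1$. For $a\ge 1$, the crude bound $\binom{N}{a}\le(eN/a)^a$ together with $\lfloor(T-aK)/L\rfloor+a\le T/L+T/K\le 2T/L$ (valid since $K\ge L$ and $a\le T/K$) gives $\log\binom{\lfloor(T-aK)/L\rfloor+a}{a}\le\phi(a):=a\bigl(1+\log(2T/L)-\log a\bigr)$. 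Since $\phi'(a)=\log\tfrac{2T}{La}\ge 0$ on $[0,2T/L]$ and $\lfloor T/K\rfloor\le T/K\le 2T/L$, the maximum of $\phi$ over admissible $a$ is attained at the right endpoint, so $\log f(T)\le\phi(T/K)=\tfrac{T}{K}\bigl(1+\log(2K/L)\bigr)$ for all $T$ large enough that $2T/L\ge 1$ (a threshold that can be chosen independent of $i$, since $L(\rho_i)$ is uniformly bounded). Dividing by $T$ and letting $T\to\infty$ yields, for all large $i$,
\[\limsup_{T\to\infty}\frac{1}{T}\log\binom{\lfloor\frac{T-Q\cdot K(\rho_i)}{L(\rho_i)}\rfloor+Q}{Q}\le\frac{1+\log(2K(\rho_i)/L(\rho_i))}{K(\rho_i)}.\]

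Finally I would let $i\to\infty$: since $K(\rho_i)\to\infty$ while $L(\rho_i)$ stays bounded away from $0$ and $\infty$, the right-hand side tends to $0$; and the left-hand side is always $\ge 0$ because the maximum defining $Q$ includes $a=0$, which contributes the value $1$. This forces the double limit to equal $0$. I do not expect a serious obstacle; the only points that need a little care are verifying that the $T$-threshold in the estimate is uniform in $i$ (which follows from the two-sided bound on $L(\rho_i)$) and confirming that the binomial appearing in the statement is genuinely the maximizing one, so that the elementary bound on $f(T)$ applies verbatim.
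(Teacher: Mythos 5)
Your proposal is correct. The paper proves the proposition by first extracting exactly the same two facts you do — that $L(\rho_i)$ is bounded away from $0$ and $\infty$ (from condition (1) of Definition~\ref{internal sequence}), and that $K(\rho_i)\to\infty$ by Theorem~\ref{K to infinity} — and then reducing to a purely combinatorial statement about binomial coefficients for sequences $T_j\to\infty$, $K_i\to\infty$ with $L$ replaced by the fixed lower bound $L_0$. At that point, however, the paper simply cites Proposition~3.29 of \cite{Zha1} and stops; it gives no internal argument for the binomial estimate. Your proof fills in that step with a clean, self-contained calculation: the crude bound $\binom{N}{a}\le(eN/a)^a$, the observation that $N\le 2T/L$ on the admissible range once $K\ge L$, the monotonicity of $\phi(a)=a(1+\log(2T/L)-\log a)$ on $[0,2T/L]$, and finally the estimate $\limsup_{T\to\infty}\frac{1}{T}\log f(T)\le\frac{1+\log(2K/L)}{K}\to 0$. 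All the small checks you flag do go through: $a\le\lfloor T/K\rfloor$ forces $T-aK\ge 0$ so $N\ge a$ and $(eN/a)^a$ applies; the $T$-threshold needed for $2T/L\ge 1$ is uniform in $i$ because of the two-sided bound $\frac{(n-1)C_1}{n}<L(\rho_i)<\frac{(n-1)C_2}{n}$; and the $a=0$ term contributes $\binom{\lfloor T/L\rfloor}{0}=1$, giving the needed lower bound $0$ for the squeeze. In short, you reach the same reduction as the paper, but you also supply the elementary binomial estimate that the paper outsources to an external reference — a genuinely different (and more self-contained) route to the same conclusion.
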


\begin{proof}
By the definition of an internal sequences, we know that the sequence $\{L(\rho_i)\}_{i=1}^\infty$ is bounded away from $0$ and $\infty$. This means in particular that 
\[L_0:=\inf_{i}\{L(\rho_i)\}\]
is a positive number. Also, by Theorem \ref{K to infinity}, we know that $\displaystyle\lim_{i\to\infty}K(\rho_i)=\infty$. This proposition thus follows if we can prove the following statement: \\
For any pair of sequences of positive numbers $\{T_j\}_{j=1}^\infty$ and $\{K_i\}_{i=1}^\infty$ such that $\displaystyle\lim_{i\to\infty}T_j=\infty$ and $\displaystyle\lim_{j\to\infty}K_i=\infty$, we have that 
\[\lim_{i\to\infty}\limsup_{j\to\infty}\frac{1}{T_j}\log{\lfloor\frac{T_j-Q_{i,j}\cdot K_i}{L_0}\rfloor+Q_{i,j}\choose Q_{i,j}}=0,\]
where $Q_{i,j}\in\{0,1,\dots,\lfloor\frac{T_j}{K_i}\rfloor\}$ is the integer so that 
\[{\lfloor\frac{T_j-Q_{i,j}\cdot K_i}{L_0}\rfloor+Q_{i,j}\choose Q_{i,j}}=\max_{a\in\{0,1,\dots,\lfloor\frac{T_j}{K_i}\rfloor\}}{\lfloor\frac{T_j-a\cdot K_i}{L_0}\rfloor+a\choose a}.\]
This is exactly Proposition 3.29 of \cite{Zha1}.
\end{proof}

\end{document}